\def\subsection{\@startsection{subsection}{2}%
  \z@{.5\linespacing\@plus.7\linespacing}{1pt}%
      {\normalfont\bfseries}}
\def\l@section{\@tocline{1}{0pt}{1pc}{}{\bfseries}}
\def\l@subsection{\@tocline{2}{0pt}{3.1em}{5pc}{}}
\newtheorem{thm}{Theorem}[section]
\newtheorem{lem}[thm]{Lemma}
\newtheorem{cor}[thm]{Corollary}
\newtheorem{prop}[thm]{Proposition}
\newtheorem*{mthm}{Main Theorem}
\theoremstyle{definition}
\newtheorem{rem}[thm]{Remark}
\newtheorem{defn}[thm]{Definition}
\theoremstyle{remark}
\numberwithin{equation}{section}
\newcommand{\C}{{\mathbb{C}}}
\newcommand{\R}{{\mathbb{R}}}
\newcommand{\Z}{{\mathbb{Z}}}
\newcommand{\N}{{\mathbb{N}}}
\newcommand{\T}{{\mathbb{T}}}
\DeclareMathOperator{\Ad}{Ad}
\DeclareMathOperator{\Tr}{Tr}
\DeclareMathOperator{\CTr}{CTr}
\DeclareMathOperator{\id}{id}
\DeclareMathOperator{\End}{End}
\DeclareMathOperator{\Sect}{Sect}
\DeclareMathOperator{\Aut}{Aut}
\DeclareMathOperator{\Cnt}{Cnt}
\DeclareMathOperator{\Cnd}{Cnd}
\DeclareMathOperator{\Int}{Int}
\def\oInt{\ovl{\Int}}
\DeclareMathOperator{\mo}{mod}
\DeclareMathOperator{\Ind}{Ind}
\def\mE{{\mathcal{E}}}
\def\mF{\mathcal{F}}
\def\mM{\mathcal{M}}
\def\mN{\mathcal{N}}
\def\meC{\mathscr{C}}
\def\meF{\mathscr{F}}
\def\meH{\mathscr{H}}
\def\meM{\mathscr{M}}
\def\meN{\mathscr{N}}
\def\meS{\mathscr{S}}
\def\meT{\mathscr{T}}
\def\al{\alpha}
\def\be{\beta}
\def\de{\delta}
\def\la{\lambda}
\def\vep{\varepsilon}
\def\ph{{\phi}}
\def\ps{{\psi}}
\def\vph{\varphi}
\def\om{\omega}
\def\si{\sigma}
\def\ta{\tau}
\def\th{\theta}
\def\Ph{\Phi}
\def\Ps{\Psi}
\def\Th{\Theta}
\def\De{\Delta}
\def\La{\Lambda}
\def\el{\ell}
\def\ovl{\overline}                 % overline, widehat, widetilde, opposite
\def\wdh{\widehat}
\def\wdt{\widetilde}
\def\hchi{\hat{\chi}}
\def\tvph{{\tilde{\vph}}}
\def\hth{{\hat{\th}}}
\def\tal{\wdt{\al}}
\def\trho{\wdt{\rho}}
\def\tsi{\wdt{\si}}
\def\tM{\wdt{M}}
\def\brho{{\bar{\rho}}}
\def\bsi{{\bar{\si}}}
\def\brrho{{\breve{\rho}}}
\def\subs{\subset}                   % for sets 
\def\setm{\setminus}
\def\nin{\notin}
\def\oti{\otimes}                    % tensor products
\def\rti{\rtimes}                    % crossed products
\def\lra{\longrightarrow}        % for maps 
\def\col{\colon}
\def\ra{\rightarrow}
\author[T. Masuda]{Toshihiko Masuda$^1$}
\address{$^1$ 
Graduate School of Mathematics, Kyushu University, 
Hakozaki, \mbox{812-8581}\\
\indent JAPAN}
\email{masuda@math.kyushu-u.ac.jp}
\author[R. Tomatsu]{Reiji Tomatsu$^{\,2,3}$}
\address{$^{2}$Department of 
Mathematical Sciences,
\hspace{-0.6mm}University of Tokyo,
\hspace{-0.6mm}Komaba,\mbox{153-8914}\\
\indent JAPAN,}
\address{$^3$Department of Mathematics, 
K.U. Leuven, Celestijnenlaan 200B,
B-3001\\ 
\indent BELGIUM}
\email{tomatsu@ms.u-tokyo.ac.jp}
\subjclass[2000]{Primary 46L10; Secondary 46L37}
\thanks{$^{1,2,3}$ Supported by JSPS}
\begin{document}

\title{Approximate innerness and central triviality of endomorphisms}

\begin{abstract}
We introduce the notions
of approximate innerness and central triviality
for endomorphisms on separable von Neumann factors,
and
we characterize them for hyperfinite factors
by Connes-Takesaki modules
of endomorphisms and modular endomorphisms
which are introduced by Izumi.
Our result is a generalization of the corresponding result
obtained by Kawahigashi-Sutherland-Takesaki
in automorphism case.
\end{abstract}

\maketitle

\section{Introduction}

The purpose of this paper is first to introduce two notions,
``approximate innerness'' and ``central triviality''
for endomorphisms on factors,
and second to generalize the result of Y. Kawahigashi, C. E. Sutherland
and M. Takesaki \cite{KST} to endomorphism case.

The study of automorphisms or group actions
has drawn attentions in studies of operator algebras.
From the viewpoint of the classification theory of group actions,
two classes of automorphisms have been considered significant,
i.e., approximately inner automorphisms $\oInt(M)$
and centrally trivial automorphisms $\Cnt(M)$ on a factor $M$, 
which are studied by A. Connes \cite{C2,C4}.
In particular,
A. Ocneanu obtained the uniqueness result for
approximately inner and centrally free actions of discrete amenable groups
on McDuff factors \cite{Oc1}. 

On those two properties, Connes announced
the following characterization without a proof,
using the flow of weights \cite{C3}:
for any hyperfinite factor $M$,
\begin{enumerate}
\item $\oInt(M)=\mathrm{Ker}(\mo)$,

\item
$\Cnt(M)=\{\Ad u \cdot\si_c^\vph\mid u\in U(M),
\ c\in Z^1(F^M,\C)\}$,
\end{enumerate}
where $\mo\col \Aut(M)\ra \Aut(F^M)$ is the Connes-Takesaki module map
\cite{CT}, $Z^1(F^M,\C)$ is the set of scalar valued 1-cocycles for
the flow of weights $F^M$ and $U(M)$
is the set of all unitary elements in $M$.

A proof of this theorem was first presented
by Kawahigashi, Sutherland and Takesaki \cite{KST}.
Their result well motivates us to consider a generalization to
endomorphisms, but our work also relies on the study of actions
because an action of a compact group dual
(or more generally a discrete quantum group) is
essentially identical to a Roberts action \cite{Ro}.
Indeed, using the present work,
we will obtain a uniqueness result
for approximately inner and centrally free actions
of an amenable discrete Kac algebra on hyperfinite type III factors
\cite{MT2}.

Now let us explain the details of this paper.
In the first half, we study approximately inner endomorphisms on
a factor $M$.
We introduce a topology on
the set of endomorphisms with finite index,
and then discuss approximation by inner endomorphisms.
Hence it is convenient to introduce the notion of rank
for an approximately inner endomorphism, that is,
if $\rho$ is approximated by inner endomorphisms with dimension $r$,
then we will say that $\rho$ has rank $r$.
In fact, we can define the rank for any positive real
number so that we match approximate innerness to
the theory of the Connes-Takesaki modules for endomorphisms
introduced by M. Izumi \cite{Iz1}.
Then we study the set of approximately inner endomorphisms
with rank $r>0$, which we denote by $\oInt_r(M)$.

In the latter half,
we study the set of centrally trivial endomorphisms
on a factor $M$ which we denote by $\Cnd(M)$.
Our aim is to clarify the relation between $\Cnd(M)$
and $\End(M)_{\rm m}$, the set of modular endomorphisms
introduced by Izumi \cite{Iz1}.
Our main result is the following 
(Theorem \ref{thm: main-app}, \ref{thm: mod-cent}):

\begin{mthm}Let $M$ be a hyperfinite factor.
Then one has the following:
\begin{enumerate}
\item
$\oInt_r(M)=\{\rho\in\End(M)_{\rm CT}\mid \mo(\rho)=\th_{\log(r/d(\rho))}\}$
for any $r>0$.
\item 
$\Cnd(M)=\End(M)_{\rm m}$. 
\end{enumerate}
\end{mthm}
Here, $\End(M)_{\rm CT}$ is the set of endomorphisms on $M$
which have Connes-Takesaki modules.

We should emphasize that we make use of
the main result of \cite{KST}
and their idea on discrete decompositions,
but we mainly use Popa's theory on
approximate innerness and central freeness of subfactors \cite{Po1}.
Indeed, some discussions of \cite{KST} involve
the classification results of discrete group actions 
\cite{Oc1,ST},
and it does not seem that those are applicable
to the endomorphism case at ease.

\section{Approximate innerness of endomorphisms}

First we fix notations.
In this paper,
we treat only von Neumann algebras with separable preduals
except for ultraproduct von Neumann algebras.
Let $M$ be a von Neumann algebra.
For $\vph\in M_*$ and $a\in M$,
we define the functionals $\vph a$ and $a\vph$ in $M_*$
by $\vph a(x):=\vph(ax)$ and $a\vph(x):=\vph(xa)$ for all $x\in M$.
We denote by $U(M)$ the set of all unitary elements in $M$.
We denote by $W(M)$ and $W_{\rm lac}(M)$
the sets of faithful normal semifinite weights
and faithful normal semifinite lacunary weights on $M$,
respectively \cite{C0,Ta2}.
For a faithful state $\ph\in M_*$, we set $|x|_\ph:=\ph(|x|)$
and $\|x\|_\ph:=\ph(x^* x)^{1/2}$.
Note that $|\cdot |_\ph$ satisfies the triangle inequality
on the centralizer of $\ph$.
We denote by $M_\ph$ the centralizer of $\ph$, that is,
$x\in M_\ph$ if and only if $\ph(xy)=\ph(yx)$ for any $y\in M$.

Let $\meH\subs M$ be a subspace.
We say that $\meH$ is a Hilbert space in $M$ if
$\meH\subs M$ is $\si$-weakly closed 
and $\eta^* \xi\in\C$ for all $\xi,\eta\in\meH$
\cite{Ro}.
The smallest projection $e\in M$ such that $e\meH=\meH$ is
called the support of $\meH$.

We denote by $\End(M)$ and $\Sect(M)$ the set of
normal endomorphisms and sectors on $M$,
that is, $\Sect(M)$ 
is the set of equivalence classes of endomorphisms on $M$ 
by unitary equivalence.
For two endomorphisms $\rho,\si\in \End(M)$,
we let
$(\rho,\si)=\{v\in M\mid v\rho(x)=\si(x)v\ \mbox{for all}\ x\in M\}$.
If $\rho$ is irreducible, then $(\rho,\si)$ is a Hilbert space 
with the inner product $(V,W)=W^* V$ for $V,W\in (\rho,\si)$.
For an endomorphism $\rho$ on $M$,
a left inverse $\ph$ of $\rho$
means a faithful normal unital completely
positive map on $M$ with $\ph\circ\rho=\id$.
For a factor $M$, we denote by $\End(M)_0$ the set of
endomorphisms with finite index.
For $\rho\in \End(M)_0$, $E_\rho$ denotes the minimal expectation 
from $M$ onto $\rho(M)$ \cite{Hi}.
We define the standard left inverse of $\rho$
by $\ph_\rho=\rho^{-1}\circ E_\rho$.

\subsection{A topology on the set of endomorphisms} 
We define the topology on the set of endomorphisms on a factor,
which is related to \cite[Definition 3.1]{MT1}.

\begin{defn}\label{defn: topology}
Let $N$ be a factor.
We introduce the topology $\meT_\ph$ on $\End(N)_0$
by giving the following neighborhoods at $\rho_0\in\End(N)_0$ 
which are defined 
for $n\in\N$, a finite family $\{\vph_i\}_{i=1}^n\subs N_*$
and $\vep>0$ by 
\[
U(\rho_0; \vph_1,\dots,\vph_n,\vep)
=
\{
\rho\in \End(N)_0\mid 
\|\vph_i\circ \ph_\rho-\vph_i\circ \ph_{\rho_0}\|<\vep
\ \mbox{for all}\ i=1,\dots,n
\}.
\]
\end{defn}

The topology $\meT_\ph$ is metrizable.
Take a norm dense sequence $\{\vph_n\}_{n=1}^\infty$
in the set of normal states on $N_*$.
The following metric $d_\ph$ defines the topology $\meT_\ph$,
\[
d_\ph(\rho,\si)
=\sum_{n=1}^\infty \frac{1}{2^n}\|\vph_n\circ \ph_\rho
-\vph_n\circ\ph_\si\| 
\ \mbox{for}\ \rho,\si\in \End(N)_0. 
\]
The restriction of $\meT_\ph$ on $\Aut(N)$
coincides with the $u$-topology
\cite[Definition 3.4]{Ha} as seen below.

\begin{lem}
Let $\al^\nu$, $\nu\in\N$, and $\al$ be in $\Aut(N)$. 
Then $\al^\nu\to\al$ as $\nu\to\infty$ 
in the topology $\meT_\ph$ 
if and only if $\al^\nu\to \al$ as $\nu\to\infty$ in the $u$-topology. 
\end{lem}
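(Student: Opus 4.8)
The plan is to show that both the restriction of $\meT_\ph$ to $\Aut(N)$ and the $u$-topology coincide with the topology of strong convergence of the canonical unitary implementations on the standard form of $N$. First I would unwind Definition \ref{defn: topology}: for $\al\in\Aut(N)$ one has $\al(N)=N$, so the minimal expectation $E_\al\col N\ra\al(N)$ is $\id_N$, $d(\al)=1$, and hence the standard left inverse is $\ph_\al=\al^{-1}\circ E_\al=\al^{-1}$. Consequently $\al^\nu\to\al$ in $\meT_\ph$ means exactly that $\|\vph\circ(\al^\nu)^{-1}-\vph\circ\al^{-1}\|\to0$ for every $\vph\in N_*$, whereas $\al^\nu\to\al$ in the $u$-topology means $\|\vph\circ\al^\nu-\vph\circ\al\|\to0$ for every $\vph\in N_*$. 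So the whole content is to see that these two conditions are equivalent.

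For this I would pass to the standard form: realize $N$ on a Hilbert space $\mH$ with self-dual cone $\mathcal{P}$ and let $\al\mapsto u_\al\in U(\mH)$ be the canonical implementation, so that $\al=\Ad u_\al$, $u_\al\mathcal{P}=\mathcal{P}$, and $\al\mapsto u_\al$ is a group homomorphism (in particular $u_{\al^{-1}}=u_\al^*$). Writing a given $\vph\in N_*^+$ as $\om_\xi$ with $\xi\in\mathcal{P}$, one computes $\vph\circ\al=\om_{u_\al^*\xi}$ and $\vph\circ\al^{-1}=\om_{u_\al\xi}$, both vector states of vectors lying in $\mathcal{P}$. Using the two standard cone estimates $\|\eta-\zeta\|^2\le\|\om_\eta-\om_\zeta\|$ for $\eta,\zeta\in\mathcal{P}$ and $\|\om_\eta-\om_\zeta\|\le\|\eta-\zeta\|(\|\eta\|+\|\zeta\|)$, together with totality of $\mathcal{P}$ in $\mH$ and uniform boundedness of the $u_{\al^\nu}$, one obtains that $\al^\nu\to\al$ in $\meT_\ph$ is equivalent to $u_{\al^\nu}\to u_\al$ strongly, while $\al^\nu\to\al$ in the $u$-topology is equivalent to $u_{\al^\nu}^*\to u_\al^*$ strongly. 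Since for a sequence of unitaries converging strongly to a unitary the adjoints also converge strongly, these two are one and the same condition, which proves the lemma.

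The argument is essentially bookkeeping, and the single point that deserves attention is precisely this bookkeeping of inverses: the $\meT_\ph$-neighborhoods are phrased through $\ph_\al=\al^{-1}$ whereas the $u$-topology is phrased through $\al$, so one genuinely has to invoke continuity of inversion — equivalently, the elementary remark that strong convergence of unitaries is automatically strong$^*$. One could instead argue directly on $N_*$, first upgrading $\|\vph\circ\al^\nu-\vph\circ\al\|\to0$ for all $\vph$ to $\si$-strong$^*$ convergence $\al^\nu(x)\to\al(x)$ by expanding $\|\al^\nu(x)-\al(x)\|_\vph^2$ and using separate $\si$-weak continuity of multiplication; but reversing that step for the inverses is no easier than continuity of inversion itself, so the standard-form route is the more transparent one.
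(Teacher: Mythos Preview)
Your proof is correct, but the paper takes a much shorter route that stays entirely in $N_*$ and avoids the standard form. Having reduced the question, as you do, to the equivalence of $\|\vph\circ(\al^\nu)^{-1}-\vph\circ\al^{-1}\|\to0$ for all $\vph$ and $\|\ps\circ\al^\nu-\ps\circ\al\|\to0$ for all $\ps$, the paper simply substitutes $\vph=\ps\circ\al$ and uses that pre-composition by an automorphism is an isometry of $N_*$: then
\[
\|\ps\circ\al\circ(\al^\nu)^{-1}-\ps\|
=\|(\ps\circ\al-\ps\circ\al^\nu)\circ(\al^\nu)^{-1}\|
=\|\ps\circ\al-\ps\circ\al^\nu\|,
\]
and since $\vph\mapsto\vph\circ\al$ is a bijection of $N_*$, this gives both directions at once. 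So your closing remark that ``reversing that step for the inverses is no easier than continuity of inversion itself'' is too pessimistic: the isometry property does all the work, and no passage through $u_\al$, cone estimates, or strong$^*$ convergence of unitaries is needed. Your approach does buy a conceptual picture (both topologies are the strong operator topology on the canonical implementations), but for this lemma it is more machinery than required.
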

\begin{proof}
The if part is trivial.
We show the only if part as follows.
The sequence $\{\al^\nu\}_\nu$ converges to $\al$ as $\nu\to\infty$
in the topology $\meT_\ph$
if and only if
$\|\vph\circ (\al^\nu)^{-1}-\vph\circ \al^{-1}\|\to0$
as $\nu\to\infty$
for all $\vph\in N_*$.
If we put $\ps\circ \al$ ($\ps\in N_*$) for $\vph$,
we obtain
$\|\ps\circ\al\circ(\al^\nu)^{-1}-\ps\|=\|\ps\circ\al-\ps\circ\al^\nu\|$.
Hence
$\|\vph\circ (\al^\nu)^{-1}-\vph\circ \al^{-1}\|\to0$
as $\nu\to\infty$
for all $\vph\in N_*$
if and only if
$\|\ps\circ \al^\nu-\ps\circ \al\|=0$
as $\nu\to\infty$ for all $\ps\in N_*$,
which means
the convergence of $\{\alpha^\nu\}_\nu$ to $\al$ in the $u$-topology.
\end{proof}

Note that the $u$-topology on $\Aut(N)$ is metrizable and complete.
By the previous lemma, the restriction of the metric
$d_\ph$ on $\Aut(N)$ gives the $u$-topology,
but $\Aut(N)\subs \End(N)_0$ is not closed in general.
Namely, that restriction may not be a complete metric.

\subsection{Approximate innerness} 
Let $\meH\subset N$ be a finite dimensional Hilbert space
with support $1$,
and $\{v_i\}_i\subset \meH$ an orthonormal basis.
Then $\rho_\meH(x):=\sum_i v_ix v_i^*$ gives an endomorphism of $N$.
We say that $\rho\in \End(N)$ is \emph{inner}
if there exists a Hilbert space $\meH\subs N$ such that
$\rho=\rho_\meH$.
Then we have $d(\rho)=\dim(\meH)$.
We also say that $\rho$ has \emph{rank} $\dim(\meH)$.
We denote by $\Int_d(N)$ the set of inner endomorphisms of rank $d$.
Now we introduce approximate innerness of endomorphisms.

\begin{defn}
Let $\rho\in\End(N)_0$ and $r\in\N$.
We say that $\rho$ is an \emph{approximately inner endomorphism of rank}
$r$
if for each $\nu\in\N$,
there exists an $r$-dimensional Hilbert space
$\meH^\nu\subs N$ with support $1$ 
such that
$(\rho_{\meH^\nu})_\nu$ converges to $\rho$
with respect to the topology $\meT_\ph$.
\end{defn}

We generalize this notion for general $r>0$ as follows. 

\begin{defn}\label{defn: app-inner-general}
Let $N$ be a factor. 
Let $\rho\in\End(N)$ and $r>0$. 
We say that $\rho$ is an \emph{approximately inner endomorphism of rank} 
$r$ 
if there exist sequences of partial isometries 
$\{v_i^\nu\}_{i=1}^{[r]+1}\subs N$, $\nu\in\N$,
such that 
\begin{enumerate}
\item 
$(v_i^\nu)^* v_i^\nu=1$ for $1\leq i\leq [r]$ for all $\nu\in\N$, 

\item 
if $r\in\N$, $v_{[r]+1}^\nu=0$ for all $\nu\in\N$, 

\item 
$\displaystyle\sum_{i=1}^{[r]+1}v_i^\nu (v_i^\nu)^*=1$ for all $\nu\in\N$, 

\item 
$\displaystyle\lim_{\nu\to\infty}
\left\| \frac{1}{r}\,v_i^\nu \vph -(\vph\circ\ph_\rho) v_i^\nu\right\|=0$ 
for all $\vph\in N_*$. 
\end{enumerate} 
\end{defn}

We denote by $\oInt_r(N)$
the set of approximately inner endomorphisms of rank $r$.
By definition, we have $\oInt(N)=\Aut(M)\cap \oInt_1(N)$.

\subsection{Locally trivial subfactors}\label{LTS}
We recall locally trivial subfactors introduced in \cite[Chapter 2]{Po1}.
Let $P$ be a factor.
For $\rho_0:=\id$ and $\rho_1:=\rho\in\End(P)_0$,
the locally trivial subfactor
$N^{(\id,\rho)}\subs M^{(\id,\rho)}$ is defined as follows:
\begin{align*}
M^{(\id,\rho)}
=&\,
P\oti M_{2}(\C),
\\
N^{(\id,\rho)}
=&\,
\left\{x\oti e_{00}+\rho(x)\oti e_{11} \mid x\in P\right\}, 
\end{align*}
where $\{e_{ij}\}_{i,j=0}^1$ denotes a system of matrix units of $M_2(\C)$.
The canonical isomorphism from $P$ onto $N^{(\id,\rho)}$
is denoted by $\al^{(\id,\rho)}$.

For $\mu_0,\mu_1>0$ with $\mu_0+\mu_1=1$,
we set $\mu:=(\mu_0,\mu_1)$.
We define the unital completely positive map $\ph_\rho^\mu \col M^{(\id,\rho)}\ra P$ by
\[
\ph_\rho^\mu(x)=\sum_{i=0}^1 \mu_i \ph_{\rho_i}(x_{ii}), 
\quad x\in M^{(\id,\rho)}. 
\]
Then $\ph_\rho^\mu$ has the following property:
\[
\ph_\rho^\mu(\al^{(\id,\rho)}(a)x\al^{(\id,\rho)}(b))=a\ph_\rho^\mu(x)b. 
\]
This implies the map
$E^\mu:=\al^{(\id,\rho)}\circ \ph_\rho^\mu$ is
a faithful normal conditional expectation
from $M^{(\id,\rho)}$ onto $N^{(\id,\rho)}$.
By using the local index formula \cite[Theorem 4.4]{Ko}, 
we have the following. 

\begin{lem}\label{lem: IndE}
One has 
$\Ind (E^\mu)
=\mu_0^{-1}+\mu_1^{-1}d(\rho_1)^2
=\mu_0^{-1}+\mu_1^{-1}\Ind (E_\rho)$. 
\end{lem}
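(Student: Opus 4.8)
The plan is to apply Kosaki's local index formula \cite[Theorem 4.4]{Ko} to the partition of unity $\{f_0,f_1\}$ of $M^{(\id,\rho)}$ given by $f_i:=1\oti e_{ii}$, which lies in the relative commutant $(N^{(\id,\rho)})'\cap M^{(\id,\rho)}$. Indeed $f_0,f_1$ are projections with $f_0+f_1=1$, and a one-line computation using $e_{ij}e_{k\el}=\de_{jk}e_{i\el}$ shows that each $f_i$ commutes with every element $x\oti e_{00}+\rho(x)\oti e_{11}$ of $N^{(\id,\rho)}$. Since $P$, and hence $N^{(\id,\rho)}\cong P$, is a factor, $E^\mu(f_i)$ lies in the center of $N^{(\id,\rho)}$ and is therefore a scalar; unwinding the definitions of $\ph_\rho^\mu$ and $\al^{(\id,\rho)}$ one obtains $E^\mu(f_i)=\mu_i$.

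Next I would identify the two reduced inclusions and their reduced expectations. For $f_0$ one has $f_0 M^{(\id,\rho)}f_0=P\oti e_{00}=N^{(\id,\rho)}f_0$, so the reduced inclusion is trivial and $\Ind(E_{f_0})=1$. For $f_1$ one has $f_1 M^{(\id,\rho)}f_1=P\oti e_{11}$ and $N^{(\id,\rho)}f_1=\rho(P)\oti e_{11}$, so after identifying $f_1 M^{(\id,\rho)}f_1$ with $P$ the reduced inclusion becomes $\rho(P)\subs P$. A direct computation of the reduced expectation $E_{f_1}(x)=E^\mu(f_1)^{-1}f_1E^\mu(x)f_1$ on $a\oti e_{11}$, using $\ph_\rho=\rho^{-1}\circ E_\rho$, shows that $E_{f_1}$ corresponds to the minimal expectation $E_\rho\col P\ra\rho(P)$ itself; hence $\Ind(E_{f_1})=\Ind(E_\rho)=d(\rho_1)^2$ by the definition of the statistical dimension.

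Then the local index formula yields
\[
\Ind(E^\mu)=\sum_{i=0}^{1}E^\mu(f_i)^{-1}\,\Ind(E_{f_i})
=\mu_0^{-1}\cdot 1+\mu_1^{-1}\,d(\rho_1)^2,
\]
which is exactly the claimed value, the second equality in the statement being just $d(\rho_1)^2=\Ind(E_\rho)$.

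I do not expect any genuine obstacle; the only points requiring care are the bookkeeping identification of $E_{f_1}$ with $E_\rho$ (which forces one to use $E^\mu=\al^{(\id,\rho)}\circ\ph_\rho^\mu$, the explicit formula for $\ph_\rho^\mu$, and $E_\rho=\rho\circ\ph_\rho$), and the fact that the reduced expectation appearing in Kosaki's formula is $E_{f_i}(\cdot)=E^\mu(f_i)^{-1}f_iE^\mu(\cdot)f_i$, so that one must compress by $f_i$ on both sides rather than merely restrict $E^\mu$.
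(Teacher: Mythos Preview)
Your proposal is correct and is precisely the approach the paper takes: the paper's entire proof is the single sentence ``By using the local index formula \cite[Theorem 4.4]{Ko}, we have the following,'' and you have simply supplied the routine verifications (that $f_i\in(N^{(\id,\rho)})'\cap M^{(\id,\rho)}$, that $E^\mu(f_i)=\mu_i$, and that the reduced expectations are $\id$ and $E_\rho$) which the paper leaves implicit.
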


\subsection{Ultraproduct von Neumann algebras and Central sequence inclusions}

We recall the notion of ultraproduct von Neumann algebras
and central sequence inclusions.
Our standard references are \cite{MT1,Oc1,Po1}.

Let $M$ be a von Neumann algebra and $\om$ a free ultrafilter on $\N$.
Denote by $\meT_\om(M)\subs \el^\infty(\N,M)$ the $C^*$-subalgebra
which consists of sequences $\om$-converging to $0$ in the strong* topology.
Let $\meN(\meT_\om(M))$ be
the $C^*$-subalgebra of $\el^\infty(\N,M)$ normalizing $\meT_\om(M)$.
Then the quotient $C^*$-algebra
$M^\om:=\meN(\meT_\om(M))/\meT_\om(M)$
has a predual and hence is a von Neumann algebra.
We call $M^\omega$ the ultraproduct von Neumann algebra of $M$.
The quotient map is denoted by $\pi_\om$.
We say that $(x^\nu)_\nu\in \el^\infty(\N, M)$
is a representing sequence of $x\in M^\omega$
if $x=\pi_\om((x^\nu)_\nu)$.
We denote by $\ta^\om$ the canonical faithful normal conditional expectation
from $M^\om$ onto $M$, that is,
for $x=\pi_\om((x^\nu)_\nu)\in M^\om$,
we have $\displaystyle\ta^\om(x)=\lim_{\nu\to\om}x^\nu$,
where the ultralimit is
taken with respect to the $\si$-weak topology of $M$.
For $\vph\in M_*$, we define the normal functional $\vph^\om\in (M^\om)_*$
by $\displaystyle\vph^\om(x):=\vph(\ta^\om(x))$ for $x\in M^\om$.

Next we consider an inclusion $N\stackrel{E}{\subs} M$,
where $E$ is a faithful normal conditional expectation from $M$
onto $N$.
We define the following $C^*$-subalgebras in $\el^\infty(\N,M)$:
\begin{align*}
M_\om^0(E)
=&\,
\{(x^\nu)_\nu\in \el^\infty(\N,M)
\mid \lim_{\nu\to\om}\|[\vph\circ E,x^\nu]\|=0
\ \mbox{for all}\ \vph\in N_*\},
\\
N_\om^0(E)
=&\,
\{(x^\nu)_\nu\in \el^\infty(\N,N)
\mid \lim_{\nu\to\om}\|[\vph,x^\nu]\|=0
\ \mbox{for all}\ \vph\in N_*\}.
\end{align*}
Then we have an inclusion $N_\om^0(E)\subs M_\om^0(E)$
with the conditional expectation
$E_\om^0\col M_\om^0(E)\ra N_\om^0(E)$ defined by
$E_\om^0((x^\nu)_\nu)=(E(x^\nu))_\nu$.
We define the central sequence von Neumann algebras $M_\omega(E)$ and $N_\omega(E)$
by
\[
M_\om(E):=M_\om^0(E)/\meT_\om(M),
\quad
N_\om(E):=(N_\om^0(E)+\meT_\om(M))/\meT_\om(M). 
\]
Since $E_\om^0$ preserves $\meT_\om(M)$,
that is, $E_\om^0(\meT_\om(M))\subs \meT_\om(M)$,
we can naturally define the conditional expectation
$E_\om\col M_\om(E)\ra N_\om(E)$, which is faithful and normal.
The inclusion $N_\om(E)\stackrel{E_\om}{\subs} M_\om(E)$
is called the central sequence inclusion of $N\stackrel{E}{\subs} M$.
Note that $M_\om(E)$ is finite.
Indeed, a functional $\vph^\om\circ E_\om$
is a faithful normal tracial state
for all faithful state $\vph\in N_*$.

If $N=M$, we denote by $M_\om$ for $M_\om(\id_M)$.
Elements in $M_\omega^0(\id_M)$ are said to be $\omega$-centralizing.

Now we consider central sequence inclusions arising from locally trivial subfactors.
Let $P$ be a factor and $\rho\in \End(P)_0$.
For each $\mu_0,\mu_1>0$ with $\mu_0+\mu_1=1$,
the following locally trivial subfactor is
defined as in the previous subsection:
\[
N^{(\id,\rho)}\stackrel{E^\mu}{\subs} M^{(\id,\rho)}.
\]
Consider its central sequence inclusion
\[
N_\om^{(\id,\rho)}(E^\mu)
\stackrel{E_\om^\mu}{\subs} M_\om^{(\id,\rho)}(E^\mu). 
\] 
Note that $M_\om^{(\id,\rho)}(E^\mu)$ is a subalgebra of
$P^\om \oti M_{2}(\C)$.

\begin{lem}\label{lem: mux}
Set 
$
%\displaystyle 
x:=\sum_{i,j=0}^1 x_{ij}\oti e_{ij}\in P^\om\oti M_{2}(\C)$.
Then $x\in M_\om^{(\id,\rho)}(E^\mu)$ 
if and only if 
for each $0 \leq i,j\leq 1$, 
a representing sequence $(x_{ij}^\nu)_{\nu}$ of $x_{ij}$ satisfies 
\[
\lim_{\nu\to\om}
\|\mu_i (\vph\circ \ph_{\rho_i}) x_{ij}^\nu
-\mu_j x_{ij}^\nu (\vph\circ \ph_{\rho_j}) 
\|=0\quad\mbox{for all}\ 
\vph\in P_*. 
\]
\end{lem}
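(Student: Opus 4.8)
The plan is to unwind the definition of the central sequence algebra $M_\om^{(\id,\rho)}(E^\mu)$ and to reduce the membership of $x$ to an elementary computation with the matrix units $e_{ij}$. Since $M_2(\C)$ is finite dimensional, $M_\om^{(\id,\rho)}(E^\mu)$ sits inside $P^\om\oti M_2(\C)$ as already noted in the excerpt, and $x=\sum_{i,j}x_{ij}\oti e_{ij}$ may be equipped with an \emph{entrywise} representing sequence $(x^\nu)_\nu$, $x^\nu=\sum_{i,j}x_{ij}^\nu\oti e_{ij}$, where $(x_{ij}^\nu)_\nu$ represents $x_{ij}$. By definition, $x\in M_\om^{(\id,\rho)}(E^\mu)$ means that \emph{some} representing sequence $(y^\nu)_\nu$ of $x$ lies in $(M^{(\id,\rho)})_\om^0(E^\mu)$, i.e. $\lim_{\nu\to\om}\|[\ps\circ E^\mu,y^\nu]\|=0$ for all $\ps\in N^{(\id,\rho)}_*$. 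I would first note that, for a fixed normal functional $\eta$ and a bounded net $(t^\nu)_\nu$ with $t^\nu\to0$ $\si$-strongly* along $\om$, one has $\|\eta t^\nu\|\to0$ and $\|t^\nu\eta\|\to0$ (the maps $z\mapsto\eta z$, $z\mapsto z\eta$ being $\si$-strong*-to-norm continuous on bounded sets); this gives $\meT_\om\subs(M^{(\id,\rho)})_\om^0(E^\mu)$, shows that in the above condition one may replace ``some'' by ``any'' representing sequence, and shows that the condition asserted in the lemma is independent of the choice of the $(x_{ij}^\nu)_\nu$. Finally, since $E^\mu=\al^{(\id,\rho)}\circ\ph_\rho^\mu$ and $\al^{(\id,\rho)}\col P\to N^{(\id,\rho)}$ is an isomorphism, the functionals $\ps\circ E^\mu$, $\ps\in N^{(\id,\rho)}_*$, are precisely the functionals $\vph\circ\ph_\rho^\mu$, $\vph\in P_*$. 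Hence $x\in M_\om^{(\id,\rho)}(E^\mu)$ if and only if $\lim_{\nu\to\om}\|[\vph\circ\ph_\rho^\mu,x^\nu]\|=0$ for all $\vph\in P_*$.

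The second step is the matrix computation. Fix $\vph\in P_*$, set $\om_i:=\mu_i(\vph\circ\ph_{\rho_i})\in P_*$ for $i=0,1$ and $\chi:=\vph\circ\ph_\rho^\mu$, so that $\chi\bigl(\sum_{k,l}y_{kl}\oti e_{kl}\bigr)=\sum_k\om_k(y_{kk})$. Expanding $\chi(x^\nu y)$ and $\chi(yx^\nu)$ by means of $e_{ij}e_{kl}=\de_{jk}e_{il}$ and the conventions $\vph a(z)=\vph(az)$, $a\vph(z)=\vph(za)$ yields
\[
[\chi,x^\nu]\Bigl(\sum_{k,l}y_{kl}\oti e_{kl}\Bigr)=\sum_{i,j}\bigl(\om_i x_{ij}^\nu-x_{ij}^\nu\om_j\bigr)(y_{ji}),
\]
whence $[\chi,x^\nu](a\oti e_{ji})=\bigl(\mu_i(\vph\circ\ph_{\rho_i})x_{ij}^\nu-\mu_j x_{ij}^\nu(\vph\circ\ph_{\rho_j})\bigr)(a)$ for all $a\in P$ and all $0\le i,j\le1$.

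The last step is to pass from $\|[\chi,x^\nu]\|\to0$ to the vanishing of each of these entries. Since $M_2(\C)$ is finite dimensional, a bounded sequence $(F_\nu)_\nu$ in $(P\oti M_2(\C))_*$ satisfies $\lim_{\nu\to\om}\|F_\nu\|=0$ if and only if $\lim_{\nu\to\om}\sup_{\|a\|\le1}|F_\nu(a\oti e_{pq})|=0$ for all $0\le p,q\le1$ (one direction is the slicing estimate $\sup_{\|a\|\le1}|F_\nu(a\oti e_{pq})|\le\|F_\nu\|$; the other follows from $\|F_\nu\|\le C\max_{p,q}\sup_{\|a\|\le1}|F_\nu(a\oti e_{pq})|$ for a constant $C$ depending only on $M_2(\C)$). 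Applying this to $F_\nu=[\chi,x^\nu]$, using the identity of the previous paragraph, and letting $\vph$ range over $P_*$, the condition $x\in M_\om^{(\id,\rho)}(E^\mu)$ becomes exactly: for all $0\le i,j\le1$ and all $\vph\in P_*$, $\lim_{\nu\to\om}\|\mu_i(\vph\circ\ph_{\rho_i})x_{ij}^\nu-\mu_j x_{ij}^\nu(\vph\circ\ph_{\rho_j})\|=0$. Together with the reductions of the first paragraph, this is the assertion.

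I do not anticipate a genuine difficulty: the argument is bookkeeping once the setup is in place. The point that needs the most care is the first paragraph, namely making the reduction to entrywise representing sequences honestly well posed (the $\si$-strong*-to-norm continuity input, the $\meT_\om$-invariance of the condition, and the identification $\{\ps\circ E^\mu\}=\{\vph\circ\ph_\rho^\mu\}$); and, in the second paragraph, keeping the left/right order in $\vph a$, $a\vph$ and the index order in $e_{ij}e_{kl}$ consistent throughout the expansion.
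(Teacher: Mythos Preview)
Your proposal is correct and follows essentially the same approach as the paper: both compute the commutator $[\vph\circ\ph_\rho^\mu,x^\nu]$ entrywise to obtain the identity $[\chi,x^\nu](y)=\sum_{i,j}(\om_i x_{ij}^\nu-x_{ij}^\nu\om_j)(y_{ji})$, and then use that each entry norm is dominated by the commutator norm (restricting to $y=a\oti e_{ji}$) while the commutator norm is dominated by the sum of the entry norms. The paper writes these two inequalities explicitly; you package them as a finite-dimensional slicing argument, and you add some extra care about independence of the choice of representing sequence that the paper leaves implicit.
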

\begin{proof}
Let $\vph\in P_*$.
Then for
$
%\displaystyle
y=\sum_{i,j=0}^1y_{ij}\oti e_{ij}$,
we have
\[
\vph\circ(\al^{(\id,\rho)})^{-1}\circ E^\mu((x_{ij}^\nu \oti e_{ij}) y)
=\mu_i\vph(\ph_{\rho_i}(x_{ij}^\nu y_{ji})), 
\]
and
\[
\vph\circ(\al^{(\id,\rho)})^{-1}\circ E^\mu(y(x_{ij}^\nu \oti e_{ij}) )
=\mu_j\vph(\ph_{\rho_j}(y_{ji}x_{ij}^\nu)). 
\]
Setting
$
%\displaystyle
x^\nu:=\sum_{i,j=0}^1 x_{ij}^\nu\oti e_{ij}$, we have
\begin{align*}
[\vph\circ(\al^{(\id,\rho)})^{-1}\circ E^\mu,x^\nu](y)
=&\,
\sum_{i,j=0}^1
\mu_i\vph(\ph_{\rho_i}(x_{ij}^\nu y_{ji}))
-
\mu_j\vph(\ph_{\rho_j}(y_{ji}x_{ij}^\nu))
\\
=&\,
\sum_{i,j=0}^1
\left(
\mu_i(\vph\circ \ph_{\rho_i})x_{ij}^\nu 
-
\mu_jx_{ij}^\nu (\vph\circ \ph_{\rho_j})
\right)
(y_{ji}). 
\end{align*}
This implies the following inequalities:
\[
\|[\vph\circ(\al^{(\id,\rho)})^{-1}\circ E^\mu,x^\nu]\|
\leq 
\sum_{i,j=0}^1
\left\|
\mu_i(\vph\circ \ph_{\rho_i})x_{ij}^\nu 
-
\mu_jx_{ij}^\nu (\vph\circ \ph_{\rho_j})
\right\|
\]
and 
\[
\left\|
\mu_i(\vph\circ \ph_{\rho_i})x_{ij}^\nu 
-
\mu_jx_{ij}^\nu (\vph\circ \ph_{\rho_j})
\right\|
\leq 
\|[\vph\circ(\al^{(\id,\rho)})^{-1}\circ E^\mu,x^\nu]\|. 
\]
Therefore $x\in M_\om^{(\id,\rho)}(E)$ if and only if
$\left\|
\mu_i(\vph\circ \ph_{\rho_i})x_{ij}^\nu 
-
\mu_jx_{ij}^\nu (\vph\circ \ph_{\rho_j})
\right\|\to0$ as $\nu\to\om$ for all $0\leq i,j\leq 1$.
\end{proof}

The previous lemma implies that the projection $1\oti e_{ii}$
is in the relative commutant
$(N_\om^{(\id,\rho)}(E^\mu))'\cap M_\om^{(\id,\rho)}(E^\mu)$. 
Then we see that $x\oti e_{ii}\in P^\om\oti \C e_{ii}$ is contained
in $(1\oti e_{ii})M_\om^{(\id,\rho)}(E^\mu)(1\oti e_{ii})$
if and only if
$\|[\vph\circ\ph_{\rho_i},x^\nu]\|\to0$ as $\nu\to\om$
for all $\vph\in P_*$. 
This means $\pi_\om((x^\nu)_\nu)$ is contained in $P_\om(E_{\rho_i})$,
where $\rho_i(P_\om)\stackrel{(E_{\rho_i})_\om}{\subs} P_\om(E_{\rho_i})$
is the central sequence inclusion of
$\rho_i(P)\stackrel{E_{\rho_i}}{\subs} P$.
Summarizing these arguments, we have
\begin{align*}
(1\oti e_{ii})M_\om^{(\id,\rho)}(E^\mu)(1\oti e_{ii})
=&\,
P_\om(E_{\rho_i})\oti \C e_{ii}, 
\\
(1\oti e_{ii})N_\om^{(\id,\rho)}(E^\mu)(1\oti e_{ii})
=&\,
\rho_i(P_\om)\oti \C e_{ii}. 
\end{align*}
Namely, the central sequence inclusion of
$\rho_i(P)\stackrel{E_{\rho_i}}{\subs} P$ is isomorphic to the corner
of $N_\om^{(\id,\rho)}(E^\mu)\subs M_\om^{(\id,\rho)}(E^\mu)$
cut by $1\oti e_{ii}$.
Hence we have the following lemma.

\begin{lem}
One has 
\begin{align*}
(1\oti e_{00})M_\om^{(\id,\rho)}(E^\mu)(1\oti e_{00})
=&\,
P_\om \oti \C e_{00},
\\
(1\oti e_{00})Z(M_\om^{(\id,\rho)}(E^\mu))(1\oti e_{00})
=&\,
Z(P_\om)\oti \C e_{00}.
\end{align*} 
\end{lem}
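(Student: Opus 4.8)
The plan is to obtain both identities from the corner formula
\[
(1\oti e_{ii})M_\om^{(\id,\rho)}(E^\mu)(1\oti e_{ii})=P_\om(E_{\rho_i})\oti\C e_{ii}
\]
established in the discussion just above the statement, specialized to $i=0$. Since $\rho_0=\id$, the target $\rho_0(P)$ is all of $P$, so the minimal expectation $E_{\rho_0}$ is the identity map of $P$; by the convention $P_\om=P_\om(\id_P)$ this gives $P_\om(E_{\rho_0})=P_\om$. Hence
\[
(1\oti e_{00})M_\om^{(\id,\rho)}(E^\mu)(1\oti e_{00})=P_\om(E_{\rho_0})\oti\C e_{00}=P_\om\oti\C e_{00},
\]
which is the first asserted equality.

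For the second equality I would use the elementary fact that, for an arbitrary projection $p$ in a von Neumann algebra $\mathcal{M}$, one has $pZ(\mathcal{M})p=Z(p\mathcal{M}p)$. The inclusion $pZ(\mathcal{M})p\subs Z(p\mathcal{M}p)$ is clear because every element of $Z(\mathcal{M})$ commutes with $p$ and with $p\mathcal{M}p$; for the reverse inclusion one passes to $z(p)\mathcal{M}$, in which $p$ is a full projection, and applies the standard reduction-theory isomorphism $Z(z(p)\mathcal{M})\ni c\mapsto cp\in Z(p\mathcal{M}p)$ together with $Z(z(p)\mathcal{M})=Z(\mathcal{M})z(p)\subs Z(\mathcal{M})$. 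Applying this with $\mathcal{M}=M_\om^{(\id,\rho)}(E^\mu)$ and $p=1\oti e_{00}$ — which lies in $M_\om^{(\id,\rho)}(E^\mu)$ by the discussion following Lemma \ref{lem: mux} — and combining with the first equality, one gets
\[
(1\oti e_{00})Z(M_\om^{(\id,\rho)}(E^\mu))(1\oti e_{00})=Z\bigl((1\oti e_{00})M_\om^{(\id,\rho)}(E^\mu)(1\oti e_{00})\bigr)=Z(P_\om\oti\C e_{00})=Z(P_\om)\oti\C e_{00}.
\]

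Thus the proof is essentially just a specialization of the preceding corner computation, and the only point requiring attention is the abstract identity $pZ(\mathcal{M})p=Z(p\mathcal{M}p)$, equivalently the surjectivity of $c\mapsto cp$ onto $Z(p\mathcal{M}p)$; this is standard (it also follows from Morita invariance of the center). I would emphasize that one does \emph{not} need $1\oti e_{00}$ to have central support $1$ in $M_\om^{(\id,\rho)}(E^\mu)$: proving that would force one to understand the off-diagonal part of $M_\om^{(\id,\rho)}(E^\mu)$ through Lemma \ref{lem: mux}, which for general $\rho$ is delicate, but it plays no role here.
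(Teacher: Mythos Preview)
Your proposal is correct and matches the paper's approach: the paper states this lemma without proof, regarding it as an immediate consequence of the corner identity $(1\oti e_{ii})M_\om^{(\id,\rho)}(E^\mu)(1\oti e_{ii})=P_\om(E_{\rho_i})\oti\C e_{ii}$ derived just before it, specialized to $i=0$ (where $\rho_0=\id$ gives $E_{\rho_0}=\id_P$ and hence $P_\om(E_{\rho_0})=P_\om$), together with the standard identity $pZ(\mathcal{M})p=Z(p\mathcal{M}p)$ for a projection in a von Neumann algebra. Your write-up simply makes this implicit reasoning explicit, and your closing remark that central support $1$ for $1\oti e_{00}$ is not needed here is accurate.
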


The following proposition clarifies the relations between
approximately inner endomorphisms and central sequence inclusions.

\begin{prop}\label{prop: common-center}
Let $N^{(\id,\rho)} \stackrel{E^\mu}{\subs} M^{(\id,\rho)}$ as before.
Let $\CTr_{M_\om^{(\id,\rho)}(E^\mu)}$ be the center valued
trace of $M_\om^{(\id,\rho)}(E^\mu)$.
Then the following statements are equivalent:
\begin{enumerate}
\item 
The central support of $1\oti e_{00}$ in $M_\om^{(\id,\rho)}(E^\mu)$ 
is equal to $1$. 

\item
$Z(N_\om^{(\id,\rho)}(E^\mu))=Z(M_\om^{(\id,\rho)}(E^\mu))$. 

\item
$\CTr_{M_\om^{(\id,\rho)}(E^\mu)}(1\oti e_{ii})=\mu_i$ for $i=0,1$.

\item 
$\rho \in \oInt_{\mu_1/\mu_0}(P)$. 

\item 
There exist a finite set $I$ and 
$x_j=\pi_\om((x_j^\nu)_{\nu})\in P^\om$, $j\in I$ 
such that 
\begin{enumerate}
\renewcommand{\labelenumii}{(\alph{enumii})}

\item $\displaystyle \bigvee_{j\in I}s(x_jx_j^*)=1$, 

\item 
$\displaystyle
\lim_{\nu\to\om}
\|\mu_1 (\vph\circ\ph_{\rho})x_j^\nu-\mu_0 x_j^\nu \vph\|
=0
\quad\mbox{for all}\ \vph\in P_*. 
$
\end{enumerate}
\end{enumerate}
\end{prop}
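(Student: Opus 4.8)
The plan is to prove the equivalences by a cycle of implications, together with a few direct detours, using the corner identifications established before the proposition and the center-valued trace.

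First I would establish the numerical backbone. By Lemma \ref{lem: IndE}, $\Ind(E^\mu)=\mu_0^{-1}+\mu_1^{-1}d(\rho)^2$, and the central sequence inclusion $N_\om^{(\id,\rho)}(E^\mu)\subs M_\om^{(\id,\rho)}(E^\mu)$ is a finite von Neumann algebra carrying the tracial state $\vph^\om\circ E_\om$. Write $p:=1\oti e_{00}$, $q:=1\oti e_{11}$, so $p+q=1$. The two corners $pM_\om^{(\id,\rho)}(E^\mu)p=P_\om\oti\C e_{00}$ and $qM_\om^{(\id,\rho)}(E^\mu)q=P_\om(E_\rho)\oti\C e_{11}$ are known, and on the corner $pM_\om^{(\id,\rho)}(E^\mu)p$ the center is $Z(P_\om)\oti\C e_{00}$. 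Since $P$ is a factor, $P_\om$ is typically not a factor but $Z(P_\om)$ is a diffuse abelian algebra in general; still, the key point is that $p$ and $q$ have well-defined center-valued traces $z_0:=\CTr(p)$ and $z_1:=\CTr(q)$ with $z_0+z_1=1$. The chain (1)$\Leftrightarrow$(2)$\Leftrightarrow$(3) should follow from standard comparison theory: the central support of $p$ being $1$ is equivalent to $z_0$ being faithful (nowhere vanishing), hence by the structure of the corners and the fact that $q$ dominates a copy of $p$ (via the matrix unit $1\oti e_{10}\in M^{(\id,\rho)}\subs M_\om^{(\id,\rho)}(E^\mu)$, which already lies in the inclusion), one computes $z_1=d(\rho)^2 z_0$ up to the weighting $\mu$; combining with $z_0+z_1=1$ pins down $z_0=\mu_0$, $z_1=\mu_1$ precisely when the compatibility $\mu_1/\mu_0=d(\rho)^{-2}\cdot(\text{something})$ — here I must be careful: the weighting in $E^\mu$ is chosen so that $\CTr(p)=\mu_0\cdot 1$ forces the relation, and the equality of centers in (2) is exactly the statement that the central support of $p$ is $1$ together with the central support of $q$ being $1$, which the matrix-unit argument gives for $q$ automatically.

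Next, the equivalence (4)$\Leftrightarrow$(5) is essentially a translation: condition (5) unwinds Definition \ref{defn: app-inner-general} in the case $r=\mu_1/\mu_0$, because with $r=\mu_1/\mu_0$ and $[r]$ copies one reconstructs partial isometries $v_i^\nu$ satisfying (1)--(4) of that definition from the $x_j^\nu$ of (5)(b) after an orthogonalization/polar-decomposition step (the Gram–Schmidt procedure producing partial isometries with orthogonal ranges summing to $1$, using (5)(a) for the support condition). The scalar $\tfrac1r=\mu_0/\mu_1$ matches the factor in Definition \ref{defn: app-inner-general}(4) after multiplying (5)(b) through. Conversely, from an approximately inner $\rho$ of rank $\mu_1/\mu_0$ one reads off finitely many $x_j^\nu=v_j^\nu$ which satisfy (5). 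This detour does not touch the ultraproduct algebra directly and is the least delicate part.

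Finally, I would close the loop by connecting (3) (or (2)) with (5) through Lemma \ref{lem: mux}. The point is that $x=\sum x_{ij}\oti e_{ij}\in M_\om^{(\id,\rho)}(E^\mu)$ lies in the inclusion with prescribed corner data exactly when the commutation relations of Lemma \ref{lem: mux} hold; specializing to an element of the form $x_{01}\oti e_{01}+x_{10}\oti e_{10}$ with $x_{10}=x_{01}^*$ gives a partial isometry from a subprojection of $p$ to a subprojection of $q$ inside $M_\om^{(\id,\rho)}(E^\mu)$ precisely when $\lim_{\nu\to\om}\|\mu_0(\vph\circ\ph_\rho)x_{10}^\nu-\mu_1 x_{10}^\nu\vph\|=0$, i.e. precisely condition (5)(b) with $r=\mu_1/\mu_0$. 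Having $p$ equivalent in $M_\om^{(\id,\rho)}(E^\mu)$ to a subprojection of $q$ via such partial isometries whose ranges have supremum $q$ (that is what (5)(a) buys) is what forces the central support of $p$ to be $1$, giving (5)$\Rightarrow$(1); and conversely (3) with $\CTr(q)=\mu_1$ means $q$ is, up to the scaling, $d(\rho)^2$-many copies of $p$ inside the factor-by-factor decomposition, which via a maximality argument produces the required family of partial isometries, giving (3)$\Rightarrow$(5).

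The main obstacle I expect is bookkeeping the weights $\mu_i$ and the factor $d(\rho)^2$ consistently through the center-valued trace, and in particular justifying the passage from "central support of $p$ is $1$" to the existence of a \emph{finite} family in (5)(a): a priori one only gets a (possibly infinite) partition of $q$ into subprojections equivalent to subprojections of $p$, and one must invoke finiteness of $M_\om^{(\id,\rho)}(E^\mu)$ together with the fact that $d(\rho)<\infty$ to truncate to finitely many, then use that $Z(P_\om)$-valued estimates can be made uniform to extract the strong-* approximation on a single projection equal to $1$. This is the step where the finite-index hypothesis $\rho\in\End(P)_0$ is genuinely used.
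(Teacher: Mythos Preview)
Your argument has a concrete gap that breaks the chain $(1)\Leftrightarrow(2)\Leftrightarrow(3)$. You claim that the off-diagonal matrix unit $1\oti e_{10}$ lies in $M_\om^{(\id,\rho)}(E^\mu)$ ``already,'' and you use it to compare $p=1\oti e_{00}$ with $q=1\oti e_{11}$. But by Lemma~\ref{lem: mux}, the constant sequence $1\oti e_{10}$ belongs to $M_\om^{(\id,\rho)}(E^\mu)$ if and only if $\mu_1\,\vph\circ\ph_\rho=\mu_0\,\vph$ for every $\vph\in P_*$, which forces $\ph_\rho=(\mu_0/\mu_1)\id$ and is impossible since $\ph_\rho$ is unital. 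In other words, the existence of any off-diagonal element in the central sequence algebra is \emph{precisely} what is at stake in the proposition; you cannot assume it to get the comparison going. For the same reason, the line ``$q$ dominates a copy of $p$ via the matrix unit'' and the heuristic $z_1=d(\rho)^2 z_0$ have no justification here.

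The paper's route avoids this trap. For $(1)\Rightarrow(2)$ it uses the general fact $Z(N_\om^{(\id,\rho)}(E^\mu))\subs Z(M_\om^{(\id,\rho)}(E^\mu))$ and then, given $z\in Z(M_\om^{(\id,\rho)}(E^\mu))$, compresses to the $e_{00}$-corner, where the center is $Z(P_\om)\oti\C e_{00}$; the element $(\al^{(\id,\rho)})^\om(z_0)$ is then a central element of $N_\om^{(\id,\rho)}(E^\mu)$ agreeing with $z$ on the $e_{00}$-corner, hence equal to $z$ by central support $1$. For $(2)\Rightarrow(3)$ it never compares $p$ and $q$ directly; instead it uses that under the hypothesis the two trace-preserving conditional expectations $E_\om^\mu\circ\CTr_{M_\om}$ and $\CTr_{N_\om}\circ E_\om^\mu$ onto $Z(N_\om^{(\id,\rho)}(E^\mu))$ coincide, and then evaluates at $1\oti e_{ii}$ using $E_\om^\mu(1\oti e_{ii})=\mu_i$. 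Only at $(3)\Rightarrow(4)$ does one produce off-diagonal partial isometries, and there they exist by ordinary comparison theory in the finite algebra $M_\om^{(\id,\rho)}(E^\mu)$ because $p$ and $q$ now have \emph{scalar} central traces $\mu_0,\mu_1$. Your $(5)\Rightarrow(1)$ sketch is essentially the paper's, and $(4)\Rightarrow(5)$ is indeed immediate; but your proposed direct $(5)\Rightarrow(4)$ via Gram--Schmidt on representing sequences is not the ``least delicate part'' --- the paper sidesteps it entirely by going through $(1)$--$(3)$.
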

\begin{proof}
(1)$\Rightarrow$(2).
The inclusion
$Z(N_\om^{(\id,\rho)}(E^\mu))\subs Z(M_\om^{(\id,\rho)}(E^\mu))$
always holds \cite[Corollary 1.3.7 (i)]{Po1}.
Let $z\in Z(M_\om^{(\id,\rho)}(E^\mu))$.
Since $Z(M_\om^{(\id,\rho)}(E^\mu))(1\oti e_{00})=Z(P_\om)\oti e_{00}$
by the previous lemma,
there exists $z_0\in Z(P_\om)$ such that
$z(1\oti e_{00})=z_0\oti e_{00}$.
Set $z'=(\al^{(\id,\rho)})^\om(z_0)$, where
$(\al^{(\id,\rho)})^\om$ is an embedding $P^\om\ra M_\om^{(\id,\rho)}(E^\mu)$
naturally defined through $\al^{(\id,\rho)}$.
Then $z'\in Z(N_\om^{(\id,\rho)}(E^\mu))$,
and $z'\in Z(M_\om^{(\id,\rho)}(E^\mu))$.
By assumption, $z(1\oti e_{00})=z'(1\oti  e_{00})$ yields $z=z'$.

(2)$\Rightarrow$(3).
Let $\CTr_{N_\om^{(\id,\rho)}(E^\mu)}$ and $\CTr_{M_\om^{(\id,\rho)}(E^\mu)}$
be the center valued traces
of $N_\om^{(\id,\rho)}(E^\mu)$
and $M_\om^{(\id,\rho)}(E^\mu)$, respectively.
Then the maps
$E_\om^\mu\circ \CTr_{M_\om^{(\id,\rho)}(E^\mu)}$ and
$\CTr_{N_\om^{(\id,\rho)}(E^\mu)}\circ E_\om^\mu$
are faithful normal conditional
expectations from $M_\om^{(\id,\rho)}(E^\mu)$
onto $Z(N_\om^{(\id,\rho)}(E^\mu))$.
Since those conditional expectations preserve
a faithful normal trace of the form $\vph^\om \circ E_\om^\mu$, $\vph\in N_*$,
we have the equality
\[
E_\om^\mu\circ \CTr_{M_\om^{(\id,\rho)}(E^\mu)} 
=\CTr_{N_\om^{(\id,\rho)}(E^\mu)}\circ E_\om^\mu. 
\]
Since $\CTr_{M_\om^{(\id,\rho)}(E^\mu)}(1\oti e_{ii})$ is contained
in $Z(M_\om^{(\id,\rho)}(E^\mu))$,
it is also contained in $Z(N_\om^{(\id,\rho)}(E^\mu))$
by the assumption of (2).
Using $E_\om^\mu(1\oti e_{ii})=\mu_i$, we have
\begin{align*}
\CTr_{M_\om^{(\id,\rho)}(E^\mu)}(1\oti e_{ii})
=&\,
E_\om^\mu(\CTr_{M_\om^{(\id,\rho)}(E^\mu)}(1\oti e_{ii}))
=
\CTr_{N_\om^{(\id,\rho)}(E^\mu)}(E_\om^\mu(1\oti e_{ii}))
\\
=&\,
\mu_i. 
\end{align*}

(3)$\Rightarrow$(4). 
Since $1\oti e_{00}$ and $1\oti e_{11}$ have central traces
$\mu_0$ and $\mu_1$, respectively,
there exist partial isometries $\{u_j\}_{j=1}^{[\mu_1/\mu_0]+1}$
such that
$u_j=(1\oti e_{11})u_j (1\oti e_{00})$,
$u_j^* u_j=(1\oti e_{00})$ for $1\leq j\leq [\mu_1/\mu_0]$
and
$
%\displaystyle
\sum_{j=1}^{[\mu_1/\mu_0]+1}u_j u_j^*=1\oti e_{11}$.
Let $\{v_j\}_{j=1}^{[\mu_1/\mu_0]+1}$ be as $u_j=v_j\oti e_{10}$.
Take a representing sequence of $v_j=(v_j^\nu)_\nu$
with $(v_j^\nu)^* v_j^\nu=1$ for $1\leq j\leq [\mu_1/\mu_0]$ and
$\sum_{j=1}^{[\mu_1/\mu_0]+1}v_j^\nu (v_j^\nu)^*=1$.
By Lemma \ref{lem: mux},
$v_j^\nu$ satisfies
\[
\lim_{\nu\to\om}
\|\mu_1 (\vph\circ\ph_{\rho_1})v_j^\nu- \mu_0 v_j^\nu\vph\|=0 
\quad\mbox{for all}\ \vph\in P_*. 
\]
This means that $\rho\in \oInt_{\mu_1/\mu_0}(P)$.

(4)$\Rightarrow$(5). 
Take partial isometries $(v_j^\nu)_\nu$,
$1\leq j\leq [\mu_1/\mu_0]+1$,
as Definition \ref{defn: app-inner-general}
for $\rho\in\oInt_{[\mu_1/\mu_0]}(P)$.
We set $I:=\{1,\dots,[\mu_1/\mu_0]+1\}$ and
$x_j^\nu=v_j^\nu\oti e_{10}$.
Then by Lemma \ref{lem: mux},
the sequence $(x_j^\nu)_\nu$ represents
the element $x_j$ in $M_\om^{(\id,\rho)}(E^\mu)$, 
we have
$
%\displaystyle
\sum_{j\in I} x_j x_j^*=1$.
In particular,
$
%\displaystyle
\bigvee_{j\in I} s(x_j x_j^*)=1$.

(5)$\Rightarrow$(1).
Take a finite family
$\{x_j\}_{j\in I}\subs P^\om$ which satisfies
the conditions in (5).
The condition (b) implies that $x_j\oti e_{10}\in M_\om^{(\id,\rho)}(E^\mu)$.
Let $x_j=v_j|x_j|$ be the polar decomposition.
Then $v_j\oti e_{10}\in M_\om^{(\id,\rho)}(E^\mu)$,
and
$\bigvee_{j\in I} s(v_j v_j^*)=1$ from (a).
We assume that
$z(1\oti e_{00})=0$ for some $z\in Z(M_\om^{(\id,\rho)}(E^\mu))$.
Then $z(v_j\oti e_{10})=(v_j\oti e_{10})z=0$,
and we have $z(v_j v_j^*\oti e_{11})=0$.
Since
$\bigvee_{j\in I_i} s(v_j v_j^*)=1$,
we have $z(1\oti e_{11})=0$.
Hence $z$ must be $0$, and equivalently the central support of $1\oti e_{00}$
in $M_\om^{(\id,\rho)}(E^\mu)$ is equal to $1$.
\end{proof}

Now we generalize \cite[Proposition 2.3 (ii)]{Po1} 
to the case of endomorphisms. 
Readers are referred to \cite[Definition 2.1]{Po1} for the definition of 
the approximately inner inclusion of factors. 

\begin{thm}\label{thm: rho-app}
Let $\mu_0,\mu_1>0$ with $\mu_0+\mu_1=1$. 
Then the following statements are equivalent:
\begin{enumerate}
\item 
The inclusion $N^{(\id,\rho)}\stackrel{E^\mu}{\subs}M^{(\id,\rho)}$ 
is approximately inner. 

\item 
The inclusion $\rho(P)\stackrel{E_\rho}{\subs} P$ 
is approximately inner and 
$\rho\in \oInt_{\mu_1/\mu_0}(P)$. 
\end{enumerate}

If $\rho$ is irreducible, the above statements are also 
equivalent with the following:

\hspace{2.5pt}
{\upshape(3)}
The inclusion $\rho(P)\stackrel{E_\rho}{\subs} P$ is approximately inner 
and 
there exists a sequence 
\\
\hspace{36pt}of partial isometries 
$(v^\nu)_{\nu=1}^\infty$ in $P$ 
such that 
\[
\lim_{\nu\to\infty}
\|\mu_1 (\vph\circ\ph_{\rho})v^\nu-\mu_0 v^\nu \vph\|
=0
\quad\mbox{for all}\ \vph\in P_*,
\]
\hspace{36pt}and $v^\nu$ is an isometry (coisometry) 
when $\mu_0<\mu_1$ (resp. $\mu_0\geq \mu_1$). 
\end{thm}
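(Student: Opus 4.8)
The plan is to pass to the central sequence inclusion $N_\om^{(\id,\rho)}(E^\mu)\subs M_\om^{(\id,\rho)}(E^\mu)$ and to adapt Popa's argument for \cite[Proposition~2.3~(ii)]{Po1}. The guiding picture is that the locally trivial subfactor degenerates at its diagonal corners: cutting by $1\oti e_{00}$ recovers the trivial inclusion $P=P$, and cutting by $1\oti e_{11}$ recovers $\rho(P)\stackrel{E_\rho}{\subs}P$. Together with Proposition~\ref{prop: common-center}, which identifies ``$\rho\in\oInt_{\mu_1/\mu_0}(P)$'' with the common-center condition $Z(N_\om^{(\id,\rho)}(E^\mu))=Z(M_\om^{(\id,\rho)}(E^\mu))$ (equivalently, fullness of $1\oti e_{00}$), this indicates that approximate innerness of $N^{(\id,\rho)}\subs M^{(\id,\rho)}$ should decompose into two independent data: approximate innerness at the $e_{11}$-corner, i.e.\ of $\rho(P)\subs P$, and the common-center condition gluing the two corners.

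For (1)$\Rightarrow$(2): approximate innerness of a finite-index inclusion restricts to its reduced corners, so the $e_{11}$-corner $\rho(P)\stackrel{E_\rho}{\subs}P$ is approximately inner; and, as in Popa's analysis, approximate innerness of an inclusion entails common center of the associated central sequence inclusion, so $Z(N_\om^{(\id,\rho)}(E^\mu))=Z(M_\om^{(\id,\rho)}(E^\mu))$, whence $\rho\in\oInt_{\mu_1/\mu_0}(P)$ by the implication $(2)\Rightarrow(4)$ of Proposition~\ref{prop: common-center}.

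For (2)$\Rightarrow$(1), the substantial direction, I would run the scheme of \cite[Proposition~2.3~(ii)]{Po1}, the one genuinely new feature being the non-trivial $e_{11}$-corner. By Proposition~\ref{prop: common-center}, the hypothesis $\rho\in\oInt_{\mu_1/\mu_0}(P)$ supplies the common center, the balanced central traces $\CTr_{M_\om^{(\id,\rho)}(E^\mu)}(1\oti e_{ii})=\mu_i$, and (its condition~(5)) partial isometries of $M_\om^{(\id,\rho)}(E^\mu)$ joining $1\oti e_{00}$ to $1\oti e_{11}$. On the $e_{11}$-corner, approximate innerness of $\rho(P)\subs P$ provides the approximating inner perturbations; on the $e_{00}$-corner the inclusion is already trivial. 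One transports the $e_{00}$-datum through the connecting partial isometries, patches it with the $e_{11}$-datum, and reads off a sequence of inner perturbations of $M^{(\id,\rho)}$ approximating $N^{(\id,\rho)}$ in the sense of \cite[Definition~2.1]{Po1}. I expect the main obstacle to lie precisely in this patching: making the two data mutually compatible --- a single ultrafilter, simultaneous near-commutation with the relevant functionals on both $N^{(\id,\rho)}$ and $M^{(\id,\rho)}$, and the correct treatment of the off-diagonal corners $e_{01},e_{10}$ --- which requires Popa's reindexation and local quantization techniques rather than a soft argument.

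Finally, for the equivalence with (3) when $\rho$ is irreducible: by Proposition~\ref{prop: common-center} the condition $\rho\in\oInt_{\mu_1/\mu_0}(P)$ is the same as its condition~(5), so one only has to move between a single partial isometry of the prescribed type and a finite family with $\bigvee_j s(x_jx_j^*)=1$ satisfying the same intertwining relation. The implication (2)$\Rightarrow$(3) is easy: the family of Definition~\ref{defn: app-inner-general} reduces to a single coisometry when $\mu_0\geq\mu_1$, while for $\mu_0<\mu_1$ its first member is already an isometry satisfying the required relation (after the obvious rescaling). For (3)$\Rightarrow$(2) one must conversely recover the full family --- equivalently, re-establish the common-center condition --- from the single partial isometry, and here irreducibility of $\rho$ is used to make $M_\om^{(\id,\rho)}(E^\mu)$ homogeneous enough for the comparison-of-projections argument, the missing isometries being obtained by adjoining central-sequence partial isometries of $\rho(P)\stackrel{E_\rho}{\subs}P$ that carry ranges into orthogonal complements, the balanced central traces leaving exactly the right slots. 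The delicate points I anticipate are keeping the isometry/coisometry property on the nose along the representing sequences --- achievable via a partial-isometry adjustment with small defect when $P$ is of type ${\rm II}_\infty$ or ${\rm III}$, the type ${\rm II}_1$ case arising only for $\mu_0=\mu_1$ where the relevant partial isometries are unitaries --- and tying the single-partial-isometry datum to fullness of $1\oti e_{00}$ rather than merely of $1\oti e_{11}$.
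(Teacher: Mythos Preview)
Your overall architecture matches the paper's: route everything through Proposition~\ref{prop: common-center} and Popa's characterization of approximate innerness via the Markov property of the central sequence inclusion. The direction (2)$\Rightarrow$(1) goes essentially as you say --- one builds a quasi-basis for $(E^\mu)_\om$ by taking $1\oti e_{00}$ at the $e_{00}$-corner, an orthonormal basis coming from approximate innerness of $\rho(P)\subs P$ at the $e_{11}$-corner, and transporting these through the connecting partial isometries $v_j\oti e_{10}$ to fill the off-diagonal corners; the sum $\sum m_i(m_i)^*$ then equals $\Ind(E^\mu)$ by Lemma~\ref{lem: IndE}, giving the Markov property. No reindexation or local quantization is needed here, contrary to what you anticipate; the patching is a direct computation.

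There is a genuine gap in your (1)$\Rightarrow$(2). You assert that ``approximate innerness of an inclusion entails common center of the associated central sequence inclusion'' as though it were a citable fact from \cite{Po1}. It is not: Popa's Proposition~2.2 yields only that $N_\om^{(\id,\rho)}(E^\mu)\subs M_\om^{(\id,\rho)}(E^\mu)$ is $\Ind(E^\mu)^{-1}$-Markov, and Markov alone does not force equality of centers. The paper's argument is specific to the locally trivial structure: one builds orthonormal bases for $E^\mu_\om$ at the diagonal corners, extends to a global basis by the Markov property, and then observes that a central projection $z$ annihilating $1\oti e_{00}$ would also annihilate every off-diagonal basis element, forcing $\Ind(E^\mu)z=\mu_1^{-1}\Ind(E_\rho)z$ --- impossible since $\Ind(E^\mu)=\mu_0^{-1}+\mu_1^{-1}\Ind(E_\rho)$ by Lemma~\ref{lem: IndE}. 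The index formula is the missing ingredient.

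Your (3)$\Rightarrow$(2) also misses the key tool. In the case $\mu_0<\mu_1$ your plan of ``adjoining central-sequence partial isometries of $\rho(P)\subs P$ that carry ranges into orthogonal complements'' does not work as stated: the single isometry $v$ gives $1\oti e_{00}\sim vv^*\oti e_{11}$, but there is no evident mechanism producing further isometries with ranges orthogonal to $vv^*$. The paper instead argues by contradiction that the central support of $1\oti e_{00}$ is full. If some central projection $z=z_1\oti e_{11}$ satisfied $z(1\oti e_{00})=0$, then $z_1vv^*=0$; irreducibility gives $\ta^\om(vv^*)\in\rho(P)'\cap P=\C$ nonzero, and Ocneanu's Fast Reindexation Lemma \cite[Lemma~5.3]{Oc1} then produces a reindexed $\Psi(v)$ with $\ta^\om(z_1\Psi(v)\Psi(v)^*)=\ta^\om(z_1)\ta^\om(vv^*)\neq 0$ while still $\Psi(v)\oti e_{10}\in M_\om^{(\id,\rho)}(E^\mu)$, contradicting $z(\Psi(v)\oti e_{10})=(\Psi(v)\oti e_{10})z=0$. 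Fast reindexation --- which you mention but misplace in (2)$\Rightarrow$(1) --- is precisely what is needed here.
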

\begin{proof}
(1)$\Rightarrow$(2). 
The inclusion $\rho(P)\stackrel{E_\rho}{\subs} P$ is 
isomorphic to 
the reduced inclusion 
$N^{(\id,\rho)} (1\oti e_{11})
\subs(1\oti e_{11})M^{(\id,\rho)} (1\oti e_{11})$ 
with a conditional expectation 
$(E^\mu)_{1\oti e_{11}}$. 
Hence the inclusion $\rho(P)\stackrel{E_\rho}{\subs} P$ 
is approximately inner by \cite[Proposition 2.7 (i)]{Po1}. 

Next we show that $\rho\in\oInt_{\mu_1/\mu_0}(P)$.
By Proposition \ref{prop: common-center},
it suffices to prove that the central support of $1\oti e_{00}$
in $M_\om^{(\id,\rho)}(E^\mu)$ is equal to $1$.
We make use of a Pimsner-Popa basis for $E_\om^\mu$ as follows.

On the corner at $1\oti e_{00}$, $m^{00}:=1\oti e_{00}$ is a basis
because
$N_\om^{(\id,\rho)}(E^\mu)(1\oti e_{00})
=(1\oti e_{00})M_\om^{(\id,\rho)}(E^\mu)(1\oti e_{00})$ holds.

Next we consider the corner at $1\oti e_{11}$. 
By \cite[Proposition 2.2]{Po1}, 
the central sequence inclusion 
$\rho(P_\om) \stackrel{(E_\rho)_\om}{\subs} P_\om(E_\rho)$ 
is a $\Ind(E_\rho)^{-1}$-Markov inclusion, 
and so is
$N_\om^{(\id,\rho)}(E^\mu)(1\oti e_{11})
\subs
(1\oti e_{11}) M_\om^{(\id,\rho)}(E^\mu)(1\oti e_{11})$ 
with a conditional expectation 
$(E_\om^\mu)_{(1\oti e_{11})}$. 
Take an orthonormal basis $(m_j^{11})_{j\in I_{11}}$ 
for $(E_\om^\mu)_{(1\oti e_{11})}$. 
Then 
$\mu_1^{-1}E_\om^\mu((m_j^{11})^*m_k^{11})
=\de_{jk}f_j$
is a projection in
$(1\oti e_{11})M_\om^{(\id,\rho)}(E^\mu)(1\oti e_{11})$,
and 
\begin{equation}\label{eq: m11}
\sum_{j\in I_{11}}m_j^{11} (m_j^{11})^*=\Ind(E_\rho)(1\oti e_{11}). 
\end{equation}

Now we have an orthonormal family 
$\{\mu_0^{-1/2} m^{00}\}\cup \{\mu_1^{-1/2} m_j^{11}\}_{j\in I_{11}}$ 
with respect to the expectation $E_\om^\mu$. 
By adding other elements $\{m_p\}_{p\in I}$, 
we can extend the orthonormal family to an orthonormal basis for $E_\om$. 

Then the Markov property of $E_\om^\mu$ implies 
the following equality: 
\begin{align}
\Ind(E^\mu)
=&\,
\mu_0^{-1}m^{00}(m^{00})^* 
+
\sum_{p\in I}m_p m_p^*
+
\sum_{j\in I_{11}}\mu_1^{-1}m_j^{11}(m_j^{11})^*
\notag\\
=&\,
\mu_0^{-1}(1\oti e_{00})
+
\sum_{p\in I}m_p m_p^*
+
\mu_1^{-1}\Ind(E_\rho)(1\oti e_{11})
\quad(\mbox{by}\ (\ref{eq: m11}))
\label{eq: Emu}.
\end{align}
We prove that the central support of $1\oti e_{00}$ in $M_\om^{(\id,\rho)}(E)$ 
is equal to $1$.
Assume that $z(1\oti e_{00})=0$ 
for a central projection $z$ in $M_\om^{(\id,\rho)}(E)$.
Note that $m_p$ is off-diagonal
because $m_j^{00}$ and $m_j^{11}$ are orthonormal bases of 
the corner of $M_\om^{(\id,\rho)}(E^\mu)$ reduced by $1\oti e_{00}$ 
and $1\oti e_{11}$, respectively.
Hence we have $zm_p=0$.
By multiplying $z$ to (\ref{eq: Emu}), we obtain 
\[
\Ind (E^\mu) z
=
\mu_1^{-1} \Ind(E_\rho) z (1\oti e_{11}). 
\]
However the formula of Lemma \ref{lem: IndE} 
implies that 
$\Ind (E^\mu)>\mu_1^{-1} \Ind(E_\rho)$. 
This shows that $z$ must be equal to $0$, 
and the central support of $1\oti e_{00}$ is equal to $1$. 

(2)$\Rightarrow$(1). 
We prove that
$N_\om^{(\id,\rho)}(E^\mu)
\stackrel{(E^\mu)_\om}{\subs} M_\om^{(\id,\rho)}(E^\mu)$ 
is an $\Ind(E^\mu)^{-1}$-Markov inclusion. 
Then $N^{(\id,\rho)}\stackrel{E^\mu}{\subs} M^{(\id,\rho)}$ is 
approximately inner by \cite[Proposition 2.2]{Po1}. 
By Proposition \ref{prop: common-center}, 
$1\oti e_{00}$ and $1\oti e_{11}$ have the scalar central traces 
$\mu_0$ and $\mu_1$, respectively. 
We may assume $\mu_0<\mu_1$ because the similar proof works
in the case of $\mu_0\geq \mu_1$. 
This allows us to take a family of partial isometries 
$\{v_j\oti e_{10}\}_{j=1}^{m+1}$ in $M_\om^{(\id,\rho)}(E^\mu)$ 
such that 
$v_j^* v_k=0$ for $j\neq k$, $v_j^*v_j=1$ for $j\leq m$ 
and
$\sum_{j=1}^{m+1} v_j v_j^*=1$.
We construct a basis for $E^\mu$ as follows.

For the $e_{00}$-entry, we set $m^{00}=\mu_0^{-1/2}(1\oti e_{00})$.
For the $e_{11}$-entry, we take a basis $\{m_j^{11}\}_{j\in I_{11}}$ 
as before.

For the $e_{10}$-entry, 
we set $m_{j}^{10}:=(v_j\oti e_{10}) m^{00}$. 
Then $\{m_{j}^{10}\}_{j=1}^{m+1}$ is an orthonormal family satisfying 
\begin{equation}\label{eq: m10}
\mu_0^{-1}(1\oti e_{11})=\sum_{j=1}^{m+1} m_j^{10}(m_j^{10})^*,
\end{equation}
and 
\begin{equation}\label{eq: 10}
(1\oti e_{11}) M_\om^{(\id,\rho)}(E^\mu)(1\oti e_{00})
=\sum_{j,k=1}^{m+1} m_{j}^{10} N_\om^{(\id,\rho)}(E^\mu). 
\end{equation}
For the $e_{01}$-entry, 
we set $m_{j}^{01}:=(v_{i_0}^*\oti e_{01})m_j^{11}$ for a fixed $i_0$. 
Although $\{m_j^{01}\}_j$ is not an orthonormal family, 
for any $x \in M_\om^{(\id,\rho)}(E^\mu)$ we have 
\begin{align*}
\sum_{j=1}^{m+1} m_j^{01} (E^\mu)_\om ((m_j^{01})^* x)
=&\,
\sum_{j=1}^{m+1} 
(v_{i_0}^*\oti e_{01})m_j^{11} 
(E^\mu)_\om (((v_{i_0}^*\oti e_{01})m_j^{11})^* x)
\\
=&\,
\sum_{j=1}^{m+1} 
(v_{i_0}^*\oti e_{01})m_j^{11}
(E^\mu)_\om((m_j^{11})^* (v_{i_0}\oti e_{10})x(1\oti e_{11}))
\\
=&\,
(v_{i_0}^*\oti e_{01}) (v_{i_0}\oti e_{10})x(1\oti e_{11})
\\
=&\,
(1\oti e_{00})x(1\oti e_{11}). 
\end{align*}
This shows that $(m_j^{01},(m_j^{01})^*)_j$
is a quasi-basis for $e_{01}$-entry of $M_\om^{(\id,\rho)}(E^\mu)$
in the sense of \cite{Wa}.
Moreover we have 
\begin{equation}\label{eq: m01}
\sum_{i=1}^{m+1}m_i^{01}(m_i^{01})^*=\mu_1^{-1}\Ind(E_\rho)(1\oti e_{00}). 
\end{equation}

Then the family $\{(m_i^{j},(m_i^{j})^*)\}_{i,j}$ is a quasi-basis
for $(E^\mu)_\om$.
Using (\ref{eq: m11}), (\ref{eq: m10}) and (\ref{eq: m01}),
we have 
\begin{align*}
\sum_{i,j}m_i^{j}(m_i^{j})^*
=&\,
\sum_{i}m_i^{00}(m_i^{00})^*
+\sum_{i}m_i^{11}(m_i^{11})^*
+\sum_{i}m_i^{10}(m_i^{10})^*
+\sum_{i}m_i^{01}(m_i^{01})^*
\\
=&\,
\mu_0^{-1}(1\oti e_{00})
+
\mu_1^{-1}\Ind(E_\rho)(1\oti e_{11})
\\
&\quad
+
\mu_0^{-1}(1\oti e_{11})
+
\mu_1^{-1}\Ind(E_\rho)(1\oti e_{00})
\\
=&\,
(\mu_0^{-1}+\mu_1^{-1}\Ind(E_\rho))
(1\oti e_{00}+1\oti e_{11})
\\
=&\,
\Ind(E^\mu)\qquad(\mbox{by Lemma}\ \ref{lem: IndE}).
\end{align*}
Hence the inclusion
$N_\om^{(\id,\rho)}(E^\mu)\stackrel{(E^\mu)_\om}{\subs}M_\om^{(\id,\rho)}(E^\mu)$ 
is $\Ind(E^\mu)^{-1}$-Markov, and 
the inclusion $N^{(\id,\rho)}\stackrel{E^\mu}{\subs} M^{(\id,\rho)}$ 
is approximately inner by \cite[Proposition 2.2]{Po1}. 

(2)$\Rightarrow$(3). 
In fact, the irreducibility of $\rho$ is unnecessary. 
By Proposition \ref{prop: common-center}, 
$e_{00}\oti 1$ and $e_{11}\oti 1$ have scalar central traces 
$\mu_0$ and $\mu_1$, respectively. 
If $\mu_0<\mu_1$, then there exists an isometry 
$v\in P^\om$ such that $v\oti e_{10}\in M_\om^{(\id,\rho)}(E^\mu)$. 
Let $(v^\nu)_\nu$ be a representing sequence of $v$ 
which consists of isometries. 
By Lemma \ref{lem: mux}, $(v^\nu)_\nu$ satisfies 
\[
\lim_{\nu\to\om}
\|\mu_1 (\vph\circ\ph_{\rho})v^\nu-\mu_0 v^\nu \vph\|
=0. 
\]
Then we take a subsequence of $(v^\nu)_\nu$ so that
the above equality holds as $\nu\to\infty$.

If $\mu_0\geq \mu_1$, then the similar argument still works, and 
we can find a desired sequence which consists of coisometries. 

(3)$\Rightarrow$(2). 
We prove that the central support of $1\oti e_{00}$ 
is equal to $1$. 
Take a sequence of 
partial isometries $(v^\nu)_\nu$ as in (3). 
Put $v:=\pi_\om((v^\nu)_\nu) \in P^\om$. 
Then the element $v\oti e_{10}$ is contained in 
$M_\om^{(\id,\rho)}(E^\mu)$ by Lemma \ref{lem: mux}. 

If $\mu_0\geq \mu_1$, 
we have $1\oti e_{11}=(v\oti e_{10})(v\oti e_{10})^*$ 
which is equivalent to $v^*v\oti e_{00}$. 
Hence the central support of $1\oti e_{00}$ is equal to 
1 in this case. 

If $\mu_0<\mu_1$, 
we have $1\oti e_{00}=(v\oti e_{10})^*(v\oti e_{10})$. 
Suppose that a non-zero projection $z\in Z(M_\om^{(\id,\rho)}(E^\mu))$ 
satisfies $z(1\oti e_{00})=0$. 
Then $z$ is of the form $z=z_1\oti e_{11}$, where $z_1$ is a projection 
in $Z(P_\om(E_\rho))$. 
Since $z$ is central, we have 
$z(v\oti e_{10})=(v\oti e_{10})z=(v\oti e_{10})(1\oti e_{00})z=0$. 
This means $z_1 vv^*=0$. 
Note that $vv^*\in P_\om(E_\rho)$, and 
$\ta^\om(vv^*)\in \rho(P)'\cap P=\C$ because $\rho$ is irreducible. 
Now for $z_1$ and $v$, 
we apply the Fast Reindexation Lemma \cite[Lemma 5.3]{Oc1}. 
Then there exists a map $\Ps\col W^*(v)\ra P^\om$ such that 
$\ta^\om(z_1 \Ps(vv^*))=\ta^\om(z_1)\ta^\om(vv^*)$, 
which is not equal to $0$. 
In particular, $z_1\Ps(v)\neq0$. 
By the construction of the fast reindexation map $\Ps$ in the proof of 
\cite[Lemma 5.3]{Oc1}, 
we may assume that $\Ps(v)$ is given by mapping $(v^\nu)_\nu$ 
to some subsequence $(v^{\nu(k)})_k$. 
Hence $\Ps(v)\oti e_{10}$ is also an element of 
$M_\om^{(\id,\rho)}(E^\mu)$. 
However, 
$z_1\Ps(v)\oti e_{10}=z(\Ps(v)\oti e_{10})=(\Ps(v)\oti e_{10})z=0$, 
and this is a contradiction. 

Therefore in the both cases, the central support of $1\oti e_{00}$ 
is equal to $1$. 
Then by Proposition \ref{prop: common-center},
$\rho\in \oInt_{\mu_1/\mu_0}(P)$. 
\end{proof}

\subsection{Basic properties of approximately inner endomorphisms}

Let $M$ be a factor and $\rho\in \oInt_r(M)$ with $r>0$.
Then for any $u\in U(M)$, $\Ad u\circ \rho$ is also in $\oInt_r(M)$.
Hence approximate innerness is a property for sectors.
The sector space $\Sect(M)$ has the basic operations, i.e.,
composition, decomposition, direct sum and conjugation.
We study how approximate innerness behaves for these operations.
For sector theory, readers are referred to \cite{Iz,L1,L2}.

\begin{lem}[Decomposition rule]
Let $\rho\in\End(M)_0$ and $[\rho]=\oplus_{i\in I} m_i[\rho_i]$
be the irreducible decomposition
where $m_i$ is the multiplicity of $[\rho_i]$ in $[\rho]$.
Then
$\rho\in\oInt_r(M)$ if and only if $\rho_i\in \oInt_{rd(\rho_i)/d(\rho)}(M)$ 
for all $i\in I$. 
\end{lem}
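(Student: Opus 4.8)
The plan is to translate the decomposition rule into a statement about locally trivial subfactors via Theorem~\ref{thm: rho-app} and Proposition~\ref{prop: common-center}, and then to use the additivity/multiplicativity of minimal indices under direct sums. The key observation is that the locally trivial subfactor $N^{(\id,\rho)}\subs M^{(\id,\rho)}$ decomposes: if $[\rho]=\oplus_{i\in I}m_i[\rho_i]$, then the inclusion $\rho(P)\stackrel{E_\rho}{\subs}P$ is ``built from'' the inclusions $\rho_i(P)\stackrel{E_{\rho_i}}{\subs}P$, and approximate innerness of $\rho(P)\subs P$ is equivalent to approximate innerness of every $\rho_i(P)\subs P$; this is already known for subfactors (it follows from Popa's theory, e.g.\ the behaviour of approximate innerness under reduction and amplification in \cite{Po1}). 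So the heart of the matter is the rank bookkeeping.

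First I would reduce to the case where $\rho$ itself is irreducible being compared with a multiple of itself, i.e.\ I separate two phenomena: (a) passing from $\rho=\oplus_i n_i\rho_i$ to the individual $\rho_i$ with the correct ranks, and (b) the compatibility of rank with multiplicity. For (a), the natural device is to realize $\rho_{\meH^\nu}$ approximating $\rho$, decompose the approximating Hilbert spaces $\meH^\nu$ compatibly with the isotypic projections in $(\rho,\rho)$ — more precisely, pick minimal projections $p_i\in(\rho,\rho)\cong \oplus_i M_{m_i}(\C)$ and use the associated isometries $w_i\in(\rho_i,\rho)$ to cut down; then $\mathrm{Ad}(w_i^*)$ applied to the approximating data for $\rho$ yields approximating data for $\rho_i$, and the dimension of the cut-down Hilbert space scales by the factor $d(\rho_i)/d(\rho)$ because $E_\rho$ restricted through $w_i$ becomes $E_{\rho_i}$ up to the weight $d(\rho_i)^2/d(\rho)^2$. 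Concretely, $\ph_\rho = \sum_i \frac{d(\rho_i)}{d(\rho)}\,w_i\ph_{\rho_i}(w_i^*\cdot w_i)w_i^*$ after identification, and feeding $\vph$ through this into Definition~\ref{defn: app-inner-general} turns the rank-$r$ condition for $\rho$ into the rank-$(rd(\rho_i)/d(\rho))$ condition for each $\rho_i$. For the converse, I would assemble approximating data for the $\rho_i$ into block-diagonal data for $\rho$, again with dimensions adding up correctly since $d(\rho)=\sum_i m_i d(\rho_i)$ and $r = \sum_i m_i\cdot (rd(\rho_i)/d(\rho))$.

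The cleaner route, which I would actually write, is to use the reduction/induction properties of $\oInt_r$ proved through the subfactor picture rather than to manipulate Hilbert spaces directly. By Theorem~\ref{thm: rho-app}, $\rho\in\oInt_{\mu_1/\mu_0}(P)$ together with approximate innerness of $\rho(P)\subs P$ is equivalent to approximate innerness of $N^{(\id,\rho)}\stackrel{E^\mu}{\subs}M^{(\id,\rho)}$. I would show that the latter inclusion, for suitably chosen $\mu$, reduces (via a projection in the relative commutant coming from the decomposition of $\rho$) to a direct sum of the inclusions $N^{(\id,\rho_i)}\stackrel{E^{\mu^{(i)}}}{\subs}M^{(\id,\rho_i)}$ with $\mu^{(i)}$ chosen so that $\mu^{(i)}_1/\mu^{(i)}_0 = rd(\rho_i)/d(\rho)$; since approximate innerness of an inclusion is stable under reduction by relative-commutant projections and under finite direct sums (\cite{Po1}), the equivalence follows. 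The index formula of Lemma~\ref{lem: IndE}, $\Ind(E^\mu)=\mu_0^{-1}+\mu_1^{-1}d(\rho)^2$, is what guarantees the $\mu^{(i)}$ exist with the right Markov constants, because $d(\rho)^2 = \big(\sum_i m_i d(\rho_i)\big)^2$ and the multiplicities $m_i$ get absorbed into the choice of the partition of $\mu_1$.

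**The main obstacle** will be getting the bookkeeping with the multiplicities $m_i$ exactly right and writing it invariantly. When $m_i>1$ the decomposition $(\rho,\rho)\cong\oplus_i M_{m_i}(\C)$ forces a choice of matrix units, and one must check that the rank condition does not depend on this choice and that the $m_i$ copies of $\rho_i$ inside $\rho$ all inherit the \emph{same} rank $rd(\rho_i)/d(\rho)$ — intuitively clear since they are unitarily equivalent and $\oInt_r$ is a property of sectors (stated just before the lemma), but it needs to be invoked cleanly. A secondary technical point is ensuring, in the ``if'' direction, that the separately chosen approximating sequences $\{v_j^{\nu,(i)}\}$ for the various $\rho_i$ can be combined into genuine partial isometries summing to $1$ as required in Definition~\ref{defn: app-inner-general}; this is a routine matrix-amplification argument (pad with an extra partial isometry to handle the non-integer part $[r]+1$), but it is where one must be careful that the defect projections add up correctly. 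I expect the subfactor-theoretic route above to make both points essentially automatic, at the cost of invoking the reduction and direct-sum stability of approximate innerness from \cite{Po1}.
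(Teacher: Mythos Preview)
The paper follows your route (1), not your preferred route (2). Concretely, it writes down the left-inverse decomposition
\[
d(\rho)\ph_\rho(x)=\sum_{i\in I}\sum_{k=1}^{m_i}d(\rho_i)\,\ph_{\rho_i}\big((w_k^i)^* x\, w_k^i\big),
\]
feeds it into Definition~\ref{defn: app-inner-general}, and checks that the families $\{(w_k^i)^* v_j^\nu\}_{j,k}$ (forward) and $\{w_k^i v_j^{i,\nu}\}_{i,j,k}$ (backward) satisfy condition~(5) of Proposition~\ref{prop: common-center} for the correct ranks. Your route (1) sketch is this argument, modulo the typo in your formula for $\ph_\rho$ (there should be no outer $w_i\cdots w_i^*$; the left inverse lands in $P$, not $\rho(P)$). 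The multiplicity bookkeeping you flag as the main obstacle is in fact a non-issue: summing over $k=1,\dots,m_i$ handles it automatically, and no choice of matrix units is needed beyond picking an orthonormal basis of each $(\rho_i,\rho)$.

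Your route (2), which you say you would actually write, has a genuine gap in the backward direction. The reduction projections $q_i=1\oti e_{00}+p_i\oti e_{11}$ (with $p_i=w_iw_i^*$) do reduce $N^{(\id,\rho)}\stackrel{E^\mu}{\subs}M^{(\id,\rho)}$ to $N^{(\id,\rho_i)}\stackrel{E^{\mu^{(i)}}}{\subs}M^{(\id,\rho_i)}$ with the ratio $\mu_1^{(i)}/\mu_0^{(i)}=r\,d(\rho_i)/d(\rho)$ you predict (this part of your computation is right, using $\ph_\rho(p_i)=d(\rho_i)/d(\rho)$). So the forward implication goes through via Popa's reduction result. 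But these $q_i$ are \emph{not} orthogonal --- they all contain $1\oti e_{00}$ --- so $N^{(\id,\rho)}\subs M^{(\id,\rho)}$ is not a direct sum of the $N^{(\id,\rho_i)}\subs M^{(\id,\rho_i)}$, and you cannot invoke ``stability of approximate innerness under finite direct sums'' from \cite{Po1} to get the converse. To assemble the pieces you would have to construct a basis for the central sequence inclusion by hand, as in the proof of Theorem~\ref{thm: rho-app}~(2)$\Rightarrow$(1), at which point you are essentially redoing the paper's direct argument with extra overhead.
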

\begin{proof}
Suppose that $\rho\in\oInt_r(M)$. 
Take $\{v_j^\nu\}_{j=1}^{[r]+1}$, $\nu\in\N$, 
as in Definition \ref{defn: app-inner-general}. 
Let $\{w_k^i\}_{k=1}^{m_i}$ be an orthonormal basis of 
the Hilbert space $(\rho_i,\rho)$. 
Then we have 
\[
d(\rho)\ph_\rho(x)
=\sum_{i\in I}\sum_{k=1}^{m_i}
d(\rho_i)\ph_{\rho_i}((w_k^i)^*x w_k^i). 
\]
For the proof of this equality, readers are referred to 
\cite[Lemma A.2]{Ma}.
Then for all $\vph\in M_*$ and $1\leq k\leq m_i$,
we have
\[
\lim_{\nu\to\infty}
\left \|\frac{1}{r} ((w_k^i)^* v_j^\nu)\cdot \vph
-\frac{d(\rho_i)}{d(\rho)}\vph\circ\ph_{\rho_i}\cdot((w_k^i)^* v_j^\nu)
\right\|
=0.
\]
It is equivalent to 
$\pi_\om(((w_k^i)^* v_j^\nu)_\nu)\oti e_{10}
\in M_\om^{(\id,\rho_i)}(E^{(\mu_0^i,\mu_1^i)})$, 
where $\mu_0^i+\mu_1^i=1$
and $\mu_0^i/\mu_1^i=d(\rho)/(r d(\rho_i))$.
Setting $x_{k,j}^{i,\nu}:=(w_k^i)^* v_j^\nu$, we have
\[\sum_{j=1}^{[r]+1}\sum_{k=1}^{m_i}
x_{k,j}^{i,\nu}(x_{k,j}^{i,\nu})^* 
=
\sum_{j=1}^{[r]+1}\sum_{k=1}^{m_i}(w_k^i)^* v_j^\nu (v_j^\nu)^* w_k^i
=
\sum_{k=1}^{m_i} (w_k^i)^* w_k^i 
=\dim (\rho_i,\rho). 
\]
By Proposition \ref{prop: common-center}, 
we see that $\rho_i\in \oInt_{rd(\rho_i)/d(\rho)}(M)$. 

Conversely we suppose that $\rho_i\in \oInt_{rd(\rho_i)/d(\rho)}(M)$ 
for all $i\in I$. 
For each $i\in I$, 
we take sequences of partial isometries 
$\{v_j^{i \,\nu}\}_{j=1}^{[rd(\rho_i)/d(\rho)]+1}$, $\nu\in\N$, 
as in Definition \ref{defn: app-inner-general}. 
Then for all $i,j$ and $\vph\in M_*$, it satisfies 
\[
\lim_{\nu\to\infty}
\left\|\frac{1}{r} 
v_j^{i\,\nu}\vph- \frac{d(\rho_j)}{d(\rho)}
(\vph\circ\ph_{\rho_i})\cdot v_j^{i\,\nu}
\right\|=0. 
\]
Then for all $1\leq k\leq m_i$, 
\[
\lim_{\nu\to\infty}
\left\|r^{-1} 
w_k^i v_j^{i\,\nu}\vph
-(\vph\circ\ph_{\rho})\cdot (w_k^i v_j^{i\,\nu})
\right\|=0. 
\]
Hence $\pi_\om((w_k v_j^{i\,\nu})_\nu)\oti e_{10}
\in M_\om^{(\id,\rho)}(E^{(\mu_0,\mu_1)})$, where $\mu_0+\mu_1=1$ 
and $\mu_0/\mu_1=1/r$. 

Since 
\[
\sum_{i\in I}\sum_{k=1}^{m_i}
\sum_{j=1}^{[rd(\rho_i)/d(\rho)]+1}
w_k^i v_j^{i\,\nu} (w_k^i v_j^{i\,\nu})^*
=1, 
\]
$\rho\in \oInt_r(M)$ by Proposition \ref{prop: common-center}. 
\end{proof}

\begin{cor}
Let $\rho,\si\in\End(M)_0$. 
Suppose that $\si\prec\rho$ and $\rho\in\oInt_r(M)$. 
Then $\si\in\oInt_{rd(\si)/d(\rho)}(M)$. 
\end{cor}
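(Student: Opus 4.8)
The plan is to deduce the corollary directly from the Decomposition rule by matching the dimensional bookkeeping on the two sides, so essentially no new ideas are needed.

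First I would write down the irreducible decompositions. Since $\si\prec\rho$, every irreducible subendomorphism of $\si$ is also one of $\rho$; so if $[\rho]=\oplus_{i\in I}m_i[\rho_i]$ is the irreducible decomposition, there are a subset $J\subseteq I$ and integers $0<n_i\le m_i$ with $[\si]=\oplus_{i\in J}n_i[\rho_i]$. In particular $\si\in\End(M)_0$, so the Decomposition rule applies to $\si$ as well as to $\rho$.

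Next, apply the Decomposition rule to $\rho$ in the forward direction: from $\rho\in\oInt_r(M)$ we obtain $\rho_i\in\oInt_{rd(\rho_i)/d(\rho)}(M)$ for every $i\in I$, hence in particular for every $i\in J$. Now put $s:=rd(\si)/d(\rho)$ and note the elementary identity $s\,d(\rho_i)/d(\si)=rd(\rho_i)/d(\rho)$. Thus the relations just obtained can be rewritten as $\rho_i\in\oInt_{s\,d(\rho_i)/d(\si)}(M)$ for all $i\in J$, which is precisely the hypothesis required to apply the Decomposition rule to $\si$ in the converse direction. This yields $\si\in\oInt_s(M)=\oInt_{rd(\si)/d(\rho)}(M)$, as claimed.

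Since both implications of the Decomposition rule are already at hand, there is no genuine obstacle here; the only point needing a little care is to allow $\si$ to be reducible (keeping track of the multiplicities $n_i$) and to verify that the dimension identity above makes the rank parameter come out as stated.
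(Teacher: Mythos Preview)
Your argument is correct and follows essentially the same route as the paper: decompose into irreducibles, apply the Decomposition rule to $\rho$ to place each irreducible component in the appropriate $\oInt$, rewrite the rank via the identity $rd(\rho_i)/d(\rho)=(rd(\si)/d(\rho))\,d(\rho_i)/d(\si)$, and then apply the converse direction of the Decomposition rule to $\si$. The only cosmetic difference is that the paper starts from the irreducible decomposition of $\si$ rather than of $\rho$, but this is immaterial.
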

\begin{proof}
Let $[\si]=\oplus_i [\si_i]$ be the irreducible decomposition.
By applying the previous lemma to $\si_i\prec\rho$,
we have $\si_i\in \oInt_{rd(\si_i)/d(\rho)}(M)$.
Note that $rd(\si_i)/d(\rho)=(rd(\si)/d(\rho))d(\si_i)/d(\si)$.
Using again the previous lemma,
we see that $\si\in \oInt_{rd(\si)/d(\rho)}(M)$.
\end{proof}

On composition of endomorphisms, the following result holds. 

\begin{lem}[Composition rule]\label{lem: composition}
Let $\rho_i\in \oInt_{r_i}(N)$ for $i=1,2$. 
Then $\rho_1\circ\rho_2\in\oInt_{r_1 r_2}(N)$. 
\end{lem}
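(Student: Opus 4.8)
The plan is to reduce the composition rule to the characterization of approximate innerness via central supports in locally trivial subfactors, exactly as in Proposition \ref{prop: common-center}. First I would pick, for each $i=1,2$, sequences of partial isometries $\{v_k^{i,\nu}\}_{k=1}^{[r_i]+1}\subs N$, $\nu\in\N$, as in Definition \ref{defn: app-inner-general} witnessing $\rho_i\in\oInt_{r_i}(N)$, so that $(v_k^{i,\nu})^*v_k^{i,\nu}=1$ for $k\leq[r_i]$, $\sum_k v_k^{i,\nu}(v_k^{i,\nu})^*=1$, and
\[
\lim_{\nu\to\infty}\left\|\frac{1}{r_i}\,v_k^{i,\nu}\vph-(\vph\circ\ph_{\rho_i})v_k^{i,\nu}\right\|=0
\quad\text{for all }\vph\in N_*.
\]
Then I would form the candidate system $w_{j,k}^\nu:=v_j^{1,\nu}\,\rho_1(v_k^{2,\nu})$ for $j=1,\dots,[r_1]+1$, $k=1,\dots,[r_2]+1$. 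A routine computation shows $\sum_{j,k}w_{j,k}^\nu(w_{j,k}^\nu)^*=\sum_j v_j^{1,\nu}\,\rho_1\!\big(\sum_k v_k^{2,\nu}(v_k^{2,\nu})^*\big)(v_j^{1,\nu})^*=\sum_j v_j^{1,\nu}(v_j^{1,\nu})^*=1$, and that each $w_{j,k}^\nu$ is a partial isometry which is an isometry whenever both factors are (i.e.\ when $j\leq[r_1]$ and $k\leq[r_2]$).

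The heart of the matter is the estimate
\[
\lim_{\nu\to\infty}\left\|\frac{1}{r_1r_2}\,w_{j,k}^\nu\vph-(\vph\circ\ph_{\rho_1\circ\rho_2})w_{j,k}^\nu\right\|=0
\quad\text{for all }\vph\in N_*.
\]
To obtain it I would use the identity $\ph_{\rho_1\circ\rho_2}=\ph_{\rho_2}\circ\ph_{\rho_1}$ for standard left inverses together with the intertwining relation $\rho_1(a)\vph=\vph'\rho_1(a)$-type manipulations. Concretely, split the difference $\tfrac{1}{r_1r_2}v_j^{1,\nu}\rho_1(v_k^{2,\nu})\vph-(\vph\circ\ph_{\rho_2}\circ\ph_{\rho_1})v_j^{1,\nu}\rho_1(v_k^{2,\nu})$ via an intermediate term $\tfrac{1}{r_2}v_j^{1,\nu}\rho_1(v_k^{2,\nu})(\vph\circ\ph_{\rho_1})$. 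The first half is controlled by applying the property of $v_k^{2,\nu}$ to the functional $(v_j^{1,\nu})^*\vph\,v_j^{1,\nu}\circ(\text{something})$ after pushing it through $\rho_1$ — here one uses that $\rho_1(x)\mapsto$ acts compatibly with $\ph_{\rho_1}$, namely $\ph_{\rho_1}(\rho_1(a)y\rho_1(b))=a\ph_{\rho_1}(y)b$, so that $\rho_1(v_k^{2,\nu})(\psi\circ\ph_{\rho_1})$ and $(\psi\circ\ph_{\rho_1})\rho_1(v_k^{2,\nu})$ differ by an amount measured by how far $v_k^{2,\nu}$ is from intertwining $\psi\circ\rho_1^{-1}$. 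The second half is controlled directly by the property of $v_j^{1,\nu}$ applied to the functional $\vph\circ\ph_{\rho_1}$. Since each piece tends to $0$ and all operators are bounded by $1$ in norm, the triangle inequality gives the claim.

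I expect the main obstacle to be exactly this last estimate: keeping track of which functional gets fed into the defining property of each $v^{i,\nu}$, and verifying that the algebraic rearrangement using $\ph_{\rho_1}(\rho_1(a)\,y\,\rho_1(b))=a\,\ph_{\rho_1}(y)\,b$ correctly converts the approximate intertwining of $v_k^{2,\nu}$ (a statement about $N_*$) into an approximate intertwining of $\rho_1(v_k^{2,\nu})$ with respect to functionals of the form $\psi\circ\ph_{\rho_1}$. Once the two limits are in place, the conclusion $\rho_1\circ\rho_2\in\oInt_{r_1r_2}(N)$ follows immediately: the system $\{w_{j,k}^\nu\}$ satisfies conditions (1)–(4) of Definition \ref{defn: app-inner-general} for the rank $r_1r_2$ (after discarding any of the $(r_1r_2)$-many indices beyond $[r_1r_2]+1$ or reorganizing, which is harmless since the relevant projections already sum to $1$), or, equivalently, one invokes Proposition \ref{prop: common-center}(5)$\Rightarrow$(4) with the family $\{w_{j,k}^\nu\}$ whose range projections have supremum $1$.
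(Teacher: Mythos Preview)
Your plan has the right shape but the wrong candidate. You set $w_{j,k}^\nu:=v_j^{1,\nu}\rho_1(v_k^{2,\nu})$, whereas the paper uses the simpler product $v_j^{1,\nu}v_k^{2,\nu}$. This is not a cosmetic difference: your $w_{j,k}^\nu$ does \emph{not} satisfy the estimate in condition~(b) of Proposition~\ref{prop: common-center}(5), and your proposed splitting cannot produce it. To see this cleanly, test the estimate in the case where both $\rho_i$ are genuinely inner, implemented by orthonormal bases $\{V_\ell^i\}$. Then all sequences are constant and the estimate would have to hold \emph{exactly}: $r_1r_2\,(\vph\circ\ph_{\rho_2}\circ\ph_{\rho_1})\,w = w\,\vph$ for $w=V_j^1\rho_1(V_k^2)$. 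A short computation gives $r_1r_2\,\ph_{\rho_2}(\ph_{\rho_1}(wx))=\sum_m (V_m^2)^*\rho_1(V_k^2)\,x\,V_j^1V_m^2$, and taking $x=(V_j^1)^*$ this becomes $d_2\,\ph_{\rho_2}(\rho_1(V_k^2))$, which in general is not equal to $\rho_1(V_k^2)$ (try $\rho_1=\rho_2$ of rank $2$). So the exact identity fails, hence the approximate one cannot hold either. Your proposed intermediate $\tfrac{1}{r_2}v_j^{1,\nu}\rho_1(v_k^{2,\nu})(\vph\circ\ph_{\rho_1})$ illustrates the problem: the ``first half'' becomes left multiplication by a bounded operator of the \emph{fixed} functional $\tfrac{1}{r_2}(\tfrac{1}{r_1}\vph-\vph\circ\ph_{\rho_1})$, which does not tend to~$0$.

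The paper's choice $v_j^{1,\nu}v_k^{2,\nu}$ avoids all of this. The key is that the two defining properties can be applied to \emph{fixed} functionals: first use the $\rho_2$-condition on $\vph$ to get $r_2^{-1}v_k^{2,\nu}\vph\approx(\vph\circ\ph_{\rho_2})v_k^{2,\nu}$, left-multiply by $r_1^{-1}v_j^{1,\nu}$; then use the $\rho_1$-condition on the \emph{fixed} functional $\psi=\vph\circ\ph_{\rho_2}$ to get $r_1^{-1}v_j^{1,\nu}\psi\approx(\psi\circ\ph_{\rho_1})v_j^{1,\nu}$, and right-multiply by $v_k^{2,\nu}$. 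No $\nu$-dependent functional ever has to be fed into a defining limit. Once you replace $\rho_1(v_k^{2,\nu})$ by $v_k^{2,\nu}$, the rest of your plan (the sum of range projections being $1$ and the appeal to Proposition~\ref{prop: common-center}) is exactly the paper's argument.
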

\begin{proof}
Take sequences of partial isometries 
$\{v_j^{1\, \nu}\}_{j=1}^{[r_1]+1}$ 
and $\{v_j^{2\, \nu}\}_{j=1}^{[r_2]+1}$ satisfying 
the conditions in Definition \ref{defn: app-inner-general} 
for $\rho_1$ and $\rho_2$, respectively. 
Then for all $\vph\in M_*$, $i=1,2$ and $1\leq j\leq [r_i]+1$, 
\[
\lim_{\nu\to\infty}
\left\|
r_i^{-1} v_j^{i\,\nu}\vph
-(\vph\circ\ph_{\rho_i})\cdot v_j^{i\,\nu}
\right\|
=0. 
\]
It is easy to see that 
\[
\lim_{\nu\to\infty}
\left\|
(r_1 r_2)^{-1} v_j^{1\,\nu} v_k^{2\,\nu}\vph
-
(\vph\circ\ph_{\rho_2}\circ\ph_{\rho_1})
\cdot v_j^{1\,\nu}v_k^{2\,\nu}
\right\|
=0 
\]
for all $1\leq j\leq [r_1]+1$ and $1\leq k\leq[r_2]+1$. 
Hence 
$\pi_\om\big{(}
(v_j^{1\,\nu}v_k^{2\,\nu}\oti e_{10})_\nu 
\big{)}
\in M_\om^{(\id,\rho_1\rho_2)}(E^{\mu_0,\mu_1})$, 
where $\mu_0+\mu_1=1$ and $\mu_0/\mu_1=1/(r_1r_2)$. 
Since 
\[
1
=\sum_{j=1}^{[r_1]+1}\sum_{k=1}^{[r_2]+1}
v_j^{1\,\nu}v_k^{2\,\nu}
(v_j^{1\,\nu}v_k^{2\,\nu})^*,
\]
$\rho_1\circ\rho_2\in\oInt_{r_1 r_2}(M)$ 
by Proposition \ref{prop: common-center}. 
\end{proof}

On conjugation, we have a result only for hyperfinite factors
(Corollary \ref{cor: conjugation-rule}). 

\subsection{Descriptions of $\oInt_r(N)$ for hyperfinite semifinite factors}

\begin{lem}\label{lem: I-factor}
Let $N$ be a type I factor,
then $\oInt_r(N)=\emptyset$ for all $r\nin\N$
and $\oInt_r(N)=\Int_r(N)$ for all $r\in \N$.
\end{lem}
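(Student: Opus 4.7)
The plan is to use Proposition~\ref{prop: common-center}~(5) to reduce approximate innerness to the existence of elements of $N^\om$ satisfying an asymptotic intertwiner relation, then to exploit the rigid trace-class structure of $B(H)$ to force $r=d(\rho)\in\N$. First I would record the standard fact that for $N=B(H)$ every finite-index endomorphism is inner: any $\rho\in\End(N)_0$ has the form $\rho=\rho_{\mathcal K}$ for a finite-dimensional Hilbert space ${\mathcal K}\subs N$ with orthonormal basis $\{w_1,\dots,w_d\}$, where $d=d(\rho)\in\N$, and then $\ph_\rho(x)=d^{-1}\sum_j w_j^* x w_j$. Assuming $\rho\in\oInt_r(N)$ and setting $\mu_0=1/(1+r),\,\mu_1=r/(1+r)$, Proposition~\ref{prop: common-center}~(5) produces a finite set $I$ and elements $x_j=\pi_\om((x_j^\nu)_\nu)\in N^\om$ ($j\in I$) with $\bigvee_{j\in I}s(x_jx_j^*)=1$ and $\|\mu_1(\vph\circ\ph_\rho)x_j^\nu-\mu_0 x_j^\nu\vph\|\to 0$ for every $\vph\in N_*$.

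Identifying $N_*$ with the trace-class operators via $\vph\leftrightarrow A_\vph$ and using $\vph\circ\ph_\rho\leftrightarrow d^{-1}\rho(A_\vph)$, the displayed relation becomes
\[
\|\mu_1 d^{-1}\rho(p_\xi)x_j^\nu-\mu_0 x_j^\nu p_\xi\|_1\to 0\quad\mbox{for every unit }\xi\in H,
\]
where $p_\xi=|\xi\rangle\langle\xi|$ and $\rho(p_\xi)=\sum_k|w_k\xi\rangle\langle w_k\xi|$ projects onto the $d$-dimensional subspace ${\mathcal K}\xi$. The key calculation is to apply this trace-class operator to the vector $\xi$: decomposing $x_j^\nu\xi=u^\nu+w^\nu$ with $u^\nu\in{\mathcal K}\xi$ and $w^\nu\per{\mathcal K}\xi$, we get $(\mu_1 d^{-1}-\mu_0)u^\nu-\mu_0 w^\nu\to 0$ in $H$, and orthogonality forces both components to vanish whenever $\mu_1 d^{-1}\neq\mu_0$, i.e., whenever $r\neq d$. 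Hence $x_j^\nu\xi\to 0$ for every $\xi$, so $x_j^\nu\to 0$ strongly.

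The hard part will be upgrading this to $(x_j^\nu)^*\to 0$ strongly. I would multiply the trace-norm condition on the right by $x_j^{\nu *}$, take the trace, and use $\Tr(\rho(p_\xi)x_j^\nu x_j^{\nu *})=\sum_k\|x_j^{\nu *}w_k\xi\|^2$ together with $\Tr(x_j^\nu p_\xi x_j^{\nu *})=\|x_j^\nu\xi\|^2$ to obtain
\[
\mu_1 d^{-1}\sum_k\|x_j^{\nu *}w_k\xi\|^2-\mu_0\|x_j^\nu\xi\|^2\to 0.
\]
Since $x_j^\nu\xi\to 0$, each $\|x_j^{\nu *}w_k\xi\|\to 0$; using $\sum_k w_k w_k^*=1$ and applying the previous convergence with $\xi=w_k^*\eta$ shows $(x_j^\nu)^*\eta\to 0$ for every $\eta\in H$. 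Consequently $x_j^\nu\in\meT_\om(N)$, so $x_j=0$ in $N^\om$ and $s(x_jx_j^*)=0$ for every $j$, contradicting $\bigvee_j s(x_jx_j^*)=1$. Thus $r=d\in\N$, which rules out $r\nin\N$, and for $r=d\in\N$ the constant choice $v_i^\nu=w_i$ realizes $\Int_r(N)\subs\oInt_r(N)$ via the exact identity $(\vph\circ\ph_\rho)w_i=d^{-1}w_i\vph$; combined with the fact that every rank-$r$ element of $\End(N)_0$ is inner, this gives $\oInt_r(N)=\Int_r(N)$.
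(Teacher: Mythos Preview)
Your proof is correct and follows essentially the same route as the paper's: both identify $N_*$ with the trace-class operators, translate the approximate-innerness condition into a trace-norm convergence, pass to operator norm via $\|\cdot\|_\infty\le\|\cdot\|_1$, and from $r\neq d$ force the relevant sequences to zero strongly$^*$, contradicting the support condition.

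The one tactical difference worth noting is that the paper first left-multiplies by $w_j^*$ to reduce the condition to the symmetric form
\[
\|r^{-1}(w_j^*v_i^\nu)\vph - d^{-1}\vph(w_j^*v_i^\nu)\|\to 0,
\]
so that for any finite projection $a$ the three compressions $a(\,\cdot\,)a$, $(1-a)(\,\cdot\,)a$, $a(\,\cdot\,)(1-a)$ immediately yield $w_j^*v_i^\nu\to 0$ strongly$^*$ in one stroke; you instead keep the asymmetric form involving $\rho(p_\xi)$, obtain strong convergence first, and then recover the adjoint direction via the trace identity $\mu_1 d^{-1}\sum_k\|x_j^{\nu*}w_k\xi\|^2-\mu_0\|x_j^\nu\xi\|^2\to 0$. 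The paper's symmetrization is a small shortcut, but your trace trick is a perfectly good substitute and the overall argument is the same.
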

\begin{proof}
Suppose $\rho\in \Int_d(N)\cap \oInt_r(N)$ for some $r>0$,
where $d=d(\rho)$.
We will show $r=d$.
Take sequences of partial isometries
$\{v_i^\nu\}_{i=1}^{[r]+1}\subs N$, $\nu\in\N$
such that
$(v_i^\nu)^* v_i^\nu=1$ for $1\leq i\leq [r]$,
$
%\displaystyle
\sum_{i=1}^{[r]+1}v_i^\nu (v_i^\nu)^*=1$ for all $\nu\in\N$
and
\begin{equation}\label{eq: Irv}
\lim_{\nu\to\infty}
\left\| r^{-1}v_i^\nu \vph -(\vph\circ\ph_\rho) v_i^\nu\right\|=0
\end{equation}
for all $1\leq i\leq [r]+1$, $\vph\in N_*$.
Take a Hilbert space $\meH\subs N$ implementing $\rho$.
Let $\{w_j\}_{j=1}^d$ be an orthonormal basis of $\meH$.
Then
$
%\displaystyle
\ph_\rho(x)=d^{-1}\sum_{j=1}^d w_j^* x w_j$ for $x\in N$.
Hence (\ref{eq: Irv}) is equivalent with
\[
\lim_{\nu\to\infty}
\left\| r^{-1}(w_j^*v_i^\nu)\cdot \vph 
-d^{-1}\vph\cdot (w_j^*v_i^\nu)
\right\|=0
\]
for all $1\leq i\leq [r]+1$, $1\leq j\leq d$, $\vph\in N_*$.

Let $a\in N$ be a trace class operator.
We apply the above limit equality to $\vph=\ta_a$.
Then the trace norm of
$r^{-1}w_j^*v_i^\nu a-d^{-1} a w_j^*v_i^\nu$ converges to $0$.
Since the trace norm dominates the uniform norm,
we have 
\[
\lim_{\nu\to\infty}
\|r^{-1}w_j^*v_i^\nu a-d^{-1} a w_j^*v_i^\nu\|=0. 
\]
Let $a$ be a finite projection, and we have
\[
\lim_{\nu\to\infty}\|(r^{-1}-d^{-1})a w_j^*v_i^\nu a\|=0,
\ 
\lim_{\nu\to\infty}\|(1-a) w_j^*v_i^\nu a\|=0,
\ 
\lim_{\nu\to\infty}\|aw_j^*v_i^\nu (1-a)\|=0. 
\]
If $r\neq d$,
then the above equalities imply that
$\|w_j^* v_i^\nu a\| \to 0$ and $\|a w_j^* v_i^\nu\|\to0$
as $\nu\to\infty$ for a finite projection $a\in N$.
Thus $w_j^* v_i^\nu\to0$ strongly$*$
as $\nu\to\infty$.
Since
$
%\displaystyle
v_i^\nu=\sum_{j=1}^d w_j(w_j^* v_i^\nu)$,
$v_i^\nu\to0$ strongly* as $\nu\to\infty$.
This is a contradiction with
$
%\displaystyle
1=\sum_{i=1}^{[r]+1} v_i^\nu(v_i^\nu)^*$.
Hence $r=d$, and $\Int_d(N)\cap \oInt_r(N)\neq\emptyset$
yields $r=d$.

Since any endomorphism on $N$ is inner,
the statement of this lemma holds.
\end{proof}

\begin{lem}\label{lem: finite-int}
If $N$ is a type II$_1$ factor with the tracial state $\ta$, 
then $\oInt_r(N)=\emptyset$ for all $r\neq 1$. 
Moreover if $N$ is a hyperfinite factor, 
then $\oInt_1(N)=\{\rho\in\End(N)_0\mid \ta\circ\ph_\rho=\ta\}$. 
\end{lem}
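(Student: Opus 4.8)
The plan is to prove the emptiness statement together with the ``easy'' inclusion of the equality by one short computation valid in any finite factor, and then to settle the reverse inclusion by a hyperfinite approximation argument.

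\emph{Emptiness for $r\ne1$, and $\oInt_1(N)\subseteq\{\rho\mid\ta\circ\ph_\rho=\ta\}$.} In a finite factor the data of Definition~\ref{defn: app-inner-general} collapses: if $\{v_i^\nu\}_{i=1}^{[r]+1}$ are partial isometries as there, then $v_1^\nu(v_1^\nu)^*$ is a projection of trace $\ta((v_1^\nu)^*v_1^\nu)=1$ (because $(v_1^\nu)^*v_1^\nu=1$ when $[r]\ge1$, and $v_1^\nu(v_1^\nu)^*=1$ when $[r]=0$), hence $v_1^\nu$ is a unitary $u^\nu$, and then $\sum_{i\ge2}v_i^\nu(v_i^\nu)^*=0$ forces $v_i^\nu=0$ for $i\ge2$. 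If $r\ge2$ this contradicts $(v_2^\nu)^*v_2^\nu=1$, so $\oInt_r(N)=\emptyset$. Otherwise condition (4) with $i=1$ and $\vph=\ta$, after replacing the test element $x$ by $(u^\nu)^*x$ and using $\ta\circ\Ad((u^\nu)^*)=\ta$, reads $\|r^{-1}\ta-\ta\circ\ph_\rho\|\to0$; since the left-hand side is independent of $\nu$ we get $\ta\circ\ph_\rho=r^{-1}\ta$, and as $\ta\circ\ph_\rho$ is a state ($\ph_\rho$ being unital and positive) this forces $r=1$, in which case the identity becomes $\ta\circ\ph_\rho=\ta$. Thus $\oInt_r(N)=\emptyset$ for $r\ne1$ and the inclusion $\subseteq$ holds, using neither hyperfiniteness nor anything beyond $r=1$.

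\emph{The reverse inclusion, for $N$ hyperfinite.} Let $\rho\in\End(N)_0$ with $\ta\circ\ph_\rho=\ta$; applying this to $\rho(x)$ and using $\ph_\rho\circ\rho=\id$ gives $\ta\circ\rho=\ta$. Since $N$ is hyperfinite of type ${\rm II}_1$, fix an increasing chain of finite-dimensional unital $*$-subalgebras $R_n$ with $\bigcup_n R_n$ dense in $N$ in $\|\cdot\|_\ta$. For each $n$ the two unital $*$-homomorphisms $\rho|_{R_n}$ and $R_n\hookrightarrow N$ induce the same trace on $R_n$ (because $\ta\circ\rho=\ta$), hence are conjugate by some $u_n\in U(N)$: one has $\rho(x)=u_nxu_n^*$ for all $x\in R_n$. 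I claim that the sequence $(u_n)_n$, with $v_2^\nu:=0$, verifies Definition~\ref{defn: app-inner-general} for $r=1$; the only nontrivial point is $\|u_n\vph-(\vph\circ\ph_\rho)u_n\|\to0$, equivalently $\|\vph\circ\Ad(u_n^*)-\vph\circ\ph_\rho\|\to0$, for every $\vph\in N_*$.

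To evaluate the last expression I would use that the minimal expectation $E_\rho$ onto the ${\rm II}_1$ subfactor $\rho(N)$ coincides with the trace-preserving one; combined with $\ta\circ\rho=\ta$ this yields $\ta(a\,\ph_\rho(x))=\ta(\rho(a)\,x)$ for $a,x\in N$, i.e.\ $\rho$ is the $L^2(N,\ta)$-adjoint of $\ph_\rho$. Hence for $\vph=\ta(a\,\cdot\,)$ with $a\in N$ the functionals $\vph\circ\Ad(u_n^*)$ and $\vph\circ\ph_\rho$ have $\ta$-densities $u_nau_n^*$ and $\rho(a)$, so their difference has norm $\|u_nau_n^*-\rho(a)\|_1$, which vanishes once $a\in R_m$ and $n\ge m$. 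Approximating an arbitrary $\vph\in N_*$ in norm by such $\ta(a\,\cdot\,)$ with $a\in\bigcup_m R_m$, and using $\|\psi\circ\Ad(u_n^*)\|=\|\psi\|$ and $\|\psi\circ\ph_\rho\|\le\|\psi\|$, then gives the claim.

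\emph{Main obstacle.} The genuine content is the reverse inclusion, and its crux is the observation that $\ta\circ\rho=\ta$ makes $\rho$ agree with an inner automorphism on each finite-dimensional block; the remaining ingredients — identifying $\rho$ as the $L^2$-adjoint of $\ph_\rho$ (which rests on ``minimal $=$ trace-preserving'' for ${\rm II}_1$ subfactors) and the routine bookkeeping of the functional approximations — are straightforward.
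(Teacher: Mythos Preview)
Your proof is correct. The first half (emptiness for $r\neq1$ and the inclusion $\oInt_1(N)\subseteq\{\rho:\ta\circ\ph_\rho=\ta\}$) is essentially the paper's argument, just organized a bit differently: the paper observes $v_1^\nu$ is unitary, rewrites condition~(4) as $\|r^{-1}\vph\circ\Ad(v_1^\nu)^*-\vph\circ\ph_\rho\|\to0$, and evaluates at $1$ to force $r=1$; you do the same with the trace, and additionally read off $\ta\circ\ph_\rho=\ta$ in the same stroke.

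For the reverse inclusion the paper simply quotes \cite[Lemma~3.9]{MT1}, whereas you supply a direct hyperfinite approximation. Your argument is sound and self-contained; one small point of presentation: you do not actually need the general fact ``minimal expectation $=$ trace-preserving expectation'' for ${\rm II}_1$ subfactors. The duality $\ta(a\,\ph_\rho(x))=\ta(\rho(a)x)$ follows immediately from the hypothesis: since $\rho(N)$ lies in the multiplicative domain of the unital completely positive map $\ph_\rho$, one has $\ph_\rho(\rho(a)x)=a\,\ph_\rho(x)$, and then $\ta(a\,\ph_\rho(x))=\ta(\ph_\rho(\rho(a)x))=\ta(\rho(a)x)$ using $\ta\circ\ph_\rho=\ta$. (Equivalently, the hypothesis itself forces $E_\rho$ to be trace-preserving, via $\ta\circ E_\rho=\ta\circ\rho\circ\ph_\rho=\ta\circ\ph_\rho=\ta$.) So the argument is even more elementary than you indicate; nothing is wrong, but you can drop that dependency.
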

\begin{proof}
If $\oInt_r(N)\neq\emptyset$, we can take $\rho \in \oInt_r(N)$. 
By definition, there exist sequences of partial isometries 
$\{v_i^\nu\}_{i=1}^{[r]+1}\subset N$, $\nu\in\N$, with the conditions in
Definition \ref{defn: app-inner-general}.
At least $v_1^\nu$ is an isometry (or coisometry) 
when $r\geq1$ (resp. $0<r<1$), 
but we note that any isometry (or coisometry) is a unitary
because $N$ is finite.
Hence $v_1^\nu$ is unitary.
Then we have
\[
\lim_{n\to\infty}
\left\|r^{-1}\vph\circ \Ad(v_1^\nu)^*-\vph\circ \ph_\rho\right\|=0
\quad \mbox{for all}\ \vph\in N_*. 
\]
In particular, 
$r^{-1}\vph(1)-\vph(1)
=(r^{-1}\vph\circ \Ad(v_1^\nu)^*-\vph\circ \ph_\rho)(1)$ 
is equal to $0$. 
Hence $r$ must be equal to $1$.
The latter assertion follows from \cite[Lemma 3.9]{MT1}. 
\end{proof}

\begin{lem}\label{lem: II-infty factor}
Let $N$ be the hyperfinite type II${}_\infty$ factor of
with the trace $\ta$.
Let $\rho\in\End(N)_0$
and $\mo(\rho)$ be the module of $\rho$,
i.e., $\ta\circ\ph_\rho=d(\rho)^{-1}\mo(\rho)^{-1}\ta$.
Let $\la>0$.
Then $\rho\in\oInt_\la(N)$ if and only if $\la=d(\rho)\mo(\rho)$.
\end{lem}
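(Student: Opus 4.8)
Since $\mo(\rho)$ is \emph{defined} by $\ta\circ\ph_\rho=d(\rho)^{-1}\mo(\rho)^{-1}\ta$, the assertion to be proved is the same as
\[
\rho\in\oInt_\la(N)\iff \ta\circ\ph_\rho=\la^{-1}\ta .
\]
The plan is to prove the inequality $\la\le d(\rho)\mo(\rho)$ directly from the definition of approximate innerness (this is the easy half), and then to obtain the reverse inequality, together with the ``if'' part, through the locally trivial subfactor $N^{(\id,\rho)}\stackrel{E^\mu}{\subs}M^{(\id,\rho)}$ and Theorem \ref{thm: rho-app}. One may first reduce to irreducible $\rho$ by the decomposition rule (noting that every irreducible component $\rho_i\prec\rho$ has $\mo(\rho_i)=\mo(\rho)$, so the parameter $\la$ transforms by $\la\mapsto\la\,d(\rho_i)/d(\rho)=d(\rho_i)\mo(\rho_i)$), which makes Theorem \ref{thm: rho-app}(3) available; this reduction is optional.

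\textbf{The easy inequality.} Suppose $\rho\in\oInt_\la(N)$ and take partial isometries $\{v_i^\nu\}_{i=1}^{[\la]+1}$ as in Definition \ref{defn: app-inner-general}. Evaluating the functionals in condition (4) at elements of the form $(v_i^\nu)^*y$ ($y\in N$, $\|y\|\le 1$) and summing over $i$, the relation $\sum_i\ph_\rho(q_i^\nu y)=\ph_\rho(y)$ (where $q_i^\nu=v_i^\nu(v_i^\nu)^*$, $\sum_iq_i^\nu=1$) gives
\[
\ph_\rho(y)=\si\text{-weak-}\lim_{\nu}\ \la^{-1}\sum_i (v_i^\nu)^* y\, v_i^\nu
\qquad(y\in N).
\]
For a finite projection $e$, each approximant $\la^{-1}\sum_i(v_i^\nu)^*ev_i^\nu$ is positive with $\ta$-trace exactly $\la^{-1}\sum_i\ta(eq_i^\nu)=\la^{-1}\ta(e)$. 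Because $\ta$ is a normal, hence $\si$-weakly lower semicontinuous, weight, $\ta(\ph_\rho(e))\le\la^{-1}\ta(e)$, that is $d(\rho)^{-1}\mo(\rho)^{-1}\le\la^{-1}$, i.e.\ $\la\le d(\rho)\mo(\rho)$. (This is the II$_\infty$ analogue of the trace estimate in Lemma \ref{lem: I-factor}.)

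\textbf{The remaining content via the subfactor machinery.} Fix $\mu_0,\mu_1>0$ with $\mu_1/\mu_0=\la$ and consider $N^{(\id,\rho)}\stackrel{E^\mu}{\subs}M^{(\id,\rho)}$; here $M^{(\id,\rho)}=N\oti M_2(\C)$ is again the hyperfinite type II$_\infty$ factor. A direct computation with the (unique) trace of $M^{(\id,\rho)}$, using $\ta\circ\ph_\rho=d(\rho)^{-1}\mo(\rho)^{-1}\ta$, shows that $E^\mu$ is trace preserving \emph{precisely} when $\mu_1/\mu_0=d(\rho)\mo(\rho)$. On the other hand, by Theorem \ref{thm: rho-app} (together with the fact that $\rho(N)\stackrel{E_\rho}{\subs}N$, which has a trace-preserving minimal expectation in the II$_\infty$ case, is approximately inner when $N$ is hyperfinite) membership $\rho\in\oInt_\la(N)$ is equivalent to the inclusion $N^{(\id,\rho)}\stackrel{E^\mu}{\subs}M^{(\id,\rho)}$ being approximately inner, equivalently (Proposition \ref{prop: common-center}) to the central support of $1\oti e_{00}$ in $M_\om^{(\id,\rho)}(E^\mu)$ being $1$. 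Since, for a finite-index inclusion of hyperfinite factors, approximate innerness forces the conditional expectation to be the trace-preserving (Markov) one --- this is obtained by reducing by a finite projection to the II$_1$ situation of \cite{Po1}, where the central sequence inclusion must be $\Ind(E)^{-1}$-Markov --- it follows that the inclusion is approximately inner iff $\mu_1/\mu_0=d(\rho)\mo(\rho)$. This gives $\la\ge d(\rho)\mo(\rho)$, hence the ``only if'' part together with the easy inequality. Conversely, if $\la=d(\rho)\mo(\rho)$ then $E^\mu$ is trace preserving, the inclusion is approximately inner (finite index, hyperfinite, Markov expectation, via reduction to II$_1$), and $\rho\in\oInt_\la(N)$ by Theorem \ref{thm: rho-app}; alternatively one constructs the partial isometries $v_i^\nu$ of Definition \ref{defn: app-inner-general} directly from the hyperfiniteness of $N$ and the scaling identity $\ta\circ\ph_\rho=\la^{-1}\ta$.

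\textbf{The main obstacle.} The delicate point is the inequality $\la\ge d(\rho)\mo(\rho)$. Unlike $\la\le d(\rho)\mo(\rho)$, it cannot be read off from the approximation $\ph_\rho\approx\la^{-1}\sum_i(v_i^\nu)^*(\cdot)v_i^\nu$: in a type II$_\infty$ factor the partial isometries $v_i^\nu$ can push the $\ta$-trace mass of a finite projection off to infinity, so the identity $\ta(\la^{-1}\sum_i(v_i^\nu)^*ev_i^\nu)=\la^{-1}\ta(e)$ only yields, in the $\si$-weak limit, the one-sided bound via lower semicontinuity. Closing the gap is exactly what forces the detour through the central sequence inclusion and the trace-compatibility rigidity of \cite{Po1}; transporting that rigidity from the II$_1$ setting to the present II$_\infty$ inclusion (through the finite-projection reduction) and matching it with the trace-preservation computation for $E^\mu$ is where the real work lies.
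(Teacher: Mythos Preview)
Your proposal is correct and follows essentially the same route as the paper: both directions go through Theorem \ref{thm: rho-app} combined with Popa's characterization \cite[Theorem 2.9 (i)]{Po1} of approximate innerness for hyperfinite type II inclusions via trace-preservation of the expectation. Your ``easy inequality'' via lower semicontinuity of $\ta$ and the optional reduction to irreducible $\rho$ are correct but superfluous, since the paper simply invokes \cite[Theorem 2.9 (i)]{Po1} directly for the II$_\infty$ inclusion to obtain the full equivalence in one stroke.
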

\begin{proof}
We will show $\rho\in\oInt_\la(N)$ for $\la=d(\rho)\mo(\rho)$.
Set $\mu_0=(1+\la)^{-1}$, $\mu_1=\la(1+\la)^{-1}$.
Consider the locally trivial subfactor
$N^{(\id,\rho)}\subs M^{(\id,\rho)}$
with the conditional expectation $E:=E^{(\mu_0,\mu_1)}$.
Then $E$ preserves the trace $\ta\oti \Tr$ on $M^{(\id,\rho)}$,
and
the locally trivial inclusion
$N^{(\id,\rho)}\stackrel{E^\mu}{\subs} M^{(\id,\rho)}$
is approximately inner by \cite[Theorem 2.9 (i)]{Po1}.
Then Theorem \ref{thm: rho-app} implies that $\rho\in \oInt_\la(N)$.

Conversely we assume $\rho\in\oInt_\la(N)$ for some $\la>0$.
We set $\mu_0:=(1+\la)^{-1}$ and $\mu_1:=\la(1+\la)^{-1}$.
Then the expectation $E_\rho$ preserves $\ta$, 
and $\rho(N)\stackrel{E_\rho}{\subs} N$ is approximately inner
\cite[Theorem 2.9 (i)]{Po1}.
Hence
the locally trivial subfactor
$N^{(\id,\rho)}\stackrel{E^\mu}{\subs} M^{(\id,\rho)}$
is approximately inner by Theorem \ref{thm: rho-app}.
Again by \cite[Theorem 2.9 (i)]{Po1},
$E^\mu$ preserves the trace
$\ta\oti \Tr$, that is, $\la=d(\rho)\mo(\rho)$.
\end{proof}

We will use the following generalization of the previous lemma 
to non-factorial case. 
The definition of approximate innerness is naturally extended to 
this case as Definition \ref{defn: app-inner-general}, 
but we have to fix a left inverse of an endomorphism. 

\begin{lem}\label{lem: typeII}
Let $N$ be a hyperfinite type II$_\infty$ von Neumann algebra 
with a faithful normal semifinite trace $\ta$. 
Let $\rho\in \End(N)$ with a left inverse $\ph_\rho$. 
If $\rho|_{Z(N)}=\id$ and $\ta\circ \ph_\rho=\la^{-1}\ta$ 
for some $\la>0$, 
then $\rho$ is approximately inner of rank $\la$ 
with respect to $\ph_\rho$. 
\end{lem}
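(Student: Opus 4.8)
The statement is a non-factorial version of Lemma~\ref{lem: II-infty factor}, so the natural strategy is to reduce to the factorial case by a central decomposition argument and then invoke Lemma~\ref{lem: II-infty factor} (together with Theorem~\ref{thm: rho-app}). First I would fix a faithful normal state $\vph\in Z(N)_*$ whose associated trace data lets us disintegrate $N=\int_X^\oplus N_x\,d\mu(x)$ over its center $Z(N)=L^\infty(X,\mu)$, with each $N_x$ the hyperfinite type II$_\infty$ factor. Since $\rho|_{Z(N)}=\id$, the endomorphism $\rho$ decomposes as a direct integral $\rho=\int_X^\oplus \rho_x\,d\mu(x)$ of endomorphisms $\rho_x\in\End(N_x)_0$, and the left inverse likewise decomposes, $\ph_\rho=\int_X^\oplus \ph_{\rho_x}\,d\mu(x)$. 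The trace condition $\ta\circ\ph_\rho=\la^{-1}\ta$ then says that, after choosing the trace on each fiber compatibly, each $\rho_x$ satisfies $d(\rho_x)\mo(\rho_x)=\la$ for $\mu$-a.e.\ $x$ — i.e.\ the "module times dimension" is the \emph{constant} $\la$ across fibers, which is exactly the hypothesis of Lemma~\ref{lem: II-infty factor} applied fiberwise.

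The cleaner route, avoiding messy measurable selection of the approximating partial isometries, is to argue through locally trivial subfactors directly, mirroring the proof of Lemma~\ref{lem: II-infty factor}. Set $\mu_0=(1+\la)^{-1}$, $\mu_1=\la(1+\la)^{-1}$, form the locally trivial inclusion $N^{(\id,\rho)}\stackrel{E^\mu}{\subs}M^{(\id,\rho)}$ using the \emph{fixed} left inverse $\ph_\rho$, and observe that $E^\mu$ preserves the trace $\ta\oti\Tr$ precisely because $\ta\circ\ph_\rho=\la^{-1}\ta$ (this is where $\la=d(\rho)\mo(\rho)$ enters, now in the averaged sense). The key point is that $\rho(N)\stackrel{E_\rho}{\subs}N$ is a trace-preserving inclusion of the hyperfinite type II$_\infty$ von Neumann algebra with the property $\rho|_{Z(N)}=\id$, so it is approximately inner by Popa's result (\cite[Theorem~2.9~(i)]{Po1}) for hyperfinite inclusions; the relative-commutant/center condition needed there is exactly supplied by $\rho|_{Z(N)}=\id$. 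Then Theorem~\ref{thm: rho-app}, whose proof (Proposition~\ref{prop: common-center} and the Markov-basis computation) is purely algebraic and never used factoriality of the ambient algebra, upgrades "the locally trivial inclusion is approximately inner" to "$\rho\in\oInt_\la(N)$ with respect to $\ph_\rho$," where $\oInt_\la$ is understood in the extended sense of Definition~\ref{defn: app-inner-general}.

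The main obstacle I anticipate is verifying that the machinery of \S\ref{LTS}–\S\ref{thm: rho-app} genuinely survives dropping factoriality of $P$: one must check that $M_\om^{(\id,\rho)}(E^\mu)$ is still finite (the tracial state $\vph^\om\circ E_\om^\mu$ is now only semifinite-on-$Z$, so one works with a faithful normal \emph{state} on the center and the argument of Lemma~\ref{lem: mux} and Proposition~\ref{prop: common-center} must be re-read with $\vph\in Z(N)_*$ allowed to be genuinely central-valued rather than scalar); correspondingly the phrase "scalar central trace" in Theorem~\ref{thm: rho-app}(2) becomes "central support $=1$," which is the formulation we actually need. A secondary technical point is the application of \cite[Theorem~2.9~(i)]{Po1}: Popa's hyperfinite-inclusion result requires the inclusion to be "smooth" / irreducible-over-the-center, and one needs $\rho|_{Z(N)}=\id$ to guarantee $Z(\rho(N))\subs Z(N)$ with the right compatibility so that the inclusion $\rho(N)\subs N$ is, fiberwise, a hyperfinite subfactor. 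Once these two verifications are in place, the conclusion follows by assembling Theorem~\ref{thm: rho-app} exactly as in the factorial Lemma~\ref{lem: II-infty factor}; no further computation is required.
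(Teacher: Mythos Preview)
Your approach differs substantially from the paper's. The paper exploits the tensor splitting $N = Z(N)\oti R_{0,1}$ directly and argues in two steps. First, for $\la = 1$: after a unitary perturbation $\rho$ can be made to fix a type I$_\infty$ subfactor $B \subs R_{0,1}$, so that $\rho = \rho^{1\oti e_{11}} \oti \id_B$ on $Z(N) \oti e_{11}R_{0,1}e_{11} \oti B$; the reduced endomorphism $\rho^{1\oti e_{11}}$ now lives on a hyperfinite type II$_1$ algebra and preserves the trace, and the hands-on argument of \cite[Lemma~3.9]{MT1} (with a minor adjustment for a nontrivial center) shows it is approximately inner of rank $1$. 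Second, for general $\la$: pick $\th \in \Aut(R_{0,1})$ with module $\la$, set $\rho_0 := (\id\oti\th)^{-1}\rho$ with left inverse $\ph_{\rho_0} = \ph_\rho\circ(\id\oti\th)$, observe $\ta\circ\ph_{\rho_0} = \ta$ so $\rho_0 \in \oInt_1(N)$ by the first step, and combine with $\id\oti\th \in \oInt_\la(N)$ (from Lemma~\ref{lem: II-infty factor}) via the composition-rule computation to conclude $\rho \in \oInt_\la(N)$.

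The obstacles you flag in your own route are genuine, not routine checks. Proposition~\ref{prop: common-center} and Theorem~\ref{thm: rho-app} are written for a \emph{factor} $P$: the step $(3)\Rightarrow(4)$ in Proposition~\ref{prop: common-center} uses that $1\oti e_{00}$ and $1\oti e_{11}$ have \emph{scalar} central trace to produce partial isometries with $v_j^*v_j = 1$ globally; in the non-factorial case comparison against a center-valued trace does not immediately yield isometries with full initial support. Likewise \cite[Theorem~2.9~(i)]{Po1} is stated for subfactors, and invoking it for algebras with center either reintroduces the measurable-selection issue you hoped to avoid or requires an independent argument. The paper sidesteps all of this by reducing to a trace-preserving II$_1$ situation where an elementary construction from \cite{MT1} suffices, and then twisting by a single \emph{factorial} automorphism whose approximate innerness is already established.
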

\begin{proof}
By assumption on the hyperfiniteness,
we can regard $N=Z(N)\oti R_{0,1}$.

First we assume that $\la=1$.
We have $\ta\circ \ph_\rho=\ta$, and $\ta\circ\rho=\ta$.
Since $\rho=\id$ on $Z(N)$, $\rho(p)$ is equivalent to $p$
for any projection $p\in N$.

Take a system of matrix units $\{e_{i,j}\}_{i,j=1}^\infty$ in $R_{0,1}$ 
such that $e_{i,i}$ are finite projections for all $i$.
We take a partial isometry $w\in N$ such that 
$w^*w=1\oti e_{1,1}$ and $ww^*=\rho(1\oti e_{11})$. 
Then we set a unitary
$
%\displaystyle 
v:=\sum_{i=1}^\infty \rho(1\oti e_{i1})w(1\oti e_{1i})$.
It is easy to see that
$v (1\oti e_{ij})=\rho(1\oti e_{ij})v$ for all $i,j$.
Hence $\si:=\Ad v^*\circ\rho$ fixes $Z(N)$
and the type I subfactor $B$ generated by
$\{1\oti e_{i,j}\}_{i,j=1}^\infty$.
Considering the left inverse $\ph_\si:=\ph_\rho\circ \Ad v$ of $\si$, 
we may assume that $\rho$ fixes $B$. 
We note that $\ph_\rho$ also fixes them. 
Indeed if $\rho(x)=x$ for $x\in N$, then $\ph_\rho(x)=\ph_\rho(\rho(x))=x$. 

Now consider the reduced endomorphism $\rho^{1\oti e_{11}}$ 
on the hyperfinite type II$_1$ von Neumann algebra 
$(1\oti e_{11})N(1\oti e_{11})=Z(N)\oti e_{11}R_{0,1}e_{11}$. 
Using the natural isomorphism from $R_{0,1}$ onto $e_{11}R_{0,1}e_{11}\oti B$, 
we see that $\rho$ and $\ph_\rho$ are of the form 
$\rho^{1\oti e_{11}}\oti \id_B$ and $\ph_\rho^{1\oti e_{11}}\oti \id_B$ 
on $Z(N)\oti e_{11}R_{0,1}e_{11}\oti B$, respectively. 
Then the same proof of \cite[Lemma 3.9]{MT1} works
after a slight modification on a treatment of the center.
Hence $\rho^{1\oti e_{11}}$ is approximately inner of rank $1$,
and so is $\rho$.

Second we consider a general case. 
Take $\th\in\Aut(R_{0,1})$ with module $\la$. 
By the previous lemma, 
$\th\in \Aut(R_{0,1})$ is approximately inner of rank $\la$. 
The trace $\ta$ is given by $\ta_0\oti \ta_1$ where 
$\ta_0$ and $\ta_1$ are the traces on $Z(N)$ and $R_{0,1}$,
respectively. 
Then the automorphism $\id\oti \th$ satisfies
$\ta \circ (\id\oti\th)=\la\ta$.
We simply write $\id\oti \th$ as $\th$ below.
It is easy to see that $\th\in\Aut(N)$ is also approximately inner
of rank $\la$ with respect to the left inverse $\th^{-1}$.
Obviously $\theta$ is trivial on $Z(N)$.
Set $\rho_0:=\th^{-1}\rho$ and $\ph_{\rho_0}=\ph_\rho\circ\th$. 
Then we have $\rho_0|_{Z(N)}=\id$ and $\ta\circ\ph_{\rho_0}=\ta$, 
and the first part of the proof 
implies that $\rho_0$ is approximately inner of rank $1$. 

Take sequences of partial isometries 
$\{v_j^\nu\}_{j=1}^{[\la]+1}$, $\{u^\nu\}_\nu$, $\nu\in\N$, 
in $N$
such that 
\begin{enumerate}
\item 
$(v_j^\nu)^* v_j^\nu=1$ for $1\leq j\leq [\la]$, 
$\displaystyle\sum_{j=1}^{[\la]+1}v_j^\nu(v_j^\nu)^* =1$. 

\item 
$(u^\nu)^* u^\nu=1=u^\nu(u^\nu)^*$. 

\item For all $\vph\in N_*$ and $1\leq j\leq [\la]+1$, 
\[
\lim_{\nu\to\infty}
\left\|
\la^{-1} v_j^\nu \vph
-(\vph\circ\th^{-1})\cdot v_j^\nu
\right\|
=0, \quad
\lim_{\nu\to\infty}
\left\|
u^\nu \vph
-(\vph\circ\ph_{\rho_0})\cdot u^\nu
\right\|
=0. 
\]
\end{enumerate}
Then it is easy to see that 
\[
\lim_{\nu\to\infty}
\left\|
\la^{-1} v_j^\nu u^\nu \vph
-(\vph\circ\ph_\rho)\cdot v_j^\nu u^\nu 
\right\|
=0.
\]
Since $v_j^\nu u^\nu$ is an isometry for $1\leq j\leq [\la]$ 
and
$
%\displaystyle
\sum_{j=1}^{[\la]+1} v_j^\nu u^\nu(v_j^\nu u^\nu)^*=1$, 
$\rho$ is approximately inner of rank $\la$ by definition.
\end{proof}

\section{Canonical extensions and approximately inner endomorphisms} 

In this section, 
we discuss a generalization of the result proved by Kawahigashi,
Sutherland and Takesaki \cite{KST}, 
that was first announced by Connes without a proof \cite{C3}.
Their result says that
for any hyperfinite factor $M$, 
an automorphism on $M$ is approximately inner if and only if it has
trivial Connes-Takesaki module,
that is,
\[
\oInt(M)=\mathrm{Ker}(\mo). 
\]

\subsection{Canonical extension}

We recall canonical extensions of endomorphisms introduced by Izumi
\cite{Iz1}. 

Let $M$ be a factor and $\tM$ the canonical core extension of $M$
\cite[Definition 2.5]{FT}, 
which is the von Neumann algebra generated 
by $M$ and 
one-parameter unitary groups $\{\la^\vph(t)\}_{t\in\R}$, $\vph\in W(M)$, 
satisfying the relations 
\[
\si_t^\vph(x)=\la^\vph(t)x\la^\vph(t)^*,
\quad
\la^\ps(t)=[D\ps:D\vph]_t \la^\vph(t)
\]
for all $x\in M$, $t\in\R$ and $\vph,\ps\in W(M)$. 

Let us represent $M$ on a Hilbert space $H$. 
The crossed product $M\rti_{\si^\vph}\R$ 
for the modular automorphism group $\si^\vph$ is
the von Neumann algebra
generated by $\pi_{\si^\vph}(M)$ and $\la(\R)$ in $B(H\oti L^2(\R))$ 
such that 
\[
(\pi_{\si^\vph}(x)\xi)(s)=\si_{-s}^\vph(x)\xi(s),
\quad
(\la(t)\xi)(s)=\xi(-t+s)
\]
for all $x\in M$, $\xi\in H\oti L^2(\R)=L^2(\R,H)$ and $s,t\in\R$. 
By \cite[Theorem 2.4]{FT}, we have the isomorphism
$\Pi_\vph\col \tM\ra M\rti_{\si^\vph}\R$ satisfying
\[
\Pi_\vph(x)=\pi_{\si^\vph}(x),
\quad
\Pi_\vph(\la^\vph(t))=\la(t)
\quad
\mbox{for all}\ x\in M, t\in\R.
\]

Let $\th$ be the $\R$-action on $\tM$ satisfying
\[
\th_s(x)=x,\quad \th_s(\la^\vph(t))=e^{-ist}\la^\vph(t)
\quad
\mbox{for all}\ x\in M, s,t\in\R.
\]
Then $\Pi_\vph\circ \th_s=\wdh{\si^\vph}_s \circ \Pi_\vph$ 
for all $s\in\R$.
The action $\th$ is also called the dual action. 

Let $\rho$ be an endomorphism on $M$ with finite index. 
Then the \emph{canonical extension} $\trho$ of $\rho$ is
the endomorphism on $\tM$ defined by
\[
\trho(x)=\rho(x),
\quad
\trho(\la^\vph(t))=d(\rho)^{it}[D\vph\circ\ph_\rho:D\vph]_t\la^\vph(t)
\]
for all $x\in M$, $t\in \R$ and $\vph\in W(M)$. 
Note that $\trho$ commutes the dual action $\th$. 

\subsection{Normalized canonical extension}

Let $\,\wdt{}\ \col \End(M)_0\ra \End(\tM)$ be the canonical
extension.
It is known that
the canonical extension is continuous
on $\Aut(M)$ with respect to the $u$-topology.
However in general,
it is not continuous on $\End(M)_0$
because the statistical dimension map
$d\col \End(M)_0\ra [1,\infty)$ is not continuous
with respect to our topology
(recall Definition \ref{defn: topology}).
In \S \ref{subsect: CT-app}, we will discuss a relation
between approximate innerness and Connes-Takesaki modules. 
Then we need the continuity for that purpose. 
Hence we introduce a modified canonical extension map as follows. 

\begin{defn}
Let $M$ be a factor and $\rho\in \End(M)_0$. 
We define the \emph{normalized canonical extension map} 
$\breve{}\ \col\End(M)_0\ra \End(\tM)$ by 
\[
\breve{\rho}(x)=\rho(x),
\quad
\breve{\rho}(\la^\vph(t))=[D\vph\circ\ph_\rho:D\vph]_t\la^\vph(t)
\]
for all $x\in M$, $t\in \R$ and $\vph\in W(M)$. 
\end{defn}

Indeed, we have
\[
\breve{\rho}=\th_{\log(d(\rho))}\circ \trho
=\trho\circ\th_{\log(d(\rho))}, 
\]
which shows the existence of $\breve{\rho}$.

Next we want to discuss a convergence in $\End(\tM)$ 
by using particular left inverses.
Note that $\tM$ may not be a factor.
Even in non-factorial case, minimal expectations can be
also defined as in \cite{FI},
and it will be possible to give a topology.
However, 
in order to avoid using a disintegration of factors and
left inverses, 
we do not take such a way.
For our purpose,
the following notion presented in \cite[Definition 3.1]{MT1} is sufficient.

\begin{defn}
Let $M$ be a von Neumann algebra and 
$\rho^\nu$, $\nu\in\N$, $\rho$ endomorphisms on $M$.
Let $\ph^\nu$ and $\ph$ be left inverses
of
$\rho^\nu$ and $\rho$, respectively.
We say that the sequence of the pairs
$\{(\rho^\nu,\ph^\nu)\}_\nu$
converges to $(\rho,\ph)$
if 
\[
\lim_{\nu\to\infty}\|\vph\circ \ph^\nu-\vph\circ\ph\|=0 
\quad\mbox{for all}\ \vph\in M_*. 
\]
\end{defn}

Note that the above convergence implies pointwise strong* convergence,
that is,
if $(\rho^\nu,\ph^\nu)$ converges to $(\rho,\ph)$, 
then $\rho^\nu(x)\to\rho(x)$ strongly* as $\nu\to\infty$ for any $x\in M$
\cite[Lemma 3.8]{MT1}.
When $M$ is a factor,
$\rho^\nu$ converges to $\rho$ in $\End_0(M)$ in the topology
defined in Definition \ref{defn: topology}
if and only if $(\rho^\nu,\ph_{\rho^\nu})$ converges to
$(\rho,\ph_\rho)$.
We study the relationship between the convergence of
endomorphisms and that of implementing isometries \cite{GL}.

Let $M$ be a factor as before.
We represent $M$ on the standard Hilbert space $L^2(M)$.
The positive cone is denoted by $L^2(M)_+$.
In what follows,
we use the following useful equalities for $\rho\in\End(M)_0$:
\begin{equation}\label{eq: si-ph}
\si_t^{\ps\circ\ph_\rho}\circ\rho=\rho\circ\si_t^\ps,
\quad
[D\ps\circ\ph_\rho:D\chi\circ\ph_\rho]_t=\rho([D\ps:D\chi]_t)
\end{equation}
for all $\ps,\chi\in W(M)$ and $t\in\R$ \cite[p.5--7]{Iz1}.
Since $\ps\circ \ph_\rho\circ E_\rho=\ps\circ\ph_\rho$,
$E_\rho$ and $\si_t^{\ps\circ\ph_\rho}$ commute \cite[p.317]{Ta1}.
Hence we also have
\begin{equation}\label{eq: si-Erho}
\si_t^{\ps\circ\ph_\rho}\circ E_\rho
=E_\rho\circ \si_t^{\ps\circ\ph_\rho}, 
\quad
\si_t^{\ps}\circ \ph_\rho
=\ph_\rho\circ \si_t^{\ps\circ\ph_\rho}.
\end{equation}
where the latter equality follows from the former one and (\ref{eq: si-ph}).

Now let us fix a faithful state $\ps\in M_*$.
We take a unit vector $\xi_\ps\in L^2(M)_+$ such that
$\ps(x)=(x\xi_\ps,\xi_\ps)$ for $x\in M$.
For each $\rho\in \End(M)_0$, we take a unit vector
$\xi_{\ps\circ\ph_\rho}\in L^2(M)_+$ such that
$\ps(\ph_\rho(x))
=(x\xi_{\ps\circ\ph_\rho},\xi_{\ps\circ\ph_\rho})$ for $x\in M$.
Following \cite[Appendix A]{GL},
we define the standard implementation $V_\rho$ for $\rho$ by
\[
V_\rho (x\xi_\ps)=\rho(x)\xi_{\ps\circ\ph_\rho}
\quad\mbox{for all}\ x\in M. 
\]
Then the isometry $V_\rho$ satisfies the following
\cite[Proposition A.2]{GL}:
\[
V_\rho x=\rho(x) V_\rho,\ 
\ph_\rho(x)=V_\rho^* x V_\rho
\quad\mbox{for all}\ x\in M. 
\]
Let $\De_\ps$ and $\De_{\ps\circ \ph_\rho}$
be the modular operators
of $\ps$ and $\ps\circ \ph_\rho$, respectively.
Then,
\begin{equation}\label{eq: U-De}
V_\rho \De_\ps^{it}=\De_{\ps\circ \ph_\rho}^{it} V_\rho
\quad\mbox{for all}\ t\in \R. 
\end{equation}
Indeed, using a formula
$\si_t^{\ps\circ \ph_\rho}\circ \rho=\rho\circ \si_t^{\ps}$,
we have
\begin{align*}
V_\rho \De_\ps^{it} (x\xi_{\ps})
=&\,
V_\rho (\si_t^\ps(x)\xi_{\ps})
=
\rho(\si_t^\ps(x))\xi_{\ps\circ\ph_\rho}
\\
=&\,
\si_t^{\ps\circ \ph_\rho}(\rho(x))\xi_{\ps\circ\ph_\rho}
=
\De_{\ps\circ\ph_\rho}^{it} \rho(x)\xi_{\ps\circ\ph_\rho}
\\
=&\,
\De_{\ps\circ\ph_\rho}^{it} V_\rho (x\xi_{\ps}). 
\end{align*}

\begin{lem}\label{lem: rho-nu-rho}
Let $(\rho^\nu)_{\nu\in\N}$ and $\rho$ be
endomorphisms on a factor $M$ with finite index.
Then the following statements are equivalent.
\begin{enumerate}
\item $(\rho^\nu)_\nu$ converges to $\rho$.

\item 
$(\rho^\nu(x))_\nu$ converges to $\rho(x)$ strongly*
for all $x\in M$
and
$\displaystyle\lim_{\nu\to\infty}\xi_{\ps\circ \ph_{\rho^\nu}}
=\xi_{\ps\circ \ph_\rho}$.

\item 
$(V_{\rho^\nu})_\nu$ converges to $V_\rho$ strongly.
\end{enumerate}
\end{lem}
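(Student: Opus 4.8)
The plan is to prove the cyclic chain of implications $(1)\Rightarrow(2)\Rightarrow(3)\Rightarrow(1)$, using the explicit formula for the standard implementation $V_\rho(x\xi_\ps)=\rho(x)\xi_{\ps\circ\ph_\rho}$ and the intertwining relations $V_\rho x=\rho(x)V_\rho$, $\ph_\rho(x)=V_\rho^* x V_\rho$, together with the commutation $V_\rho\De_\ps^{it}=\De_{\ps\circ\ph_\rho}^{it}V_\rho$ established just above.

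\medskip

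For $(1)\Rightarrow(2)$: assuming $(\rho^\nu)_\nu$ converges to $\rho$, the pointwise strong$*$ convergence $\rho^\nu(x)\to\rho(x)$ is immediate from \cite[Lemma 3.8]{MT1}. For the convergence of the vectors, recall that $\xi_{\ps\circ\ph_\rho}$ is the unique vector in the positive cone $L^2(M)_+$ implementing the state $\ps\circ\ph_\rho$, and that the map sending a normal state to its canonical vector representative in $L^2(M)_+$ is norm-continuous (indeed $1/\sqrt2$-Hölder, by the Powers–Størmer inequality). Since $\|\ps\circ\ph_{\rho^\nu}-\ps\circ\ph_\rho\|\to0$ by hypothesis, we get $\xi_{\ps\circ\ph_{\rho^\nu}}\to\xi_{\ps\circ\ph_\rho}$ in norm, hence a fortiori strongly.

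\medskip

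For $(2)\Rightarrow(3)$: I would show $V_{\rho^\nu}\eta\to V_\rho\eta$ for $\eta$ in the dense subspace $M\xi_\ps$, and then upgrade to all of $L^2(M)$ by a uniform-boundedness argument (all the $V_{\rho^\nu}$ are isometries, so $\|V_{\rho^\nu}\|=1$). For $\eta=x\xi_\ps$ with $x\in M$ we compute
\[
V_{\rho^\nu}(x\xi_\ps)-V_\rho(x\xi_\ps)
=\rho^\nu(x)\xi_{\ps\circ\ph_{\rho^\nu}}-\rho(x)\xi_{\ps\circ\ph_\rho}.
\]
Split this as
\[
\big(\rho^\nu(x)-\rho(x)\big)\xi_{\ps\circ\ph_{\rho^\nu}}
+\rho(x)\big(\xi_{\ps\circ\ph_{\rho^\nu}}-\xi_{\ps\circ\ph_\rho}\big).
\]
The second term tends to $0$ because $\rho(x)$ is a fixed bounded operator and the vectors converge. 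For the first term, I would use that right multiplication by a bounded net of vectors is controlled: write $(\rho^\nu(x)-\rho(x))\xi_{\ps\circ\ph_{\rho^\nu}}=(\rho^\nu(x)-\rho(x))J_{\ps\circ\ph_{\rho^\nu}}\De_{\ps\circ\ph_{\rho^\nu}}^{1/2}$ acting suitably, or more simply observe that strong$*$ convergence $\rho^\nu(x)\to\rho(x)$ together with norm-convergence of $\xi_{\ps\circ\ph_{\rho^\nu}}$ gives $\|(\rho^\nu(x)-\rho(x))\xi_{\ps\circ\ph_{\rho^\nu}}\|\le\|(\rho^\nu(x)-\rho(x))\xi_{\ps\circ\ph_\rho}\|+\|\rho^\nu(x)-\rho(x)\|\,\|\xi_{\ps\circ\ph_{\rho^\nu}}-\xi_{\ps\circ\ph_\rho}\|$; the first summand tends to $0$ by strong convergence applied to the fixed vector $\xi_{\ps\circ\ph_\rho}$, and the second is bounded by $2\|x\|$ times a null sequence. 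Hence $V_{\rho^\nu}\eta\to V_\rho\eta$ on the dense subspace, and by uniform boundedness $V_{\rho^\nu}\to V_\rho$ strongly.

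\medskip

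For $(3)\Rightarrow(1)$: assuming $V_{\rho^\nu}\to V_\rho$ strongly, I recover the convergence of left inverses from $\ph_{\rho^\nu}(x)=V_{\rho^\nu}^* x V_{\rho^\nu}$. For a faithful state of the form $\vph=(\,\cdot\,\xi,\xi)$ with $\xi\in L^2(M)$ (these are norm-dense in the state space, which suffices since $\|\vph\circ\ph_{\rho^\nu}-\vph\circ\ph_\rho\|$ is controlled by norm-perturbations of $\vph$ uniformly in $\nu$), we have
\[
\vph\circ\ph_{\rho^\nu}(x)=(V_{\rho^\nu}^* x V_{\rho^\nu}\xi,\xi)=(x V_{\rho^\nu}\xi,V_{\rho^\nu}\xi),
\]
so $\vph\circ\ph_{\rho^\nu}$ is the vector state associated with $V_{\rho^\nu}\xi$, and similarly $\vph\circ\ph_\rho$ is that of $V_\rho\xi$. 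Since $V_{\rho^\nu}\xi\to V_\rho\xi$ in norm, the associated vector states converge in norm, giving $\|\vph\circ\ph_{\rho^\nu}-\vph\circ\ph_\rho\|\to0$; passing to general normal states by the density remark and the uniform estimate completes the proof.

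\medskip

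The main obstacle I anticipate is the passage from the dense subspace to all of $L^2(M)$ in $(2)\Rightarrow(3)$ and, dually, the reduction to vector states of the special form in $(3)\Rightarrow(1)$: one must check that the relevant estimates are uniform in $\nu$ (using $\|V_{\rho^\nu}\|=1$), which is where the isometry property of the standard implementations is essential. The norm-continuity of the state-to-vector map (Powers–Størmer) in $(1)\Rightarrow(2)$ is standard but deserves an explicit citation.
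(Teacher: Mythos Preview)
Your proof is correct and follows essentially the same cyclic scheme $(1)\Rightarrow(2)\Rightarrow(3)\Rightarrow(1)$ as the paper, with the same ingredients (\cite[Lemma 3.8]{MT1} for strong$*$ convergence, continuity of the state-to-vector map, the formula $V_\rho(x\xi_\ps)=\rho(x)\xi_{\ps\circ\ph_\rho}$ on the dense subspace, and uniform boundedness of the isometries). The only notable difference is in $(3)\Rightarrow(1)$: you restrict to positive vector states $\omega_{\xi,\xi}$ and then pass to general $\vph\in M_*$ by a density-plus-uniform-estimate argument, whereas the paper observes directly that on the standard representation every normal functional is of the form $\omega_{\xi,\eta}$ and computes $\omega_{\xi,\eta}\circ\ph_{\rho^\nu}=\omega_{V_{\rho^\nu}\xi,\,V_{\rho^\nu}\eta}$, giving the one-line estimate
\[
\|\omega_{\xi,\eta}\circ\ph_{\rho^\nu}-\omega_{\xi,\eta}\circ\ph_\rho\|
\le \|\eta\|\,\|V_{\rho^\nu}\xi-V_\rho\xi\|+\|\xi\|\,\|V_{\rho^\nu}\eta-V_\rho\eta\|.
\]
This avoids your density step entirely; you may want to adopt it.
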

\begin{proof}
(1)$\Rightarrow$ (2). 
The strong* convergence of $\rho^\nu(x)$ follows from 
\cite[Lemma 3.8]{MT1}. 
Since $M$ acts on $L^2(M)$ standardly, 
the convergence $\ps\circ \ph_{\rho^\nu}\to \ps\circ\ph_\rho$ 
implies the convergence 
$\xi_{\ps\circ \ph_{\rho^\nu}}\to \xi_{\ps\circ \ph_\rho}$
\cite[Lemma 2.10]{Ha}. 

\noindent
(2)$\Rightarrow$ (3). 
By (2), we see that for all $x\in M$, 
\[
\lim_{\nu\to\infty}V_{\rho^\nu}(x\xi_\ps)
=\lim_{\nu\to\infty}\rho^\nu(x)\xi_{\ps\circ\ph_{\rho^\nu}}
=\rho(x)\xi_{\ps\circ\ph_\rho}
=V_\rho (x\xi_\ps). 
\]
The norm-boundedness of $V_{\rho^\nu}$ implies the strong convergence. 

\noindent
(3)$\Rightarrow$ (1). 
For vectors $\xi,\eta\in L^2(M)$, 
we denote by $\om_{\xi,\eta}\in M_*$ the functional 
$\om_{\xi,\eta}(x)=(x\xi,\eta)$ for $x\in M$. 
Since $V_{\rho^\nu}$ implements $\ph_{\rho^\nu}$, we have 
$\om_{\xi,\eta}\circ \ph_{\rho^\nu}
=\om_{V_{\rho^\nu}\xi,V_{\rho^\nu}\eta}$. 
By elementary calculation, we have 
\[
\|\om_{\xi,\eta}\circ \ph_{\rho^\nu}
-\om_{\xi,\eta}\circ \ph_{\rho}
\|
\leq 
\|\eta\|\|V_{\rho^\nu}\xi-V_\rho \xi\|
+\|\xi\|\|V_{\rho^\nu}\eta-V_\rho \eta\|. 
\]
Hence we have the norm convergence 
$\om_{\xi,\eta}\circ \ph_{\rho^\nu}
\to\om_{\xi,\eta}\circ \ph_{\rho}$ as $\nu\to\infty$. 
Since any normal functional on $M$ is
of the form $\om_{\xi,\eta}$, $\xi,\eta\in L^2(M)$, 
we have done. 
\end{proof}

\begin{lem}
Let $M$ be an infinite factor, 
$\rho_1,\rho_2\in \End(M)_0$ 
and $v_1,v_2\in M$ isometries with $v_1v_1^*+v_2v_2^*=1$. 
We define $\rho\in \End(M)$
by $\rho(x):=v_1\rho_1(x)v_1^*+v_2\rho_2(x)v_2^*$. 
Then for any weight $\vph$ on $M$, we have 
\[
d(\rho)^{it}[D\vph\circ \ph_\rho:D\vph]_t
=\sum_{k=1}^2 d(\rho_k)^{it}v_k[D\vph\circ\ph_{\rho_k}:D\vph]_t 
\si_t^\vph(v_k^*). 
\]
\end{lem}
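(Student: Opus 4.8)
The plan is to compute both sides by testing them against the relevant Connes cocycle flows, using the defining properties of the canonical extension. Recall that for any endomorphism $\si\in\End(M)_0$ and weight $\vph$, the normalized Connes cocycle $d(\si)^{it}[D\vph\circ\ph_\si:D\vph]_t$ is precisely $\trho(\la^\vph(t))\la^\vph(t)^*$ — up to the convention $\trho(\la^\vph(t))=d(\si)^{it}[D\vph\circ\ph_\si:D\vph]_t\la^\vph(t)$ — so the statement really says $\trho(\la^\vph(t)) = \sum_k v_k\,\trho[\rho_k](\la^\vph(t))\,\si_t^\vph(v_k^*)$ where I write $\trho[\rho_k]$ for the canonical extension of $\rho_k$. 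Since $\la^\vph(t)^*$ appears on both sides after pulling it out, it suffices to verify this identity, and since elements of $\tM$ are generated by $M$ and the $\la^\vph(t)$, it is enough to check that the map $x\oti\la^\vph(t)\mapsto \sum_k v_k\trho[\rho_k](x)\trho[\rho_k](\la^\vph(t))\si_t^\vph(v_k^*)$ agrees with $\trho$.

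First I would verify on $M$: for $x\in M$, $\sum_k v_k\rho_k(x)v_k^* = \rho(x) = \trho(x)$ by definition of $\rho$, so the $M$-part is immediate. Next, the heart of the matter is the behavior on $\la^\vph(t)$, equivalently the cocycle identity
\[
d(\rho)^{it}[D\vph\circ\ph_\rho:D\vph]_t
=\sum_{k=1}^2 d(\rho_k)^{it}v_k[D\vph\circ\ph_{\rho_k}:D\vph]_t\si_t^\vph(v_k^*).
\]
To prove this I would compute the left inverse $\ph_\rho$ explicitly in terms of $\ph_{\rho_1},\ph_{\rho_2}$ and the isometries. The point is that $\rho$ decomposes as a direct sum with the two projections $v_1v_1^*, v_2v_2^*$, so $d(\rho)=d(\rho_1)+d(\rho_2)$ and the minimal left inverse is $\ph_\rho(y)=\sum_k \frac{d(\rho_k)}{d(\rho)}\rho_k^{-1}(v_k^* y v_k)$; here $\rho_k^{-1}(v_k^*yv_k)$ makes sense because $v_k^* y v_k\in\rho_k(M)$ is false in general, so more precisely $\ph_\rho(y)=\sum_k\frac{d(\rho_k)}{d(\rho)}\ph_{\rho_k}(v_k^* y v_k)$ using that $E_\rho$ restricts correctly. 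Granting this formula, $\vph\circ\ph_\rho = \sum_k\frac{d(\rho_k)}{d(\rho)}(\vph\circ\ph_{\rho_k})(v_k^*\cdot v_k)$, and then I would expand $[D\vph\circ\ph_\rho:D\vph]_t$ using the chain rule for Connes cocycles together with the relation $[D(\vph\circ\ph_{\rho_k})(v_k^*\cdot v_k):D\,\vph]_t$, which involves the partial isometry $v_k$ and a scalar $(d(\rho_k)/d(\rho))^{it}$ coming from the weight rescaling. Assembling the terms over $k$ and using $v_1v_1^*+v_2v_2^*=1$ together with $\si_t^\vph(v_k^*)$ to move the modular flow past the isometries should produce exactly the right-hand side.

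The main obstacle I anticipate is bookkeeping the scalar factors $d(\rho)^{it}$ versus $d(\rho_k)^{it}$ correctly and justifying the manipulation of Connes cocycles for the non-faithful ``compressed'' weights $\vph_k := (\vph\circ\ph_{\rho_k})(v_k^*\cdot v_k)$, which are supported on $v_kv_k^*$ rather than all of $M$ — one needs to work on the corner $v_kv_k^* M v_kv_k^*$ and then patch. A clean way around this is to first prove the identity for $\vph$ a faithful state and use density/continuity, or to observe that both sides, as functions of $\vph$, transform the same way under $\vph\mapsto\vph'$ via the cocycle $[D\vph':D\vph]_t$, so it suffices to check one convenient $\vph$; picking $\vph$ so that $v_kv_k^*\in M_\vph$ would kill the awkward $\si_t^\vph(v_k^*)$ terms and make the computation transparent, after which the general case follows by the cocycle chain rule. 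I would also double-check the edge normalization against the already-established facts $\breve\rho=\th_{\log d(\rho)}\circ\trho$ and the formula of Lemma~\ref{lem: IndE}-style additivity of $d$ under this kind of direct sum, to make sure the $d(\rho)^{it}$ on the left is not off by the index-versus-dimension discrepancy.
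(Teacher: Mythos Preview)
Your identification of the key formula
\[
d(\rho)\,\ph_\rho(y)=\sum_{k=1}^2 d(\rho_k)\,\ph_{\rho_k}(v_k^* y v_k)
\]
is exactly what the paper uses, so you are starting from the right place. However, your proposed route from there to the cocycle identity is more circuitous than necessary and contains one concrete slip: having $v_kv_k^*\in M_\vph$ does \emph{not} give $\si_t^\vph(v_k^*)=v_k^*$; it only fixes the range projection, so $\si_t^\vph(v_k)=v_k w_{k,t}$ for some unitary $w_{k,t}$. To kill the $\si_t^\vph(v_k^*)$ term you would need $v_k\in M_\vph$, and for \emph{given} isometries $v_1,v_2$ there is no reason such a $\vph$ exists. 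Your fallback of reducing to a single convenient weight via the chain rule $[D\vph\circ\ph_\rho:D\vph]_t=\rho([D\vph:D\vph_0]_t)\,[D\vph_0\circ\ph_\rho:D\vph_0]_t\,[D\vph_0:D\vph]_t$ is sound in principle, but you still need a base case, and the one you propose is not available.

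The paper bypasses all of this. Rather than manipulating chain rules or compressed non-faithful weights, it simply sets
\[
u_t:=\sum_{k=1}^2 \Big(\frac{d(\rho_k)}{d(\rho)}\Big)^{it} v_k\,[D\vph\circ\ph_{\rho_k}:D\vph]_t\,\si_t^\vph(v_k^*)
\]
and verifies directly that $u_t$ satisfies the relative modular condition characterizing $[D\vph\circ\ph_\rho:D\vph]_t$ (Takesaki, Theorem VIII.3.3). With the left-inverse formula above in hand this is a short KMS-type computation for the pair $(\vph\circ\ph_\rho,\vph)$, and it works for arbitrary $\vph$ in one stroke---no special choice of weight, no corners, no density argument. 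Your reformulation via canonical extensions is just a restatement (since $\trho(\la^\vph(t))$ is \emph{defined} by this cocycle), so it adds no leverage; the whole content is the cocycle identity, and the modular characterization is the clean tool for it.
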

\begin{proof}
It is shown by using
$
%\displaystyle 
d(\rho)\ph_\rho(x)
=\sum_{k=1}^2 d(\rho_k)\ph_{\rho_k}(v_k^* xv_k)$
and the relative modular condition \cite[Theorem VIII.3.3]{Ta2}.
\end{proof}

Now we construct a left inverse of a canonical extension.

\begin{lem}
Let $\rho\in\End(M)_0$ and $\wdt{\rho}$  be the canonical extension of $\rho$.
Then there exists a left inverse $\ph_{\wdt{\rho}}$ on $\tM$ such that
\[
\ph_{\wdt{\rho}}(x\la^\vph(t))
=d(\rho)^{-it}\ph_\rho(x [D\vph:D\vph\circ \ph_\rho]_t)\la^\vph(t)
\]
for all $x\in M$, $t\in\R$ and $\vph\in W(M)$. 
\end{lem}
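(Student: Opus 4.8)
The plan is to produce the left inverse $\ph_{\wdt\rho}$ by a direct verification that the formula defines a well-defined normal completely positive unital map satisfying $\ph_{\wdt\rho}\circ\wdt\rho=\id$. First I would recall that by the isomorphism $\Pi_\vph\colon\tM\to M\rti_{\si^\vph}\R$ the element $x\la^\vph(t)$ is of the correct general type, and that a generic element of $\tM$ is a $\si$-weak limit of linear combinations $\sum_j x_j\la^\vph(t_j)$; hence it suffices to define $\ph_{\wdt\rho}$ on such combinations and check boundedness. The natural candidate is to compose $\ph_\rho$ (extended to $\tM$ in the obvious way on the $M$-part) with the adjoint-type correction coming from the Connes cocycle, exactly as written; equivalently, one can try to realize $\ph_{\wdt\rho}$ as $V_{\wdt\rho}^*\,(\cdot)\,V_{\wdt\rho}$ for the standard implementation $V_{\wdt\rho}$ of $\wdt\rho$ on $L^2(\tM)$, which automatically gives a normal unital completely positive map that is a left inverse once one knows $V_{\wdt\rho}$ is an isometry with $V_{\wdt\rho}x=\wdt\rho(x)V_{\wdt\rho}$.

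Concretely, the key steps in order are: (1) Check that the prescribed formula is consistent with the cocycle relation $\la^\ps(t)=[D\ps:D\vph]_t\la^\vph(t)$ defining $\tM$, i.e.\ that the right-hand side does not depend on the choice of $\vph\in W(M)$; this uses the chain rule for Connes cocycles together with the identity $[D\ps\circ\ph_\rho:D\chi\circ\ph_\rho]_t=\rho([D\ps:D\chi]_t)$ from \eqref{eq: si-ph} and the intertwining $\si_t^{\ps}\circ\ph_\rho=\ph_\rho\circ\si_t^{\ps\circ\ph_\rho}$ from \eqref{eq: si-Erho}. (2) Verify multiplicativity-type compatibility: that $\ph_{\wdt\rho}$ as defined respects the product $(x\la^\vph(s))(y\la^\vph(t))=x\si_s^\vph(y)\la^\vph(s+t)$, so that it extends to the $*$-algebra generated by $M$ and the $\la^\vph(t)$, using again $\si^{\vph}\circ\ph_\rho=\ph_\rho\circ\si^{\vph\circ\ph_\rho}$ and the fact that $[D\vph:D\vph\circ\ph_\rho]_t$ is a $\si^{\vph\circ\ph_\rho}$-cocycle. (3) Check normality and the contractivity/positivity needed to extend $\si$-weakly to all of $\tM$; here the cleanest route is to identify the formula with compression by the standard implementing isometry $V_{\wdt\rho}$ of $\wdt\rho$, which gives positivity, unitality and normality for free. (4) Finally verify $\ph_{\wdt\rho}(\wdt\rho(x\la^\vph(t)))=x\la^\vph(t)$: since $\wdt\rho(x)=\rho(x)$ and $\wdt\rho(\la^\vph(t))=d(\rho)^{it}[D\vph\circ\ph_\rho:D\vph]_t\la^\vph(t)$, substituting into the formula for $\ph_{\wdt\rho}$ and using $\ph_\rho\circ\rho=\id$ together with the cocycle identities collapses everything back to $x\la^\vph(t)$.

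The main obstacle I expect is step (1), the well-definedness across different weights $\vph$: one must juggle three cocycles, $[D\vph\circ\ph_\rho:D\vph]_t$, $[D\ps:D\vph]_t$, and $\rho$ applied to $[D\ps:D\vph]_t$, and check that after changing $\vph$ to $\ps$ the extra Radon–Nikodym factors in $x[D\vph:D\vph\circ\ph_\rho]_t$ and the $\si_t^\vph$ twisting on $x$ conspire with $\ph_\rho$ to reproduce the same functional of $\tM$. A secondary technical point is that $\tM$ need not be a factor, so one cannot simply invoke minimal-expectation machinery; but since $\ph_\rho=V_\rho^*(\cdot)V_\rho$ and $\wdt\rho$ has a standard implementation $V_{\wdt\rho}$ on $L^2(\tM)$ with $V_{\wdt\rho}\,x=\wdt\rho(x)\,V_{\wdt\rho}$ for all $x\in\tM$, setting $\ph_{\wdt\rho}:=V_{\wdt\rho}^*(\cdot)V_{\wdt\rho}$ sidesteps factoriality entirely and one only needs to compute that this compression has the stated action on $x\la^\vph(t)$, which is the content of steps (1)–(4) above. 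I would therefore carry out the verification in this order: define $\ph_{\wdt\rho}$ via $V_{\wdt\rho}$, record its automatic good properties, then compute its value on $x\la^\vph(t)$ using \eqref{eq: si-ph}, \eqref{eq: si-Erho} and \eqref{eq: U-De}, and finally read off both the stated formula and the left-inverse identity.
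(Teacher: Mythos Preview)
Your approach differs from the paper's. The paper constructs $\ph_{\trho}$ via the conjugate endomorphism: after reducing to the case where $M$ is infinite, it takes an isometry $v\in(\id,\brho\rho)$, observes that $v\in(\id,\wdt{\brho}\,\trho)$ by \cite[Proposition 2.5 (1)]{Iz1}, and sets $\ph_{\trho}(x):=v^*\wdt{\brho}(x)v$. The left-inverse property is then immediate from $v^*v=1$, and the stated formula drops out of a short computation using the preceding lemma on cocycles of direct sums together with the chain rule for Connes cocycles and \eqref{eq: si-ph}. No implementation on $L^2(\tM)$ enters.

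Your route via an implementing isometry is also workable, but there is a circularity you have not addressed. The Guido--Longo standard implementation of $\trho$ on $L^2(\tM)$, in the sense of \cite[Appendix A]{GL}, is defined by $V_{\trho}(y\,\xi_\chi)=\trho(y)\,\xi_{\chi\circ\ph_{\trho}}$ and therefore already presupposes a choice of left inverse $\ph_{\trho}$; you cannot invoke it to \emph{define} $\ph_{\trho}$. What does work is to build an implementing isometry by hand from $V_\rho$ on $L^2(\R,L^2(M))$, namely $(U\xi)(s)=d(\rho)^{is}[D\vph\circ\ph_\rho:D\vph]_{-s}^*\,V_\rho\,\xi(s)$, verify directly that $U$ is an isometry with $U\,y=\trho(y)\,U$ for all $y\in\tM$ (this is where \eqref{eq: si-ph} and \eqref{eq: si-Erho} are used), and then set $\ph_{\trho}:=U^*(\cdot)U$. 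The left-inverse identity is then automatic from $U^*U=1$, and your well-definedness concern in step (1) evaporates because $U$ is a concrete operator. The paper in fact carries out exactly this construction in the lemmas immediately following this one (for the normalized extension $\brrho$), so your plan amounts to reversing the paper's logical order; either order produces the same objects.
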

\begin{proof}
When $M$ is finite, we consider $P:=B(\el^2)\oti M$
and $\si:=\id\oti \rho$.
Then $\wdt{P}=B(\el^2)\oti\tM$ and $\tsi=\id\oti \wdt{\rho}$.
If the statement holds for infinite case,
there exists a left inverse $\ph_{\tsi}$ on $\wdt{P}$
with the above property.
Since $\tsi$ is trivial on $B(\el^2)$, so is $\ph_{\tsi}$.
Then we can define the map $\ph_{\trho}$ on $\tM$ by
$\ph_{\tsi}=\id\oti \ph_{\trho}$, which has the desired property.
Hence we may and do assume that $M$ is infinite.

Take an isometry $v\in (\id,\brho\rho)$.
We set $\ph_{\wdt{\rho}}(x):=v^* \wdt{\brho}(x)v$ for $x\in\tM$.
By \cite[Proposition 2.5 (1)]{Iz1}, $v\in (\id,\wdt{\brho}\wdt{\rho})$.
Hence $\ph_{\wdt{\rho}}$ is a left inverse of $\wdt{\rho}$.
By the previous lemma,
we have 
\[
d(\brho\rho)^{it}
v^* [D\vph\circ\ph_{\brho\rho}:D\vph]_t=\si_t^\vph(v^*). 
\]
Using $\ph_{\brho\rho}=\ph_\rho\ph_\brho$ and (\ref{eq: si-ph}), 
we have 
\begin{align*}
\ph_{\wdt{\rho}}(x\la^\vph(t))
=&\,
v^*\wdt{\brho}(x \la^\vph(t))v
=
v^* \brho(x) d(\rho)^{it} [D\vph\circ\ph_\brho:D\vph]_t \la^\vph(t)v
\\
=&\,
d(\rho)^{it}
v^* \brho(x) [D\vph\circ\ph_\brho:D\vph]_t \si_t^\vph(v)\la^\vph(t)
\\
=&\,
d(\rho)^{it}
v^* \brho(x) [D\vph\circ\ph_\brho:D\vph]_t 
\cdot
d(\brho\rho)^{-it}
[D\vph\circ\ph_{\brho\rho}:D\vph]_t^*
v\la^\vph(t)
\\
=&\,
d(\rho)^{-it}
v^*\brho(x)[D\vph\circ\ph_\brho:D\vph\circ\ph_{\brho\rho}]_t 
v \la^\vph(t)
\\
=&\,
d(\rho)^{-it}
v^* \brho(x)\brho([D\vph:D\vph\circ\ph_\rho]_t)v\la^\vph(t)
\\
=&\,
d(\rho)^{-it}
\ph_\rho(x[D\vph:D\vph\circ\ph_\rho]_t)\la^\vph(t).
\end{align*}
\end{proof}
Hence the map $\ph_\brrho:=\ph_{\wdt{\rho}}\circ \th_{-\log(d(\rho))}$ 
is a left inverse of $\brrho$ such that
\[
\ph_{\brrho}(x\la^\vph(t))
=\ph_\rho(x [D\vph:D\vph\circ \ph_\rho]_t)\la^\vph(t). 
\]

Let $\vph\in M_*$ be a faithful state.
We identify $\tM$ with $M\rti_{\si^{\vph}}\R$ via $\Pi_\vph$.
For $\rho\in\End(M)_0$,
we define the operator $U_\brrho$
on $L^2(M)\oti L^2(\R)=L^2(\R,L^2(M))$
by
\[
(U_\brrho\xi)(s)=[D\vph\circ\ph_\rho:D\vph]_{-s}^* V_\rho\xi(s)
\quad\mbox{for all}\ \xi\in L^2(\R,L^2(M)), s\in\R,
\]
where $V_\rho$ is an isometry defined as before 
by the fixed faithful normal state $\ps$. 

\begin{lem}
For $\rho\in\End(M)_0$, 
$U_\brrho$ has the following properties: 

\begin{enumerate}
\item $U_\brrho$ is an isometry. 

\item $U_\brrho x=\brrho(x)U_\brrho$ for all $x\in M\rti_{\si^\vph}\R$. 

\item 
$\ph_\brrho(x)=U_\brrho^* x U_\brrho$ 
for all $x\in M\rti_{\si^\vph}\R$.

\end{enumerate}
\end{lem}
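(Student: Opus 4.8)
The plan is to prove all three items by a direct computation in the spatial picture $\tM\cong M\rti_{\si^\vph}\R$ acting on $L^2(\R,L^2(M))$, checking the relevant identities on the generators $\pi_{\si^\vph}(a)$ ($a\in M$) and $\la(t)$ ($t\in\R$) and then extending by $\si$-weak density and normality. I would abbreviate $u_r:=[D\vph\circ\ph_\rho:D\vph]_r$, so that $\Pi_\vph(\brrho(\la^\vph(t)))=\pi_{\si^\vph}(u_t)\la(t)$ and $[D\vph:D\vph\circ\ph_\rho]_t=u_t^*$, and the computations will use only: the intertwining relations $V_\rho x=\rho(x)V_\rho$ and $V_\rho^* xV_\rho=\ph_\rho(x)$ from \cite[Proposition A.2]{GL}; the Connes cocycle chain rule $u_{s+t}=u_s\,\si_s^\vph(u_t)$ and the conjugation rule $\si_t^{\vph\circ\ph_\rho}(y)=u_t\,\si_t^\vph(y)\,u_t^*$ for $y\in M$; the identities (\ref{eq: si-ph}) and (\ref{eq: si-Erho}); and the explicit formula $\ph_\brrho(x\la^\vph(t))=\ph_\rho(x\,[D\vph:D\vph\circ\ph_\rho]_t)\la^\vph(t)$ established above.

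For (1), each $u_{-s}$ is a unitary of $M$ and $V_\rho$ an isometry of $L^2(M)$, so $\eta\mapsto u_{-s}^*V_\rho\eta$ is isometric on $L^2(M)$ for every $s$, whence $\|U_\brrho\xi\|^2=\int_\R\|\xi(s)\|^2\,ds=\|\xi\|^2$ (the $s$-measurability being routine). For (2), on a generator $\pi_{\si^\vph}(a)$ I would compute $(U_\brrho\pi_{\si^\vph}(a)\xi)(s)=u_{-s}^*V_\rho\si_{-s}^\vph(a)\xi(s)=u_{-s}^*\si_{-s}^{\vph\circ\ph_\rho}(\rho(a))V_\rho\xi(s)$ using $V_\rho\si_{-s}^\vph(a)=\rho(\si_{-s}^\vph(a))V_\rho$ and (\ref{eq: si-ph}); the conjugation rule turns $u_{-s}^*\si_{-s}^{\vph\circ\ph_\rho}(\rho(a))$ into $\si_{-s}^\vph(\rho(a))u_{-s}^*$, so this equals $(\pi_{\si^\vph}(\rho(a))U_\brrho\xi)(s)=(\brrho(\pi_{\si^\vph}(a))U_\brrho\xi)(s)$. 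On the generator $\la(t)$ one gets $(U_\brrho\la(t)\xi)(s)=u_{-s}^*V_\rho\xi(s-t)$ and $(\brrho(\la(t))U_\brrho\xi)(s)=\si_{-s}^\vph(u_t)\,u_{t-s}^*V_\rho\xi(s-t)$, and these coincide because the chain rule gives $\si_{-s}^\vph(u_t)=u_{-s}^*u_{t-s}$.

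For (3) I would first record $(U_\brrho^*\eta)(s)=V_\rho^*\,u_{-s}\,\eta(s)$, and then evaluate $U_\brrho^*\pi_{\si^\vph}(a)\la(t)U_\brrho$ on $\xi$, obtaining the function $s\mapsto V_\rho^*\,u_{-s}\,\si_{-s}^\vph(a)\,u_{t-s}^*\,V_\rho\,\xi(s-t)$. Pulling $u_{-s}(\cdot)u_{-s}^*$ past $\si_{-s}^\vph(a)$ and rewriting $u_{-s}u_{t-s}^*=\si_{-s}^{\vph\circ\ph_\rho}(u_t^*)$ via the chain and conjugation rules, the middle factor collapses to $\si_{-s}^{\vph\circ\ph_\rho}(a u_t^*)$, so the expression becomes $V_\rho^*\si_{-s}^{\vph\circ\ph_\rho}(a u_t^*)V_\rho=\ph_\rho(\si_{-s}^{\vph\circ\ph_\rho}(a u_t^*))$, which by (\ref{eq: si-Erho}) equals $\si_{-s}^\vph(\ph_\rho(a u_t^*))$; this is exactly $(\pi_{\si^\vph}(\ph_\rho(a[D\vph:D\vph\circ\ph_\rho]_t))\la(t)\xi)(s)=(\ph_\brrho(\pi_{\si^\vph}(a)\la(t))\xi)(s)$. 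Since both $x\mapsto U_\brrho^*xU_\brrho$ and $\ph_\brrho$ are normal and linear and agree on the $\si$-weakly dense span of the $\pi_{\si^\vph}(a)\la(t)$, they agree on all of $M\rti_{\si^\vph}\R$.

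The hard part is not conceptual but bookkeeping: $\si^\vph$ enters twice — once through the crossed-product covariance $(\pi_{\si^\vph}(x)\xi)(s)=\si_{-s}^\vph(x)\xi(s)$ and once through the cocycle $u_r$ — so one must be disciplined about which side $\rho$, $\ph_\rho$, $V_\rho$ act on and about invoking the chain rule with argument $-s$ (not $s$). No new idea beyond the previously established formulas for $\ph_\brrho$, $V_\rho$, and the basic modular-cocycle calculus is needed.
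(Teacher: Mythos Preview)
Your proposal is correct and follows essentially the same route as the paper's proof: both argue by direct computation on the generators $\pi_{\si^\vph}(a)\la(t)$ in the spatial realization on $L^2(\R,L^2(M))$, using only the intertwining properties of $V_\rho$, the Connes cocycle chain rule, and the identities (\ref{eq: si-ph}), (\ref{eq: si-Erho}). The only cosmetic difference is in part (3): the paper avoids writing $U_\brrho^*$ explicitly by computing the sesquilinear form $(U_\brrho^*\,\pi_{\si^\vph}(a)\la(t)\,U_\brrho\xi,\eta)$, whereas you first record $(U_\brrho^*\eta)(s)=V_\rho^*\,u_{-s}\,\eta(s)$ and compute pointwise, which is slightly more streamlined.
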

\begin{proof}
(1) It is trivial. 

\noindent(2) 
Let $x\in M$ and $t\in\R$. Then we have for $s\in\R$, 
\begin{align*}
\big{(}
U_\brrho (\pi_{\si^\vph}(x)\la(t))\xi
\big{)}(s)
=&\,
[D\vph\circ\ph_\rho:D\vph]_{-s}^* 
V_\rho\big{(}(\pi_{\si^\vph}(x)\la(t)\xi\big{)}(s)
\\
=&\,
[D\vph\circ\ph_\rho:D\vph]_{-s}^* 
V_\rho\si_{-s}^\vph(x)\xi(-t+s)
\\
=&\,
[D\vph\circ\ph_\rho:D\vph]_{-s}^* 
\rho(\si_{-s}^\vph(x))V_\rho\xi(-t+s)
\\
=&\,
[D\vph\circ\ph_\rho:D\vph]_{-s}^* 
\si_{-s}^{\vph\circ\ph_\rho}(\rho(x))V_\rho\xi(-t+s)
\\
=&\,
\si_{-s}^{\vph}(\rho(x))
[D\vph\circ\ph_\rho:D\vph]_{-s}^* V_\rho\xi(-t+s)
\\
=&\,
\si_{-s}^{\vph}(\rho(x))
[D\vph\circ\ph_\rho:D\vph]_{-s}^* [D\vph\circ\ph_\rho:D\vph]_{t-s}
\\
&\quad\cdot 
(U_\brrho\xi)(-t+s)
\\
=&\,
\si_{-s}^{\vph}(\rho(x))
\si_{-s}^\vph([D\vph\circ\ph_\rho:D\vph]_{t} )\cdot
(U_\brrho\xi)(-t+s)
\\
=&\,
\big{(}\pi_{\si^\vph}(\rho(x) [D\vph\circ\ph_\rho:D\vph]_{t})
\la(t)U_\brrho\xi\big{)}(s)
\\
=&\,
\big{(}\brrho(\pi_{\si^\vph}(\rho(x) )\la(t))U_\brrho
\xi\big{)}(s). 
\end{align*}
Hence the equality of (2) holds. 

\noindent(3) 
By (2) and (3), 
we have for $x\in M$, $t\in \R$ and $\xi,\eta\in L^2(M)\oti L^2(\R)$, 
\begin{align*}
&(U_\brrho^* \pi_{\si^\vph}(x)\la(t)U_\brrho\xi,\eta)
\\
=&\,
(\pi_{\si^\vph}(x)\la(t)U_\brrho\xi,U_\brrho\eta)
\\
=&\,
\int_{-\infty}^\infty
\big{(}
\big{(}\pi_{\si^\vph}(x)\la(t)U_\brrho\xi\big{)}(s),(U_\brrho\eta)(s)
\big{)}ds
\\
=&\,
\int_{-\infty}^\infty
\big{(}
\si_{-s}^\vph(x)
\big{(}U_\brrho\xi\big{)}(-t+s),(U_\brrho\eta)(s)
\big{)}ds
\\
=&\,
\int_{-\infty}^\infty
\big{(}
\si_{-s}^\vph(x)
[D\vph\circ\ph_\rho:D\vph]_{t-s}^* V_\rho\xi(-t+s),
[D\vph\circ\ph_\rho:D\vph]_{-s}^* V_\rho \eta(s)
\big{)}ds
\\
=&\,
\int_{-\infty}^\infty
\big{(}
[D\vph\circ\ph_\rho:D\vph]_{-s}
\si_{-s}^\vph(x)
[D\vph\circ\ph_\rho:D\vph]_{t-s}^* V_\rho\xi(-t+s),
 V_\rho \eta(s)
\big{)}ds
\\
=&\,
\int_{-\infty}^\infty
\big{(}
\si_{-s}^{\vph\circ\ph_\rho}(x)
[D\vph\circ\ph_\rho:D\vph]_{-s}
[D\vph\circ\ph_\rho:D\vph]_{t-s}^* V_\rho\xi(-t+s),
 V_\rho \eta(s)
\big{)}ds
\\
=&\,
\int_{-\infty}^\infty
\big{(}
\si_{-s}^{\vph\circ\ph_\rho}(x)
\si_{-s}^{\vph\circ\ph_\rho}([D\vph\circ\ph_\rho:D\vph]_{t}^*)
 V_\rho\xi(-t+s), V_\rho \eta(s)
\big{)}ds
\\
=&\,
\int_{-\infty}^\infty
\big{(}
V_\rho^*
\si_{-s}^{\vph\circ\ph_\rho}
(x[D\vph:D\vph\circ\ph_\rho]_{t}) V_\rho\xi(-t+s), \eta(s)
\big{)}ds
\\
=&\,
\int_{-\infty}^\infty
\big{(}
\ph_\rho(
\si_{-s}^{\vph\circ\ph_\rho}(x[D\vph:D\vph\circ\ph_\rho]_{t}))
\xi(-t+s), \eta(s)
\big{)}ds
\\
=&\,
\int_{-\infty}^\infty
\big{(}
\si_{-s}^{\vph}
(\ph_\rho(x[D\vph:D\vph\circ\ph_\rho]_{t}))
\xi(-t+s), \eta(s)
\big{)}ds
\qquad (\mbox{by}\ (\ref{eq: si-Erho}))
\\
=&\,
\big{(}
\pi_{\si^\vph}(\ph_\rho(x[D\vph:D\vph\circ\ph_\rho]_{t}))
\la(t)\xi,\eta
\big{)}
\\
=&\,
\big{(}
\ph_{\brrho}(\pi_{\si^\vph}(x)\la(t))\xi,\eta\big{)}. 
\end{align*}
\end{proof}

\begin{lem}
Assume that $(\rho^\nu)_{\nu\in\N}$
converges to $\rho$ in $\End(M)_0$.
Then $U_{\breve{\rho^\nu}}$ converges to $U_\brrho$ strongly.
\end{lem}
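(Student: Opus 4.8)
The plan is to verify the strong convergence on a total set of vectors and then use that every $U_{\breve{\rho^\nu}}$ and $U_\brrho$ is an isometry, so norm-bounded by $1$. Under the identification $L^2(M)\oti L^2(\R)=L^2(\R,L^2(M))$, the vectors $\xi$ of the form $\xi(s)=f(s)\eta$ with $f\in L^2(\R)$ and $\eta\in L^2(M)$ span a dense subspace, so it suffices to treat such $\xi$. For these, the defining formula for the operators $U_\brrho$ yields
\[
\|U_{\breve{\rho^\nu}}\xi-U_\brrho\xi\|^2
=\int_{\R}|f(s)|^2\,
\big\|[D\vph\circ\ph_{\rho^\nu}:D\vph]_{-s}^*V_{\rho^\nu}\eta
-[D\vph\circ\ph_{\rho}:D\vph]_{-s}^*V_{\rho}\eta\big\|^2\,ds,
\]
and I would finish by the dominated convergence theorem. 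Indeed, since the Connes cocycles are unitaries and the $V$'s are isometries, the integrand is dominated by the integrable function $4|f(s)|^2\|\eta\|^2$, so it is enough to prove that the integrand tends to $0$ for each fixed $s\in\R$.

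To get this pointwise convergence I would split
\[
[D\vph\circ\ph_{\rho^\nu}:D\vph]_{-s}^*V_{\rho^\nu}\eta
-[D\vph\circ\ph_{\rho}:D\vph]_{-s}^*V_{\rho}\eta
=[D\vph\circ\ph_{\rho^\nu}:D\vph]_{-s}^*(V_{\rho^\nu}\eta-V_{\rho}\eta)
+\big([D\vph\circ\ph_{\rho^\nu}:D\vph]_{-s}^*-[D\vph\circ\ph_{\rho}:D\vph]_{-s}^*\big)V_{\rho}\eta.
\]
The first summand has norm at most $\|V_{\rho^\nu}\eta-V_{\rho}\eta\|$, which tends to $0$ because $(\rho^\nu)_\nu$ converges to $\rho$ and hence $V_{\rho^\nu}\to V_\rho$ strongly by Lemma \ref{lem: rho-nu-rho}. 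For the second summand, the same hypothesis gives $\vph\circ\ph_{\rho^\nu}\to\vph\circ\ph_\rho$ in norm, and the Connes cocycle $[D\,\cdot:D\vph]_{-s}$ depends strongly$*$ continuously on its first argument (a faithful normal state), so $[D\vph\circ\ph_{\rho^\nu}:D\vph]_{-s}^*\to[D\vph\circ\ph_{\rho}:D\vph]_{-s}^*$ strongly; applying this to the fixed vector $V_\rho\eta$ completes the estimate.

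The one point requiring care is the continuity property invoked in the last step: for each fixed $t\in\R$, the map from the faithful normal states of $M$ (with the norm topology) into $M$ (with the strongly$*$ topology) sending $\ps$ to $[D\ps:D\vph]_t$ is continuous, and in fact this continuity is uniform for $t$ in compact subsets of $\R$. This is a standard fact of modular theory; in the writeup one should pin down a reference or include the short argument (for instance via the balanced weight on $M\oti M_2(\C)$). Everything else — density of the simple tensors, the norm identity, the domination, and the algebraic splitting — is routine, so this continuity input is really the only obstacle.
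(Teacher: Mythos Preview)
Your argument is correct and follows essentially the same route as the paper: both reduce the question to the norm continuity of $\ps\mapsto[D\ps:D\vph]_t$ (the paper cites \cite[Theorem IX.1.19]{Ta2}) together with $V_{\rho^\nu}\to V_\rho$ strongly from Lemma~\ref{lem: rho-nu-rho}. The only cosmetic difference is that the paper checks weak convergence on compactly supported $\xi,\eta$ (which, for isometries converging to an isometry, upgrades to strong convergence) and uses the uniform-on-compacts version of the cocycle continuity, whereas you go directly for strong convergence on simple tensors via dominated convergence and pointwise cocycle continuity.
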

\begin{proof}
Since $U_{\breve{\rho^\nu}}$ and $U_\brrho$ are isometries, 
it suffices to show that 
$(U_{\breve{\rho^\nu}}\xi,\eta)$ converges to 
$(U_\brrho\xi,\eta)$ for all $\xi,\eta\in L^2(M)\oti L^2(\R)$. 
Moreover we may and do assume that $\xi,\eta$ have 
compact supports on $\R$. 
Then we have 
\begin{align*}
(U_{\breve{\rho^\nu}}\xi,\eta)
=&\,
\int_{-\infty}^\infty
((U_{\breve{\rho^\nu}}\xi)(s),\eta(s))ds
\\
=&\,
\int_{-\infty}^\infty
([D\vph\circ\ph_{\rho^\nu}:D\vph]_{-s}^*V_\rho\xi(s),\eta(s))ds,
\end{align*}
which converges to 
\[
\int_{-\infty}^\infty
([D\vph\circ\ph_{\rho}:D\vph]_{-s}^* V_\rho\xi(s),\eta(s))ds
\]
because for each $r>0$,
the cocycle $[D\vph\circ\ph_{\rho^\nu}:D\vph]_t$
uniformly converges to $[D\vph\circ\ph_{\rho}:D\vph]_t$
strongly as $\nu\to\infty$ for all $t\in[-r,r]$
\cite[Theorem IX.1.19]{Ta2}. 
\end{proof}

\begin{thm}\label{thm: cont-nce}
Let $M$ be a factor. 
Then the normalized canonical extension is continuous,
that is, 
if $(\rho^\nu)_{\nu\in\N}$ converges to $\rho$ 
in $\End(M)_0$, then the pairs 
$\{(\breve{\rho^\nu},\ph_{\breve{\rho^\nu}})\}_{\nu\in\N}$ 
converge to $(\brrho,\ph_\brrho)$. 
In particular, 
$\breve{\rho^\nu}(x)$ converges to $\brrho(x)$ 
strongly* as $\nu\to\infty$ for all $x\in\tM$. 
\end{thm}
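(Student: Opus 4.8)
The plan is to deduce the statement from the strong convergence $U_{\breve{\rho^\nu}}\to U_\brrho$ proved in the preceding lemma, following the pattern of the implication (3)$\Rightarrow$(1) in Lemma \ref{lem: rho-nu-rho}. Fix a faithful state $\vph\in M_*$ and identify $\tM$ with $M\rti_{\si^\vph}\R$ acting on $L^2(M)\oti L^2(\R)$. For $\xi,\eta\in L^2(M)\oti L^2(\R)$ write $\om_{\xi,\eta}$ for the functional $x\mapsto (x\xi,\eta)$ on $\tM$. Since $U_\brrho$ implements $\ph_\brrho$, that is $\ph_\brrho(x)=U_\brrho^* x U_\brrho$ for all $x\in\tM$, one has $\om_{\xi,\eta}\circ\ph_\brrho=\om_{U_\brrho\xi,\,U_\brrho\eta}$, and likewise $\om_{\xi,\eta}\circ\ph_{\breve{\rho^\nu}}=\om_{U_{\breve{\rho^\nu}}\xi,\,U_{\breve{\rho^\nu}}\eta}$.

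First I would record the elementary bound
\[
\big\|\om_{\xi,\eta}\circ\ph_{\breve{\rho^\nu}}-\om_{\xi,\eta}\circ\ph_\brrho\big\|
\leq\|\eta\|\,\big\|U_{\breve{\rho^\nu}}\xi-U_\brrho\xi\big\|+\|\xi\|\,\big\|U_{\breve{\rho^\nu}}\eta-U_\brrho\eta\big\|,
\]
obtained by inserting the intermediate functional $\om_{U_\brrho\xi,\,U_{\breve{\rho^\nu}}\eta}$, using $\|\om_{a,b}\|\leq\|a\|\,\|b\|$ together with the fact that $U_{\breve{\rho^\nu}}$ is an isometry. By the preceding lemma the right-hand side tends to $0$, so $\om_{\xi,\eta}\circ\ph_{\breve{\rho^\nu}}\to\om_{\xi,\eta}\circ\ph_\brrho$ in norm for every pair $\xi,\eta$.

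Next I would pass from vector functionals to an arbitrary $\psi\in(\tM)_*$. Every normal functional on $\tM$, regarded on $L^2(M)\oti L^2(\R)$, admits a decomposition $\psi=\sum_{n}\om_{\xi_n,\eta_n}$ with $\sum_n\|\xi_n\|\,\|\eta_n\|<\infty$. Given $\vep>0$ choose $N$ so that $\sum_{n>N}\|\xi_n\|\,\|\eta_n\|<\vep$; since $\ph_{\breve{\rho^\nu}}$ and $\ph_\brrho$ are unital completely positive, hence contractive, the tail $\sum_{n>N}$ contributes at most $2\vep$, uniformly in $\nu$, while the finite sum over $n\leq N$ tends to $0$ by the previous step. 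This gives $\|\psi\circ\ph_{\breve{\rho^\nu}}-\psi\circ\ph_\brrho\|\to0$, i.e. $\{(\breve{\rho^\nu},\ph_{\breve{\rho^\nu}})\}_\nu$ converges to $(\brrho,\ph_\brrho)$; the pointwise strong* convergence $\breve{\rho^\nu}(x)\to\brrho(x)$ for $x\in\tM$ then follows from \cite[Lemma 3.8]{MT1}.

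Once the two preceding lemmas are available, the argument is essentially formal; the only points needing a little attention are the uniform control of the tail of the functional expansion (so that one may interchange the limit in $\nu$ with the infinite sum) and the passage from $M_*$ to $(\tM)_*$, which is where the realization of $\tM$ as the crossed product $M\rti_{\si^\vph}\R$ on $L^2(M)\oti L^2(\R)$ is used. I do not expect a serious obstacle here: the substantive input — the construction of the implementing isometries $U_\brrho$ and their strong continuity in $\rho$ — has already been carried out above.
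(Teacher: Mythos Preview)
Your proposal is correct and follows essentially the same route as the paper: use the strong convergence $U_{\breve{\rho^\nu}}\to U_\brrho$ from the preceding lemma, then repeat the (3)$\Rightarrow$(1) argument of Lemma~\ref{lem: rho-nu-rho} with $U_\brrho$ in place of $V_\rho$, and finally invoke \cite[Lemma 3.8]{MT1} for the strong* convergence (which is the (1)$\Rightarrow$(2) step). The only difference is in the passage from vector functionals to arbitrary $\psi\in(\tM)_*$: you use a trace-class decomposition $\psi=\sum_n\om_{\xi_n,\eta_n}$ with tail control, whereas the paper simply observes (as in the original (3)$\Rightarrow$(1)) that every normal functional is already of the form $\om_{\xi,\eta}$ for a single pair of vectors, since the representation of $\tM=M\rti_{\si^\vph}\R$ on $L^2(M)\oti L^2(\R)$ is standard. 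Your argument is slightly more roundabout but equally valid; the paper's shortcut just saves the $\vep$-splitting step.
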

\begin{proof}
By the previous lemma, $U_{\breve{\rho^\nu}}$ converges to $U_\brrho$.
Then the same proof as
(3)$\Rightarrow$(1) and (1)$\Rightarrow$(2)
of Lemma \ref{lem: rho-nu-rho} works. 
\end{proof}

\subsection{Dominant weights and canonical extensions}

In this subsection, we treat an infinite factor $M$.
By the Takesaki duality, $M$ is isomorphic to $\tM\rti_\th\R$
and $\tM$ is regarded as the centralizer of a dominant weight on $M$.
We will study the canonical extension and the restriction of
an endomorphism on the centralizer of a dominant weight.

Let $\vph$ be a dominant weight on $M$.
Then the covariant system $\{M, \si^\vph\}$ is dual,
that is, there exists a one-parameter unitary group
$\{v(t)\}_{t\in\R}$ in $M$ such that
$\si_s^\vph(v(t))=e^{-ist}v(t)$.
Using the action $\th_t:=\Ad v(t)$ on $M_\vph$,
we have the natural isomorphism of the covariant systems
$\{M,\si^\vph\}\cong\{M_\vph\rti_\th \R,\hth\}$.
The dual action on $\tM$ is denoted by $\wdh{\si^\vph}$ for a while.

\begin{lem}
Let $\rho\in\End(M)_0$.
Then
there exists $u\in U(M)$ such that
\begin{enumerate}
\item 
$(\vph,\Ad u\circ\rho)$ is an invariant pair in the sense of
\cite[Definition 2.2]{Iz1},

\item 
$u\rho(v(t))u^*=v(t)$.
\end{enumerate}
\end{lem}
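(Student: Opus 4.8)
The plan is to produce the unitary $u$ in two stages, first arranging the invariance condition (1) by a standard argument from Izumi's theory, and then correcting by a further unitary in the centralizer $M_\vph$ so that (2) holds without spoiling (1). For the first stage, recall that $(\vph,\psi)$ is an \emph{invariant pair} for an endomorphism $\psi$ when $\vph\circ\psi=d(\psi)^{-1}\vph$, equivalently when $\psi(M_\vph)\subs M_\vph$ and $\vph\circ\ph_\psi=\vph$; since $\vph\circ\ph_\rho$ is a faithful normal semifinite weight on $M$ with the same modular flow structure as a dominant weight, it is itself dominant, hence unitarily equivalent to $\vph$. So there is $u_0\in U(M)$ with $\vph\circ\ph_\rho=\vph\circ\Ad u_0^*$, i.e.\ $\vph\circ\ph_{\Ad u_0\circ\rho}=\vph$, which is exactly condition (1) for $\rho_1:=\Ad u_0\circ\rho$. (This is the content of \cite[Definition 2.2]{Iz1} together with the uniqueness of dominant weights up to inner perturbation.)

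For the second stage I would examine how $\rho_1$ acts on the distinguished unitary group $\{v(t)\}$. Because $(\vph,\rho_1)$ is invariant, $\rho_1$ preserves $M_\vph$ and commutes with $\si^\vph$ up to the obvious scaling, so $\rho_1(v(t))$ is again a one-parameter unitary group with $\si_s^\vph(\rho_1(v(t)))=e^{-ist}\rho_1(v(t))$; hence $w(t):=\rho_1(v(t))v(t)^*$ lies in $M_\vph=\tM$ and, using the cocycle identity for $\rho_1(v)$ and $v$, one checks $w$ is a $\th$-cocycle in $\tM$, i.e.\ $w(s+t)=w(s)\th_s(w(t))$. Since $\tM$ is the centralizer of a dominant weight and $\th$ is (a model of) its dual flow, such a cocycle is a coboundary: there is $u_1\in U(\tM)$ with $w(t)=u_1\th_t(u_1)^*=u_1 v(t)u_1^* v(t)^*$. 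Setting $u:=u_1^* u_0$ one then computes $u\rho(v(t))u^* = u_1^*\rho_1(v(t))u_1 = u_1^* w(t)v(t)u_1 = u_1^*(u_1 v(t)u_1^*v(t)^*)v(t)u_1 = v(t)$, giving (2). Finally, because $u_1\in\tM=M_\vph$, conjugating by $u_1$ leaves $\vph$ invariant, so $(\vph,\Ad u\circ\rho)$ is still an invariant pair and (1) survives.

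The main obstacle I anticipate is the cocycle vanishing step: one must verify that $w(t)=\rho_1(v(t))v(t)^*$ really is a $\th$-cocycle valued in $\tM=M_\vph$ (this uses that $\rho_1$ commutes with $\si^\vph$ on $M_\vph$ and intertwines the grading correctly, which is where invariance of the pair is essential), and then invoke the correct vanishing statement — that every continuous unitary $\th$-cocycle in the centralizer of a dominant weight is a coboundary, which follows from the cohomology lemma for flows on von Neumann algebras (as used throughout \cite{Iz1} and \cite{KST}) applied to the dual $\R$-action. A minor point to handle carefully is continuity/measurability of $t\mapsto w(t)$, which is immediate from normality of $\rho_1$ and strong continuity of $v(\cdot)$, so that the coboundary $u_1$ can indeed be chosen. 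Everything else is routine manipulation with Connes cocycles and the relations defining $\tM$.
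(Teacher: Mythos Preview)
Your proposal is correct and follows essentially the same two-stage approach as the paper: first obtain an invariant pair (the paper simply cites \cite[Lemma 2.12 (ii)]{ILP}, while you sketch the underlying dominant-weight argument), then form the $\th$-cocycle $w(t)=\rho_1(v(t))v(t)^*\in M_\vph$ and trivialize it via the stability of $\{M_\vph,\th\}$ (the paper cites \cite[Theorem III.5.1 (ii)]{CT}). Two small imprecisions worth cleaning up: the invariant-pair condition is $\vph\circ\ph_\rho=d(\rho)^{-1}\vph$ rather than $\vph\circ\ph_\rho=\vph$ (this is what gives $\si_t^{\vph\circ\ph_\rho}=\si_t^\vph$ and hence $w(t)\in M_\vph$), and the identification ``$M_\vph=\tM$'' is only up to a type I tensor factor---but neither affects the argument.
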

\begin{proof}
By \cite[Lemma 2.12 (ii)]{ILP}, we can
take $u\in U(M)$ satisfying (1).
Replace $\rho$ with $\Ad u\circ \rho$.
Then $\si_t^\vph \circ \rho=\rho\circ\si_t^\vph$
by (\ref{eq: si-ph}),
and the unitary $w(t):=\rho(v(t))v(t)^*$ is contained in $M_\vph$.
Then $\{w(t)\}_t$ is a $\th$-cocycle.
By using the stability of $\{M_\vph,\th\}$
\cite[Theorem III.5.1 (ii)]{CT}, 
there exists $\nu\in U(M_\vph)$
such that $w(t)=\nu\th_t(\nu^*)$.
Then $(\vph,\Ad\nu^*\circ\rho)$ is
an invariant pair, and $\nu^*\rho(v(t)))\nu=v(t)$.
\end{proof}

Replacing $\rho$ with $\Ad u\circ\rho$,
we assume that $\rho$ satisfies the conditions in the above lemma.
Now we discuss how the canonical extension
$\trho\in \End(\tM)$ can be transformed to an endomorphism on $M_\vph$.
We let $M$ act on a Hilbert space $H$.
By the Takesaki duality, we have the isomorphism
$\tM\ra M_\vph\oti B(L^2(\R))$ satisfying
$x v(t) \la^\vph(t)\mapsto \pi_\th(x)(1\oti \mu_\th(t))(1\oti \nu(t))$
for $x\in M_\vph$ and $t\in\R$,
where $\pi_\th(x)\in M_\vph\oti L^\infty(\R)$,
$\mu_\th(t)\in \C\oti L(\R)$ (the group von Neumann algebra of $\R$)
and $\nu(t)\in \C\oti L^\infty(\R)$ are defined by
\[
(\pi_\th(x)\xi)(s)=\th_{-s}(x)\xi(s),
\quad
(\mu_\th(t)\xi)(s)=\xi(-t+s),
\quad
(\nu(t)\xi)(s)=e^{-its}\xi(s)
\]
for all $\xi\in L^2(\R,H)$ and $s,t\in\R$.

Since $\trho(x v(t) \la^\vph(t))=\rho(x)v(t)\la^\vph(t)$ 
and $(\rho\oti\id)\circ\pi_\th=\pi_\th\circ\rho$, 
$\trho$ is transformed to $\rho\oti\id\in \End(M_\vph\oti B(L^2(\R)))$ 
through the isomorphism. 
The dual action $\wdh{\si^\vph}$ on $\tM$ is given by 
$\th_t\oti \Ad \mu_\th(t)$ on $M_\vph\oti B(L^2(\R))$. 
Hence we have the following isomorphism 
between the covariant systems with an endomorphism $\rho$: 
\[
\{\tM,\wdh{\si^\vph},\trho\}
\cong\{M_\vph\oti B(L^2(\R)),\th\oti\Ad \mu_\th,\rho\oti\id\},
\]
which means there exists an isomorphism 
$\Th\col \tM\ra M_\vph\oti B(L^2(\R))$ such that 
$\Th\circ \wdh{\si^\vph}_s=(\th_s\oti\Ad \mu_\th(s))\circ \Th$ 
and $\Th\circ \trho=(\rho\oti\id)\circ \Th$ for all $s\in\R$. 

\begin{lem}\label{lem: duality}
Let $M, \vph, \{v(s)\}_{s\in\R}$ and $\th$ be as before.
Then for any $\rho\in\End(M)_0$,
there exists $u\in M$ and an isomorphism
$\Ps_\rho\col \tM\ra M_\vph$ such that
\begin{enumerate}
\item $(\vph,\Ad u\circ \rho)$ is an invariant pair,

\item $u\rho(v(s))u^*=v(s)$,

\item $\Ps_\rho$ is an isomorphism between the following covariant systems: 
\[
\Ps_\rho\col \{\tM,\wdh{\si^\vph},\trho\}
\ra\{M_\vph,\th,\Ad u\circ\rho|_{M_\vph}\}. 
\]
\end{enumerate}
\end{lem}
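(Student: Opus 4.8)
The plan is to transport everything through the Takesaki-duality identification set up just before the statement and then to collapse the auxiliary type~$I$ factor. First I would apply the preceding lemma to produce $u\in U(M)$ for which $(\vph,\Ad u\circ\rho)$ is an invariant pair and $u\rho(v(s))u^*=v(s)$; these are (1) and (2). Set $\rho_0:=\Ad u\circ\rho$. By (\ref{eq: si-ph}) the invariant-pair condition forces $\si^\vph\circ\rho_0=\rho_0\circ\si^\vph$, hence $\rho_0(M_\vph)\subseteq M_\vph$, so $\Ad u\circ\rho|_{M_\vph}=\rho_0|_{M_\vph}$ is a genuine endomorphism of $M_\vph$ and it commutes with $\th=\Ad v(\cdot)|_{M_\vph}$ since $\rho_0$ fixes every $v(s)$. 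I would also record the identity $\widetilde{\Ad u\circ\rho}=\Ad u\circ\trho$ on $\tM$: it follows from $\ph_{\Ad u\circ\rho}=\ph_\rho\circ\Ad u^*$, the chain rule for Connes cocycles together with $[D(\psi\circ\Ad u^*):D\psi]_t=u\,\si^\psi_t(u^*)$, and the relation $\si^{\vph\circ\ph_\rho}_t=\Ad\,[D\vph\circ\ph_\rho:D\vph]_t\circ\si^\vph_t$.

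Now the discussion preceding the statement, applied to $\rho_0$ (which satisfies its hypotheses), supplies an isomorphism $\Th_0\col\tM\to M_\vph\oti B(L^2(\R))$ with $\Th_0\circ\wdh{\si^\vph}_s=(\th_s\oti\Ad\mu_\th(s))\circ\Th_0$ and $\Th_0\circ\widetilde{\rho_0}=(\rho_0|_{M_\vph}\oti\id)\circ\Th_0$; via the identity above this also describes $\Th_0\circ\trho$. So the problem reduces to producing an isomorphism $\Xi\col M_\vph\oti B(L^2(\R))\to M_\vph$ which intertwines $\th_s\oti\Ad\mu_\th(s)$ with $\th_s$ and $\rho_0|_{M_\vph}\oti\id$ with $\rho_0|_{M_\vph}$; composing $\Xi$ with $\Th_0$ and correcting by an inner automorphism (absorbed, if necessary, into a re-choice of $u$ within the latitude left by the preceding lemma) then yields $\Ps_\rho$.

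To build $\Xi$ I would return inside $\tM$ and use the Weyl relation $\la^\vph(s)v(t)\la^\vph(s)^*=e^{-ist}v(t)$: by Stone--von Neumann, $N_0:=W^*(\{v(s)\la^\vph(t)\}_{s,t})$ is a type $I_\infty$ factor, and $\Th_0$ carries it onto $1\oti B(L^2(\R))$; hence $\tM=N_0\,\oti\,P$ with $P:=N_0'\cap\tM\cong M_\vph$. On $N_0$ the dual action $\wdh{\si^\vph}$ restricts to an inner action implemented by the $v(s)\in N_0$, and $\widetilde{\rho_0}$ restricts to the identity, because $\rho_0$ fixes each $v(s)$ and, under the invariant-pair normalization, $\widetilde{\rho_0}(\la^\vph(t))=\la^\vph(t)$. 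Transported through $\Th_0$, this says that on $M_\vph\oti B(L^2(\R))$ the amplifying factor $1\oti B(L^2(\R))$ carries an inner $\R$-action and the identity endomorphism; since moreover $M_\vph$ is properly infinite --- a dominant weight on an infinite factor has a properly infinite centralizer --- one can upgrade the abstract isomorphism $M_\vph\cong M_\vph\oti B(L^2(\R))$ to the equivariant $\Xi$.

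I expect this last step --- the simultaneous, equivariant collapse of the auxiliary $B(L^2(\R))$, that is, the verification that $(\th_s\oti\Ad\mu_\th(s),\,\rho_0|_{M_\vph}\oti\id)$ is genuinely conjugate, and not merely cocycle conjugate, to $(\th_s,\,\rho_0|_{M_\vph})$ on $M_\vph$ --- to be the crux, and it is exactly there that proper infiniteness of $M_\vph$, the innerness of $\wdh{\si^\vph}|_{N_0}$ and the triviality of $\widetilde{\rho_0}|_{N_0}$ are brought to bear. The remaining matters --- reconciling the factor $d(\rho)^{it}$ in $\trho$ with the normalized extension $\brrho$, the invariant-pair normalization of $\vph\circ\ph_{\rho_0}$, and the precise routing of the unitary $u$ --- are routine bookkeeping.
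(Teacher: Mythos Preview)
Your strategy matches the paper's: normalize $\rho$ via the preceding lemma, pass through Takesaki duality to $M_\vph\oti B(L^2(\R))$, then collapse the type~$I$ factor equivariantly. You have also correctly verified $\widetilde{\Ad u\circ\rho}=\Ad u\circ\trho$. The gap is precisely where you place it: the construction of the equivariant $\Xi$. Saying that proper infiniteness of $M_\vph$ together with the triviality of $\widetilde{\rho_0}|_{N_0}$ and the innerness of $\wdh{\si^\vph}|_{N_0}$ ``allow one to upgrade'' the isomorphism is not a proof; one has to produce the mechanism.

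The paper does this in two concrete steps which you do not mention. First, because $\th$ is a dual action one has $(M_\vph)^\th\cong M$, hence $(M_\vph)^\th$ is properly infinite; a Hilbert space $\meH\subs (M_\vph)^\th$ with support $1$ gives an explicit isomorphism $\Ph\col M_\vph\oti B(L^2(\R))\to M_\vph$ satisfying $\Ph\circ(\th_s\oti\id)=\th_s\circ\Ph$ and $\Ph\circ(\rho_0\oti\id)=\Ad\rho_0(u')^*\circ\rho_0\circ\Ph$ for a specific unitary $u'=\sum\rho_0(\xi_i)\xi_i^*$. Second, the residual piece $\Ad\mu_\th(s)$ survives as a $\th$-cocycle $\mu(s)=\Ph(1\oti\mu_\th(s))$ in $M_\vph^\th$; one eliminates it using the stability of $\th$ (Connes--Takesaki, Theorem~III.5.1), which yields $w\in U(M_\vph)$ with $\mu(s)=w\th_s(w^*)$ and forces a further modification of the unitary to $w^*\rho_0(u'^*w)$. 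One then checks directly that this new unitary lies in $M_\vph^\th$ and still satisfies (1) and (2). This last verification is not merely ``a re-choice of $u$ within the latitude left by the preceding lemma''---the preceding lemma gives existence, not a parametrization, and the modified unitary arises from the collapse construction rather than from reapplying that lemma.

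A smaller point: your claim that $\Th_0$ carries $N_0=W^*(\{v(s),\la^\vph(t)\})$ onto $1\oti B(L^2(\R))$ is not correct. Since $v(s)\in M_\vph$ and $\pi_\th(v(s))=v(s)\oti1$ (the $v(s)$ commute, so $\th_{-r}(v(s))=v(s)$), one has $\Th_0(v(s))=v(s)\oti1\notin 1\oti B(L^2(\R))$. Thus $\Th_0(N_0)$ is not the second tensor factor, and the decomposition $\tM=P\oti N_0$ does not line up with $M_\vph\oti B(L^2(\R))$ in the way you suggest. This does not doom the approach, but it means the Stone--von~Neumann observation, as stated, does not by itself deliver $\Xi$; you still need the Hilbert-space-in-$(M_\vph)^\th$ and stability arguments above.
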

\begin{proof}
We may assume that $(\vph,\rho)$ is an invariant pair and
$\rho(v(s))=v(s)$ for all $s\in\R$ as before.
Since $\th$ is a dual action \cite[Theorem III.5.1 (ii)]{CT}, 
$(M_\vph)^\th$ is isomorphic to $M$. 
Hence we can take an infinite 
dimensional Hilbert space $\meH\subs (M_\vph)^\th$ with support $1$.
Let $\{\xi_i\}_{i=1}^\infty$ be an orthonormal basis of $\meH$.
Let 
$t_\meH\col \rho_\meH(M_\vph)\oti B(\meH)\ra M_\vph$
be the isomorphism such that
$t_\meH(\rho_\meH(x)\oti \xi_i\xi_j^*)
=\rho_\meH(x)\xi_i\xi_j^*=\xi_i x\xi_j^*$ 
for all $x\in M_\vph$ and $i,j\in\N$.
We define the unitary $u=\sum_{i=1}^\infty \rho(\xi_i)\xi_i^*$.
Then $u\in (M_\vph)^\th$ and $u\meH=\rho(\meH)$.
We also define the isomorphism $\Ps\col B(L^2(\R))\ra B(H)$
such that $\Ps(e_{ij})=\xi_i\xi_j^*$, where $\{e_{ij}\}_{ij=1}^\infty$
is a system of matrix units of $B(L^2(\R))$.
 
Now we introduce the isomorphism $\Ph\col M_\vph\oti B(L^2(\R))\ra M_\vph$
defined by
\[
\Ph\col M_\vph\oti B(L^2(\R))\stackrel{\rho_\meH\oti\Ps}{\lra}
\rho_\meH(M_\vph)\oti B(\meH)\stackrel{t_\meH}{\lra} M_\vph
\stackrel{\Ad u}{\lra}M_\vph. 
\]
Then we have 
$\Ph(x\oti e_{ij})=u\rho_\meH(x)\xi_i\xi_j^* u^*=u\xi_i x\xi_j^*u^*$ 
for all $x\in M_\vph$ and $i,j\in\N$. 
We will check that 
\[
\Ph\circ (\th_s\oti \id)\circ \Ph^{-1}=\th_s,
\quad
\Ph \circ (\rho\oti\id)\circ \Ph^{-1}
=\Ad \rho(u^*)\circ \rho=\rho\circ\Ad u^*. 
\]
Indeed, for $x\in M_\vph$ and $i,j\in\N$, we have 
\begin{align*}
\Ph((\th_s\oti\id)(x\oti e_{ij}))
=&\,
\Ph(\th_s(x)\oti e_{ij})
=
u\xi_i \th_s(x)\xi_j^*u^*
=
\th_s(u\xi_i x\xi_j^*u^*)
\\
=&\,
\th_s(\Ph(x\oti e_{ij})), 
\end{align*}
and 
\begin{align*}
\Ph((\rho\oti\id)(x\oti e_{ij}))
=&\,
\Ph(\rho(x)\oti e_{ij})
=
u\xi_i \rho(x)\xi_j^*u^*
=
\rho(\xi_i x \xi_j^*)
\\
=&\,
\rho(u^*)\rho(u \xi_i x \xi_j^* u^*)\rho(u)
=
\rho(u^*)\rho(\Ph(x\oti e_{ij}))\rho(u). 
\end{align*}

Set $\mu(s)=\Ph(1\oti \mu_\th(s))$. 
Since $1\oti \mu_\th(s)$ is fixed by $\th\oti\id$ and $\rho\oti\id$, 
so is $\mu(s)$ by $\th$ and $\Ad \rho(u^*)\circ \rho$. 
Hence we have the following isomorphism between the covariant systems 
with endomorphisms:
\[
\Ph\circ\Th\col \{\tM,\wdh{\si^\vph},\trho\}
\ra\{M_\vph,\th\circ\Ad \mu,\Ad \rho(u^*)\circ \rho|_{M_\vph}\}. 
\]

The one-parameter unitary group $\{\mu(s)\}_{s\in\R}$
is contained in $M_\vph^\th$, and this is a $\th$-cocycle.
By stability of $\th$ \cite[Theorem III.5.1 (ii)]{CT},
there exists $w\in U(M_\vph)$ such that
$\mu(s)=w\th_s(w^*)$ for all $s\in\R$.
Then the equality $\Ad \rho(u^*)\circ \rho(\mu(s))=\mu(s)$
implies that $w^*\rho(u^*w)\in M_\vph^\th$. 
Indeed using $\rho(u)\in M_\vph^\th$ and $\rho\circ\th_s=\th_s\circ\rho$, 
we have 
\begin{align*}
(w^*\rho(u^*w))^*\th_s(w^*\rho(u^*w))
=&\,
\rho(w^*u)w\th_s(w^*)\th_s(\rho(u^*w))
\\
=&\,
\rho(w^*)\rho(u)\mu(s)\rho(u^*)\th_s(\rho(w))
\\
=&\,
\rho(w^*)\rho(\mu(s))\th_s(\rho(w))
\\
=&\,
\rho(w^*\mu(s)\th_s(w))=1. 
\end{align*}
Now we have the following isomorphism: 
\[
\{\tM,\hat{\si}^\vph,\trho\}
\stackrel{\Ph\circ \Th}{\lra}
\{M_\vph,\th\circ\Ad \mu,\Ad\rho(u^*)\circ \rho|_{M_\vph}\}
\stackrel{\Ad w^*}{\lra}
\{M_\vph,\th,\Ad(w^*\rho(u^*w))\circ\rho|_{M_\vph}\}. 
\]
This is a desired one.
Indeed,
it is easy to see that $(\vph,\Ad(w^*\rho(u^*w))\circ\rho)$ is
an invariant pair since $w^*\rho(u^*w)\in M_\vph^\th$.
Furthermore we have
\begin{align*}
\Ad(w^*\rho(u^*w))(\rho(v(s)))
=&\,
w^*\rho(u^*w)v(s)\rho(w^*u)w
\\
=&\,
v(s)\th_{-s}(w^*\rho(u^*w))\rho(w^*u)w
\\
=&\,
v(s)w^*\rho(u^*w)\rho(w^*u)w
=v(s). 
\end{align*}
\end{proof}

The following lemma is probably well-known by specialists, 
but we present a proof for readers' convenience. 

\begin{lem}\label{lem: splitting B(H)}
Let $M$ be an infinite factor 
and $\rho\in \End(M)_0$. 
Then there exists an isomorphism $\pi\col M\oti B(\el_2)\ra M$ 
such that $[\rho]=[\pi\circ(\rho\oti\id)\circ \pi^{-1}]$ 
in $\Sect(M)$. 
\end{lem}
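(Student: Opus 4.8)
The statement asserts that for an infinite factor $M$ and $\rho\in\End(M)_0$, the sector $[\rho]$ is unchanged under amplification by $B(\ell_2)$, once we identify $M\oti B(\ell_2)$ with $M$ via a suitable isomorphism $\pi$. The plan is to exhibit $\pi$ explicitly using a Hilbert space in $M$ and then transport $\rho$ through it. Since $M$ is an infinite factor, there exists an infinite dimensional Hilbert space $\meK\subs M$ with support $1$; fix an orthonormal basis $\{\xi_i\}_{i=1}^\infty$ of $\meK$. This gives the usual isomorphism $t_\meK\col \rho_\meK(M)\oti B(\meK)\ra M$ determined by $t_\meK(\rho_\meK(x)\oti \xi_i\xi_j^*)=\xi_i x\xi_j^*$, and composing with $\rho_\meK\oti\id$ one obtains an isomorphism $\pi\col M\oti B(\ell_2)\ra M$ such that $\pi(x\oti e_{ij})=\xi_i x\xi_j^*$ for all $x\in M$ and $i,j\in\N$, where $\{e_{ij}\}$ is a system of matrix units of $B(\ell_2)$ identified with $B(\meK)$.

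\textbf{Computing $\pi\circ(\rho\oti\id)\circ\pi^{-1}$.} For $x\in M$ and $i,j$, we have $\pi((\rho\oti\id)(x\oti e_{ij}))=\pi(\rho(x)\oti e_{ij})=\xi_i\rho(x)\xi_j^*$. On the other hand, set $u:=\sum_{k=1}^\infty \rho(\xi_k)\xi_k^*$, which is a unitary in $M$ since $\{\xi_k\}$ and $\{\rho(\xi_k)\}$ are orthonormal bases of Hilbert spaces of support $1$ (the latter because $\rho$ is an endomorphism with finite index, hence injective and normal). Then $u\xi_i = \rho(\xi_i)$, and therefore
\[
\xi_i\,\rho(x)\,\xi_j^*
= u^* \rho(\xi_i)\rho(x)\rho(\xi_j)^* u
= u^* \rho(\xi_i x \xi_j^*)\, u
= (\Ad u^*\circ\rho)(\xi_i x\xi_j^*)
= (\Ad u^*\circ\rho)(\pi(x\oti e_{ij})).
\]
Since elements of the form $\xi_i x\xi_j^*$ generate $M$ (as $\meK$ has support $1$), this shows $\pi\circ(\rho\oti\id)\circ\pi^{-1}=\Ad u^*\circ\rho$. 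Hence $[\pi\circ(\rho\oti\id)\circ\pi^{-1}]=[\rho]$ in $\Sect(M)$, as claimed.

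\textbf{Main obstacle.} The only genuinely delicate point is the existence of the infinite dimensional Hilbert space $\meK\subs M$ with support $1$: this is where infiniteness of $M$ is used, and it is standard (an infinite factor contains a copy of $B(\ell_2)$ with a common unit, equivalently $M\cong M\oti B(\ell_2)$). Everything else is a routine verification that the matrix-unit generators are carried correctly and that $u$ is unitary; the computation above is all that is required, and $\pi$ is manifestly a $*$-isomorphism as a composition of $\rho_\meK\oti\id$ (an isomorphism onto $\rho_\meK(M)\oti B(\meK)$ since $\rho_\meK$ is injective with $\rho_\meK(M)'\cap M\supseteq \meK^*\meK$) and $t_\meK$.
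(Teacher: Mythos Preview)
Your proof is correct and follows essentially the same approach as the paper: the paper also picks an infinite dimensional Hilbert space $\meH\subs M$ with support $1$, builds $\pi=t_\meH\circ(\rho_\meH\oti\id)$, and verifies $\pi\circ(\rho\oti\id)\circ\pi^{-1}=\Ad u\circ\rho$ for the unitary $u=\sum_i\xi_i\rho(\xi_i^*)$ (which is your $u^*$). The only difference is cosmetic—your $u$ is the adjoint of theirs—and you spell out the generator computation a bit more explicitly.
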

\begin{proof}
Take an infinite dimensional Hilbert space $\meH$ in $M$ with support $1$.
Let $\{\xi_i\}_{i=1}^\infty$ be an orthonormal basis of $\meH$.
Define the isomorphism
$t_\meH\col \rho_\meH(M)\oti B(\meH)\ra M$
by $t_\meH(\rho_\meH(x)\oti \xi_i\xi_j^*)
=\rho_\meH(x)\xi_i\xi_j^*=\xi_i x\xi_j^*$
for $x\in M$ and $i,j\in\N$.
Then the isomorphism $\pi\col M\oti B(\meH)\to M$
is defined by $\pi=t_\meH\circ (\rho_\meH\oti\id)$.
Define the unitary
$
%\displaystyle 
u=\sum_{i=1}^\infty\xi_i\rho(\xi_i^*)$.
By direct computation, we see that
$\pi\circ (\rho\oti\id)\circ\pi^{-1}(x)=\Ad u\circ\rho(x)$
for all $x\in M$.
Hence $[\pi\circ (\rho\oti\id)\circ\pi^{-1}]=[\rho]\in \Sect(M)$. 
\end{proof}

\subsection{Connes-Takesaki modules and approximately inner endomorphisms}
\label{subsect: CT-app}

Let $M$ be a factor and $\rho\in\End(M)_0$. 
Following \cite[Definition 4.1]{Iz1}, 
we say that $\rho$ has a Connes-Takesaki module 
if the canonical extension $\trho$ satisfies 
$\trho(Z(\tM))=Z(\tM)$. 
We denote by $\mo(\rho)$ the restriction of $\trho$ to $Z(\tM)$ 
and by $\End(M)_{{\rm CT}}$ the set of endomorphisms 
with Connes-Takesaki modules. 

Let $M$ be an infinite factor. 
Take a dominant weight $\vph$ on $M$ 
and a one-parameter unitary group $\{v(t)\}_{t\in\R}$ such that 
$\si_s^\vph(v(t))=e^{-ist}v(t)$ as before. 
Set $\th_s:=\Ad v(s)|_{M_\vph}\in\Aut(M_\vph)$ as before. 
Taking Lemma \ref{lem: duality} into account, 
we also denote by $\th$ the dual action $\wdh{\si^\vph}$ on $\tM$. 

\begin{lem}\label{lem: gen-trace}
Let $M$ be a type III$_\la$ factor with $0<\la<1$. 
Let $\rho\in \End(M)_{{\rm CT}}$. 
Assume that there exists $s_0\in\R$ such that 
$\mo(\rho)=\th_{s_0}$. 
Then for any generalized trace $\ps$ on $M$, 
there exists a unitary $u\in M$ such that 
in the discrete decomposition $M=M_\ps\rti_\si \Z$,
\begin{enumerate}

\item 
$\ps\circ \ph_{\Ad u\circ \rho}=d(\rho)^{-1} e^{-s_0} \ps$, 

\item $\Ad u\circ \rho(U)=U$, where $U$ is the unitary implementing 
$\si$. 
\end{enumerate}
\end{lem}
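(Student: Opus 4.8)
The plan is to conjugate $\rho$ into a normal form adapted to the decomposition $M=M_\ps\rtimes_\si\Z$, read off the scalar from $\mo(\rho)$, and then absorb a residual cocycle. First I would apply \cite[Lemma 2.12 (ii)]{ILP} to get $u_1\in U(M)$ with $(\ps,\Ad u_1\circ\rho)$ an invariant pair in the sense of \cite[Definition 2.2]{Iz1}; put $\rho_1:=\Ad u_1\circ\rho$. Then $\si_t^\ps\circ\rho_1=\rho_1\circ\si_t^\ps$, so $\rho_1$ preserves the centralizer $M_\ps=M^{\si^\ps}$, and, $M$ being a factor, the cocycle $[D\ps\circ\ph_{\rho_1}:D\ps]_t$ is a scalar, i.e.\ $\ps\circ\ph_{\rho_1}=c_0\ps$ for some $c_0>0$. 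Since $\si_t^\ps(U)=\la^{it}U$, the unitary $\rho_1(U)$ lies in the $\la^{it}$-spectral subspace $M_\ps U$ of $\si^\ps$, hence $\rho_1(U)=w_1U$ with $w_1\in U(M_\ps)$; applying $\rho_1$ to $UxU^*=\si(x)$ gives the cocycle identity $\rho_0\circ\si=\Ad w_1\circ\si\circ\rho_0$ on $M_\ps$, where $\rho_0:=\rho_1|_{M_\ps}\in\End(M_\ps)_0$.

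Next I would determine $c_0$ from the module. By definition of the canonical extension, $\wdt{\rho_1}(\la^\ps(t))=d(\rho)^{it}[D\ps\circ\ph_{\rho_1}:D\ps]_t\la^\ps(t)=(d(\rho)c_0)^{it}\la^\ps(t)$. For $M$ of type III${}_\la$ the centre $Z(\tM)$ is generated by the single unitary $\la^\ps(T_0)$, where $T_0>0$ is the period of $\si^\ps$ (so $\la^{iT_0}=1$); hence $\mo(\rho)(\la^\ps(T_0))=(d(\rho)c_0)^{iT_0}\la^\ps(T_0)$, whereas $\th_{s_0}(\la^\ps(T_0))=e^{-is_0T_0}\la^\ps(T_0)$. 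The assumption $\mo(\rho)=\th_{s_0}$ therefore forces $d(\rho)c_0=e^{-s_0}\la^{k}$ for some $k\in\Z$, the integer $k$ being ambiguous precisely because the flow of weights is periodic.

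The heart of the proof is to show that $w_1$ is a coboundary for $\si$, i.e.\ that there is $u_2\in U(M_\ps)$ with $w_1=u_2^*\si(u_2)$; equivalently, that $\Ad u_2\circ\rho_0$ commutes with $\si$ on the nose. Here $w_1$ is a unitary element of the finite-dimensional intertwiner space $(\si\rho_0,\rho_0\si)$, and I would obtain this by passing to a conjugate $\ovl{\rho_0}$ of $\rho_0$ and invoking Popa's stability theory for finite-index inclusions \cite{Po1} (in the hyperfinite case it reduces to the vanishing of the $1$-cohomology of the trace-scaling $\Z$-action $\si$ on $R_{0,1}$). This cocycle-vanishing step is the main obstacle. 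Granting it, set $\rho_2:=\Ad u_2\circ\rho_1$: then $\rho_2(U)=u_2w_1\si(u_2)^*U=U$, and since $u_2\in M_\ps$ preserves $\ps$ we have $\ph_{\rho_2}=\ph_{\rho_1}\circ\Ad u_2^{-1}$ and $\ps\circ\ph_{\rho_2}=c_0\ps$.

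It remains to correct the scalar. Replacing $\rho_2$ by $\rho_3:=\Ad U^{m}\circ\rho_2$ leaves $\rho_3(U)=U^m\rho_2(U)U^{-m}=U$ unchanged, while $\ph_{\rho_3}=\ph_{\rho_2}\circ\Ad U^{-m}$, and a short computation with the $\Z$-grading of $\ps$ (using $\ps\circ\Ad U^{-m}=\la^{\pm m}\ps$ on $M_\ps$) gives $\ps\circ\ph_{\rho_3}=c_0\la^{\pm m}\ps$. By the previous paragraph $m$ can be chosen so that $\ps\circ\ph_{\rho_3}=d(\rho)^{-1}e^{-s_0}\ps$. Then $u:=U^{m}u_2u_1$ is the desired unitary: $\ps\circ\ph_{\Ad u\circ\rho}=d(\rho)^{-1}e^{-s_0}\ps$ and $\Ad u\circ\rho(U)=U$.
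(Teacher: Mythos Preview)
Your overall plan is correct and its endgame coincides with the paper's: once $(\ps,\rho_1)$ is an invariant pair, the unitary $w_1=\rho_1(U)U^{-1}$ lies in $M_\ps$, and one needs $u_2\in U(M_\ps)$ with $w_1=u_2^*\si(u_2)$. But your justification of this step is misdirected. There is nothing to be gained from Popa's subfactor theory or from passing to a conjugate $\ovl{\rho_0}$; the finite-index structure of $\rho_0$ and the intertwiner space $(\si\rho_0,\rho_0\si)$ play no role whatsoever. What is needed is precisely the stability of the trace-scaling $\Z$-action $\si$ on the type~II$_\infty$ factor $M_\ps$, i.e., that every unitary in $M_\ps$ is of the form $u^*\si(u)$. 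This is \cite[Theorem~III.5.1~(i)]{CT}, valid for \emph{any} type~III$_\la$ factor with $0<\la<1$ (no hyperfiniteness assumed), and it is exactly the reference the paper invokes. Your parenthetical remark about the hyperfinite case names the right phenomenon, but it is neither a consequence of a Popa-type argument nor restricted to $R_{0,1}$.

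Where your route genuinely differs from the paper's is in obtaining condition~(1). The paper does not apply \cite[Lemma~2.12~(ii)]{ILP} directly to the given generalized trace. Instead it first passes to a dominant weight $\vph$ via the preceding duality lemma (so that $(\vph,\rho)$ is invariant, $\rho(v(s))=v(s)$, and $\rho|_{Z(M_\vph)}=\th_{s_0}$), builds a specific generalized trace $\ps=\vph_{h^{-1}}$, and observes that the inner perturbation by $w=v(s_0)^*$ produces the exact scalar $d(\rho)^{-1}e^{-s_0}$ with no $\la^{\Z}$ ambiguity; uniqueness of generalized traces up to scalar and inner perturbation then transfers this to the given $\ps$. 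Your route works directly with $\ps$, tolerates the $\la^{\Z}$ ambiguity in $c_0$, and removes it at the end via $\Ad U^m$ (using $\ps\circ\Ad U^{-1}=\la^{-1}\ps$). This is a legitimate and somewhat more elementary alternative; it trades the dominant-weight machinery for an extra scalar-correction step. Both routes then finish with the same Connes--Takesaki stability argument to arrange $\rho(U)=U$ while preserving~(1).
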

\begin{proof}
By Lemma \ref{lem: duality}, 
we may assume that
$(\vph, \rho)$ is an invariant pair,
$\rho(v(s))=v(s)$ for all $s\in\R$
and $\rho|_{Z(M_\vph)}=\th_{s_0}$.
Since each generalized trace is unique up to 
scalar multiplications and inner perturbations, 
to achieve (1) for each generalized trace, 
it suffices to 
construct one generalized trace $\ps$ satisfying (1). 

We put $T=-2\pi/\log\la$. 
Then $\si_T^\vph=\Ad h^{iT}$ for some invertible $h\in Z(M_\vph)_+$. 
Since $\si_T^\vph(v(t))=e^{-itT}v(t)$, 
we have $\th_t(h^{-iT})=e^{-itT} h^{-iT}$, 
We set $\ps:=\vph_{h^{-1}}$. 
Then it is well-known that $\ps$ is a generalized trace. 
Indeed, it is trivial that $\ps$ has the period $T$. 
So, we have to check $\ps(1)=\vph(h^{-1})=\infty$, 
but it is also trivial since $\vph$ is a dual weight. 
Set $w:=v(s_0)^*$. 
Then $\Ad w\circ \rho=\id$ on $Z(M_\vph)$, and $\Ad w\circ \rho(h)=h$. 
Hence we have 
\begin{align*}
\ps\circ \ph_{\Ad w\circ \rho}
=&\,
\vph_{h^{-1}}\circ \ph_{\Ad w\circ \rho}
=
(\vph\circ \ph_{\Ad w\circ \rho})_{h^{-1}}
\\
=&\,
(\vph\circ \ph_{\rho}\circ \Ad w^*)_{h^{-1}}
=
d(\rho)^{-1}(\vph\circ \Ad v(s_0))_{h^{-1}}
\\
=&\,
d(\rho)^{-1}e^{-s_0}\vph_{h^{-1}}
=
d(\rho)^{-1}e^{-s_0}\ps.
\end{align*}
Therefore, we may assume that $\rho$ has the property 
$\ps\circ \ph_\rho=d(\rho)^{-1}e^{-s_0}\ps$. 
As is explained, we also may assume that $\ps$ is a given 
generalized trace. 

Now let $M=M_\ps\rti_\si \Z$ be the discrete decomposition
with the implementing unitary $U$.
Since $\si^\ps$ and $\rho$ commutes,
we see that $\rho(U)U^*$ is in $M_\ps$.
By stability of $\si=\Ad U|_{M_\ps}$ \cite[Theorem III.5.1 (i)]{CT},
we can take a unitary $u\in M_\ps$ such that
$\rho(U)U^*=u^*\si(u)=u^*UuU^*$.
Hence we have $\Ad u\circ \rho(U)=U$.
Since $u\in M_\ps$,
we have $\ps\circ \ph_{\Ad u\circ \rho}=d(\rho)^{-1} e^{-s_0} \ps$. 
\end{proof}

\begin{lem}
Let $M$ be a type III$_0$ factor. 
Let $\rho\in \End(M)_{{\rm CT}}$. 
Assume that there exists $s_0\in\R$ such that 
$\mo(\rho)=\th_{s_0}$. 
Then there exists $\ps\in W_{\rm{lac}}(M)$ and $u\in U(M)$ 
such that 
\begin{enumerate}
\item 
$\ps$ has infinite multiplicity, 

\item 
$\ps\circ \ph_{\Ad u\circ \rho}=d(\rho)^{-1} e^{-s_0} \ps$, 

\item 
$\Ad u\circ \rho|_{Z(M_\ps)}=\id$. 
\end{enumerate}
\end{lem}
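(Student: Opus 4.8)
The plan is to imitate the proof of Lemma~\ref{lem: gen-trace}, replacing the generalized trace by a lacunary weight arising from the discrete decomposition of the type III$_0$ factor $M$, while carrying along the now non-trivial centre $Z(M_\ps)$.

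First I would reduce using Lemma~\ref{lem: duality}. Fix a dominant weight $\vph$ on $M$, its one-parameter unitary group $\{v(s)\}_{s\in\R}$, and the flow $\th_s=\Ad v(s)|_{M_\vph}$. Lemma~\ref{lem: duality} lets me assume $(\vph,\rho)$ is an invariant pair, $\rho(v(s))=v(s)$ for all $s\in\R$, and $\rho|_{Z(M_\vph)}=\th_{s_0}$. Composing with $\Ad v(s_0)^*$ — which fixes each $v(s)$, still commutes with $\si^\vph$, and sends $\th_{s_0}$ to the identity on $Z(M_\vph)$ — I may further assume $\rho|_{Z(M_\vph)}=\id$ and $\rho\circ\si_t^\vph=\si_t^\vph\circ\rho$. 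Since $\si^{\vph\circ\Ad v(s_0)}=\si^\vph$ and $M$ is a factor, $\vph\circ\Ad v(s_0)=e^{-s_0}\vph$ (the flow of weights scales the trace of $\tM$ by $e^{-s_0}$), so the computation of Lemma~\ref{lem: gen-trace} gives $\vph\circ\ph_\rho=d(\rho)^{-1}e^{-s_0}\vph$. Every perturbation made here and below is to be absorbed into the single unitary $u$ of the statement.

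Next I would produce the lacunary weight. By the structure theory of type III$_0$ factors (cf.\ \cite{CT,Ta2}), the flow of weights $\{Z(M_\vph),\th\}$ is a flow built under a ceiling function $h$ over an ergodic transformation of a base $Y$, and one may arrange $\inf h>|s_0|$. This yields a lacunary weight $\ps\in W_{\rm{lac}}(M)$ of infinite multiplicity, which is (1), together with a discrete decomposition $M=M_\ps\rti_\si\Z$ in which $M_\ps$ is type II$_\infty$ and $Z(M_\ps)$ is the function algebra of $Y$. Because $\rho$ fixes $Z(M_\vph)$ pointwise and commutes with $\si^\vph$ and with $\th$, it is compatible with the data defining $\ps$; pinning down the constant by $\vph\circ\ph_\rho=d(\rho)^{-1}e^{-s_0}\vph$ and straightening by an inner perturbation if necessary, I obtain, after the corresponding replacement of $\rho$, the equality $\ps\circ\ph_\rho=d(\rho)^{-1}e^{-s_0}\ps$, which is (2). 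By (\ref{eq: si-ph}) this forces $\si_t^\ps\circ\rho=\rho\circ\si_t^\ps$, and by (\ref{eq: si-Erho}) also $\si_t^\ps\circ\ph_\rho=\ph_\rho\circ\si_t^\ps$; hence $\rho$ and $\ph_\rho$ map $M_\ps$ into $M_\ps$, and $\rho|_{M_\ps}$ is a finite-index endomorphism of $M_\ps$.

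Finally I would establish (3), namely $\rho|_{Z(M_\ps)}=\id$. The endomorphism $\rho|_{M_\ps}$ acts on $Z(M_\ps)$ through an automorphism $\gamma$; since $\rho$ commutes with $\si$ up to an inner automorphism of $M_\ps$ (carried by $\rho(U)U^*$, where $U$ implements $\si$) and preserves the trace-scaling function of $\si$, the automorphism $\gamma$ commutes with $\si|_{Z(M_\ps)}$ and respects the ceiling $h$. As the flow of weights of $M$ is reconstructed from the triple $(Z(M_\ps),\si|_{Z(M_\ps)},h)$, the flow automorphism induced by $\gamma$ must coincide with $\mo(\rho)$, which is trivial after the reduction of the first step; since $\inf h>|s_0|$ there is no wrap-around, and this is possible only when $\gamma=\id$. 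Hence $\rho|_{Z(M_\ps)}=\id$ for the same unitary $u$, completing the proof. I expect this last step — converting $\mo(\rho)=\th_{s_0}$ into precise control of the action of $\rho$ on $Z(M_\ps)$, tracking $\rho(U)U^*$ and the compatibility with (1) and (2) — to be the main obstacle, and the freedom to choose a lacunary section with $\inf h>|s_0|$ to be the key device that makes the bookkeeping close.
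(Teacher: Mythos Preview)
Your first reduction (via Lemma~\ref{lem: duality} and the perturbation by $\Ad v(s_0)^*$) matches the paper exactly. The divergence is in how you produce the lacunary weight and verify (2) and (3).

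The paper's route is much more direct. Having arranged $\rho|_{Z(M_\vph)}=\id$ and $\vph\circ\ph_\rho=d(\rho)^{-1}e^{-s_0}\vph$, it simply writes the lacunary weight as $\chi=\vph_h$ for some $h\in Z(M_\vph)_+$ with support projection $e$ (every weight on $M$ is of this form relative to the dominant $\vph$). Then $\rho(h)=h$ is automatic, and a one-line chain-rule computation gives $\chi\circ\ph_\rho=d(\rho)^{-1}e^{-s_0}\chi$, which is (2). For (3), the Connes--Takesaki relative commutant theorem \cite[Theorem II.5.1]{CT} gives $M_\vph'\cap M=Z(M_\vph)$, hence $Z(M_\chi)\subset(eM_\vph e)'\cap eMe=Z(M_\vph)e$; since $\rho$ already fixes $Z(M_\vph)$ pointwise, (3) drops out immediately. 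The remainder (compressing from $eMe$ to $M$ by an isometry, then tensoring with $B(\ell^2)$ via Lemma~\ref{lem: splitting B(H)} to ensure infinite multiplicity) is routine bookkeeping.

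Your flow-under-a-ceiling approach is not wrong in spirit, but as written it has genuine gaps. In your Step~2 the phrase ``compatible with the data defining $\ps$'' carries all the weight: you never say what this data is in terms of $\vph$, so there is no visible mechanism transporting $\vph\circ\ph_\rho=d(\rho)^{-1}e^{-s_0}\vph$ to the analogous identity for $\ps$. The missing link is precisely $\ps=\vph_h$ with $h\in Z(M_\vph)$. More seriously, your Step~3 presupposes that $\rho$ restricts to an automorphism $\gamma$ of $Z(M_\ps)$, but a priori one only has $\rho(Z(M_\ps))\subset\rho(M_\ps)'\cap M_\ps$, which need not equal $Z(M_\ps)$; without the Connes--Takesaki containment $Z(M_\ps)\subset Z(M_\vph)e$ you cannot even define $\gamma$. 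The subsequent claims --- that $\gamma$ respects the ceiling, and that the flow automorphism it induces coincides with $\mo(\rho)$ --- are likewise left as assertions. Finally, the ``no wrap-around'' condition $\inf h>|s_0|$ is a red herring: after your first reduction the action on the flow is already trivial, and the problem is to deduce $\gamma=\id$ from that, not to rule out a residual translation.
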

\begin{proof}
There exists $u\in U(M)$ satisfying Lemma \ref{lem: duality}. 
Replacing $\rho$ with $\Ad u\circ \rho$, 
we may assume that 
$(\vph,\rho)$ is an invariant pair,
$\rho(v(s))=v(s)$ 
and 
$\rho|_{Z(M_\vph)}=\th_{s_0}|_{Z(M_\vph)}$. 
By perturbing $\rho$ to $\Ad v(s_0)^*\circ \rho$ again, 
we may and do assume that $\rho$ satisfies 
$\vph\circ\ph_\rho=d(\rho)^{-1} e^{-s_0}\vph$, 
$\rho(v(s))=v(s)$ and $\rho|_{Z(M_\vph)}=\id$. 

Take $h\in Z(M_\vph)_+$ such that a normal semifinite weight 
$\chi=\vph_h$ is lacunary. 
Let $e\in Z(M_\vph)$ be the support projection of $h$. 
Then $\chi$ is faithful on $eM e$. 
Since $\rho|_{Z(M_\vph)}=\id$, 
we have $\rho(h)=h$ and $\ph_\rho(h)=\ph_\rho(\rho(h))=h$. 
Then 
\[
\chi\circ \ph_\rho=\vph_h\circ \ph_\rho
=(\vph\circ\ph_\rho)_{\rho(h)}
=d(\rho)^{-1} e^{-s_0}\vph _h
=d(\rho)^{-1} e^{-s_0}\chi. 
\]
Since $e M_\vph e\subs M_\chi\subs eMe$
 and $M_\vph '\cap M=Z(M_\vph)$ 
by Connes-Takesaki relative commutant theorem 
\cite[Theorem II.5.1]{CT}, 
we have 
$Z(M_\chi)\subs (e M_\vph e)'\cap e M e=Z(M_\vph)e$. 
Hence $\rho|_{Z(M_\chi)}=\id$. 

Take an isometry $w\in M$ such that $ww^*=e$. 
Then the map $\pi:M\ni x\mapsto wxw^* \in eMe$ is an isomorphism. 
We set $\chi':=\chi\circ \pi\in W_{\rm lac}(M)$ 
and $\rho':=\pi^{-1}\circ \rho\circ \pi\in \End(M)_0$. 
Then $\rho'=\id$ on $Z(M_{\chi'})$. 
It is easy to see that $v:=w^* \rho(w)\in M$ is unitary, and 
$\rho'=\Ad v\circ \rho$. 
Hence $\ph_{\rho'}=\ph_\rho\circ \Ad v^*$ and 
\begin{align*}
\chi'\circ \ph_{\rho'}
=&\,(\chi\circ\Ad w)\circ (\ph_{\rho}\circ \Ad v^*)
=\chi\circ \ph_{\rho}\circ \Ad \rho(w)\rho(w^*)w
\\
=&\,d(\rho)^{-1} e^{-s_0}\chi\circ \Ad \rho(e)w
=d(\rho)^{-1} e^{-s_0}\chi\circ\Ad w
=d(\rho)^{-1} e^{-s_0}\chi'. 
\end{align*}

Note that $\chi'$ may not have infinite multiplicity. 
Consider the weight $\ps':=\chi'\oti \Tr$ and 
the endomorphism $\rho'\oti\id$ on $M\oti B(\el_2)$. 
Then by Lemma \ref{lem: splitting B(H)}, 
there exists an isomorphism $\pi'\col M\oti B(\el_2)\ra M$ 
and a unitary $u\in U(M)$ such that 
$\Ad u\circ\rho'
=\pi'\circ (\rho'\oti\id)\circ {\pi'}^{-1}$. 
Set $\ps:=(\chi'\oti \Tr)\circ {\pi'}^{-1}$. 
Then this $\ps$ and $u\in U(M)$ are desired ones. 
\end{proof}

Let $M=M_\ps\rti_\si \Z$ be
the discrete decomposition of $M$ with the implementing unitary $U$.
Then the same proof of \cite[Lemma 2]{KST} works in
our case, and
we can take a unitary $v\in U(M_\ps)$ with
$\Ad vu\circ \rho(U)=U$. Hence the following holds.

\begin{lem}\label{lem: lacunary}
Let $M$ be a factor of type III$_0$. 
Let $\rho\in \End(M)_0$. 
Suppose that $\mo(\rho)=\th_{s_0}$ for some $s_0\in \R$. 
Then there exist $\ps\in W_{\rm{lac}}(M)$ with infinite multiplicity 
and $u\in U(M)$ such that 
\begin{enumerate}
\item 
In the discrete decomposition $M=M_\ps\vee\{U\}''$, 
we have 
$\Ad u\circ\rho|_{Z(M_\ps)}=\id$,

\item $\ps\circ \ph_{\Ad u\circ \rho}=d(\rho)^{-1}e^{-s_0}\ps$, 

\item $\Ad u\circ \rho(U)=U$. 
\end{enumerate}
\end{lem}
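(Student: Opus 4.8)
The plan is to reorganize the discussion preceding the statement into a self-contained argument: everything reduces to the previous lemma together with an adaptation of \cite[Lemma 2]{KST}. First I would apply the previous lemma to $\rho$ and $s_0$, obtaining $\ps\in W_{\rm lac}(M)$ of infinite multiplicity and $u'\in U(M)$ such that, writing $\rho':=\Ad u'\circ\rho$,
\[
\ps\circ\ph_{\rho'}=d(\rho)^{-1}e^{-s_0}\ps, \qquad \rho'|_{Z(M_\ps)}=\id .
\]
Thus (1) and (2) already hold with $u'$ in place of $u$. Moreover, if $v\in U(M_\ps)$ then $v$ centralizes $\ps$, so $\ps\circ\Ad v^*=\ps$, and $v$ fixes $Z(M_\ps)$ pointwise; hence replacing $u'$ by $vu'$ preserves (1) and (2). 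So the whole problem is to find $v\in U(M_\ps)$ achieving (3).

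Next I would pass to the discrete decomposition $M=M_\ps\vee\{U\}''=M_\ps\rti_\si\Z$ of the lacunary weight $\ps$, where $\si:=\Ad U|_{M_\ps}$. Since $\ps\circ\ph_{\rho'}$ is a positive scalar multiple of $\ps$, its modular automorphism group is again $\si^\ps$, so formula (\ref{eq: si-ph}) gives $\si_t^\ps\circ\rho'=\rho'\circ\si_t^\ps$, whence $\rho'(M_\ps)\subseteq M_\ps$. Using $\rho'|_{Z(M_\ps)}=\id$, for every $z\in Z(M_\ps)$ we get $\rho'(U)z\rho'(U)^*=\rho'(UzU^*)=UzU^*$; hence $U^*\rho'(U)$ commutes with $Z(M_\ps)$ and therefore lies in $Z(M_\ps)'\cap M=M_\ps$. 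Consequently $w:=\rho'(U)U^*=\si\big(U^*\rho'(U)\big)$ is a unitary of $M_\ps$, and (exactly as in the proof of \cite[Lemma 2]{KST}) it determines a $\si$-cocycle, whose central part is trivial precisely because $\rho'|_{Z(M_\ps)}=\id$. By the stability of the $\Z$-action $\si$ on $M_\ps$ (\cite[Theorem III.5.1 (i)]{CT}), this cocycle is a coboundary: there is $v\in U(M_\ps)$ with $\rho'(U)U^*=v^*\si(v)=v^*UvU^*$, that is, $\Ad v\circ\rho'(U)=U$.

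Finally I would set $u:=vu'$, so that $\Ad u\circ\rho=\Ad v\circ\rho'$. Then (3) holds by the last displayed identity; (1) holds because $\Ad v$ is trivial on $Z(M_\ps)$ and $\rho'|_{Z(M_\ps)}=\id$; and (2) holds because $\ph_{\Ad u\circ\rho}=\ph_{\rho'}\circ\Ad v^*$, while $\ps\circ\Ad v^*=\ps$, so $\ps\circ\ph_{\Ad u\circ\rho}=d(\rho)^{-1}e^{-s_0}\ps$.

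The step I expect to be the main obstacle is the passage from cocycle to coboundary. The point is that \cite[Lemma 2]{KST} is stated for automorphisms, so one must verify that its proof still goes through when $\rho'$ is merely an endomorphism and, more delicately, that the presence of a nontrivial center $Z(M_\ps)$ in the type III$_0$ case does not obstruct the vanishing of the cocycle. The latter is exactly where the reduction $\rho'|_{Z(M_\ps)}=\id$ is used; apart from it, every manipulation takes place inside $M_\ps$, on which $\si$ is a genuine automorphism, so the endomorphism character of $\rho$ does not intervene.
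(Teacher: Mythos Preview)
Your argument is correct and is exactly the paper's approach: the preceding lemma supplies (1)--(2), and then the \cite[Lemma 2]{KST} stability argument produces $v\in U(M_\ps)$ with $\Ad v\circ\rho'(U)=U$, after which (1)--(3) hold for $u=vu'$. The only step worth tightening is the equality $Z(M_\ps)'\cap M=M_\ps$, which is true because $\si$ acts freely on $Z(M_\ps)$ in the type III$_0$ case; alternatively, since $\si_t^\ps(U)\in U\cdot Z(M_\ps)$ and $\rho'$ fixes $Z(M_\ps)$, the commutation $\si_t^\ps\circ\rho'=\rho'\circ\si_t^\ps$ directly gives $\rho'(U)U^*\in M_\ps$, as in the paper's proof of the III$_\la$ analogue.
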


Now we prove the main result of this section. 

\begin{thm}\label{thm: main-app}
Let $M$ be a hyperfinite factor. 
Let $\rho\in \End(M)_0$ and $r>0$. 
Then the following conditions are equivalent: 
\begin{enumerate}
\item $\rho\in \oInt_r(M)$, 

\item 
$\rho\in\End(M)_{\rm CT}$ and 
$\mo(\rho)=\th_{\log(r/d(\rho))}$.
\end{enumerate}
\end{thm}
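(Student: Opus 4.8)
The plan is to prove the two implications separately. The implication $(1)\Rightarrow(2)$ will be uniform in the type of $M$ and rests on the continuity of the normalized canonical extension (Theorem~\ref{thm: cont-nce}); the implication $(2)\Rightarrow(1)$ is the substantial one and will be reduced, via the structure theory, to the hyperfinite type~II$_\infty$ case already treated in Lemma~\ref{lem: II-infty factor} and Lemma~\ref{lem: typeII}.

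For $(1)\Rightarrow(2)$, suppose first $r\in\N$ and write $\rho=\lim_\nu\rho_{\meH^\nu}$ with $\dim\meH^\nu=r$. On $\tM$ the canonical extension $\wdt{\rho_{\meH^\nu}}$ is again implemented by the Hilbert space $\meH^\nu$ viewed inside $\tM$ (this is the formula for the canonical extension of an inner, or direct-sum, endomorphism), hence it acts trivially on $Z(\tM)$; consequently $\breve{\rho_{\meH^\nu}}=\th_{\log r}\circ\wdt{\rho_{\meH^\nu}}$ restricts to $\th_{\log r}$ on $Z(\tM)$, independently of $\nu$. By Theorem~\ref{thm: cont-nce}, $\breve{\rho^\nu}(z)\to\brrho(z)$ strongly$*$ for every $z\in Z(\tM)$, so $\brrho(z)=\th_{\log r}(z)$; thus $\brrho(Z(\tM))=Z(\tM)$, i.e.\ $\rho\in\End(M)_{\rm CT}$, and $\mo(\rho)=\trho|_{Z(\tM)}=\th_{-\log d(\rho)}\circ\brrho|_{Z(\tM)}=\th_{\log(r/d(\rho))}$. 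For general $r>0$ I would run the same argument with the approximating families of partial isometries of Definition~\ref{defn: app-inner-general}, using the explicit left inverse $\ph_\brrho$ of the normalized canonical extension and the fact that the relative cocycles $[D\vph:D\vph\circ\ph_\rho]_t$ lie in $M$, so that the defining estimates transport to $\tM$ and localize on $Z(\tM)$ to give $\brrho|_{Z(\tM)}=\th_{\log r}$ again.

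For $(2)\Rightarrow(1)$, the type~I and type~II$_1$ cases follow at once from Lemmas~\ref{lem: I-factor} and~\ref{lem: finite-int}: there $\mo(\rho)=\th_{\log(r/d(\rho))}$ is equivalent to $r=d(\rho)$ with $\rho$ inner of rank $r$ (type~I), resp.\ to $r=1$ with $\ta\circ\ph_\rho=\ta$ (type~II$_1$), which is exactly the description of $\oInt_r(M)$ there; the type~II$_\infty$ case is Lemma~\ref{lem: II-infty factor} after the routine check that $\th_{\log(r/d(\rho))}$ corresponds to the scalar module identity $r=d(\rho)m$. So the content is type~III. I would first invoke Lemma~\ref{lem: duality}, together with the fact that $\oInt_r$ is a sector invariant, to replace $\rho$ by $\Ad u\circ\rho$ so that $(\vph,\rho)$ is an invariant pair for a dominant weight, $\rho$ fixes the unitaries $v(s)$, and $\rho|_{Z(M_\vph)}$ realizes $\mo(\rho)=\th_{\log(r/d(\rho))}$; then for type~III$_1$ perturb once more by a suitable inner automorphism $\Ad v(s_1)^{*}$ so that $\vph\circ\ph_\rho=r^{-1}\vph$, for type~III$_\la$ with $0<\la<1$ apply Lemma~\ref{lem: gen-trace}, and for type~III$_0$ apply Lemma~\ref{lem: lacunary}. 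In every case one lands on a decomposition $M=M_\ps\rti_\si\Z$ (discrete; for III$_1$ the continuous decomposition $M=M_\vph\rti_\th\R$) with $M_\ps$ a hyperfinite type~II$_\infty$ von Neumann algebra, with diffuse center precisely when $M$ is of type~III$_0$, such that $\rho$ fixes the implementing unitary $U$ (resp.\ each $v(s)$), $\rho|_{Z(M_\ps)}=\id$, and $\ps\circ\ph_\rho=r^{-1}\ps$ (using $d(\rho)^{-1}e^{-\log(r/d(\rho))}=r^{-1}$). By Lemma~\ref{lem: typeII}, in its non-factorial form needed for III$_0$ (or Lemma~\ref{lem: II-infty factor} in the factor case), $\rho|_{M_\ps}$ is approximately inner of rank $r$, so one may fix partial isometries $\{v_j^\nu\}_{j=1}^{[r]+1}\subs M_\ps$ witnessing this.

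The main obstacle is the final lifting step: the $v_j^\nu$, which witness approximate innerness of $\rho|_{M_\ps}$, must be promoted to witnesses of $\rho\in\oInt_r(M)$, i.e.\ the intertwining estimate $\|r^{-1}v_j^\nu\vph-(\vph\circ\ph_\rho)v_j^\nu\|\to0$ must be pushed from $\vph\in(M_\ps)_*$ to all $\vph\in M_*$. Since $\rho$ and $\ph_\rho$ fix $U$ (resp.\ the $v(s)$) and the $v_j^\nu$ lie in $M_\ps$, I would use a fast reindexation argument in the spirit of the use of \cite[Lemma~5.3]{Oc1} in the proof of Theorem~\ref{thm: rho-app}, passing to a subfamily that is in addition asymptotically central relative to $U$ while retaining the isometry and partition-of-unity conditions; this upgrades the estimate to all of $M_*$ and yields $\rho\in\oInt_r(M)$. (Alternatively, one could avoid the explicit lifting by checking, via Theorem~\ref{thm: rho-app}, that the locally trivial subfactor $N^{(\id,\rho)}\stackrel{E^\mu}{\subs}M^{(\id,\rho)}$ with $\mu_1/\mu_0=r$ is approximately inner, by reducing that inclusion to its continuous core and applying Popa's theory \cite{Po1} in the hyperfinite type~II$_\infty$ core exactly as in Lemma~\ref{lem: II-infty factor}.) The delicate points are precisely this lifting — keeping the diffuse center $Z(M_\ps)$ under control throughout the type~III$_0$ case, and preserving the structural constraints on the $v_j^\nu$ while forcing asymptotic $U$-centrality — together with the bookkeeping needed to see that all the unitary perturbations and the passage through the normalized canonical extension keep the triple $(d(\rho),\mo(\rho),r)$ consistently matched.
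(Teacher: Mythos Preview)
Your outline is largely on target, but there is a genuine gap in the lifting step for the type~III$_\la$ ($0<\la<1$) and III$_0$ cases, and a substantive difference in the III$_1$ case.

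For III$_1$, the paper does \emph{not} pass through the continuous decomposition at all. It applies Popa's result \cite[Theorem~2.9~(iv)]{Po1} directly to the locally trivial subfactor $N^{(\id,\rho)}\stackrel{E^\mu}{\subs}M^{(\id,\rho)}$ (with $\mu_1/\mu_0=r$) to conclude it is approximately inner, and then invokes Theorem~\ref{thm: rho-app} to get $\rho\in\oInt_r(M)$. Your proposed route via $M_\vph\rti_\th\R$ would require lifting across an $\R$-action, which is strictly harder than the $\Z$-action case you already flag as delicate.

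For III$_\la$ and III$_0$, your setup is correct up to obtaining $v_j^\nu\in N:=M_\ps$ via Lemma~\ref{lem: typeII}. The gap is the lifting. Fast reindexation is the wrong tool: the $v_j^\nu$ are \emph{not} $\om$-centralizing in $N$ (they satisfy $r^{-1}v_j^\nu\chi\approx(\chi\circ\ph_\rho)v_j^\nu$, not $[v_j^\nu,\chi]\to0$), so Ocneanu's lemma does not produce asymptotic $U$-centrality. What the paper does instead is observe that, because $\si\circ\ph_\rho=\ph_\rho\circ\si$, the elements $w_{ij}:=v_i^*\si^\om(v_j)$ and $p_i:=v_i^*v_i$ \emph{do} land in $N_\om$. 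One then forms the matrix $w=(w_{ij})$ in $p(N_\om\oti M_{[r]+1}(\C))p$ and uses the \emph{stability} of the $\Z$-action $\si_\om$ on $N_\om$ (via \cite[Lemma~4]{KST} in the III$_0$ case, \cite[Theorem~2.1.3]{C2} in the III$_\la$ case) to write $w=\mu(\si_\om\oti\id)(\mu^*)$; replacing $v_i$ by $\sum_j v_j\mu_{ji}$ yields $\si^\om(\ovl{v}_i)=\ovl{v}_i$ while preserving the rank-$r$ estimates and the partition of unity. Only then does the estimate extend from $N_*$ to $M_*$ via the Fourier expansion $x=\sum_k x_kU^k$. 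This cohomological untwisting is the heart of the argument and is not achievable by reindexation alone.

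A smaller point: for $(1)\Rightarrow(2)$ with $r\notin\N$, the paper does not ``run the same argument'' with the partial isometries of Definition~\ref{defn: app-inner-general}; that sketch is not clearly justified. Instead it reduces to the integer case: pick $s$ with $e^sr\in\N$, use surjectivity of $\mo$ to find $\al\in\Aut(M)$ with $\mo(\al)=\th_s$, deduce $\al\in\oInt_{e^s}(M)$ from the already-proved $(2)\Rightarrow(1)$, then apply the composition rule (Lemma~\ref{lem: composition}) to get $\al\rho\in\oInt_{e^sr}(M)$ and read off $\mo(\rho)$.
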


\noindent 
$\bullet$ \textit{Proof of (2)$\Rightarrow$(1) in 
Theorem \ref{thm: main-app} for type I factors.} 
 
Assume that $\rho\in \End(M)_{\rm CT}=\End(M)$ 
has the Connes-Takesaki module $\mo(\rho)=\th_{\log(r/d(\rho))}$. 
Since any endomorphism on $M$ is inner, 
$\rho\in\Int_{d(\rho)}(M)$ and $\th_{\log(r/d(\rho))}=\id$. 
The space of the flow of weights of a type I factor 
is isomorphic to $\R$ with the additive translation flow. 
Hence $\log(r/d(\rho))=0$, and $r=d(\rho)$. 
\hfill$\Box$

\noindent 
$\bullet$ \textit{Proof of (2)$\Rightarrow$(1) in 
Theorem \ref{thm: main-app} for the hyperfinite type II$_1$ factor.} 

Let $\ta\in M_*$ be the tracial state. 
Then the canonical core $\tM=M\rti_{\si^\ta}\R$ is naturally 
regarded as $M\oti \{\la^\ta(t)\}_{t\in\R}''$, 
and $Z(\tM)=\{\la^\ta(t)\}_{t\in\R}''$. 
Hence $\rho$ has the Connes-Takesaki module 
with $\mo(\rho)=\th_{\log(r/d(\rho))}$ 
if and only if 
$d(\rho)^{it}[D\ta\circ \ph_\rho:D \ta]_t=d(\rho)^{it}r^{-it}$ 
for all $t\in\R$. 
This implies $\ta\circ \ph_\rho=r^{-1}\ta$, 
but $\ta(1)=1$ yields that $r$ must be equal to $1$. 
Then $\ta\circ \ph_\rho=\ta$. 
Since $\rho$ preserves the tracial state $\ta$, 
we see that $\rho$ is approximately inner of rank $1$ 
by Lemma \ref{lem: finite-int}. 
\hfill$\Box$

\noindent 
$\bullet$ \textit{Proof of (2)$\Rightarrow$(1) in 
Theorem \ref{thm: main-app} for the hyperfinite type II$_\infty$ factor.} 

Let $\ta$ be the normal semifinite tracial weight on $M$.
Then $Z(\tM)=\{\la^\ta(t)\}_{t\in\R}''$,
and $\ta\circ \ph_\rho=r^{-1}\ta$ holds. 
Then Lemma \ref{lem: II-infty factor}
implies that $\rho\in\oInt_r(M)$. 
\hfill$\Box$

\noindent 
$\bullet$ \textit{Proof of (2)$\Rightarrow$(1) in 
Theorem \ref{thm: main-app} for the hyperfinite type III$_1$ factor.} 

We make use of Popa's result on approximate innerness of hyperfinite 
subfactors of type III$_1$. 
Since the flow of weights is trivial, $\End(M)_0=\End(M)_{\rm CT}$ 
and the modules of endomorphisms are trivial. 
Hence we have to prove $\End(M)_0=\oInt_r(M)$ for all $r>0$. 
Let $\rho\in \End(M)_0$.
Take any $\mu_0,\mu_1>0$ with $\mu_0+\mu_1=1$. 
Set $r:=\mu_1/\mu_0$. 
Consider the locally trivial subfactor 
$N^{(\id,\rho)}\subs M^{(\id,\rho)}$ 
with the expectation $E^{(\mu_0,\mu_1)}$. 
Then the subfactor is approximately inner 
by \cite[Theorem 2.9 (iv)]{Po1}. 
We note that that Popa's result states for minimal expectations, 
but the same proof is applicable for general expectations 
because we can prove that 
the Jones projections given in his proof are contained in the centralizer of 
the given state. 
Hence $\rho\in\oInt_r(M)$ by Theorem \ref{thm: rho-app}. 
\hfill$\Box$

\noindent 
$\bullet$ \textit{Proof of (2)$\Rightarrow$(1) in 
Theorem \ref{thm: main-app} for hyperfinite type III$_0$ factors.} 

We make use of the discrete decomposition of $M$ to reduce the problem 
to that of a type II von Neumann algebra. 
By Lemma \ref{lem: lacunary} for $s_0=\log(r d(\rho)^{-1})$, 
after perturbing $\rho$ by an inner automorphism, 
we may assume that 
there exists a lacunary weight $\ps$ on $M$ with infinite multiplicity 
such that 
$\ps\circ \ph_{\rho}=d(\rho)^{-1}e^{-s_0}\ps=r^{-1}\ps$, 
$\rho|_{Z(M_\ps)}=\id$ and $\rho(U)=U$, 
where $U$ is the implementing unitary 
in the discrete decomposition $M=M_\ps\rti_\si\Z$. 
Since $\ph_\rho\col M\ra M$ is the standard left inverse, 
we have $\Ad U\circ \ph_\rho=\ph_\rho\circ \Ad U$ on $M$ by uniqueness. 

Set $\ta:=\ps|_{M_\ps}$, which is a faithful normal semifinite trace 
on the type II$_\infty$ von Neumann algebra $N:=M_\ps$. 
Lemma \ref{lem: typeII} shows that 
$\rho|_{N}$ is approximately inner 
of rank $r$ with respect to $\ph_\rho|_N$. 
Hence there exist partial isometries $\{v_i^\nu\}_{i=1}^{[r]+1}$, 
$\nu\in\N$, in $N$ such that 
$(v_i^\nu)^* v_j^\nu=\de_{i,j}1$ for $1\leq i,j\leq [r]$, 
$\sum_{i=1}^{[r]+1}v_i^\nu (v_i^\nu)^*=1$ and
\begin{equation}\label{eq: rv}
\displaystyle \lim_{\nu\to\infty}
\|r^{-1} v_i^\nu \cdot \chi-\chi\circ \ph_\rho \cdot v_i^\nu\|_{N_*}=0
\quad\mbox{for all}\ \chi\in N_*.
\end{equation}
Since $\si\circ \ph_\rho=\ph_\rho\circ \si$ on $N$,
we also have 
\[
\lim_{\nu\to\infty}
\|r^{-1} \si(v_i^\nu) \cdot \chi
-\chi\circ \ph_\rho \cdot \si(v_i^\nu)\|_{N_*}
=0\quad
\mbox{for all}\ \chi\in N_*. 
\]
This implies that $((v_i^\nu)^*\si(v_j^\nu))_\nu$ 
and $((v_i^\nu)^* v_j^\nu)_\nu$ are central sequences in $N$. 
Recall the quotient map $\pi_\om\col \meN(\meT_\om)\ra N^\om$. 
We set the following elements: 
\[
v_i:=\pi_\om((v_i^\nu)_\nu), 
\quad 
w_{i j}:=v_i^* \si^\om(v_j), 
\quad 
p_i:=v_i^* v_i. 
\] 
Then we have $v_i^* v_j=\de_{ij}p_i$. 
Moreover, $w_{ij}$ is in $N_\om$, 
and $p_i=1$ for $1\leq i\leq [r]$ 
and $p_{[r]+1}$ is a projection in $N_\om$.
On $w_{i,j}$, we have the following relations: 
\begin{equation}\label{eq: ww*}
\sum_{j=1}^{[r]+1}w_{ij}w_{kj}^*=\de_{ik}p_i,\quad 
\sum_{j=1}^{[r]+1}w_{jk}^*w_{ji}=\de_{ik}\si_\om(p_i) 
\quad\mbox{for all}\ 1\leq i,k\leq [r]+1, 
\end{equation}
and 
\begin{equation}\label{eq: si-v}
\si^\om(v_j)=\sum_{i=1}^{[r]+1} v_i w_{ij}. 
\end{equation}

We will prove the following two claims. 

\noindent\textbf{Claim 1.} 
We can replace a sequence $(v_{[r]+1}^\nu)_\nu$ 
so that $\si_\om(p_{[r]+1})=p_{[r]+1}$. 

\noindent(Proof of Claim 1.) 
Consider the von Neumann algebra $N_\om\oti B(\C^{[r]+1})$. 
We set
$%\displaystyle 
w:=\sum_{i,j=1}^{[r]+1}w_{ij}\oti e_{ij}$
and
$
%\displaystyle 
p:=\sum_{i=1}^{[r]+1}p_i\oti e_{ii}$,
where $\{e_{ij}\}_{i,j=1}^{[r]+1}$
is a system of matrix units of $B(\C^{[r]+1})$.
The equality (\ref{eq: ww*}) yields
\[
ww^*=p,\quad w^*w=(\si_\om\oti\id)(p).
\]
In particular, $p$ and $(\si_\om\oti\id)(p)$ are equivalent 
in $N_\om\oti B(\C^{[r]+1})$. 
Since $N_\om\oti B(\C^{[r]+1})$ is finite, 
$1-p=p_{[r]+1}\oti e_{[r]+1,[r]+1}$ and 
$1-(\si_\om\oti\id)(p)=\si_\om(p_{[r]+1})\oti e_{[r]+1,[r]+1}$ 
are also equivalent 
in $N_\om\oti B(\C^{[r]+1})$. 
Hence $p_{[r]+1}$ and $\si_\om(p_{[r]+1})$ are equivalent in $N_\om$. 
Take a unitary $v\in N_\om$ such that $\si_\om(p_{[r]+1})=v^*p_{[r]+1}v$. 

We note that the $\Z$-action $\si_\om$ on $N_\om$ is stable 
\cite[Lemma 4]{KST}. 
Hence we can take a unitary $u\in N_\om$ such that $v=u\si_\om(u^*)$. 
Then we have $\si_\om(u^* p_{[r]+1} u)=u^* p_{[r]+1} u$. 
Let $(u^\nu)_\nu$ be a representing sequence of $u$ 
such that $u^\nu$ is a unitary for all $\nu\in\N$. 
When we replace $v_{[r]+1}^\nu$ with $v_{[r]+1}^\nu u^\nu$ 
and choose a subsequence of $(v_{[r]+1}^\nu u^\nu)_\nu$, 
we see that the new family 
$\{v_i^\nu\}_{i=1}^{[r]}\cup \{v_{[r]+1}^\nu u^\nu\}$ 
also satisfies the above conditions (1), (2), (3) 
and also $\si_\om(p_{[r]+1})=p_{[r]+1}$. 
\hfill$\Box$

By using Claim 1, we assume that $\si_\om(p_{[r]+1})=p_{[r]+1}$. 

\noindent\textbf{Claim 2.} 
We can replace the sequences $(v_i^\nu)_\nu$ for $1\leq i\leq [r]+1$ 
so that $\si(v_i)-v_i\to0$ strongly* as $\nu\to\infty$ 
for all $1\leq i\leq [r]+1$. 

\noindent(Proof of Claim 2.) 
Since $ww^*=p=(\si_\om\oti\id)(p)=w^*w$, 
$w$ is a unitary in $p(N_\om\oti B(\C^{[r]+1}))p$. 
By our assumption, we can consider the reduced $\Z$-action 
$(\si_\om\oti\id)^p$. 
It is easy to see that $(\si_\om\oti\id)^p$ also has stability 
by using Rohlin towers in $Z(N)$ 
as in the proof of \cite[Lemma 3, 4]{KST}. 
Hence there exists a unitary $\mu \in p(N_\om\oti B(\C^{[r]+1}))p$ 
such that 
\begin{equation}\label{eq: wmu}
w=\mu (\si_\om\oti\id)(\mu^*).
\end{equation}
Now we set 
\begin{equation}\label{eq: ov}
\ovl{v}_i:=\sum_{j=1}^{[r]+1}v_j \mu_{ji}\in N^\om,
\end{equation}
where $\mu_{ji}$ is the $(j,i)$-entry of $\mu$.
Then we have 
\begin{align*}
(\ovl{v}_i)^*\ovl{v}_j
=&\,
\sum_{k,\el=1}^{[r]+1} \mu_{ki}^*v_k^* v_\el \mu_{\el j}
=
\sum_{k=1}^{[r]+1} \mu_{ki}^* p_k \mu_{k j}
\\
=&\,
(\mu^* p \mu)_{ij}
=
(\mu^* \mu)_{ij}
=
\de_{ij}p_i. 
\end{align*} 

Using (\ref{eq: si-v}), (\ref{eq: wmu}) and (\ref{eq: ov}), we obtain 
\begin{align}
\si^\om(\ovl{v}_i)
=&\,
\sum_{j=1}^{[r]+1}\si^\om(v_j) \si_\om(\mu_{ji})
=
\sum_{j,k=1}^{[r]+1}v_k w_{kj}\si_\om(\mu_{ji})
\notag\\
=&\,
\sum_{k=1}^{[r]+1}v_k \mu_{ki}
=\ovl{v}_i.
\label{eq: si-ov} 
\end{align}

Now we take a representing sequence $(\mu^\nu)_\nu\in N\oti B(\C^{[r]+1})$ 
of $\mu$ such that 
\[
(\mu^\nu)^*\mu^\nu
=\sum_{i=1}^{[r]}(1\oti e_{ii})
+(v_{[r]+1}^\nu)^*(v_{[r]+1}^\nu)\oti e_{[r]+1,[r]+1}
=\mu^\nu (\mu^\nu)^*.
\] 
Using $(\mu^\nu)_\nu$, 
we take a representing sequence $(\ovl{v}_i^\nu)_\nu$ 
of $\ovl{v}_i$ defined by 
\[
\ovl{v}_i^\nu:=\sum_{j=1}^{[r]+1}v_j^\nu \mu_{ji}^\nu 
\]
for all $1\leq i\leq[r]+1$ and $\nu\in\N$. 
Then we have 
\begin{align*}
(\ovl{v}_i^\nu)^* \ovl{v}_j^\nu
=&\,
\sum_{k,\el=1}^{[r]+1} 
(\mu_{k i}^\nu)^*(v_k^\nu)^*v_\el^\nu \mu_{\el j}^\nu
=
\sum_{k=1}^{[r]+1} 
(\mu_{k i}^\nu)^*\mu_{k j}^\nu
=
\de_{ij} (v_i^\nu)^* v_j^\nu, 
\end{align*}
and for $\chi\in N_*$,
\begin{align}
\lim_{\nu\to\om}
\|r^{-1} \ovl{v}_i^\nu \cdot \chi
-\chi\circ \ph_\rho \cdot \ovl{v_i}^\nu
\|_{N_*}
\leq &\,
\lim_{\nu\to\om}
\sum_{j=1}^{[r]+1}
\|r^{-1} (v_j^\nu \mu_{ji}^\nu)\cdot \chi
-\chi\circ \ph_\rho \cdot (v_j^\nu \mu_{ji}^\nu)
\|_{N_*}
\notag\\
=&\,
\lim_{\nu\to\om}
\sum_{j=1}^{[r]+1}
\|r^{-1} v_j^\nu \cdot \chi\cdot \mu_{ji}^\nu
-\chi\circ \ph_\rho \cdot (v_j^\nu \mu_{ji}^\nu)
\|_{N_*}
\notag\\
\leq&\,
\lim_{\nu\to\om}
\sum_{j=1}^{[r]+1}
\|r^{-1} v_j^\nu \cdot \chi
-\chi\circ \ph_\rho \cdot v_j^\nu 
\|_{N_*}=0, 
\label{leq: rov}
\end{align}
where we have used (\ref{eq: rv}) 
and $\pi_\om((\mu_{ji}^\nu)_\nu)=\mu_{ji}\in N_\om$. 
Moreover $\si^\om(\ovl{v}_i)=\ovl{v}_i$ 
shows the following strong* convergence: 
\begin{equation}\label{eq: siov}
\lim_{\nu\to\om}
\si(\ovl{v}_i^\nu)-\ovl{v}_i^\nu=0. 
\end{equation}

Therefore there exists a subsequence of $(\ovl{v}_i^\nu)_\nu$ 
such that the above limits (\ref{leq: rov}) and (\ref{eq: siov}) 
are taken by $\nu\to\infty$. 
This is a desired one in Claim 2. 
\hfill$\Box$

Let us take $(v_i^\nu)_\nu$ satisfying Claim 2. 
We show that 
\begin{equation}\label{eq: rov}
\lim_{\nu\to\infty}
\left\|r^{-1}v_i^\nu \cdot \chi
-\chi\circ \ph_\rho \cdot v_i^\nu
\right\|_{M_*}=0
\quad\mbox{for all}\ 1\leq i\leq [r]+1,\  \chi\in M_*. 
\end{equation}
This implies that $\rho\in\oInt_r(M)$. 

For $k\in\Z$, 
set a bounded linear map $\mE_k$ on $M$ such that 
$\mE_k(xU^\el)=\de_{k\el}xU^\el$ for all $x\in N$ and $\el\in\Z$. 
Since $\{\chi\circ\mE_k\mid \chi\in M_*,\ k\in\Z\}$ is total in $M_*$,
it suffices to prove (\ref{eq: rov}) for $\chi\circ\mE_k$, $\chi\in M_*$.
Let $x\in M$ and
$
x=\sum_{k\in\Z} x_k U^k$ be
the (formal) %discrete decomposition of $x$. 
expansion of $x$.
Then 
\begin{align*}
(r^{-1}v_i^\nu \cdot \chi\circ\mE_k
-\chi\circ\mE_k \circ \ph_\rho\cdot v_i^\nu)(x)
=&\,
r^{-1} \chi(\mE_k(x v_i^\nu))
-
\chi(\mE_k(\ph_\rho(v_i^\nu x)))
\\
=&\,
r^{-1}\chi(x_k \si^k(v_i^\nu)U^k)
-
\chi(\ph_\rho(v_i^\nu x_k)U^k)
\\
=&\,
(r^{-1} \si^k(v_i^\nu) \cdot (U^k \chi )
-
(U^k \chi)\circ\ph_\rho \cdot v_i^\nu )(x_k). 
\end{align*}
Since $\|x_k\|\leq \|x\|$, we have
\[
\|r^{-1}v_i^\nu \cdot \chi\circ\mE_k
-\chi\circ\mE_k \circ \ph_\rho\cdot v_i^\nu\|_{M_*}
\leq 
\|r^{-1} \si^k(v_i^\nu) \cdot (U^k \chi)|_N
-
(U^k \chi)|_N \circ\ph_\rho \cdot v_i^\nu\|_{N_*}. 
\]

Since $\si^k(v_i^\nu)-v_i^\nu\to0$ strongly* 
as $\nu\to\infty$, 
we have 
\begin{align*}
&\varlimsup_{\nu\to\infty}
\|r^{-1}v_i^\nu \cdot \chi\circ\mE_k
-\chi\circ\mE_k \circ \ph_\rho\cdot v_i^\nu\|_{M_*}\\
\leq &\,
\varlimsup_{\nu\to\infty}
\|r^{-1} \si^k(v_i^\nu) \cdot (U^k \chi)|_N
-
(U^k \chi)|_N \circ\ph_\rho \cdot v_i^\nu\|_{N_*}
\\
\leq &\,
\varlimsup_{\nu\to\infty}
\|r^{-1} \si^k(v_i^\nu) \cdot (U^k \chi)|_N
-
r^{-1} v_i^\nu \cdot (U^k \chi)|_N\|_{N_*}
\\
&\quad+
\varlimsup_{\nu\to\infty}
\|r^{-1} v_i^\nu \cdot (U^k \chi)|_N
-
(U^k \chi)|_N \circ\ph_\rho \cdot v_i^\nu\|_{N_*}
=0. 
\end{align*}
This shows (\ref{eq: rov}). 
\hfill$\Box$

\noindent 
$\bullet$ \textit{Proof of (2)$\Rightarrow$(1) in 
Theorem \ref{thm: main-app} for the hyperfinite type III$_\la$ factor 
with $0<\la<1$.} 

We prove $\rho\in\oInt_r(M)$ along with the proof
of type III$_0$ case by using Lemma \ref{lem: gen-trace}.
Let $\ps$ be a generalized trace and
$M=M_\ps\rti_\si\Z$ the discrete decomposition.
Then the $\Z$-action $\si_\om$ on $(M_\ps)_\om$
is stable \cite[Theorem 2.1.3]{C2}.
Here we note that $\si$ is centrally free, i.e., $\si^n\in\Cnt(M_\ps)$ 
if and only if $n=0$, 
which follows from the fact that $\Cnt(M_\ps)=\Int(M_\ps)$
\cite[Lemma 5]{C2}.
So, Claim 1 still holds. 
Since any reduction of outer automorphism on a factor is still outer, 
Claim 2 also holds.
Thus we can prove that $\rho\in\oInt_r(M)$ as in type III$_0$ case.
\hfill$\Box$

Therefore we have proved the implication 
(2)$\Rightarrow$(1) in Theorem \ref{thm: main-app}.

\noindent 
$\bullet$ \textit{Proof of (1)$\Rightarrow$(2) in 
Theorem \ref{thm: main-app}.} 
 
First we assume $r\in\N$. 
Take $r$-dimensional Hilbert spaces $\meH^\nu\subs M$ with support $1$
for $\nu\in\N$, 
such that $\rho_{\meH^\nu}$ converges to $\rho$ as $\nu\to\infty$. 
Recall that the normalized canonical extension is
continuous by Theorem \ref{thm: cont-nce}.
Hence $\rho_{\meH^\nu}\to\rho$ implies 
$\th_{\log(d(\rho_{\meH^\nu}))}(\trho_{\meH^\nu}(x))
\to\th_{\log(d(\rho))}(\trho(x))$ strongly* as $\nu\to\infty$ 
for all $x\in\tM$. 
Using $d(\rho_{\meH^\nu})=r$ and $\trho_{\meH^\nu}|_{Z(\tM)}=\id$, 
we have 
$\th_{\log(d(\rho))}\trho|_{Z(\tM)}=\th_{\log r}$. 
This shows that $\rho$ has the Connes-Takesaki module 
$\mo(\rho)=\th_{\log(r/d(\rho))}$. 

Next we treat a general case. 
Let $r>0$. 
Take $s\in\R$ such that $e^s r\in\N$. 
Since
$\mo\col \Aut(M)\ra \Aut_\th(Z(\tM))$ is surjective 
\cite{Ham, ST2},
there exists $\al\in\Aut(M)$ such that $\tal=\th_s$ on $Z(\tM)$. 
By using (2)$\Rightarrow$(1) of Theorem \ref{thm: main-app}, which 
we have already proved, 
we see that $\al\in\oInt_{e^s}(M)$. 
Then $\al\circ \rho\in\oInt_{e^s r}(M)$ by 
Lemma \ref{lem: composition}. 
Since $e^s r\in\N$, 
$\al\rho$ has the Connes-Takesaki module and 
$\mo(\al\rho)=\th_{\log(e^s r/d(\rho))}$ on $Z(\tM)$. 
This also shows that $\rho$ has a Connes-Takesaki module, 
and 
\begin{align*}
\mo(\rho)
=&\,\mo(\al^{-1})\mo(\al\rho)
=\tal^{-1}\circ \th_{\log(e^s r/d(\rho))}
=\th_{-s}\circ\th_{\log(e^s r/d(\rho))}
\\
=&\,\th_{\log(r/d(\rho))}.
\end{align*} 
\hfill$\Box$

\begin{cor}
Let $M$ be a hyperfinite factor and $r,s>0$.
\begin{enumerate}

\item 
When $M$ is of type I, then 
\[
\End(M)_0=\bigcup_{n\in\N}\Int_n(M). 
\]

\item 
When $M$ is of type II$_1$ with the tracial state $\ta$, then 
\begin{align*}
\oInt_1(M)
=&\,\{\rho\in\End(M)_0\mid\ta\circ \ph_\rho=\ta\},
\\
\oInt_r(M)
=&\,
\emptyset\ \ \mbox{if}\ r\neq1. 
\end{align*}

\item 
When $M$ is of type II$_\infty$ with the trace $\ta$, then 
\[
\oInt_r(M)=\{\rho\in \End(M)_0\mid \ta\circ \ph_\rho=r^{-1}\ta\}. 
\]

\item 
When $M$ is of type III$_1$, then 
\[
\End(M)_0=\oInt_r(M).
\] 

\item 
When $M$ is of type III$_\la$ with $0<\la<1$ 
with the generalized trace $\vph$, 
then 
\begin{align*}
\oInt_r(M)
=&\,
\{\rho\in\End(M)_0\mid \vph\circ \ph_\rho=r^{-1}\vph\circ\Ad u 
\ \mbox{for some}\ u\in U(M)\}\\
=&\,
\oInt_{r\la^n}(M)\ \mbox{for all}\ n\in\Z,
\\
&\hspace{-1.9cm}\oInt_r(M)\cap \oInt_s(M)=\emptyset 
\ \mbox{if}\  s\neq r\la^n\ \mbox{for all}\ n\in\Z
\end{align*}

\item When $M$ is of type III$_0$,
\[ 
\oInt_r(M)\cap \oInt_s(M)=\emptyset\ \mbox{if}\ r\neq s. 
\] 
\end{enumerate}
\end{cor}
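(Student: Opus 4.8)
The plan is to read off all six statements from Theorem \ref{thm: main-app}: once $\oInt_r(M)$ is identified with $\{\rho\in\End(M)_{\rm CT}\mid\mo(\rho)=\th_{\log(r/d(\rho))}\}$, everything reduces to (a) describing $\End(M)_{\rm CT}$ and (b) translating the module condition into a relation between $\ph_\rho$ and the trace or generalized trace, which in each type needs only the structure of the continuous core $\tM$, its center $Z(\tM)$ and the dual flow $\th$ (equivalently the flow of weights). For the semifinite cases I would argue as follows. If $M$ is of type I, every $\rho\in\End(M)_0$ is inner of integer rank, so $\End(M)_0=\bigcup_n\Int_n(M)$, and Lemma \ref{lem: I-factor} says $\oInt_r(M)=\Int_r(M)$ for $r\in\N$ and $\emptyset$ otherwise; this is (1). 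If $M$ is of type II then $\si^\ta$ is trivial, $Z(\tM)=\{\la^\ta(t)\}''$ with $\th_s(\la^\ta(t))=e^{-ist}\la^\ta(t)$, and since $\trho(\la^\ta(t))=d(\rho)^{it}[D\ta\circ\ph_\rho:D\ta]_t\la^\ta(t)$ and $M\cap Z(\tM)=\C$, membership in $\End(M)_{\rm CT}$ forces $[D\ta\circ\ph_\rho:D\ta]_t$ to be scalar, i.e. $\ta\circ\ph_\rho=c\ta$, and then $\mo(\rho)=\th_{-\log(cd(\rho))}$; comparing with $\th_{\log(r/d(\rho))}$ gives $c=r^{-1}$. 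In type II$_1$ the state normalization forces $c=1$ (so $r=1$), giving (2); in type II$_\infty$ any $c=r^{-1}$ is allowed, giving (3). These match Lemmas \ref{lem: finite-int} and \ref{lem: II-infty factor}.

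For type III$_1$ the flow of weights is trivial, so $Z(\tM)=\C$, $\End(M)_{\rm CT}=\End(M)_0$, all modules are trivial, and $\th_{\log(r/d(\rho))}=\id$ for every $r$; hence $\oInt_r(M)=\End(M)_0$ for all $r>0$, which is (4) (this was also obtained directly in the proof of Theorem \ref{thm: main-app} using Popa's theorem). For type III$_\la$ ($0<\la<1$) the flow of weights is the translation flow on $\R/(\log\la^{-1})\Z$, so $\th_s|_{Z(\tM)}=\th_{s'}|_{Z(\tM)}$ exactly when $s-s'\in(\log\la)\Z$; since $\log(r\la^n/d(\rho))-\log(r/d(\rho))=n\log\la$, the conditions of Theorem \ref{thm: main-app} for $r$ and for $r\la^n$ coincide, so $\oInt_r(M)=\oInt_{r\la^n}(M)$, and if $\rho\in\oInt_r(M)\cap\oInt_s(M)$ then $\th_{\log(r/d(\rho))}=\th_{\log(s/d(\rho))}$ forces $r/s=\la^n$ for some $n\in\Z$, which is the disjointness clause of (5). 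For type III$_0$ the flow of weights has no nonzero period, so $\th_a=\th_b$ on $Z(\tM)$ forces $a=b$; the same argument then yields $\oInt_r(M)\cap\oInt_s(M)=\emptyset$ for $r\neq s$, which is (6).

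It remains to prove the description $\oInt_r(M)=\{\rho\in\End(M)_0\mid\vph\circ\ph_\rho=r^{-1}\vph\circ\Ad u\ \text{for some}\ u\in U(M)\}$ in (5). The inclusion ``$\subseteq$'' is immediate from Lemma \ref{lem: gen-trace} with $s_0=\log(r/d(\rho))$, which provides $u\in U(M)$ with $\vph\circ\ph_{\Ad u\circ\rho}=d(\rho)^{-1}e^{-s_0}\vph=r^{-1}\vph$, i.e. $\vph\circ\ph_\rho=r^{-1}\vph\circ\Ad u$. For ``$\supseteq$'' I would set $\rho_0:=\Ad u\circ\rho$, so that $\vph\circ\ph_{\rho_0}=r^{-1}\vph$; since approximate innerness is stable under $\Ad$ of unitaries it suffices to show $\rho_0\in\oInt_r(M)$, and because $\vph\circ\ph_{\rho_0}$ is proportional to $\vph$, $\rho_0$ commutes with $\si^\vph$. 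Then, exactly as in the type III$_0$/III$_\la$ part of the proof of Theorem \ref{thm: main-app}, one uses stability of the implementing $\Z$-action in the discrete decomposition $M=M_\vph\rti_\si\Z$ to perturb $\rho_0$ by a unitary of $M_\vph$ so that $\rho_0(U)=U$ while keeping $\vph\circ\ph_{\rho_0}=r^{-1}\vph$, after which the central sequence argument (Claims 1 and 2 of that proof) produces the required partial isometries and shows $\rho_0\in\oInt_r(M)$. The hard part will be precisely this ``$\supseteq$'' step, i.e. re-running the central-sequence machinery of the Theorem \ref{thm: main-app} proof from the weight-scaling hypothesis $\vph\circ\ph_{\rho_0}=r^{-1}\vph$ rather than from the module condition; the remaining inputs are purely bookkeeping with the flow of weights, the only nonformal ones being that the flow of weights of a type III$_\la$ factor has period exactly $\log\la^{-1}$ and that of a type III$_0$ factor has no nonzero period.
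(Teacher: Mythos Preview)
Your plan is correct and is exactly what the paper intends: the corollary is stated without proof because each item is a direct translation of Theorem \ref{thm: main-app} through the known structure of the flow of weights in that type, and you carry this out cleanly for (1)--(4), (6), and the periodicity/disjointness clauses of (5).

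The only place you work harder than needed is the ``$\supseteq$'' direction of the first equality in (5). There is no need to re-run Claims 1 and 2 of the proof of Theorem \ref{thm: main-app}; you can stay at the level of modules. With $\rho_0=\Ad u\circ\rho$ and $\vph\circ\ph_{\rho_0}=r^{-1}\vph$ you get $[D\vph\circ\ph_{\rho_0}:D\vph]_t=r^{-it}$, hence
\[
\trho_0(\la^\vph(t))=d(\rho)^{it}r^{-it}\la^\vph(t)=\th_{\log(r/d(\rho))}(\la^\vph(t)).
\]
For a generalized trace $\vph$ of period $T=-2\pi/\log\la$, one has $Z(\tM)=\{\la^\vph(nT)\}_{n\in\Z}''$ (because $\si^\vph_t$ is outer for $t\notin T\Z$, so $M'\cap\tM$ reduces to this abelian algebra). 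Therefore $\rho_0\in\End(M)_{\rm CT}$ with $\mo(\rho_0)=\th_{\log(r/d(\rho))}$, and Theorem \ref{thm: main-app} gives $\rho_0\in\oInt_r(M)$, hence $\rho\in\oInt_r(M)$. This closes (5) without touching any central sequences, and matches the spirit of a corollary.
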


In the next section, we study a relation between
centrally trivial endomorphisms 
and modular endomorphisms \cite[Definition 3.1]{Iz1}.
Note that the statistical dimension of a modular endomorphism
is an integer.

\begin{cor}
Let $M$ be a hyperfinite factor and $\rho$ a modular endomorphism 
on $M$. 
Then $\rho\in \oInt_{d(\rho)}(M)$.
\end{cor}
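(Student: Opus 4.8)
The plan is to deduce the statement directly from Theorem \ref{thm: main-app} by computing the Connes--Takesaki module of a modular endomorphism and checking it is trivial. In other words, it suffices to show that any modular endomorphism $\rho$ lies in $\End(M)_{\rm CT}$ with $\mo(\rho)=\th_{\log(d(\rho)/d(\rho))}=\th_0=\id$, since then condition (2) of Theorem \ref{thm: main-app} holds with $r=d(\rho)$ and we conclude $\rho\in\oInt_{d(\rho)}(M)$.

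First I would recall Izumi's notion: by \cite[Definition 3.1]{Iz1}, if $\rho\in\End(M)_0$ is a modular endomorphism then its canonical extension $\trho$ is inner in $\tM$, that is, there is a finite-dimensional Hilbert space $\meH\subs\tM$ with support $1$ and an orthonormal basis $\{\eta_i\}_i$ of $\meH$ such that $\trho(x)=\sum_i\eta_i x\eta_i^*$ for all $x\in\tM$. (This is consistent with the remark recorded just above the corollary that $d(\rho)=\dim\meH\in\N$.)

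Next I would observe the one-line consequence that such a $\trho$ fixes the center pointwise: for $z\in Z(\tM)$, using $\sum_i\eta_i\eta_i^*=1$ (the support of $\meH$ is $1$) and centrality of $z$,
\[
\trho(z)=\sum_i\eta_i z\eta_i^*=z\sum_i\eta_i\eta_i^*=z.
\]
Hence $\trho(Z(\tM))=Z(\tM)$, so $\rho\in\End(M)_{\rm CT}$, and $\mo(\rho)=\trho|_{Z(\tM)}=\id=\th_0$. Since $\log(d(\rho)/d(\rho))=0$, this says precisely that condition (2) of Theorem \ref{thm: main-app} holds for $r=d(\rho)$, and therefore $\rho\in\oInt_{d(\rho)}(M)$, as claimed.

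There is essentially no real obstacle here beyond correctly invoking the definition of a modular endomorphism: the whole analytic content has been absorbed into Theorem \ref{thm: main-app}, and the only point requiring a moment's care is matching normalizations, namely that in the module formula $\mo(\rho)=\th_{\log(r/d(\rho))}$ the choice $r=d(\rho)$ produces exactly the trivial flow automorphism $\th_0$, which is what the canonical extension of a modular endomorphism realizes on $Z(\tM)$.
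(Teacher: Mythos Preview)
Your proof is correct and follows essentially the same approach as the paper: both observe that innerness of $\trho$ forces $\mo(\rho)=\id=\th_{\log(d(\rho)/d(\rho))}$ and then invoke Theorem~\ref{thm: main-app} with $r=d(\rho)$. You have simply spelled out the one-line computation that an inner endomorphism fixes the center pointwise, which the paper leaves implicit.
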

\begin{proof}
Since $\trho$ is inner, $\mo(\rho)=\id=\th_{\log(d(\rho)/d(\rho))}$. 
By Theorem \ref{thm: main-app}, we see that $\rho\in \oInt_{d(\rho)}(M)$. 
\end{proof}

For conjugation of an endomorphism, a rank of approximate innerness 
behaves as follows. 
It seems that the assumption on hyperfiniteness is unnecessary, 
but we have no proof so far. 

\begin{cor}[Conjugation rule]\label{cor: conjugation-rule}
Let $M$ be a hyperfinite infinite factor and 
$\rho\in\oInt_r(M)$ for some $r>0$. 
Then $\brho\in\oInt_{r^{-1}d(\rho)^2}(M)$. 
In particular, 
$\rho\brho\in\oInt_{d(\rho)^2}(M)$. 
\end{cor}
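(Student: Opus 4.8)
The plan is to run everything through Theorem \ref{thm: main-app}, which translates approximate innerness into a statement about Connes--Takesaki modules. Since $\rho\in\oInt_r(M)$, that theorem gives $\rho\in\End(M)_{\rm CT}$ and $\mo(\rho)=\th_{\log(r/d(\rho))}$. Because $d(\brho)=d(\rho)$, the assertion $\brho\in\oInt_{r^{-1}d(\rho)^2}(M)$ is, once more by Theorem \ref{thm: main-app}, equivalent to
\[
\brho\in\End(M)_{\rm CT}\quad\text{and}\quad\mo(\brho)=\th_{\log(r^{-1}d(\rho)^2/d(\brho))}=\th_{\log(d(\rho)/r)}=\mo(\rho)^{-1}.
\]
So I would first reduce the corollary to two module-theoretic facts: (a) $\brho\in\End(M)_{\rm CT}$, and (b) $\mo(\brho)=\mo(\rho)^{-1}$.

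For the identity (b) I would use two elementary properties of the canonical extension. First, it is multiplicative: $\wdt{\si_1\si_2}=\wdt{\si_1}\circ\wdt{\si_2}$ for all $\si_1,\si_2\in\End(M)_0$, which is a direct computation on the generators $x\in M$ and $\la^\vph(t)$ using $d(\si_1\si_2)=d(\si_1)d(\si_2)$, $\ph_{\si_1\si_2}=\ph_{\si_2}\circ\ph_{\si_1}$, the chain rule for Connes cocycles, and the identity $[D\ps\circ\ph_{\si_1}:D\chi\circ\ph_{\si_1}]_t=\si_1([D\ps:D\chi]_t)$ of \eqref{eq: si-ph}; in particular $\End(M)_{\rm CT}$ is closed under composition and $\mo$ is multiplicative on it. Second, if $v\in(\si,\tau)$ is an isometry, then $\wdt{v}=v\in(\wdt{\si},\wdt{\tau})$ by \cite[Proposition 2.5 (1)]{Iz1}, so restricting $v\wdt{\si}(z)=\wdt{\tau}(z)v$ to $z\in Z(\tM)$ and using that $\wdt{\tau}(z)$ is then central forces $\wdt{\si}(z)=\wdt{\tau}(z)$; hence a subsector and its ambient sector have the same module. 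Granting (a), I pick an isometry $R\in(\id,\rho\brho)$, which exists since $\brho$ is a conjugate of $\rho$: multiplicativity gives $\rho\brho\in\End(M)_{\rm CT}$, the subsector rule gives $\mo(\rho\brho)=\mo(\id)=\id_{Z(\tM)}$, and therefore $\mo(\rho)\mo(\brho)=\id_{Z(\tM)}$, which is (b).

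The step needing genuine work is (a), the stability of $\End(M)_{\rm CT}$ under conjugation, and this is where I expect the main obstacle. I would invoke Izumi's analysis of canonical extensions \cite{Iz1}; alternatively it can be obtained from Lemma \ref{lem: duality}, which (up to an isomorphism of covariant systems) presents $\wdt{\rho}$ as $\Ad u\circ\rho|_{M_\vph}$ on the centralizer of a dominant weight, so that a conjugate of $\rho$ on $M$ restricts compatibly to a conjugate of $\Ad u\circ\rho|_{M_\vph}$ in $\End(M_\vph)$ and the property of mapping $Z(M_\vph)\cong Z(\tM)$ onto itself is preserved under taking conjugates. With (a) and (b) established, Theorem \ref{thm: main-app} applied to $\brho$ --- which has module $\th_{\log(d(\rho)/r)}$ and statistical dimension $d(\rho)$ --- gives $\brho\in\oInt_{r^{-1}d(\rho)^2}(M)$, and the composition rule (Lemma \ref{lem: composition}) then gives $\rho\brho\in\oInt_{r\cdot r^{-1}d(\rho)^2}(M)=\oInt_{d(\rho)^2}(M)$.
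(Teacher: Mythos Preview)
Your proposal is correct and follows essentially the same route as the paper: apply Theorem~\ref{thm: main-app} to $\rho$ to obtain $\mo(\rho)=\th_{\log(r/d(\rho))}$, use the identity $\mo(\brho)=\mo(\rho)^{-1}$ together with $d(\brho)=d(\rho)$, apply Theorem~\ref{thm: main-app} again to $\brho$, and finish with Lemma~\ref{lem: composition}. The only difference is one of emphasis: the paper's proof simply asserts $\mo(\brho)=\mo(\rho)^{-1}$ (this is part of Izumi's theory in~\cite{Iz1}, where the canonical extension of a conjugate is shown to be a conjugate of the canonical extension), whereas you spell out an argument for it via multiplicativity of the canonical extension and the fact that $\id\prec\rho\brho$ forces $\mo(\rho\brho)=\id$. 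Your concern that step~(a) --- closure of $\End(M)_{\rm CT}$ under conjugation --- needs ``genuine work'' is somewhat overstated; this is exactly what \cite[Proposition~2.5]{Iz1} provides, and the paper takes it as known.
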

\begin{proof}
By Theorem \ref{thm: main-app}, 
we have $\mo(\rho)=\th_{\log(r/d(\rho))}$, 
and 
\[
\mo(\brho)
=\mo(\rho)^{-1}=\th_{-\log(r/d(\rho))}
=\th_{\log(d(\rho)/r)}
=\th_{\log(d(\rho)^2/rd(\brho))}. 
\]
Hence $\brho\in\oInt_{d(\rho)^2/r}(M)$ 
again by Theorem \ref{thm: main-app}. 
Then Lemma \ref{lem: composition} 
implies that $\rho\brho\in\oInt_{d(\rho)^2}(M)$.
\end{proof}

\section{Centrally trivial endomorphisms}

\subsection{Centrally trivial endomorphisms}
Let $M$ be a factor and $\rho\in\End(M)_0$.
As is shown in \cite[Lemma 3.3]{MT1},
we can define $\rho^\om\in \End(M^\om)$ by
\[
\rho^\om(\pi_\om((x^\nu)_\nu))=\pi_\om((\rho(x^\nu))_\nu)
\quad\mbox{for all}\ (x^\nu)_\nu\in \meN(\meT_\om(M)). 
\]
The map
$\End(M)_0\ni \rho\mapsto \rho^\om\in\End(M^\om)$
is a semigroup homomorphism.

\begin{defn}
Let $M$ be a factor and $\rho\in \End(M)_0$.
We say that $\rho$ is \emph{centrally trivial} if and only if
$\rho^\om=\id$ on $M_\om$.
\end{defn}

We only consider centrally trivial endomorphisms with finite index.
However, we should mention that it can be defined for an endomorphism
with infinite index if this has a left inverse.
We denote by $\Cnd(M)$ the set of centrally trivial endomorphisms on $M$.
Since $\Int(M)\subset \Cnd(M)$,
the central triviality is a property for sectors.
We show that the set $\Cnd(M)$ is closed under the composition, decomposition,
direct sum and conjugation when $M$ is infinite.

\begin{defn} Let $N\subset M$ be an inclusion of factors.
\begin{enumerate}

\item 
$\meC_\omega(M,N):= \{(x^\nu)_\nu \in \el^\infty(\mathbf{N}, N)
\mid
\displaystyle\lim_{\nu\to\omega}\|[\varphi,x^\nu]\|=0, \varphi\in M_*\}$.
\item 
$C_\omega(M,N):=\meC_\omega(M,N)/\meT_\om(N)$.
\end{enumerate}
\end{defn}

If $N=M$, then it is obvious that $C_\omega(M,M)=M_\omega$.

\begin{lem}\label{lem: downward}
Let $N\subset M$ be an inclusion of factors with finite index.
Let $M\supset N\supset N_1$ be a downward basic construction
with respect to the minimal expectation $E\col M\ra N$.
Then $C_\omega(M,N)=C_\omega(M,N_1)$.
\end{lem}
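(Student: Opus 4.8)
\noindent
The plan is to establish the trivial inclusion $C_\om(M,N_1)\subseteq C_\om(M,N)$ and then to show that the resulting canonical map $C_\om(M,N_1)\to C_\om(M,N)$ is onto. The inclusion is immediate: since $N_1\subseteq N$ one has $\meC_\om(M,N_1)\subseteq\meC_\om(M,N)$, and $\meT_\om(N_1)=\meT_\om(N)\cap\el^\infty(\N,N_1)$, so the induced map on quotients is well defined and injective.

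For surjectivity I would bring in the Jones projection $e\in M$ of the downward basic construction, for which $M=\{N,e\}''$, $N_1=N\cap\{e\}'$, $exe=E_1(x)e$ for $x\in N$ with $E_1\col N\to N_1$ the minimal expectation, and $E(e)=\la^{-1}$ where $\la:=\Ind(E)$. Since $E$ is $N$-bimodular these relations yield, for $x\in N$,
\[
E_1(x)=\la E_1(x)E(e)=\la E(E_1(x)e)=\la E(exe),\qquad E(xe)=xE(e)=\la^{-1}x.
\]
Given $(x^\nu)_\nu\in\meC_\om(M,N)$, put $x:=\pi_\om((x^\nu)_\nu)$. As $(x^\nu)_\nu$ is $\om$-centralizing in $M$, $x$ is a well-defined element of $M^\om$ commuting with $M$; in particular $[e,x^\nu]\to0$ $*$-strongly along $\om$. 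Set $y^\nu:=E_1(x^\nu)\in N_1$, a bounded sequence with $\|y^\nu\|\le\|x^\nu\|$. Using $ex^\nu e-x^\nu e=[e,x^\nu]e$ and the two identities above,
\[
y^\nu-x^\nu=\la E(ex^\nu e)-\la E(x^\nu e)=\la E\big([e,x^\nu]e\big),
\]
which tends to $0$ $*$-strongly along $\om$ because $E$ is normal, hence $*$-strongly continuous on bounded sets; thus $(x^\nu-y^\nu)_\nu\in\meT_\om(N)$.

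It then remains to check that $(y^\nu)_\nu\in\meC_\om(M,N_1)$. It is bounded and $N_1$-valued, and for $\vph\in M_*$ one has $\|[\vph,y^\nu]\|\le\|[\vph,x^\nu]\|+\|[\vph,y^\nu-x^\nu]\|$: the first term tends to $0$ along $\om$ by hypothesis, and for the second, writing $\vph$ as a vector functional $\om_{\xi,\eta}$ and $z^\nu:=y^\nu-x^\nu$, we have $\vph z^\nu=\om_{\xi,(z^\nu)^*\eta}$ and $z^\nu\vph=\om_{z^\nu\xi,\eta}$, so $\|[\vph,z^\nu]\|\le\|\xi\|\,\|(z^\nu)^*\eta\|+\|z^\nu\xi\|\,\|\eta\|\to0$ along $\om$ since $z^\nu\to0$ $*$-strongly along $\om$. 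Hence the class of $(x^\nu)_\nu$ in $C_\om(M,N)$ equals that of $(y^\nu)_\nu$, which is the image of an element of $C_\om(M,N_1)$; so the canonical map is onto and $C_\om(M,N)=C_\om(M,N_1)$.

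The step I expect to require the most care is the passage from ``$(x^\nu)_\nu$ is $\om$-centralizing in $M$'' to ``$\pi_\om((x^\nu)_\nu)$ commutes with $M$ in $M^\om$'' (hence $[e,x^\nu]\to0$ $*$-strongly along $\om$); this is the standard behaviour of relative central sequences, which I would invoke from \cite{Oc1,Po1,MT1}, together with the routine verification that the relations $exe=E_1(x)e$ and $E(e)=\Ind(E)^{-1}$ hold for the downward basic construction attached to the minimal expectation $E$.
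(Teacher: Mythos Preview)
Your proof is correct and follows essentially the same route as the paper's: both use the Jones projection $e$ of the downward basic construction, the identity $ex^\nu e=E_1(x^\nu)e$, and the fact that an $\om$-centralizing sequence in $M$ asymptotically commutes with $e$, to conclude that $E_1(x^\nu)-x^\nu\to0$ strongly$^*$ along $\om$. Your write-up is more detailed (you explicitly apply $E$ and verify that $(E_1(x^\nu))_\nu$ remains $\om$-centralizing), while the paper leaves these points implicit, but the argument is the same.
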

\begin{proof} 
Let $(x^\nu)_\nu\in \meC_\omega(M,N)$,
$E_1$ be the
minimal expectation from $N$ onto $N_1$, and
$e\in M$ the Jones projection for $N_1\subset N$.
Then $ex^\nu e-x^\nu e=[e,x^\nu]e$
converges to $0$ $\sigma$-strongly*. Since $ex^\nu e=E_1(x^\nu)e$ and
$E(e)=[M:N]_0^{-1}$, $(E_1(x^\nu)-x^\nu)_\nu$ converges to $0$ strongly*.
Hence any element in $C_\omega(M,N)$ is represented by an element
in $\meC_\omega(M,N_1)$. 
\end{proof}

\begin{lem}\label{lem: rho}
Let $\rho\in\Cnd(M)$. 
Then $C_\omega(M,\rho(M))= M_\omega$ holds.
\end{lem}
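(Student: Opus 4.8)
The plan is to establish the nontrivial inclusion $M_\om\subseteq C_\om(M,\rho(M))$; the reverse one is immediate, since every element of $\meC_\om(M,\rho(M))$ is in particular $\om$-centralizing, i.e. lies in $M_\om^0(\id_M)\subseteq\meN(\meT_\om(M))$, and since $\meT_\om(\rho(M))=\meT_\om(M)\cap\el^\infty(\N,\rho(M))$, the natural map $C_\om(M,\rho(M))\to M_\om$ is a well-defined injection. Thus $C_\om(M,\rho(M))$ is naturally a von Neumann subalgebra of $M_\om$, and it suffices to see that it exhausts $M_\om$.

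So I would take $x\in M_\om$ and fix an $\om$-centralizing representing sequence $(x^\nu)_\nu\in M_\om^0(\id_M)$ with $x=\pi_\om((x^\nu)_\nu)$. Central triviality of $\rho$ gives $\rho^\om(x)=x$, hence $\pi_\om((\rho(x^\nu))_\nu)=\pi_\om((x^\nu)_\nu)$; equivalently, the sequence $y^\nu:=\rho(x^\nu)-x^\nu$ belongs to $\meT_\om(M)$, so it is bounded and $\om$-converges to $0$ strongly$*$. The claim is then that $(\rho(x^\nu))_\nu\in\meC_\om(M,\rho(M))$: each $\rho(x^\nu)$ lies in $\rho(M)$, the sequence is bounded since $\|\rho(x^\nu)\|\leq\|x^\nu\|$, and for $\vph\in M_*$ one estimates
\[
\|[\vph,\rho(x^\nu)]\|\leq\|[\vph,y^\nu]\|+\|[\vph,x^\nu]\|,
\]
where the last term $\om$-converges to $0$ because $(x^\nu)_\nu$ is $\om$-centralizing. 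Granting that the first term $\om$-converges to $0$ as well, $(\rho(x^\nu))_\nu\in\meC_\om(M,\rho(M))$, so its class lies in $C_\om(M,\rho(M))$ and, regarded inside $M^\om$, equals $\pi_\om((\rho(x^\nu))_\nu)=\rho^\om(x)=x$; therefore $x\in C_\om(M,\rho(M))$, as desired.

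The only step deserving a separate remark — and the closest thing here to a genuine obstacle — is the assertion $\lim_{\nu\to\om}\|[\vph,y^\nu]\|=0$, since $(y^\nu)_\nu$ converges to $0$ only strongly$*$, not in norm. Representing $M$ on $L^2(M)$ and choosing $\xi,\eta\in L^2(M)$ with $\vph(a)=(a\xi,\eta)$, the identities $\vph y^\nu(a)=(a\xi,(y^\nu)^*\eta)$ and $y^\nu\vph(a)=(ay^\nu\xi,\eta)$ yield
\[
\|\vph y^\nu\|\leq\|\xi\|\,\|(y^\nu)^*\eta\|,\qquad\|y^\nu\vph\|\leq\|y^\nu\xi\|\,\|\eta\|,
\]
and both right-hand sides $\om$-converge to $0$ because $y^\nu\to 0$ strongly$*$ along $\om$; hence $\|[\vph,y^\nu]\|\leq\|\vph y^\nu\|+\|y^\nu\vph\|\to 0$. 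With this observation in hand, the argument above is complete, and it uses only the definitions together with the fact that $\rho^\om$ restricts to the identity on $M_\om$; no appeal to the flow of weights or to stability results is needed here.
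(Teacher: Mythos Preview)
Your proof is correct and follows essentially the same route as the paper's. Both arguments reduce the nontrivial inclusion $M_\om\subseteq C_\om(M,\rho(M))$ to the observation that $x=\rho^\om(x)$ is represented by $(\rho(x^\nu))_\nu\in\meC_\om(M,\rho(M))$; the paper records this in one line (``Since $M_\om=\rho^\om(M_\om)$, $M_\om\subs C_\omega(M,\rho(M))$ follows''), while you spell out the verification that $(\rho(x^\nu))_\nu$ is $\om$-centralizing by proving $\meT_\om(M)\subseteq M_\om^0(\id_M)$ directly. For the reverse inclusion the paper invokes that $\ph_\rho$ preserves $M_\om$ to produce an $\om$-centralizing representative, but as you observe this is unnecessary: any $(\rho(x^\nu))_\nu\in\meC_\om(M,\rho(M))$ is already $\om$-centralizing by definition.
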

\begin{proof}
Let $(\rho(x^\nu))_\nu \in \meC_\omega(M,\rho(M))$.
Since $\ph_\rho$ preserves $M_\om$,
$(x^\nu)_\nu=(\ph_\rho(\rho(x^\nu)))_\nu$
is an $\om$-centralizing sequence of $M$.
The central triviality of $\rho$
implies that
$(\rho(x^\nu))_\nu$ and $(x^\nu)_\nu$ are equivalent.
Hence $C_\omega(M,\rho(M))\subs M_\omega$ holds.
Since $M_\om=\rho^\om(M_\om)$, $M_\om\subs C_\omega(M,\rho(M))$ follows.
\end{proof}

\begin{lem}\label{lem: brho}
Assume that $M$ is an infinite factor.
Let $\rho\in\Cnd(M)$. Then one has $\brho\in\Cnd(M)$.
\end{lem}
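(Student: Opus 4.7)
The plan is to prove $\brho\in\Cnd(M)$ in two stages: first, to establish $C_\om(M,\brho(M))=M_\om$ via a downward basic construction; second, to convert this structural equality into the pointwise statement $\brho^\om|_{M_\om}=\id$ using the conjugate equations.

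For the first stage, I fix a standard solution $(R,\bar R)$ of the conjugate equations for $(\rho,\brho)$, normalized so that $R^*R=\bar R^*\bar R=d(\rho)$ and $R^*\brho(\bar R)=1=\bar R^*\rho(R)$. Since $M$ is infinite and $\rho(M)\subs M$ has finite index, the downward basic construction is available, and after perturbing $\brho$ by an inner automorphism one may arrange that $e:=d(\rho)^{-1}\bar R\bar R^*\in M$ is the Jones projection for $\rho\brho(M)\subs\rho(M)$ inside $M$, so that $\rho\brho(M)\subs\rho(M)\subs M$ is a basic construction with respect to $E_\rho$. Applying Lemma~\ref{lem: downward} and combining with Lemma~\ref{lem: rho} yields $C_\om(M,\rho\brho(M))=M_\om$, and since $\rho\brho(M)\subs\brho(M)\subs M$, one concludes
\[
M_\om=C_\om(M,\rho\brho(M))\subs C_\om(M,\brho(M))\subs M_\om,
\]
whence $C_\om(M,\brho(M))=M_\om$.

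For the second stage, take $x=\pi_\om((x^\nu)_\nu)\in M_\om$. By the previous paragraph, $x$ admits a representing sequence of the form $(\brho(y^\nu))_\nu$, so $x\in\brho(M)^\om$ and $E_\brho^\om(x)=x$. The key ingredient is the identity $\ph_\brho(w)=d(\rho)^{-1}\bar R^*\rho(w)\bar R$ for $w\in M$, a consequence of the conjugate equations which can be verified by factoring $\bar R=VW$ through isometries $V,W\in M$ implementing $\rho,\brho$ on the infinite factor $M$. Extending to $M^\om$ and using $\rho^\om|_{M_\om}=\id$,
\[
\ph_\brho^\om(x)=d(\rho)^{-1}\bar R^*\rho^\om(x)\bar R=d(\rho)^{-1}\bar R^*x\bar R.
\]
The standard commutation $[M,M_\om]=0$ inside $M^\om$ rewrites this as $\ph_\brho^\om(x)=d(\rho)^{-1}x\bar R^*\bar R=x$. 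Since $\brho\circ\ph_\brho=E_\brho$, we obtain $\brho^\om(x)=\brho^\om(\ph_\brho^\om(x))=E_\brho^\om(x)=x$, which is the desired central triviality of $\brho$.

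The main obstacle is the careful setup of the downward basic construction so that the Jones projection coincides with $d(\rho)^{-1}\bar R\bar R^*$; this requires matching the expectation property $e\rho(z)e=E_{\rho\brho(M)}(\rho(z))e$ with the sector-theoretic identity $\bar R^*\rho(z)\bar R=d(\rho)\ph_\brho(z)$ via the Longo Q-system formalism. The remaining ingredients—the conjugate-equation expression for $\ph_\brho$ and the commutation $[M,M_\om]=0$—are standard, but the way they must interact inside the central sequence ultrapower to force $\ph_\brho^\om(x)=x$ is the crucial technical point.
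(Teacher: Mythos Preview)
Your Stage 1 contains a genuine error: the inclusion $\rho\brho(M)\subs\brho(M)$ is false in general. What is always true is $\rho\brho(M)\subs\rho(M)$ (apply $\rho$ to $\brho(M)\subs M$) and, dually, $\brho\rho(M)\subs\brho(M)$; but $\rho$ has no reason to preserve $\brho(M)$, so $\rho(\brho(M))\not\subs\brho(M)$ for a generic non-self-conjugate $\rho$. Lemmas~\ref{lem: downward} and~\ref{lem: rho} do give you $C_\om(M,\rho\brho(M))=M_\om$, exactly as in the paper, but the sandwich argument $C_\om(M,\rho\brho(M))\subs C_\om(M,\brho(M))$ collapses without the inclusion. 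Consequently your claim $E_\brho^\om(x)=x$ in Stage~2 is unjustified, and the final line $\brho^\om(x)=E_\brho^\om(x)=x$ does not close.

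Your Stage~2 computation $\ph_\brho^\om(x)=d(\rho)^{-1}\bar R^*\rho^\om(x)\bar R=d(\rho)^{-1}\bar R^* x\bar R=x$ is correct and is, in fact, the same observation the paper uses (written there as $v^*x^\nu v-x^\nu\to 0$ for the isometry $v\in(\id,\rho\brho)$). The paper exploits it differently: from $C_\om(M,\rho\brho(M))=M_\om$ one represents $x$ by $(\rho\brho(y^\nu))_\nu$, and then $v^*\rho\brho(y^\nu)v=y^\nu$ combined with $v^*x^\nu v\sim x^\nu$ forces $y^\nu\sim x^\nu$, hence $\rho\brho(x^\nu)\sim x^\nu$, i.e.\ $\rho\brho\in\Cnd(M)$. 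Once both $\rho$ and $\rho\brho$ are centrally trivial, one applies $\ph_\rho$ to $\rho^\om(x)-(\rho\brho)^\om(x)=0$ to obtain $x-\brho^\om(x)=0$. This route never needs $C_\om(M,\brho(M))=M_\om$, and you can repair your argument along these lines using the ingredients you already have.
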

\begin{proof}
First we show $\rho\brho\in\Cnd(M)$.
By Lemma \ref{lem: downward} and \ref{lem: rho},
$C(M,\rho\brho(M))=C(M,\rho(M))=M_\omega$.
This implies that for any $(x^\nu)_\nu\in\meC_\omega(M,M)$,
there exists $(y^\nu)_\nu\in\ell^\infty(\mathbf{N}, M )$
such that $x^\nu-\rho\brho(y^\nu )\to0$ strongly*
as $\nu\to\om$.
Take an isometry $v\in(\id, \rho\brho)$.
Then $v^*x^\nu v-v^*\rho\brho(y^\nu )v\to0$
as $\nu\to\om$.
Here $v^*x^\nu v-x^\nu\to 0$ as $\nu\to\om$
and $v^*\rho\brho(y^\nu )v=y^\nu $.
This yields that $x^\nu-y^\nu\to0$,
and $\rho\brho(x^\nu)-x^\nu\to0$ strongly*
as $\nu\to\om$.
Hence $\rho\brho\in\Cnd(M)$.

Second we show that $\brho\in\Cnd(M)$.
Since $\rho, \rho \bar{\rho}\in\Cnd(M)$,
$\rho(x^\nu)-\rho\bar{\rho}(x^\nu)\to0$
strongly* as $\nu\to\om$.
Applying $\phi_\rho$ to $\rho(x^\nu)-\rho\bar{\rho}(x^\nu)$,
we get the conclusion.
\end{proof} 

\begin{thm}\label{thm: cend-structure}
Assume that $M$ is an infinite factor.
Then the subset $\Cnd(M)\subs\End(M)_0$
is closed under the composition, decomposition,
direct sum and conjugation.
Namely one has the following:
\begin{enumerate}
\item 
If $\rho,\si\in\Cnd(M)$, then $\rho\si\in\Cnd(M)$.

\item 
If $\rho\in\End(M)_0$, $\si\in\Cnd(M)$ and $\rho\prec \si$,
then $\rho\in\Cnd(M)$.

\item 
If $\rho,\si\in\Cnd(M)$, then $\rho\oplus\si\in \Cnd(M)$.

\item If $\rho\in\Cnd(M)$, then $\brho\in\Cnd(M)$.
\end{enumerate}
\end{thm}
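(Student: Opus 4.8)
The plan is to verify the four closure properties one at a time, using the three lemmas (Lemma \ref{lem: downward}, \ref{lem: rho}, \ref{lem: brho}) as the main tools, together with the elementary fact that $\rho\mapsto\rho^\om$ is a semigroup homomorphism. Property (4) is precisely Lemma \ref{lem: brho}, so nothing new is needed there. For (1), if $\rho,\si\in\Cnd(M)$ and $(x^\nu)_\nu$ is $\om$-centralizing, then $\si^\om$ fixes $\pi_\om((x^\nu)_\nu)$, i.e. $\si(x^\nu)-x^\nu\to0$ $\sigma$-strongly* as $\nu\to\om$; applying $\rho$ (which is normal and bounded, hence $\sigma$-strong* continuous on bounded sets) gives $\rho\si(x^\nu)-\rho(x^\nu)\to0$, and since $\rho\in\Cnd(M)$ also $\rho(x^\nu)-x^\nu\to0$. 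Adding these yields $\rho\si(x^\nu)-x^\nu\to0$, i.e. $(\rho\si)^\om=\id$ on $M_\om$, so $\rho\si\in\Cnd(M)$.

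For (3), let $v_1,v_2\in M$ be isometries with $v_1v_1^*+v_2v_2^*=1$ and set $\eta(x)=v_1\rho(x)v_1^*+v_2\si(x)v_2^*$, a representative of $\rho\oplus\si$. For an $\om$-centralizing sequence $(x^\nu)_\nu$ we have $\rho(x^\nu)-x^\nu\to0$ and $\si(x^\nu)-x^\nu\to0$ $\sigma$-strongly*; hence $\eta(x^\nu)-\big(v_1x^\nu v_1^*+v_2x^\nu v_2^*\big)\to0$. It remains to see that $v_1x^\nu v_1^*+v_2x^\nu v_2^*-x^\nu\to0$ $\sigma$-strongly*, which follows because $v_1,v_2\in M$ are fixed elements and $(x^\nu)_\nu$ commutes asymptotically with every normal state, so $[v_k,x^\nu]\to0$ and $[v_k^*,x^\nu]\to0$ $\sigma$-strongly* (this is the standard fact that bounded $\om$-centralizing sequences asymptotically commute with all of $M$ in the strong* topology; alternatively invoke that $M_\om$ is a relative commutant). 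Therefore $\eta^\om=\id$ on $M_\om$ and $\rho\oplus\si\in\Cnd(M)$.

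For (2), suppose $\rho\prec\si$ with $\si\in\Cnd(M)$. Then $\si$ decomposes, up to unitary equivalence, as a direct sum containing $\rho$ as a summand, say $\si\cong\rho\oplus\rho'$ for some $\rho'\in\End(M)_0$; concretely there are isometries $w\in(\rho,\si)$, $w'\in(\rho',\si)$ with $ww^*+w'(w')^*=1$. Given an $\om$-centralizing $(x^\nu)_\nu$, central triviality of $\si$ gives $\si(x^\nu)-x^\nu\to0$ $\sigma$-strongly*; compressing by $w^*(\cdot)w$ and using $w^*\si(x^\nu)w=\rho(x^\nu)$ together with $w^*x^\nu w-x^\nu\to0$ (again asymptotic commutation of $(x^\nu)_\nu$ with the fixed element $w\in M$) yields $\rho(x^\nu)-x^\nu\to0$. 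Hence $\rho^\om=\id$ on $M_\om$, i.e. $\rho\in\Cnd(M)$.

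The only step requiring genuine care — and the one I would expect to be the main obstacle to a fully rigorous write-up — is the repeated use of the fact that a bounded $\om$-centralizing sequence asymptotically commutes with every element of $M$ in the $\sigma$-strong* topology (so that expressions like $[v,x^\nu]$, $[w,x^\nu]$ vanish in the limit). This is well known and follows from Kaplansky-density-type arguments reducing the commutator estimate from general $a\in M$ to $a$ in a strong*-dense subalgebra where it is controlled by finitely many normal states; but it must be cited or proved cleanly (it is implicit in the setup of \cite{Oc1,Po1}), since all four cases ultimately rest on it. Everything else is routine manipulation of normal maps and the homomorphism property of $\rho\mapsto\rho^\om$.
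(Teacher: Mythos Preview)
Your proposal is correct and follows essentially the same route as the paper. The only difference is stylistic: you work with representing sequences and flag asymptotic commutation of $(x^\nu)_\nu$ with fixed elements of $M$ as something needing justification, whereas the paper works directly in $M^\om$ and uses that $M_\om\subs M'\cap M^\om$, which makes these commutations immediate (e.g.\ in (3) one simply writes $vxv^*+wxw^*=(vv^*+ww^*)x=x$ for $x\in M_\om$, and in (2) one uses $w\rho^\om(x)=\si^\om(x)w=xw=wx$ for a single isometry $w\in(\rho,\si)$ without needing the full decomposition $\si\cong\rho\oplus\rho'$). Your ``only step requiring genuine care'' thus evaporates once you phrase things in $M^\om$; this is also why the paper's argument for (1) is the one-liner $(\rho\si)^\om=\rho^\om\si^\om=\id$ on $M_\om$.
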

\begin{proof}
(1) It is trivial because $(\rho\si)^\om=\rho^\om\si^\om=\id$ on $M_\om$.

\noindent(2) 
Let $w\in (\rho,\si)$ be an isometry. 
Since $w\in (\rho^\om,\si^\om)$, we have for $x\in M_\om$, 
\[w\rho^\om(x)=\si^\om(x)w=xw=wx.\] 
Hence $\rho^\om(x)=w^*w\rho^\om(x)=w^*wx=x$. 

\noindent(3) 
Let $v,w\in M$ be isometries with $vv^*+ww^*=1$. 
We set $\th(x):=v\rho(x)v^*+w\si(x)w^*$ for $x\in M$. 
Then for $x\in M_\om$, we have 
\[
\th^\om(x)=v\rho^\om(x)v^*+w\si^\om(x)w^*=vxv^*+wxw^*=(vv^*+ww^*)x=x. 
\]
Hence $\rho\oplus\si\in\Cnd(M)$. 

\noindent(4) 
It has been proved in the previous lemma. 
\end{proof}

Connes showed that any centrally trivial automorphism
commutes with any approximately inner automorphism
up to inner automorphism \cite[Lemma 2.2.2]{C3}.
We generalize this to the case of endomorphisms as follows.

\begin{lem}\label{lem: commutes}
Let $M$ be a factor, $\rho\in \Cnd(M)$
and $\theta\in \overline{\Int}(M)$.
Then $\theta$ and $\rho$ commute up to inner automorphism, 
that is, 
$[\theta\circ \rho \circ \theta^{-1}]=[\rho]$ in $\Sect(M)$. 
\end{lem}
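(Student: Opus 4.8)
The plan is to manufacture a unitary intertwiner between $\theta\circ\rho\circ\theta^{-1}$ and $\rho$ inside the ultrapower $M^\om$, to pin it down using central triviality, and then to descend it to $M$. Since $\theta\in\oInt(M)=\Aut(M)\cap\oInt_1(M)$, the automorphism $\theta$ is a $\meT_\ph$-limit of inner automorphisms, so there are unitaries $u_\nu\in U(M)$ with $\Ad u_\nu\to\theta$. Put $w_\nu:=u_\nu\rho(u_\nu^*)\in U(M)$; a one-line computation gives the \emph{exact} identity $\Ad w_\nu\circ\rho=\Ad u_\nu\circ\rho\circ\Ad u_\nu^{-1}$. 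Because composition and inversion are $\meT_\ph$-continuous, the right-hand side converges to $\theta\circ\rho\circ\theta^{-1}$, hence pointwise $*$-strongly, so $w_\nu\rho(x)w_\nu^*\to\theta\rho\theta^{-1}(x)$ $*$-strongly for each $x\in M$. Setting $u:=\pi_\om((u_\nu)_\nu)$ and $W:=\pi_\om((w_\nu)_\nu)=u\rho^\om(u)^*$ (both are unitaries in $M^\om$ since the sequences are bounded), this limit passes to $M^\om$ and yields $W\rho(x)W^*=\theta\rho\theta^{-1}(x)$ for all $x\in M$, while $\Ad u|_M=\theta$.

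Next I would use the hypothesis $\rho\in\Cnd(M)$, i.e. $\rho^\om=\id$ on $M_\om$, to force $W\in M_\om'\cap M^\om$. Indeed $\Ad W\circ\rho^\om=\Ad u\circ\rho^\om\circ\Ad u^*$ on all of $M^\om$. Since $\Ad u|_M=\theta\in\Aut(M)$, the automorphism $\Ad u$ of $M^\om$ normalizes $M_\om=M'\cap M^\om$, and $\rho^\om$ is the identity on $M_\om$; hence $\Ad u\circ\rho^\om\circ\Ad u^*$ is again the identity on $M_\om$, so for every $y\in M_\om$ we get $\Ad W(y)=\Ad W(\rho^\om(y))=y$, that is, $W\in M_\om'\cap M^\om$. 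Thus $W$ is at the same time a unitary in the ultrapower intertwiner space $(\rho,\theta\rho\theta^{-1})^{M^\om}=\{v\in M^\om\mid v\rho(x)=\theta\rho\theta^{-1}(x)v\ \text{for all}\ x\in M\}$ and a member of the commutant of the central sequence algebra.

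The remaining step, and the main obstacle, is the descent: to extract from $W$ an honest unitary $v\in M$ with $v\rho(x)v^*=\theta\rho\theta^{-1}(x)$ for all $x\in M$, which gives $\theta\circ\rho\circ\theta^{-1}=\Ad v\circ\rho$ and hence $[\theta\circ\rho\circ\theta^{-1}]=[\rho]$ in $\Sect(M)$. This is exactly the point where central triviality is essential: for approximately inner $\theta$ alone the sector of $\rho$ need not be preserved — already for $\rho=\id$ the set of endomorphisms unitarily equivalent to $\id$ fails to be $\meT_\ph$-closed once $M$ is not full, while if $M$ is full then $\theta$ is inner and there is nothing to prove. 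I would carry out the descent by a fast-reindexation argument of the kind used in the proof of Theorem \ref{thm: rho-app}: the intertwiner space $(\rho,\theta\rho\theta^{-1})^M$ is finite dimensional over the finite-dimensional relative commutants $\rho(M)'\cap M$ and $(\theta\rho\theta^{-1})(M)'\cap M$; the $C_\om$-machinery of Lemmas \ref{lem: downward} and \ref{lem: rho}, combined with $\rho\in\Cnd(M)$, controls how these relative commutants and intertwiners sit inside $M^\om$; and the fact that $W$ commutes with $M_\om$ allows one to reindex a representing sequence $(w_\nu)_\nu$ of $W$ into a copy of $M^\om$ indexed by a fresh ultrafilter without disturbing the intertwining with $M\subseteq M^\om$, producing a sequence asymptotically contained in the finite-dimensional intertwiner space of $M$ and, in the limit, the required unitary $v\in M$.
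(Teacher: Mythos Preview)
Your setup in $M^\om$ is fine, and the observation that $W=u\rho^\om(u)^*$ lies in $M_\om'\cap M^\om$ is correct and pleasant. The problem is the descent. What you write is not a proof but a hope: fast reindexation moves elements around \emph{within} $M^\om$ (making them asymptotically free from prescribed countable data), it does not push them into $M$. In particular, nothing in the $C_\om$-machinery of Lemmas~\ref{lem: downward}--\ref{lem: rho} or in Ocneanu's reindexation yields a sequence that is ``asymptotically contained in the finite-dimensional intertwiner space of $M$''; that space may a priori be $\{0\}$, and you have produced no nonzero element of it. The natural candidate $\tau^\om(W)\in(\rho,\theta\rho\theta^{-1})^M$ is only a contraction and could vanish. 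Nor can you simply invoke $M_\om'\cap M^\om=M$: this fails already when $M$ is full (where $M_\om=\C$), and you have not established it for McDuff $M$ either. So the last paragraph is a genuine gap, and it is exactly the substance of the lemma.

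The paper avoids the ultrapower altogether and proves strong$^*$ \emph{convergence} of the sequence $u_n^*\rho(u_n)$ directly, which is strictly more than the mere boundedness you use to form $W$. The mechanism is a Cauchy estimate: choose the approximants $\Ad u_n\to\theta$ carefully, so that with $v_n:=u_{n+1}u_n^*$ one has $\Ad v_n$ close to $\id$; then central triviality gives $\|\rho(v_n)-v_n\|_{\varphi}<2^{-n}$ for two well-chosen states $\varphi$, and a telescoping computation shows $\|u_{n+1}^*\rho(u_{n+1})-u_n^*\rho(u_n)\|_\varphi$ is summable on both sides. The strong$^*$ limit $w\in U(M)$ then satisfies $\theta^{-1}\rho\,\theta=\Ad w\circ\rho$. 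This is the missing idea in your argument: rather than trying to descend an abstract ultrapower intertwiner, one arranges from the start that the concrete sequence $u_n^*\rho(u_n)$ converges in $M$.
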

\begin{proof} 
Let $\varphi_0 $ be a faithful normal state on $\rho(M)$,
and set $\varphi :=\varphi_0\circ E_\rho$.
Let $\{\mathcal{V}_n\}_{n\in\N}$ be 
a fundamental system of the neighborhood of $\id$
in $\Aut(M)$ such that
for any $u\in U(M)$ with $\Ad u \in \mathcal{V}_n$, we have
\[
\|\rho(u)-u\|_{\varphi\circ \theta^{-1}}
<2^{-n},
\quad
\|\rho(u)-u\|_{\varphi\circ\rho\circ \theta^{-1}
\circ\phi_\rho}
<2^{-n}.
\]

If $\beta_n$ converges to $\theta$ in $\Aut(M)$, then $\rho\circ \beta_n
\circ \rho^{-1}$ converges to $\rho\circ \theta\circ \rho^{-1}$ in
$\Aut(\rho(M))$. So we can choose a monotone decreasing system of
the neighborhood of $\theta$ denoted by $\mathcal{W}_n$ 
such that $\mathcal{W}_n\mathcal{W}_n^{-1}\subset \mathcal{V}_n$ and 
for $\beta\in \mathcal{W}_n$, we have 
\[
\|\varphi\circ \beta^{-1}-\varphi\circ \theta^{-1}\|
<2^{-n},
\quad 
\|\varphi_0\circ \rho\circ\beta^{-1}\circ \rho^{-1}-
\varphi_0\circ \rho\circ\theta^{-1}\circ \rho^{-1}\|
<2^{-n}.
\]
Since $\varphi_0=\varphi$ on $\rho(M)$, we get
\[
\|\varphi\circ \rho\circ\beta^{-1}\circ \phi_\rho-
\varphi\circ \rho\circ\theta^{-1}\circ \phi_\rho\|
=
\|\varphi\circ \rho\circ\beta^{-1}\circ \rho^{-1}\circ E_\rho-
\varphi\circ \rho\circ\theta^{-1}\circ \rho^{-1}\circ E_\rho\|
<
2^{-n}.
\]
Let $\theta=\displaystyle \lim\limits_{n\rightarrow\infty }\Ad u_n$ 
with $\Ad u_n\in \mathcal{W}_n$.
Set $v_n:=u_{n+1}u_n^*$. 
Then we see that $\Ad v_n\in
\mathcal{W}_{n+1}\mathcal{W}_{n+1}^{-1}\subset \mathcal{V}_n$.
Hence 
$\|\rho(v_n)-v_n\|_{\vph\circ\th^{-1}}<2^{-n}$ and 
$\|\rho(v_n)-v_n\|_{\vph\circ \rho\circ\th^{-1}\circ\ph_\rho}<2^{-n}$. 

Now we prove
$u_n^*\rho(u_n)$ converges in $U(M)$. 
We have 
\begin{align*}
\|\rho(u_{n+1}^*)u_{n+1}-\rho(u_n^*)u_n\|_{\varphi}^2
=&\, 
\|\rho(u_{n}^*v_n^*)v_nu_{n}-\rho(u_n^*)u_n\|_{\varphi}^2 
= 
\|\rho(v_n^*)v_n-1\|_{\varphi\circ\Ad u_n^*}^2 \\ 
\leq&\, 
2\|\varphi\circ \Ad u_n^*-\varphi\circ \theta^{-1}\|+
\|\rho(v_n^*)v_n-1\|_{\varphi\circ\theta^{-1}}^2 \\ 
\leq &\, 2^{1-n}+
\|v_n-\rho(v_n)\|_{\varphi\circ\theta^{-1}}^2
\leq  2^{1-n}+4^{-n} \\
\leq&\, 2^{2-n},
\end{align*}
and 
\begin{align*}
\|u_{n+1}^*\rho(u_{n+1})-u_n^*\rho(u_n)\|_{\varphi}^2 
=&\, 
\|u_n^*v_n^*\rho(v_nu_{n})-u_n^*\rho(u_n)\|_{\varphi}^2
=
\|v_n^*\rho(v_n)-1\|_{\varphi\circ \Ad \rho(u_n^*)}^2 \\ 
\leq&\, 
2 \|\varphi\circ\Ad \rho(u_n^*)
-\varphi\circ \rho\circ \theta^{-1}\circ \phi_\rho\|
\\
&\quad
+\|v_n^*\rho(v_n)-1\|_{\varphi\circ\rho\circ\theta^{-1}\circ \phi_\rho}^2 
\\ 
\leq&\, 
2^{1-n} +4^{-n} \\ 
\leq &\, 2^{2-n},
\end{align*}
where we have used 
\[
\varphi \circ \Ad \rho(u_n^*)= 
\varphi \circ \Ad \rho(u_n^*)\circ E_\rho
= \varphi \circ \rho\circ \Ad u_n^* \circ \phi_\rho.
\]
Hence the strong* limit
$w:=\displaystyle\lim_{n\to\infty} u_n^* \rho(u_n) \in U(M)$ exists,
and we have
$\theta^{-1}\circ\rho \circ\theta
=\displaystyle\lim_{n\to\infty} \Ad u_n^*\rho(u_n)\rho =\Ad w\circ\rho$.
\end{proof}

Let $M$ be a McDuff factor, i.e.,
the von Neumann algebra $M_\om$ is of type II$_1$.
When $\al$ is a centrally trivial automorphism of $M$,
we know that $\al$ is outer conjugate to $\al\oti \id_{R_0}$,
where $R_0$ denotes the hyperfinite type II$_1$ factor.
This property also holds for centrally trivial endomorphisms.

\begin{prop}\label{prop: relMc}
Let $M$ be an infinite McDuff factor
and
$\rho\in\Cnd(M)$.
Then there exists an isomorphism $\th\col M\oti R_0\ra M$ such that
$[\rho]=[\theta\circ(\rho\otimes \id_{R_0})\circ \theta^{-1}]$
in $\Sect(M)$.
\end{prop}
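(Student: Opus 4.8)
The plan is to reduce the proposition to the construction of a single hyperfinite subfactor of $M$ adapted to $\rho$. Concretely, I claim it suffices to find, after replacing $\rho$ by an inner perturbation $\Ad u\circ\rho$ (which does not change $[\rho]$ in $\Sect(M)$), a subfactor $N\subs M$ with $N\cong R_0$, with $N$ \emph{split} in $M$ in the sense that the natural homomorphism $(N'\cap M)\oti N\ra M$ is an isomorphism, and with $\rho(N)=N$. Given such an $N$, the restriction $\rho|_N$ is an automorphism of $N\cong R_0$ (onto since $\rho(N)=N$); since $N$ is split, a central sequence of $N$ is a central sequence of $M$, so $N_\om$ sits inside $M_\om$ and $\rho^\om|_{N_\om}=\id$ because $\rho\in\Cnd(M)$. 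Hence $\rho|_N\in\Cnt(R_0)=\Int(R_0)$ by \cite{C2}, say $\rho|_N=\Ad w$ with $w\in U(N)$; replacing $\rho$ by $\Ad w^*\circ\rho$, which is trivial on $N'\cap M$ since $w\in N$, we may and do assume $\rho|_N=\id_N$. Then $\rho(N'\cap M)\subs\rho(N)'\cap M=N'\cap M$, and on the $\si$-weakly dense $*$-subalgebra $(N'\cap M)\odot N$ the endomorphism $\rho$ coincides with $(\rho|_{N'\cap M})\oti\id_N$, so $\rho=(\rho|_{N'\cap M})\oti\id_N$ on $M=(N'\cap M)\oti N$ by normality. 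Finally, fixing an isomorphism $\ps\col N\oti R_0\ra N$ (which exists since $R_0\cong R_0\oti R_0$) and setting $\theta:=\id_{N'\cap M}\oti\ps$, read through the canonical identification $M\oti R_0=(N'\cap M)\oti N\oti R_0$, one gets an isomorphism $\theta\col M\oti R_0\ra M$ with $\theta\circ(\rho\oti\id_{R_0})\circ\theta^{-1}=(\rho|_{N'\cap M})\oti\id_N=\rho$, so that $[\theta\circ(\rho\oti\id_{R_0})\circ\theta^{-1}]=[\rho]$.

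For the construction of $N$, I would invoke the McDuff hypothesis and central triviality together. Since $M$ is McDuff, $M_\om$ is of type II$_1$, so it contains a subfactor $R\cong R_0$, which we present as the $\si$-weak closure of an increasing sequence of finite-dimensional subfactors with matrix units $\{e^{(n)}_{ij}\}_{n}$; and since $\rho\in\Cnd(M)$, $\rho^\om$ is the identity on $M_\om$, so in particular $\rho^\om(e^{(n)}_{ij})=e^{(n)}_{ij}$. Choosing a $\si$-strongly dense sequence $(a_k)_k$ in the unit ball of $M$, a faithful normal state $\vph$, and representing sequences $(e^{(n),\nu}_{ij})_\nu$ of the $e^{(n)}_{ij}$, one runs the classical McDuff inductive construction: the representing sequences are $\om$-centralizing, hence almost commute with $a_1,\dots,a_k$, and are almost fixed by $\rho$ since $\rho^\om$ fixes the $e^{(n)}_{ij}$; by a diagonal choice of $\nu=\nu(n)$ together with a local-to-global perturbation of each block of matrix units one obtains an increasing chain $Q_1\subs Q_2\subs\cdots$ of finite-dimensional subfactors of $M$ such that $N:=(\bigvee_n Q_n)''\cong R_0$, $N$ is split in $M$, and $\rho$ moves the chosen generators of $N$ by an arbitrarily small amount in $\|\cdot\|_\vph$. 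An inner perturbation of $\rho$ absorbing the first-order error, followed by Ocneanu's Fast Reindexation Lemma \cite[Lemma 5.3]{Oc1} to correct the residual discrepancy, then yields $\rho(N)=N$ exactly, as required by the first step.

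The main obstacle is precisely this last upgrading: passing from the \emph{asymptotic} centrality and $\rho$-invariance supplied by $M_\om$ to the \emph{exact} statements that $N$ splits $M$ and that $\rho(N)=N$. The automorphism case of Connes \cite{C3,C4} is the model, but two points require extra care here. First, $M$ may be of type III, so one cannot average against a trace on $M$; all estimates must be organized through faithful normal states on $M$ and through the tracial algebra $M_\om$, and the splitting $M=(N'\cap M)\oti N$ must be produced by a local-to-global argument rather than by a trace-preserving conditional expectation. Second, $\rho$ is not invertible, so the correction of its action on $N$ cannot be done by conjugating $\rho^{-1}$; instead one works with the left inverse $\ph_\rho$ and the relative commutant $\rho(M)'\cap M$, using Lemmas \ref{lem: downward} and \ref{lem: rho} — which identify $C_\om(M,\rho(M))$ with $M_\om$ — to run the reindexation, and Lemma \ref{lem: commutes} to control the interaction between $\rho$ and the approximately inner automorphisms that move the hyperfinite complement into position.
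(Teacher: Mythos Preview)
Your reduction in the first paragraph is correct: once a split $N\cong R_0$ with $\rho(N)=N$ is in hand, everything follows as you say. The gap is in producing such an $N$. Building $N$ first with $\rho$ only \emph{approximately} trivial on generators and then ``correcting the residual discrepancy'' by Fast Reindexation does not work: that lemma is about asymptotic independence of sequences in $M^\om$ and says nothing about exact invariance of a fixed subalgebra, and Lemma~\ref{lem: commutes} is likewise irrelevant to positioning $N$. What would actually work is an intertwined induction in which the matrix units of $N$ and the inner perturbations of $\rho$ are built simultaneously: at stage $n$, having arranged $\rho_{n-1}:=\Ad v_{n-1}\circ\rho$ equal to $\id$ on $Q_{n-1}$, one chooses centralizing matrix units in $Q_{n-1}'\cap M$, observes that their $\rho_{n-1}$-images also lie there and are nearby, and conjugates by some $u_n\in U(Q_{n-1}'\cap M)$ close to $1$ to make $\rho_n|_{Q_n}=\id$ exactly without disturbing the earlier stages; with summable errors the products $u_n\cdots u_1$ converge strongly$^*$ to a unitary. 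This is Connes' scheme adapted to endomorphisms, and it is considerably more work than what you sketched.

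The paper sidesteps all of this. It sets $\si:=\id\oplus\rho\oplus\rho\in\Cnd(M)$ (via Theorem~\ref{thm: cend-structure}), so that $C_\om(M,\si(M))=M_\om$ is of type II$_1$ by Lemma~\ref{lem: rho}, and then invokes Bisch's relative McDuff theorem \cite{B}: the inclusion $\si(M)\subs M$ is isomorphic to $(\si\oti\id_{R_0})(M\oti R_0)\subs M\oti R_0$, giving isomorphisms $\th_1,\th_2\col M\ra M\oti R_0$ with $\th_1\si=(\si\oti\id_{R_0})\th_2$. Decomposing both sides into sectors yields $[\th_1\th_2^{-1}]\oplus 2[\th_1\rho\th_2^{-1}]=[\id]\oplus 2[\rho\oti\id_{R_0}]$, and the multiplicity pattern $(1,2)$ forces $[\th_1\th_2^{-1}]=[\id]$, whence $[\th_1\rho\th_1^{-1}]=[\rho\oti\id_{R_0}]$. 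The doubled copy of $\rho$ is exactly what makes this matching unambiguous even when $\rho$ is an automorphism.
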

\begin{proof}
If $[\rho]=[\id]$, then the proposition is trivial.
We assume that
$[\rho]\ne [\id]$.
Take $\sigma\in\End(M)$ such that
$[\sigma]=[\id]\oplus[\rho]\oplus [\rho]$.
Then $\sigma$ is centrally trivial
by Theorem \ref{thm: cend-structure}.
Hence
$C(M,\sigma( M))=M_\omega$ holds by Lemma \ref{lem: rho}.
Since $M_\omega$ is of type II$_1$,
$\sigma(M)\subset M$ is relatively McDuff in the sense of Bisch \cite{B}.
Thus $\sigma(M)\subset M$ is
isomorphic to
$(\sigma\otimes \id_{R_0})(M\otimes R_0)\subset M\otimes R_0$.
Hence we can find isomorphisms
$\theta_1$ and $\theta_2$ from $M$ onto $M\otimes R_0$ such that
$\theta_1\circ\sigma= (\sigma\otimes \id_{R_0})\circ \theta_2$.
So,
\[
[\theta_1\theta^{-1}_2]\oplus 2[\theta_1\circ\rho\circ \theta_1^{-1}]
=
[\theta_1 \sigma \theta_2^{-1}]
=
[\sigma\otimes \id_{R_0}]
=
[\id\oti \id_{R_0}]\oplus
2[\rho\otimes \id_{R_0}]
\] holds. By comparing irreducible
components, we have $[\theta_1 \theta_2^{-1}]=[\id\oti\id_{R_0}]$, and
$[\theta_1\circ\rho\circ \theta_1^{-1}]=[\rho\otimes \id_{R_0}]$.
\end{proof}

Combining Lemma \ref{lem: splitting B(H)}
with Proposition \ref{prop: relMc},
we have the following result, where $R_{0,1}$ denotes
the hyperfinite type II$_\infty$ factor.

\begin{prop}\label{prop: splitting R01}
Let $M$ be an infinite McDuff factor
and $\rho\in \Cnd(M)$.
Then there exists an isomorphism $\th\col M\oti R_{0,1}\ra M$
such that $[\rho]=[\th\circ(\rho\oti\id)\circ \th^{-1}]$
in $\Sect(M)$.
\end{prop}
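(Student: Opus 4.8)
The plan is to deduce the proposition as a formal consequence of Lemma \ref{lem: splitting B(H)} and Proposition \ref{prop: relMc}, together with the standard tensorial identity $R_{0,1}\cong R_0\oti B(\el_2)$ for the hyperfinite factors. Under this identity one has an isomorphism $j\col M\oti R_{0,1}\ra(M\oti R_0)\oti B(\el_2)$ which carries $\rho\oti\id_{R_{0,1}}$ to $(\rho\oti\id_{R_0})\oti\id_{B(\el_2)}$; fixing such a $j$ at the outset reduces the task to producing an isomorphism $(M\oti R_0)\oti B(\el_2)\ra M$ intertwining $(\rho\oti\id_{R_0})\oti\id_{B(\el_2)}$ with a sector representative of $\rho$.

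First I would apply Proposition \ref{prop: relMc} to the pair $(M,\rho)$: since $M$ is an infinite McDuff factor and $\rho\in\Cnd(M)$, there is an isomorphism $\th_1\col M\oti R_0\ra M$ with $[\th_1\circ(\rho\oti\id_{R_0})\circ\th_1^{-1}]=[\rho]$ in $\Sect(M)$. Next, $M\oti R_0$ is again an infinite factor (as $M$ is properly infinite) and $\rho\oti\id_{R_0}\in\End(M\oti R_0)_0$, since its minimal index is $d(\rho)^2$. So Lemma \ref{lem: splitting B(H)}, applied to the pair $(M\oti R_0,\rho\oti\id_{R_0})$, yields an isomorphism $\pi\col(M\oti R_0)\oti B(\el_2)\ra M\oti R_0$ with $[\pi\circ((\rho\oti\id_{R_0})\oti\id_{B(\el_2)})\circ\pi^{-1}]=[\rho\oti\id_{R_0}]$ in $\Sect(M\oti R_0)$.

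I would then set $\th:=\th_1\circ\pi\circ j\col M\oti R_{0,1}\ra M$. Since $j\circ(\rho\oti\id_{R_{0,1}})\circ j^{-1}=(\rho\oti\id_{R_0})\oti\id_{B(\el_2)}$ on the nose, and since conjugation by the isomorphism $\th_1$ preserves unitary equivalence of endomorphisms (it induces a bijection on sectors), we get
\[
[\th\circ(\rho\oti\id_{R_{0,1}})\circ\th^{-1}]
=[\th_1\circ\pi\circ((\rho\oti\id_{R_0})\oti\id_{B(\el_2)})\circ\pi^{-1}\circ\th_1^{-1}]
=[\th_1\circ(\rho\oti\id_{R_0})\circ\th_1^{-1}]=[\rho],
\]
which is the assertion. (Concretely, the two cited identities give $\pi\circ((\rho\oti\id_{R_0})\oti\id_{B(\el_2)})\circ\pi^{-1}=\Ad u\circ(\rho\oti\id_{R_0})$ and $\th_1\circ(\rho\oti\id_{R_0})\circ\th_1^{-1}=\Ad v\circ\rho$, so $\th\circ(\rho\oti\id_{R_{0,1}})\circ\th^{-1}=\Ad(\th_1(u)v)\circ\rho$.)

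There is essentially no genuine obstacle: the statement is a corollary, and the only points requiring care are (i) checking that the isomorphism $R_{0,1}\cong R_0\oti B(\el_2)$ transports $\rho\oti\id_{R_{0,1}}$ to $(\rho\oti\id_{R_0})\oti\id_{B(\el_2)}$ exactly, and (ii) tracking the ``up to inner'' relations so that the two sector identities compose correctly. Both are routine.
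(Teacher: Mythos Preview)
Your proof is correct and follows exactly the approach indicated by the paper, which simply states that the proposition is obtained by combining Lemma~\ref{lem: splitting B(H)} with Proposition~\ref{prop: relMc}. You have merely spelled out the details of that combination via the identification $R_{0,1}\cong R_0\oti B(\el_2)$.
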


\subsection{Modular endomorphisms}

Let $M$ be a factor and $\rho\in\End(M)_0$. 
We say that $\rho$ is a \emph{modular endomorphism} 
when the canonical extension $\trho$ is inner, that is,
there exists a finite dimensional Hilbert space in $\tM$
which implements $\trho$ \cite[Definition 3.1]{Iz1}.
By $\End(M)_{\rm m}$ and $\Sect(M)_{\rm m}$,
we denote the sets of modular endomorphisms on $M$
and sectors of modular endomorphisms, respectively.
We also denote by $\End(M)_{\rm m,\, irr}$ the set of 
irreducible modular endomorphisms on $M$. 
Then $\End(M)_{\rm m}$ is closed under the composition, 
decomposition, direct sum and conjugation. 

In type III case,
Izumi characterizes modular endomorphisms
in terms of $U(n)$-valued cohomology classes 
$H^1(F^M,U(n))$ with respect to the flow of weights $F^M$
\cite[Theorem 3.3]{Iz1}.
We complement his result for each type. 

\begin{lem}
Let $M$ be a factor. Then the following statements hold. 
\begin{enumerate}

\item 
When $M$ is of type II$_1$, $\End(M)_{\rm m}=\Int(M)$. 

\item 
When $M$ is of type II$_\infty$, 
$\End(M)_{\rm m,\, irr}=\Int(M)$. 

\item 
When $M$ is of type III$_\la$ $(0<\la\leq1)$, 
\[
\End(M)_{\rm m,\, irr}
=\{\Ad u\circ \si_t^\vph\mid u\in U(M),\ \vph\in W(M),\ t\in\R\}. 
\]

\item
When $M$ is of type III$_0$, 
\[
\End(M)_{\rm m}
=
\{
\rho\in \End(M)_0\mid \de_{\rm m}([\rho])\in H^1(F^M,U(n)),
\ n\in\N
\},
\]
where $\de_{\rm m}$ is a bijection 
from $\Sect(M)_{\rm m}$ onto $\displaystyle\bigcup_{n\in\N}H^1(F^M,U(n))$ 
introduced in \cite[p.10]{Iz1}. 
\end{enumerate}
\end{lem}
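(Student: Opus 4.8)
The plan is to prove each of the four cases by combining Izumi's characterization of modular endomorphisms via inner canonical extensions with the structure of the canonical core $\tM$ and the flow of weights. Recall that $\rho\in\End(M)_0$ is modular precisely when $\trho$ is inner on $\tM$, i.e. there is a finite-dimensional Hilbert space $\meK\subs\tM$ with $\trho=\rho_\meK$. Since $\trho|_M=\rho$ and $\trho$ commutes with the dual action $\th$, the vectors of $\meK$ interact with both $M$ and $Z(\tM)$ in a controlled way; this is the common mechanism in all four parts.

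For (1), when $M$ is of type II$_1$ with trace $\ta$, we have $\tM=M\,\ovl\oti\,\{\la^\ta(t)\}_{t\in\R}''$ and $Z(\tM)=\{\la^\ta(t)\}_{t\in\R}''$, which is diffuse. An inner $\trho$ must satisfy $\trho|_{Z(\tM)}=\id$ (inner endomorphisms are trivial on the center), so $d(\rho)^{it}[D\ta\circ\ph_\rho:D\ta]_t=1$, forcing $\ta\circ\ph_\rho=\ta$ and $d(\rho)^{it}=1$, hence $d(\rho)=1$ and $\rho\in\Aut(M)$; then $\trho$ inner on $\tM$ with $\trho(\la^\ta(t))=\la^\ta(t)$ forces $\rho$ to be an inner automorphism of $M$ by a standard argument (the implementing unitary lies in $M$ after adjusting by the center). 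Conversely $\Int(M)\subs\End(M)_{\rm m}$ always. For (2), when $M$ is type II$_\infty$, the same computation with the semifinite trace and $Z(\tM)=\{\la^\ta(t)\}''$ shows that an \emph{irreducible} modular $\rho$ has $\mo(\rho)$ trivial and $d(\rho)^{it}[D\ta\circ\ph_\rho:D\ta]_t$ a scalar one-parameter group; irreducibility of $\rho$ together with innerness of $\trho$ forces the implementing Hilbert space to be one-dimensional inside $\tM$, and tracking where that vector sits (it must be in $M$ up to the central part) gives $\rho\in\Int(M)$. Again $\Int(M)\subs\End(M)_{{\rm m},\,{\rm irr}}$.

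For (3), when $M$ is type III$_\la$, $0<\la\le1$: the containment $\supseteq$ holds because $\wdt{\si_t^\vph}$ is inner (implemented by $\la^\vph(t)\in\tM$, a one-dimensional Hilbert space), and modular endomorphisms form a class closed under $\Ad u$. For $\subseteq$, take $\rho$ irreducible modular; then $\trho$ is inner and irreducible on $\tM$, so it is implemented by a single unitary $V\in\tM$ with $V x=\rho(x)V$ for $x\in M$ and $V\la^\vph(t)=\trho(\la^\vph(t))V$. Since $\trho$ commutes with $\th$, adjusting $V$ we arrange $\th_s(V)=V$, so $V$ lies in the centralizer of a dominant weight, and using that $\tM$ is generated by $M$ and the $\la^\vph(t)$ one writes $V$ in a normal form from which $\rho=\Ad u\circ\si_t^\vph$ for appropriate $u,t,\vph$; here one uses the relative commutant structure and the classification of one-parameter groups in the core, essentially as in \cite[Theorem 3.3]{Iz1} specialized to the trivial flow of weights. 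For (4), the type III$_0$ case is exactly Izumi's \cite[Theorem 3.3]{Iz1}: $\de_{\rm m}$ is a bijection from $\Sect(M)_{\rm m}$ onto $\bigcup_n H^1(F^M,U(n))$, and the displayed description of $\End(M)_{\rm m}$ is a restatement.

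The main obstacle is the type II$_\infty$ irreducible case (2) and the ``normal form'' step in (3): one must pin down that the finite-dimensional Hilbert space implementing $\trho$ can be chosen inside a concrete piece of $\tM$ (the fixed-point algebra of $\th$, i.e. a copy of $M$, or the centralizer of a dominant weight) rather than spread across the core, and this requires carefully exploiting commutation with the dual action $\th$ together with irreducibility. Once the implementer is localized, the identification with $\Int(M)$ respectively $\{\Ad u\circ\si_t^\vph\}$ is routine, and (1) and (4) are then immediate (the latter by direct appeal to Izumi).
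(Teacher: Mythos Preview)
Your approach is correct but differs from the paper's. For parts (3) and (4) you both simply invoke Izumi's \cite[Theorem 3.3]{Iz1}, so there is nothing to compare. For parts (1) and (2) the paper takes a more direct route: given the implementing isometries $V_i\in\tM$ for $\trho$, one has $V_i\,\pi_{\si^\ta}(x)=\pi_{\si^\ta}(\rho(x))\,V_i$ for $x\in M$; since $\tM=M\oti L(\R)$ in the semifinite case, a slice map $(\id\oti\phi)$ applied to any nonzero $V_i$ yields a nonzero element of $(\id,\rho)_M$. Then $W^*W\in Z(M)=\C$ gives an isometry $v\in(\id,\rho)_M$; in II$_1$ finiteness makes $v$ unitary, and in II$_\infty$ irreducibility gives $vv^*\in\rho(M)'\cap M=\C$, so again $v$ is unitary. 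This avoids your preliminary step of proving $d(\rho)=1$.

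Your route---first showing the implementing Hilbert space is one-dimensional (via the trace constraint in II$_1$, and via the relative commutant identity $\trho(\tM)'\cap\tM\cong M_d(Z(\tM))$ versus $=Z(\tM)$ in II$_\infty$), then disintegrating the single unitary $V$ over $Z(\tM)$ to write $V=u\oti c$---is also valid. It is slightly longer and in the II$_\infty$ case your remark about $d(\rho)^{it}[D\ta\circ\ph_\rho:D\ta]_t$ being scalar is true but not what actually forces $d=1$ there (that comes from irreducibility, as you note). The paper's argument is cleaner because it never needs to know $d(\rho)=1$ in advance: the slice of any one $V_i$ already produces the required intertwiner in $M$.
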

\begin{proof}
(3) and (4) is already proved \cite[Theorem 3.3]{Iz1}. 

\noindent (1)
Let $\ta$ be the normal tracial state on $M$.
Then the natural isomorphism
$\Pi_\ta\col \tM\ra M\rti_{\si^\ta}\R$
maps $x\in M$ to $x\oti1$.
Let $\rho\in \End(M)_{\rm m}$.
Take $\{V_i\}_{i=1}^d\subs \tM$ be
an implementing orthonormal system of $\trho$.
Then for $x\in M$,
$(\rho(x)\oti1)V_i=\trho(x\oti1)V_i
=V_i (x\oti1)$.
This implies that
$(\id,\rho)\neq 0$.
Since $M$ is finite, $(\id,\rho)$ contains a unitary,
and
$\rho$ is an inner automorphism.

\noindent (2)
The above proof for $\rho\in\End(M)_{\rm m,\,irr}$ also works
in this case.
\end{proof}

\begin{lem}\label{lem: mod-in-cent}
Let $M$ be a factor.
Then $\End(M)_{\rm m}\subs \Cnd(M)$.
\end{lem}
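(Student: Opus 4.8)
The plan is to show that if $\rho\in\End(M)_{\rm m}$, then $\rho^\om$ acts trivially on $M_\om$. The key idea is that a modular endomorphism $\trho$ is implemented by a finite-dimensional Hilbert space $\meH\subs\tM$, and central sequences in $M$ interact with $\tM$ only through the part of $\tM$ coming from $M$ itself; the unitaries $\la^\vph(t)$ generating the rest of $\tM$ do not affect central behavior. First I would pass to the canonical core: fix a faithful normal state $\vph\in M_*$ and realize $\tM=M\rti_{\si^\vph}\R$. Let $\{V_i\}_{i=1}^d\subs\tM$ be an orthonormal basis of a Hilbert space in $\tM$ implementing $\trho$, so that $\trho(X)=\sum_i V_i X V_i^*$ for all $X\in\tM$; in particular $\rho(x)=\sum_i V_i x V_i^*$ for $x\in M$. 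The crucial observation is that the $V_i$, lying in $\tM$, are fixed objects independent of $\nu$, so for any $\om$-centralizing sequence $(x^\nu)_\nu$ in $M$ we should get $\rho(x^\nu)=\sum_i V_i x^\nu V_i^* $ and want to compare this with $x^\nu$ in the strong$*$ topology along $\om$.

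The heart of the argument is that a bounded sequence $(x^\nu)_\nu\subs M$ that is $\om$-centralizing in $M$ is also, when viewed inside $\tM$, asymptotically central against a sufficiently rich family of normal functionals on $\tM$ — at least against those of the form $\psi\circ E$, where $E\col\tM\ra M$ is the canonical conditional expectation coming from the dual weight (equivalently the operator-valued weight associated with the dual action $\th$), or more concretely against the states $\vph\circ\ph_\rho$-type functionals built from $\vph$. Using the commutation relations $\si_t^\vph(x)=\la^\vph(t)x\la^\vph(t)^*$, one checks that $[\la^\vph(t),x^\nu]\to0$ strongly$*$ along $\om$ precisely because $(x^\nu)_\nu$ being $\om$-centralizing forces $\si_t^\vph(x^\nu)-x^\nu\to0$ strongly$*$ (this is the standard fact that central sequences are asymptotically $\si^\vph$-invariant, see e.g.\ the treatment in \cite{C2}). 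Consequently $(x^\nu)_\nu$ is $\om$-centralizing in $\tM$ as well, i.e.\ it asymptotically commutes with every element of $M$ and with every $\la^\vph(t)$, hence with all of $\tM$. Then since $\{V_i\}$ implements $\trho$ and $\sum_i V_iV_i^*=1$, we get
\[
\rho(x^\nu)=\sum_{i=1}^d V_i x^\nu V_i^*
=\sum_{i=1}^d x^\nu V_i V_i^* + \sum_{i=1}^d [V_i,x^\nu]V_i^*
= x^\nu + \sum_{i=1}^d [V_i,x^\nu]V_i^*,
\]
and the error term tends to $0$ strongly$*$ along $\om$ because each $[V_i,x^\nu]\to0$. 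Therefore $\rho^\om(\pi_\om((x^\nu)_\nu))=\pi_\om((x^\nu)_\nu)$ for all classes coming from $\meC$-type central sequences, which is exactly the statement that $\rho^\om=\id$ on $M_\om$.

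The main obstacle I anticipate is the passage from ``$(x^\nu)_\nu$ is $\om$-centralizing in $M$'' to ``$[V,x^\nu]\to0$ strongly$*$ for every $V\in\tM$'': while $\la^\vph(t)$-commutation follows from asymptotic $\si^\vph$-invariance of central sequences, one must argue that this extends to the whole von Neumann algebra $\tM$ generated by $M$ and $\{\la^\vph(t)\}_{t\in\R}$, and in particular that the $V_i$ (which live in $\tM$ but need not be polynomials in $M$ and $\la^\vph(t)$) are approximated in the relevant topology by such polynomials uniformly enough. The clean way around this is to note that asymptotic commutation with a generating set of a von Neumann algebra, together with uniform boundedness, gives asymptotic commutation with the $\si$-weak closure via Kaplansky density — one approximates each $V_i$ by elements of the $*$-algebra generated by $M\cup\{\la^\vph(t)\}$ and uses that $(x^\nu)_\nu$ is bounded. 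A secondary point requiring care is the finite-index case versus the general definition: here $\rho\in\End(M)_0$ so $d(\rho)<\infty$ and $\ph_\rho$ preserves $M_\om$ (used already in Lemma~\ref{lem: rho}), so no extra subtlety arises there. Once the asymptotic-centrality transfer is in hand, the displayed identity finishes the proof immediately, giving $\End(M)_{\rm m}\subs\Cnd(M)$.
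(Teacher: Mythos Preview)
Your approach is essentially the same as the paper's: both pass to the core $\tM=M\rti_{\si^\vph}\R$, use that $\om$-centralizing sequences in $M$ are asymptotically $\si^\vph$-invariant, and conclude from innerness of $\trho$ that $\rho(x^\nu)-x^\nu\to0$ strongly$*$.

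The one real gap is exactly the step you flag. Your proposed Kaplansky argument does not work as stated: with $y_k\to V$ strongly$*$ and $\|y_k\|\le\|V\|$, the decomposition
\[
[V,x^\nu]\xi=(V-y_k)x^\nu\xi-x^\nu(V-y_k)\xi+[y_k,x^\nu]\xi
\]
leaves the term $\|(V-y_k)x^\nu\xi\|$ uncontrolled, because $x^\nu\xi$ ranges over a bounded but not fixed set of vectors, and strong$*$ convergence of $V-y_k\to0$ is not uniform on bounded sets. The paper bypasses this entirely by working in the concrete crossed-product representation on $H\otimes L^2(\R)$ and proving directly that
\[
\pi_{\si^\vph}(x^\nu)-x^\nu\otimes 1\;\to\;0\quad\mbox{strongly}*,
\]
via the integral estimate $\|(\pi_{\si^\vph}(x^\nu)-x^\nu\otimes1)(\xi\otimes f)\|^2=\int|f(s)|^2\|(\si_{-s}^\vph(x^\nu)-x^\nu)\xi\|^2\,ds$ and the uniform convergence of $\si_{-s}^\vph(x^\nu)-x^\nu$ on compacta (Connes \cite[Proposition 2.3 (1)]{C1}). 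Since $(x^\nu\otimes1)_\nu$ is clearly $\om$-centralizing in $M\otimes B(L^2(\R))$ (check on the total set of simple tensors in the predual), so is $(\pi_{\si^\vph}(x^\nu))_\nu$, and then $[V_i,\pi_{\si^\vph}(x^\nu)]\to0$ strongly$*$ follows for \emph{every} $V_i\in\tM\subs M\otimes B(L^2(\R))$ with no density argument needed. Replacing your Kaplansky step by this computation makes the proof complete.
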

\begin{proof}
Let $\rho\in\End(M)_{\rm m}$ and $\ps\in W(M)$.
We identify $\tM$ with $M\rti_{\si^\ps}\R$.
Let $(x^\nu)_\nu$ be an $\om$-centralizing sequence in $M$.
We will show that $\rho(x^\nu)-x^\nu\to0$ strongly*
as $\nu\to\om$.

Note that $\pi_{\si^\ps}(x^\nu)-x^\nu\oti1\to0$
strongly as $\nu\to\om$ in $M\oti B(L^2(\R))$.
It suffices to show
$\|(\pi_{\si^\ps}(x^\nu)-x^\nu\oti1)(\xi\oti f)\|\to0$
for all $\xi\in L^2(M)$ and $f\in L^2(\R)$ with compact support.
It is checked as follows:
\begin{align*}
\|(\pi_{\si^\ps}(x^\nu)-x^\nu\oti1)(\xi\oti f)\|^2
=&\,
\int_{-\infty}^\infty 
|f(s)|^2 \|(\si_{-s}^\ps(x^\nu)-x^\nu)\xi\|^2 ds
\\
\to&\,0\quad \mbox{as}\ \nu\to\om,
\end{align*}
since $\|(\si_{-s}^\ps(x^\nu)-x^\nu)\xi\|$ uniformly converges to $0$
on the compact support of $f$
as $\nu\to\om$ by \cite[Proposition 2.3 (1)]{C1}.
Similarly we can prove
$\pi_{\si^\ps}((x^\nu)^*)-(x^\nu)^*\oti1\to0$.
Hence $\pi_{\si^\ps}(x^\nu)-x^\nu\oti1\to0$ strongly* as $\nu\to\om$.
In particular, $(\pi_{\si^\ps}(x^\nu))_\nu$
is an $\om$-centralizing
sequence in $M\oti B(L^2(\R))$.
Since $\trho$ is inner, $\trho$ acts trivially
on $\om$-centralizing sequences of $\tM$.
Thus $\pi_\ps(\rho(x^\nu))-\pi_\ps(x^\nu)
=\trho(\pi_\ps(x^\nu))-\pi_\ps(x^\nu)$ converges to 0 strongly*
as $\nu\to\om$.
\end{proof}

Our main result in this section is the following. 

\begin{thm}\label{thm: mod-cent}
Let $M$ be a hyperfinite factor. 
Then one has
\[
\End(M)_{\rm m}=\Cnd(M). 
\]
\end{thm}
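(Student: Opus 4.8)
The inclusion $\End(M)_{\rm m}\subseteq\Cnd(M)$ is Lemma \ref{lem: mod-in-cent}, so the content is $\Cnd(M)\subseteq\End(M)_{\rm m}$: given $\rho\in\Cnd(M)$ I must produce a finite dimensional Hilbert space in $\tM$ implementing $\trho$. The plan follows the architecture of \cite{KST} and of the proof of Theorem \ref{thm: main-app}: treat the hyperfinite type II$_\infty$ factor $R_{0,1}$ (and the type III$_1$ factor) as the base case via Popa's subfactor theory \cite{Po1}, reduce the type III$_\la$ ($0<\la<1$) and III$_0$ cases to it through a discrete decomposition, and dispose of types I and II$_1$ by soft reductions.

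For type I, every endomorphism is inner, and an inner $\rho_\meH$ is modular since $\meH\subseteq M\subseteq\tM$ still implements $\trho$ (using $\ta\circ\ph_{\rho_\meH}=d(\rho)^{-1}\ta$, so that $\trho$ is trivial on $Z(\tM)=\{\la^\ta(t)\}_{t\in\R}''$ and equals $\rho_\meH\oti\id$ on $\tM=M\oti L^\infty(\R)$). For type II$_1$ I would pass to $M\oti B(\el^2)\cong R_{0,1}$: from $(M\oti B(\el^2))_\om=M_\om$ and $(\rho\oti\id)^\om=\rho^\om\oti\id$ one gets $\rho\oti\id\in\Cnd(R_{0,1})$, and since $\widetilde{M\oti B(\el^2)}=\tM\oti B(\el^2)$ with $\widetilde{\rho\oti\id}=\trho\oti\id$, modularity of $\rho\oti\id$ is equivalent to that of $\rho$; thus type II$_1$ follows once $R_{0,1}$ is settled.

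For $M$ of type III$_\la$ ($0<\la<1$) or III$_0$ I would imitate the discrete-decomposition part of the proof of Theorem \ref{thm: main-app}. Replacing $\rho$ by $\Ad u\circ\rho$ (harmless, as $\Cnd$ and modularity are sectorial), I would first play Lemma \ref{lem: commutes} against the approximately inner modular automorphisms $\si_t^\vph$ to see that $\rho$ is compatible with the modular flows up to inner perturbation and hence has a Connes--Takesaki module, then apply Lemmas \ref{lem: gen-trace}--\ref{lem: lacunary} to obtain a (lacunary, infinite multiplicity) weight $\ps$ with a discrete decomposition $M=M_\ps\rti_\si\Z$ in which $\rho(U)=U$, $\rho|_{Z(M_\ps)}=\id$, and $\ps\circ\ph_\rho$ is a scalar multiple of $\ps$. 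Using $\rho(U)=U$ one checks, exactly as in the $\si_\om$-computations of the proof of Theorem \ref{thm: main-app}, that $\rho|_{M_\ps}\in\Cnd(M_\ps)$ with $M_\ps$ hyperfinite of type II$_\infty$; granting that $\rho|_{M_\ps}$ is then modular (i.e.\ inner), I would transfer its implementing Hilbert space, using the $\Z$-equivariance and stability of $\si$, into one for $\trho$. The type III$_1$ case would be handled similarly --- the core $\tM$ is $R_{0,1}$, one verifies $\trho\in\Cnd(R_{0,1})$ (central sequences of the core reducing to those of $M$), and the type II$_\infty$ result makes $\trho$ an inner endomorphism of the core, i.e.\ $\rho$ modular --- or, alternatively, directly via Popa's type III$_1$ results as in the proof of Theorem \ref{thm: main-app}.

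The heart of the matter is $M=R_{0,1}$, where $\End(M)_{\rm m}=\Int(M)$, so one must show a centrally trivial endomorphism of $R_{0,1}$ is inner. Here I would combine: $\rho$ commutes up to inner automorphism with every $\theta\in\oInt(R_{0,1})$ (Lemma \ref{lem: commutes}), and by Theorem \ref{thm: main-app} these $\theta$ are precisely the trace-preserving automorphisms; the subfactor $\rho(R_{0,1})\subseteq R_{0,1}$ satisfies $C_\om(R_{0,1},\rho(R_{0,1}))=(R_{0,1})_\om$ (Lemma \ref{lem: rho}), the condition opposite to central freeness in the sense of \cite{Po1}; and $\rho$ may be taken to absorb $R_{0,1}$ (Proposition \ref{prop: splitting R01}). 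The plan is to feed these into Popa's structure theory for hyperfinite subfactors to force $\rho(R_{0,1})\subseteq R_{0,1}$ to be implemented by a finite dimensional Hilbert space, whence $[\rho]$ is an inner --- thus modular --- sector. I expect the main obstacle to be precisely this step: converting the central-sequence information on the subfactor, together with Lemma \ref{lem: commutes}, into the honest innerness of $\trho$, and in particular first forcing $\rho$ to preserve the trace, since this is the point at which \cite{KST} invoked the classification of amenable group actions \cite{Oc1,ST} and where we must instead rely on \cite{Po1} and on Rohlin-type arguments parallel to those in the proof of Theorem \ref{thm: main-app}. Once $R_{0,1}$ is disposed of, the reductions above complete the proof.
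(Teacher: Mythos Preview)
Your reduction from type III to type II$_\infty$ has a genuine gap. In the discrete decomposition $M=M_\psi\rtimes_\si\Z$ you claim that $\rho(U)=U$ forces $\rho|_{M_\psi}\in\Cnd(M_\psi)$. But by \cite[Lemma~7]{KST} one has $M_\omega=((M_\psi)_\omega)^{\si_\omega}$, so central triviality of $\rho$ on $M$ only gives triviality of $\rho^\omega$ on the $\si_\omega$-fixed part of $(M_\psi)_\omega$. Since $\si$ is trace-scaling and therefore centrally non-trivial on $M_\psi$, this is a proper subalgebra, and there is no reason for $\rho|_{M_\psi}$ to be centrally trivial. The same obstruction undermines your III$_1$ argument: central sequences of the core $\tM\cong R_{0,1}$ do not ``reduce to those of $M$'' (the relevant $\R$-action on $\tM$ scales the trace), so $\trho\in\Cnd(\tM)$ does not follow from $\rho\in\Cnd(M)$. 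Your appeal to Lemmas \ref{lem: gen-trace}--\ref{lem: lacunary} also presupposes $\mo(\rho)=\th_{s_0}$, which you have not established from central triviality alone.

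The paper routes around this obstacle case by case. For III$_1$ it works directly on $M$: Lemma \ref{lem: rho-cent-nontri} shows the subfactor $(\id\oplus\rho)(M)\subset M$ fails Popa's central non-triviality, and Theorem \ref{thm: Popa-cent} together with Izumi's graph-change criterion then forces $[\rho]$ or $[\rho\brho]$ to contain a modular automorphism, which for irreducible $\rho$ pins down $[\rho]=[\si_t^\vph]$. For III$_\la$ ($0<\la<1$) it writes $M=R\rtimes_{\si_T^\vph}\Z$ with $R$ the hyperfinite III$_1$ factor; here $\si_T^\vph\in\Cnt(R)$, so $R_\omega\subset M_\omega$ (Lemma \ref{lem: RM}) and one genuinely gets $\rho|_R\in\Cnd(R)$, reducing to the III$_1$ case. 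For III$_0$ the paper accepts that only $\rho^\omega=\id$ on $(N_\omega)^{\th_\omega}$ is available and proves the substantial Lemma \ref{lem: rho-th-n}: a Rohlin-tower averaging argument showing that such a $\rho$ must satisfy $(\rho,\th^n)_N\neq0$ for some $n$, after which Lemmas \ref{lem: rho-th-inner} and \ref{lem: inner-modular} finish. This last lemma is precisely the missing idea in your sketch.
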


We present a proof separately for hyperfinite factors of 
type I, II$_1$, II$_\infty$, III$_1$, III$_\la$ ($0<\la<1$) 
and III$_0$.
We denote by $\Cnd(M)_{\rm irr}$ the set of endomorphisms on $M$
which are with finite index, centrally trivial and irreducible.
By Theorem \ref{thm: cend-structure} and
Lemma \ref{lem: mod-in-cent}, it suffices to show that
$\Cnd(M)_{\rm irr}\subs \End(M)_{\rm m}$.

\subsection{Semifinite case}

\noindent$\bullet$
\textit{Proof of Theorem \ref{thm: mod-cent} 
for a hyperfinite type I factor.}

It is trivial because every endomorphism on $M$ is inner. 
\hfill$\Box$

\vspace{10pt}
\noindent$\bullet$
\textit{Proof of Theorem \ref{thm: mod-cent} 
for the hyperfinite type II$_\infty$ factor.}

Let $M$ be the hyperfinite type II$_\infty$ factor. 
We show that $\Cnd(M)_{\rm irr}\subs \Int(M)$.
Let $\rho\in \Cnd(M)_{\rm irr}$. 
We may assume $\rho$ is of the form $\rho\otimes
\id_{R_{0,1}}$ by Proposition \ref{prop: splitting R01}. 
If $\sigma$ is a flip automorphism of $R_{0,1}\otimes R_{0,1}$,
then $\sigma$ is approximately inner.
Hence 
$[\sigma\circ(\brho\otimes\id_{R_{0,1} })\circ\sigma^{-1}]
=[\brho\otimes \id_{R_{0,1}}]$
by
Theorem \ref{thm: cend-structure}
and Lemma \ref{lem: commutes}.
Then, 
\begin{align*}
[\rho\otimes \brho]
=&\,
[
(\rho \otimes \id_{R_{0,1}})\circ \sigma \circ 
(\brho\otimes \id_{R_{0,1}}) \circ \sigma^{-1 }
] \\
=&\, 
[
(\rho \otimes \id_{R_{0,1}})\circ 
(\brho\otimes \id_{R_{0,1}}) 
]\\
=&\, 
[
\rho\brho\otimes \id_{R_{0,1}}
] \\
\succ&\, 
[
\id_{R_{0,1}}\otimes \id_{R_{0,1}}
].
\end{align*}
Since $\rho$ is irreducible,
$\rho$ is an inner automorphism.
\hfill$\Box$

\vspace{10pt}
\noindent$\bullet$ \textit{Proof of Theorem \ref{thm: mod-cent} 
for the hyperfinite type II$_1$ factor.}

Let $M$ be the hyperfinite type II$_1$ factor. 
We show that $\Cnd(M)=\Int(M)$. 
Let $\rho\in \Cnd(M)$. 
On the type II$_\infty$ factor $N=M\oti B(\el_2)$,
we define the endomorphism $\si:=\rho\oti \id$.
We claim that $\si$ is centrally trivial.

If not, there exists $x\in N_\om$ such that $\si(x)\neq x$.
Let $\{p_i\}_{i=1}^\infty$ be a partition of unity in $B(\el_2)$
which consists of minimal projections.
Then there exists $i\geq 1$ such that 
$\si^\om(x)(1\oti p_i)\neq x(1\oti p_i)$. 
Since $x (1\oti p_i)=(1\oti p_i) x$
and $\si(1\oti p_i)=1\oti p_i$, 
we have 
$\si^\om((1\oti p_i)x (1\oti p_i))\neq (1\oti p_i)x(1\oti p_i)$. 
Let $(x^\nu)_{\nu}$ be an $\om$-centralizing sequence for $x$. 
For each $\nu\in\N$, we take $y^\nu\in M$ such that 
$(1\oti p_i)x^\nu (1\oti p_i)=y^\nu\oti p_i$. 
Then $(y^\nu)_\nu$ is an $\om$-centralizing sequence in $M$. 
Indeed, for $\vph\in M_*$, $\ps=p_i \ps p_i\in B(\el_2)_*$ and $z\in M$, 
we have 
\begin{align*}
\ps(p_i)[\vph,y^\nu](z)
=&\,
[\vph\oti \ps, y^\nu\oti p_i](z\oti1)
\\
=&\,
[\vph\oti \ps, (1\oti p_i)x^\nu (1\oti p_i)](z\oti1)
\\
=&\,
[\vph\oti \ps, x^\nu ](z\oti1), 
\end{align*}
and $|\ps(p_i)|\|[\vph,y^\nu]\|\leq \|[\vph\oti \ps, x^\nu ]\|$. 
Hence $(y^\nu)_\nu$ is $\om$-centralizing. 

Then $\si^\om((1\oti p_i)x (1\oti p_i))$ is represented by 
$\si(y^\nu \oti p_i)=\rho(y^\nu)\oti p_i$ which is equivalent to 
$y^\nu\oti p_i$ because $\rho$ is centrally trivial. 
This is a contradiction with 
$\si^\om((1\oti p_i)x (1\oti p_i))\neq (1\oti p_i)x(1\oti p_i)$. 

Hence $\rho\oti\id $ is centrally trivial, and inner
by the above result on the hyperfinite type II$_\infty$ factor. 
This implies $(\id,\rho)\neq 0$. 
Since $M$ is finite, $(\id,\rho)$ contains a unitary. 
Therefore $\rho$ is an inner automorphism. 
\hfill$\Box$

For type III cases, we prove Theorem \ref{thm: mod-cent} in the following 
subsections. 

\subsection{Type III$_1$ case}

Central freeness for subfactors is introduced by Popa
\cite[Definition 3.1]{Po1}.
Following this notion,
we prepare the notion of central non-triviality for subfactors.

\begin{defn}
Let $N\subs M$ be an inclusion of factors.
Assume that there exists a faithful normal conditional expectation
from $M$ onto $N$.
We say that $N\subs M$ is \emph{centrally non-trivial} 
when the following condition is satisfied: 
for any $\vep>0$ and any finite subset $\mF\subs M_*$, 
there exists a partition of unity $(q_j)_{j=1}^t$ in $N$ such that
\[
\left\| 
\sum_{j=1}^t q_j \vph q_j-\vph\circ E_{N\vee (N'\cap M)}^M
\right\|
<\vep
\ \mbox{for all}\ \vph\in\mF, 
\]
where $E_{N\vee (N'\cap M)}^M$ is the unique faithful
normal conditional expectation from $M$ onto $N\vee (N'\cap M)$.
\end{defn}

By definition, the central non-triviality is a condition
that is independent of the choice of a particular conditional
expectation from $M$ onto $N$.
The following lemma is proved in the same way as
\cite[Proposition 3.2]{Po1}.
\begin{lem}
Let $N\subs M$ be an inclusion of factors.
Let $E_N^M$ be a faithful normal conditional expectation from $M$
onto $N$.
Fix a faithful normal state $\vph$ on $N$.
Then the following statements are equivalent:
\begin{enumerate}
\item
$N\subs M$ is centrally non-trivial.

\item
For any $\de,\vep>0$, any finite set $\meF\subs N_*$,
any finite family $(x_i)_{i=1}^m$ in the unit ball of $M$,
there exists a partition of unity $(q_j)_{j=1}^t$ in $N$
such that $t\leq (10^{4}\vep^{-1})^{6m\log\vep^{-1}}$ and
\begin{enumerate}
\item
$\displaystyle
\left\|\sum_{j=1}^t q_j \ps q_j-\ps
\right\|<\de$
for all $\ps\in\meF$,

\item
$\displaystyle
\bigg{\|}\sum_{j=1}^t q_j x_i q_j-E_{N\vee (N'\cap M)}^M(x_i)
\bigg{\|}_{\vph\circ E_N^M}<\vep$
for all $1\leq i\leq m$.
\end{enumerate}
\end{enumerate}
\end{lem}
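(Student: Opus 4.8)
The plan is to transcribe the proof of \cite[Proposition 3.2]{Po1} into the present ``non-trivial'' setting, under the dictionary: the trace of Popa's finite von Neumann algebra is replaced by the fixed faithful normal state $\chi:=\vph\circ E_N^M$ on $M$; the pinching $x\mapsto\sum_j q_jxq_j$ by a partition of unity $(q_j)_j$ in $N$ is kept as is; and the target of that pinching, which in the central \emph{freeness} case is $E^M_{N'\cap M}$, is replaced by $E^M_{N\vee(N'\cap M)}$. Two standing facts legitimise the transcription. First, since $[M:N]<\infty$ the relative commutant $N'\cap M$ is finite dimensional, so $N\subs N\vee(N'\cap M)\subs M$ are all finite-index inclusions; in particular $E^M_{N\vee(N'\cap M)}$ exists, is unique, is $N$-bimodular, and hence commutes with every pinching $x\mapsto\sum_j q_jxq_j$ with $q_j\in N$, and both inner inclusions admit finite Pimsner--Popa bases. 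Second, all the von Neumann algebras involved are separable, so the averaging and central-sequence machinery invoked in \cite{Po1} applies verbatim. Throughout we use the seminorm $\|y\|_\chi:=\chi(y^*y)^{1/2}$ on $M$ from the statement, together with its ``right'' companion $\chi(yy^*)^{1/2}$.

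\textbf{Proof of (2)$\Rightarrow$(1).} This is the more elementary direction. Given $\vep>0$ and a finite set of normal functionals on $M$, approximate each of them in norm by a finite sum of functionals $a\chi b$ with $a,b$ in the unit ball of $M$, so it suffices to make $\|\sum_j q_j(a\chi b)q_j-(a\chi b)\circ E^M_{N\vee(N'\cap M)}\|$ small for the finitely many relevant pairs. Using a finite Pimsner--Popa basis for $N\subs M$ and the $N$-bimodularity of $E^M_{N\vee(N'\cap M)}$, one rewrites $\sum_j q_j x q_j$ as a fixed finite combination of terms $\bigl(\sum_j q_j w_{kl}q_j\bigr)E_N^M(\cdots)$ with $w_{kl}\in M$ fixed; a Cauchy--Schwarz estimate in $L^2(M,\chi)$ then bounds the difference by finitely many quantities of the form $\|\sum_j q_jw_{kl}q_j-E^M_{N\vee(N'\cap M)}(w_{kl})\|_\chi$ and their adjoint versions, plus commutator terms $\|\sum_j q_j\ps q_j-\ps\|$ with $\ps\in N_*$ built from $\chi|_N$, $a\chi a^*$, $b^*\chi b$. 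Applying (2) to the finite family $\{w_{kl},w_{kl}^*\}$ and to this $\meF$ makes all these terms small, whence so is the original one, and $N\subs M$ is centrally non-trivial.

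\textbf{Proof of (1)$\Rightarrow$(2).} This is the substantive direction and follows Popa's argument step by step. From central non-triviality one gets, for any prescribed finite family and any $\de>0$, \emph{some} partition of unity $(q_j)_j$ in $N$ with $\|\sum_j q_j\vph q_j-\vph\circ E^M_{N\vee(N'\cap M)}\|<\de$ on that family; a routine density argument lets one simultaneously impose $\|\sum_j q_j\ps q_j-\ps\|<\de$ for $\ps\in\meF$. The quantitative core is then to also achieve, for the prescribed unit-ball elements $x_1,\dots,x_m\in M$, the estimate $\|\sum_j q_jx_iq_j-E^M_{N\vee(N'\cap M)}(x_i)\|_\chi<\vep$ \emph{with the explicit bound} $t\le(10^{4}\vep^{-1})^{6m\log\vep^{-1}}$ on the number of atoms. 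As in \cite{Po1}, one replaces the pinching over $(q_j)_j$ by an average over finitely many unitaries of $N$, runs the iterated ``local quantisation'' treating one $x_i$ at a time while tracking how the number of unitaries is multiplied at each of the $m$ steps (each step costing a factor $(10^{4}\vep^{-1})^{6\log\vep^{-1}}$), and then uses the Dixmier property of the finite algebra $N$ to convert the resulting convex combination of unitary conjugations back into a partition of unity with the claimed cardinality. The bookkeeping of these multiplicative losses over the $m$ steps produces precisely the exponent $6m\log\vep^{-1}$.

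\textbf{Expected main obstacle.} The delicate part is faithfully reproducing Popa's quantitative iteration so as to recover the exact bound on $t$: one must verify that passing from a partition of unity to a unitary average and back, and the step that incorporates one further $x_i$, interact correctly with the \emph{new} target expectation $E^M_{N\vee(N'\cap M)}$ and with the fixed state $\chi$, and that the numerical losses remain within $(10^{4}\vep^{-1})^{6m\log\vep^{-1}}$. Because $N'\cap M$ is finite dimensional and $E^M_{N\vee(N'\cap M)}$ is unique, $N$-bimodular, and therefore commutes with the pinchings, none of these steps differs in substance from the central-freeness case, and Popa's estimate transfers unchanged.
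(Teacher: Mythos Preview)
Your approach is the same as the paper's: the paper's entire proof of this lemma is the one-line remark that it ``is proved in the same way as \cite[Proposition 3.2]{Po1},'' and your plan is precisely to transcribe that argument with $E^M_{N\vee(N'\cap M)}$ in place of $E^M_{N'\cap M}$.

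Two imprecisions in your sketch are worth flagging. First, you invoke $[M:N]<\infty$, but the lemma as stated does not assume finite index; this hypothesis is present in Popa's original setting and holds in every application the paper makes, so no harm is done, but you should be aware you are proving a formally weaker statement. Second, your description of the (1)$\Rightarrow$(2) mechanism is not quite how Popa's argument runs: his local quantization (Theorem~A.1.2 in \cite{Po1}) produces the partition of unity $(q_j)_{j=1}^t$ with the stated bound on $t$ \emph{directly}, without passing through unitary averages and invoking a Dixmier property, and $N$ is in any case not assumed to be a finite von Neumann algebra here. When you actually carry out the transcription you will find the argument is more streamlined than your outline suggests.
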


\begin{lem}\label{lem: rho-cent-nontri}
Let $M$ be an infinite factor. 
Let $\rho\in\Cnd(M)_{\rm irr}$ be non-inner. 
Set the endomorphism $\si:=\id\oplus \rho$. 
Then the inclusion $\si(M)\subs M$ is 
not centrally non-trivial. 
\end{lem}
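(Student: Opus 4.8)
The plan is to argue by contradiction: suppose $N:=\si(M)\subs M$ is centrally non-trivial. Writing $\si(x)=v_1xv_1^*+v_2\rho(x)v_2^*$ for isometries $v_1,v_2\in M$ with $v_1v_1^*+v_2v_2^*=1$ and putting $e_i:=v_iv_i^*$, the hypothesis that $\rho$ is irreducible and non-inner gives $(\id,\rho)=(\rho,\id)=0$, so $N'\cap M=(\si,\si)$ is two-dimensional and equals $\C e_1\oplus\C e_2$. Consequently $N$ is block-diagonal for $(e_1,e_2)$, one has $N\vee(N'\cap M)=v_1Mv_1^*\oplus v_2\rho(M)v_2^*$, and the conditional expectation $E:=E^M_{N\vee(N'\cap M)}$ restricts to the identity on $e_1Me_1$, to $\Ad v_2\circ E_\rho\circ\Ad v_2^*$ on $e_2Me_2$, and annihilates $e_1Me_2$ and $e_2Me_1$. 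Finally, every partition of unity in $N$ has the form $(\si(p_j))_j$ for a partition of unity $(p_j)_j$ in $M$.

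Applying the definition of central non-triviality along a cofinal sequence of finite subsets of the separable predual $M_*$ with $\vep\downarrow0$, I obtain partitions of unity $(p_j^\nu)_j$ in $M$ with $\bigl\|\sum_j\si(p_j^\nu)\,\vph\,\si(p_j^\nu)-\vph\circ E\bigr\|\to0$ for every $\vph\in M_*$. Expanding $\vph$ in its $(e_1,e_2)$-blocks and using the description of $E$, a direct computation in this $2\times2$ decomposition (cf. Lemma \ref{lem: mux}) shows this is equivalent to the three families of relations, each valid for all $\chi\in M_*$,
\begin{gather*}
\textstyle\sum_j p_j^\nu\chi p_j^\nu\to\chi,\qquad
\textstyle\sum_j \rho(p_j^\nu)\chi\rho(p_j^\nu)\to\chi\circ E_\rho,\\
\textstyle\sum_j \rho(p_j^\nu)\chi p_j^\nu\to0,
\end{gather*}
coming respectively from the $(1,1)$-, $(2,2)$- and off-diagonal corners. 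The first relation says that the pinchings $x\mapsto\sum_jp_j^\nu xp_j^\nu$ converge to the identity, i.e. $(p_j^\nu)_j$ is asymptotically central.

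The decisive step, which I expect to be the main obstacle, is to bring in the central triviality of $\rho$ (equivalently of $\si=\id\oplus\rho$, which lies in $\Cnd(M)$ by Theorem \ref{thm: cend-structure}). Since $\rho^\om$ acts trivially on $M_\om$, an asymptotically central partition should satisfy $\rho(p_j^\nu)\approx p_j^\nu$, whence $\sum_j\rho(p_j^\nu)\chi p_j^\nu\approx\sum_j p_j^\nu\chi p_j^\nu\to\chi$ by the first relation, while the third relation forces this limit to be $0$; thus $\chi=0$ for all $\chi\in M_*$, which is absurd. (In the non-surjective case $\rho(M)\subsetneq M$ one may instead pair $\rho(p_j^\nu)\approx p_j^\nu$ with the second relation to get $\chi=\chi\circ E_\rho$ for all $\chi$, i.e. $E_\rho=\id$, contradicting $\rho(M)\ne M$.) Turning the heuristic $\rho(p_j^\nu)\approx p_j^\nu$ into the estimate $\sum_j(\rho(p_j^\nu)-p_j^\nu)\chi p_j^\nu\to0$ is not automatic, since the partitions delivered by the definition are only weakly central and their number of parts grows with $\nu$, so central triviality — which concerns genuinely $\om$-centralising sequences — does not apply off the shelf. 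The way through is to upgrade $(p_j^\nu)_j$ to a genuinely $\om$-centralising family by a Rohlin-tower and fast-reindexation argument in the spirit of \cite[Lemma 5.3]{Oc1} and \cite[Lemmas 3, 4]{KST}, using the second relation to keep the $\rho$-twisted part under control; carrying this technical upgrade out carefully is the heart of the proof.
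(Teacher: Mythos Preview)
Your overall strategy matches the paper's: argue by contradiction, identify the block structure so that the off-diagonal part of $M^{(\id,\rho)}$ is orthogonal to $N\vee(N'\cap M)$, then use centrality of the partition together with $\rho\in\Cnd(M)$ to force a contradiction. You also correctly locate the genuine difficulty: central triviality applies to individual $\om$-centralising sequences, not to a family of projections whose number of parts may grow with $\nu$.

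However, your proposed resolution (a Rohlin-tower and fast-reindexation argument) is unnecessary, and in fact constitutes the main gap in your proposal. The paper bypasses this issue entirely by invoking the \emph{equivalent characterisation} of central non-triviality stated immediately before the lemma (this is Popa's local quantisation, \cite[Proposition 3.2]{Po1}): for a fixed $\vep>0$ and a fixed finite family $(x_i)_{i=1}^m$, one can choose the partition of unity $(q_j)_{j=1}^t$ with $t\le(10^4\vep^{-1})^{6m\log\vep^{-1}}$, a bound that is \emph{independent of} $\de$ and $\meF$. So fix $0<\vep<1/\sqrt{2}$ and the single element $x=1\otimes e_{01}$ (equivalently $v_1v_2^*$ in your notation), take $\meF_\nu\uparrow M_*$ and $\de=1/\nu$, and obtain partitions $(q_j^\nu)_{j=1}^t$ with the \emph{same} $t$ for every $\nu$. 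Then for each fixed $j\le t$ the sequence $(q_j^\nu)_\nu$ is genuinely $\om$-centralising, so $\rho(q_j^\nu)-q_j^\nu\to0$ strongly* as $\nu\to\om$ by central triviality. Since $t$ is fixed, $\sum_{j=1}^t q_j^\nu\rho(q_j^\nu)\to\sum_{j=1}^t q_j^\nu=1$ in $\|\cdot\|_{\ps\circ\ph_\rho}$, contradicting the estimate $\bigl\|\sum_j q_j^\nu\rho(q_j^\nu)\bigr\|_{\ps\circ\ph_\rho}<\sqrt{2}\,\vep<1$ coming from central non-triviality applied to $1\otimes e_{01}$.

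In short, the technical upgrade you anticipate is already built into Popa's characterisation; no reindexation is needed.
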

\begin{proof}
Suppose that $\si(M)\subs M$ is centrally non-trivial. 
Then the locally trivial subfactor 
$N^{(\id,\rho)}\subs M^{(\id,\rho)}$ is centrally non-trivial
because this inclusion is isomorphic to $\si(M)\subs M$. 
Since $\rho$ is non-inner, 
\[
N^{(\id,\rho)}\vee \big{(}(N^{(\id,\rho)})'\cap M^{(\id,\rho)}\big{)}
=\{x\otimes e_{00}+\rho(y)\otimes e_{11}\mid x,y\in M\}. 
\]
This shows that 
the matrix unit $1\oti e_{01}\in M^{(\id,\rho)}$ 
is orthogonal to $N^{(\id,\rho)}\vee (N^{(\id,\rho)})'\cap M^{(\id,\rho)}$. 
Recall
the embedding $\al^{(\id,\rho)}\col M\ra M^{(\id,\rho)}$
and the conditional expectation $E:=E^{(1/2,1/2)}$
as defined in \S\ref{LTS}.
Let $\nu\in\N$ and $0<\vep<1/\sqrt{2}$.
Let $\{\meF_\nu\}_{\nu=1}^\infty$ be an increasing
finite subsets of $M_*$ whose union is norm-dense.
Then by central non-triviality,
for each $\nu\in\N$,
there exists a partition of unity
$(q_j^\nu)_{j=1}^{t}$ in $M$,
with $t\leq(10^4 \vep^{-1})^{6\log\vep^{-1}}$,
such that $\|[q_j^\nu,\chi]\|<2/\nu$ for all $1\leq j\leq t$,
$\chi\in \meF_\nu$ and
\[
\vep>
\Big{\|}
\sum_{j=1}^{t}
\al^{(\id,\rho)}(q_j^\nu)(1\oti e_{01})\al^{(\id,\rho)}(q_j^\nu)
\Big{\|}_{\ps\circ(\al^{(\id,\rho)})^{-1}\circ E}
=
\frac{1}{\sqrt{2}}
\Big{\|}\sum_{j=1}^{t}
q_j^\nu \rho(q_j^\nu)\Big{\|}_{\ps\circ\ph_\rho},
\]
where $\ps$ is a fixed faithful state in $M_*$.
Since $(q_j^\nu)_\nu$ is centralizing,
$\rho(q_j^\nu)-q_j^\nu\to 0$ strongly* as $\nu\to\om$.
Then we have 
\[
\frac{1}{\sqrt{2}}>\vep\geq \lim_{\nu\to\om}
\frac{1}{\sqrt{2}}
\Big{\|}\sum_{j=1}^{t}
q_j^\nu \rho(q_j^\nu)\Big{\|}_{\ps\circ\ph_\rho}
=
\lim_{\nu\to\om}
\frac{1}{\sqrt{2}}\Big{\|}\sum_{j=1}^{t}
q_j^\nu\Big{\|}_{\ps\circ\ph_\rho}
=\frac{1}{\sqrt{2}}. 
\]
This is a contradiction. 
\end{proof}

Next we recall the following result \cite[Theorem 3.5]{Iz0}. 

\begin{thm}[Izumi]
Let $M$ be a type III$_1$ factor and $\si\in\End(M)_0$. 
Then the following conditions are equivalent: 
\begin{enumerate}
\item $\si(M)'\cap M= \tsi(\tM)'\cap \tM$. 

\item $[\si\bsi]$ does not contain an outer modular automorphism. 
\end{enumerate}
\end{thm}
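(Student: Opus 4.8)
The plan is to prove both implications at once by computing the dimensions of the two relative commutants and reducing each condition to the single statement that $[\si\bsi]$ contains no outer modular automorphism. Throughout I would work in a fixed core/dual picture, writing $\th$ for the dual flow on $\tM$; since $M$ is of type III$_1$ the flow of weights is trivial, so $\tM$ is a (type II$_\infty$) factor and $\tM^\th=M$. The starting point is the easy inclusion: by functoriality of the canonical extension \cite[Proposition~2.5]{Iz1}, an intertwiner $v\in(\rho_1,\rho_2)_M$ is automatically an intertwiner $v\in(\widetilde{\rho_1},\widetilde{\rho_2})_{\tM}$, so with $\rho_1=\rho_2=\si$ we get $\si(M)'\cap M=(\si,\si)_M\subseteq(\tsi,\tsi)_{\tM}=\tsi(\tM)'\cap\tM$; since $\si$ and $\tsi$ have finite index and $\tM$ is a factor, both are finite-dimensional, so (1) is equivalent to equality of their dimensions.

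Next I would compute these dimensions. Since $\tM$ is an infinite factor and $\tsi$ has finite index, Frobenius reciprocity gives $\dim(\tsi,\tsi)_{\tM}=\dim(\id_{\tM},\tsi\overline{\tsi})_{\tM}$, and because $\widetilde{\bsi}$ is a conjugate of $\tsi$ in $\tM$ and the canonical extension is multiplicative \cite{Iz1}, we have $\tsi\overline{\tsi}\cong\widetilde{\si\bsi}$. Writing the irreducible decomposition $[\si\bsi]=\bigoplus_i m_i[\tau_i]$ in $\Sect(M)$ with $\tau_0=\id$, the direct-sum formula for canonical extensions (the displayed lemma above, iterated) gives $[\widetilde{\si\bsi}]=\bigoplus_i m_i[\widetilde{\tau}_i]$ in $\Sect(\tM)$, hence $\dim\bigl(\tsi(\tM)'\cap\tM\bigr)=\sum_i m_i\,\dim(\id_{\tM},\widetilde{\tau}_i)_{\tM}$; likewise $\dim\bigl(\si(M)'\cap M\bigr)=\dim(\id_M,\si\bsi)_M=m_0$.

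The heart of the matter is then to evaluate $\dim(\id_{\tM},\widetilde{\tau}_i)_{\tM}$ for each irreducible $\tau_i$. Since $\th$ commutes with $\widetilde{\tau}_i$, it acts strongly continuously and unitarily on the finite-dimensional Hilbert space $(\id_{\tM},\widetilde{\tau}_i)_{\tM}\subseteq\tM$, and I would diagonalise it. For a normalised eigenvector $v$ with $\th_s(v)=e^{i\la s}v$ (note $v^*v\in(\id_{\tM},\id_{\tM})_{\tM}=\C$, so $v^*v=1$): if $\la=0$ then $v\in\tM^\th=M$ and $v\in(\id_M,\tau_i)_M$, forcing $[\tau_i]=[\id]$ by irreducibility; if $\la\neq0$ then $vv^*$ is $\th$-fixed, so $vv^*\in\bigl(\widetilde{\tau}_i(\tM)'\cap\tM\bigr)\cap M\subseteq\tau_i(M)'\cap M=\C$, whence $vv^*=1$, $v$ is unitary, $\widetilde{\tau}_i=\Ad v$ is inner, and $\tau_i$ is modular. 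By the classification of irreducible modular endomorphisms over a type III$_1$ factor (the lemma above with $\la=1$), such a $\tau_i$ is an automorphism $\Ad u\circ\si_t^\vph$, so $\widetilde{\tau}_i$ is implemented by a single unitary and $\dim(\id_{\tM},\widetilde{\tau}_i)_{\tM}=1$. Hence $\dim(\id_{\tM},\widetilde{\tau}_i)_{\tM}$ is $1$ when $\tau_i=\id$ or $\tau_i$ is an outer modular automorphism and is $0$ otherwise, so
\[
\dim\bigl(\tsi(\tM)'\cap\tM\bigr)=\dim\bigl(\si(M)'\cap M\bigr)+\sum_{\tau_i\ \text{outer modular}}m_i ,
\]
and, by the first paragraph, (1) holds iff this extra sum vanishes, i.e. iff $[\si\bsi]$ contains no outer modular automorphism, which is exactly (2).

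I expect the main obstacle to be the diagonalisation step together with the two identifications it relies on — $\tM^\th=M$ (so that a $\la=0$ eigenvector descends to a genuine $M$-intertwiner) and $\widetilde{\tau}_i(\tM)'\cap M\subseteq\tau_i(M)'\cap M$ (so that a $\la\neq0$ eigenvector forces $\widetilde{\tau}_i$ inner). Establishing these rigorously requires pinning down the core/dual picture carefully, and it is precisely here, together with the use of $(\id_{\tM},\id_{\tM})_{\tM}=\C$ and of Frobenius reciprocity over a factorial target, that the type III$_1$ hypothesis enters in an essential way.
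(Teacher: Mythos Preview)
The paper does not prove this theorem; it is quoted without proof from \cite[Theorem 3.5]{Iz0} and used as a black box. Your argument is correct and is the natural proof in the canonical-extension language of \cite{Iz1}. The key steps---the inclusion $(\si,\si)_M\subseteq(\tsi,\tsi)_{\tM}$ from functoriality \cite[Proposition 2.5]{Iz1}, Frobenius reciprocity over the type II$_\infty$ factor $\tM$, the direct-sum compatibility of the canonical extension, and the diagonalisation of the dual flow $\th$ on each finite-dimensional intertwiner space $(\id_{\tM},\widetilde{\tau}_i)_{\tM}$---are all sound, and the dichotomy you extract (a $\th$-fixed eigenvector descends to $M$ and forces $[\tau_i]=[\id]$; a nontrivial eigenvector, via $vv^*\in\tau_i(M)'\cap M=\C$, forces $v$ unitary, $\widetilde{\tau}_i$ inner, and hence $\tau_i$ modular) is exactly the mechanism that makes the excess dimension count the outer modular summands. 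The identifications you flag as potential obstacles ($\tM^\th=M$, factoriality of $\tM$, and $\widetilde{\tau}_i(\tM)'\cap M\subseteq\tau_i(M)'\cap M$) are routine once $M$ is of type III$_1$.
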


Consider the following towers:
\[
\C=M'\cap M\subs \si(M)'\cap M\subs \si\bsi(M)'\cap M \subs \cdots,
\]
\[
\C=\tM'\cap \tM\subs \tsi(\tM)'\cap \tM
\subs \tsi\wdt{\bsi}(\tM)'\cap \tM\subs \cdots,
\]
We say that the graph change occurs at the stage $n$
if
the $n$-th algebras ($\C$ is the $0$th algebra) do not coincide.
The following lemma is a direct consequence of the previous theorem.

\begin{lem}\label{lem: graph-outer}
Let $M$ be a type III$_1$ factor and $\si\in\End(M)_0$. 
Then the graph change of $\si(M)\subs M$ occurs 
at the stage $n\geq0$ if and only if 
$[(\si\bsi)^n]$ contains an outer modular automorphism. 
\end{lem}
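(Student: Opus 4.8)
The plan is to deduce the lemma from the theorem of Izumi quoted just above, applied not to $\si$ itself but to the iterated endomorphisms that label the $n$-th algebras of the two towers. Put $\rho^{(0)}:=\id$ and, for $n\ge1$, let $\rho^{(n)}\in\End(M)_0$ be the composition of $n$ factors alternating between $\si$ and $\bsi$ and beginning with $\si$ (so $\rho^{(1)}=\si$, $\rho^{(2)}=\si\bsi$, $\rho^{(3)}=\si\bsi\si$, and so on); reading off the displayed towers, the $n$-th algebra of the first tower is $\rho^{(n)}(M)'\cap M$. For the second tower I would use Izumi's structure theory of canonical extensions \cite{Iz1}: the canonical extension is multiplicative, $\widetilde{\rho_1\rho_2}=\widetilde{\rho_1}\,\widetilde{\rho_2}$ (the chain rule for Connes cocycles together with multiplicativity of the statistical dimension), and $\wdt{\bsi}$ is a conjugate of $\tsi$ in $\tM$; hence $\widetilde{\rho^{(n)}}=\tsi\,\wdt{\bsi}\,\tsi\cdots$ is exactly the $n$-th label in the second tower, and the $n$-th algebra there is $\widetilde{\rho^{(n)}}(\tM)'\cap\tM$. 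Finally, the inclusion $\rho(M)'\cap M\subs\trho(\tM)'\cap\tM$ holds for every $\rho\in\End(M)_0$ by \cite[Proposition 2.5]{Iz1}, so, taking $\rho=\rho^{(n)}$, ``the graph change occurs at the stage $n$'' means precisely that this inclusion is proper.

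The next step is the elementary identity $[\rho^{(n)}\,\overline{\rho^{(n)}}]=[(\si\bsi)^n]$ in $\Sect(M)$. The conjugate $\overline{\rho^{(n)}}$ is again an $n$-fold alternating composition of $\si$ and $\bsi$, obtained from $\rho^{(n)}$ by reversing the order of its factors and interchanging $\si\leftrightarrow\bsi$; in either parity of $n$ the two alternating words fit together, so that the juxtaposition $\rho^{(n)}\,\overline{\rho^{(n)}}$ is the $2n$-fold alternating composition $\si\bsi\si\bsi\cdots=(\si\bsi)^n$.

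One then applies the theorem quoted above to $\rho^{(n)}\in\End(M)_0$ (legitimate, since $M$ is of type III$_1$): $\rho^{(n)}(M)'\cap M=\widetilde{\rho^{(n)}}(\tM)'\cap\tM$ if and only if $[\rho^{(n)}\,\overline{\rho^{(n)}}]=[(\si\bsi)^n]$ contains no outer modular automorphism. Negating this equivalence is exactly the statement of the lemma. The only step that is not purely formal is the compatibility, used above, of the canonical extension with the two towers — that the $n$-th relative commutant of $\tsi(\tM)\subs\tM$ really is $\widetilde{\rho^{(n)}}(\tM)'\cap\tM$ and contains $\rho^{(n)}(M)'\cap M$ in the way needed to apply the theorem — but this reduces to the multiplicativity of the canonical extension and to $\wdt{\bsi}$ being a conjugate of $\tsi$, both supplied by \cite{Iz1}; with these in hand the lemma is indeed a direct consequence of the previous theorem.
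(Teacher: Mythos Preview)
Your proposal is correct and is exactly the argument the paper has in mind: the paper simply records that the lemma ``is a direct consequence of the previous theorem'' without writing out any details, and you have supplied those details --- identifying the $n$-th relative commutants as $\rho^{(n)}(M)'\cap M$ and $\widetilde{\rho^{(n)}}(\tM)'\cap\tM$ via multiplicativity of the canonical extension, observing the identity $[\rho^{(n)}\,\overline{\rho^{(n)}}]=[(\si\bsi)^n]$, and then invoking Izumi's theorem with $\rho^{(n)}$ in place of $\si$. There is nothing to add.
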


The following theorem proved by Popa 
\cite[Theorem 3.5 (ii), Corollary 4.4]{Po1} 
is our key to characterize centrally
trivial endomorphisms on the hyperfinite type III$_1$ factor. 
We note that Popa's theorem is proved for centrally free subfactors, 
but his proof still works
for centrally non-trivial subfactors without any change. 

\begin{thm}[Popa]\label{thm: Popa-cent}
Let $M$ be the hyperfinite type III$_1$ factor 
and $\si\in\End(M)_0$. 
Let $\si(M)\subs M\subs M_1\subs \cdots$ be the basic extension. 
Let $n\in\N$. 
Then the following statements are equivalent. 
\begin{enumerate}

\item The $n$-th subfactor $M\subs M_n$ is centrally non-trivial. 

\item The graph change does not occur at the stage less than or equal to $n$. 

\item 
The sector $[(\si\bsi)^n]$ does not contain an outer modular automorphism. 
\end{enumerate}
\end{thm}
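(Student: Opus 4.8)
The plan is to treat the three-way equivalence as two separate halves: the equivalence (2)$\Leftrightarrow$(3) is essentially a restatement of Lemma \ref{lem: graph-outer}, while (1)$\Leftrightarrow$(2) is Popa's theorem, carried over from the centrally free setting to the centrally non-trivial one without changing the proof.

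For (2)$\Leftrightarrow$(3): Lemma \ref{lem: graph-outer} says that for each $m\geq 0$ the graph change of $\si(M)\subs M$ occurs at the stage $m$ precisely when $[(\si\bsi)^m]$ contains an outer modular automorphism. So I would first record a monotonicity remark: since $\bsi$ is a conjugate of $\si$ we have $[\id]\prec[\si\bsi]$, hence $[(\si\bsi)^m]\prec[(\si\bsi)^{m+1}]$ for every $m$, and therefore the set of indices $m$ for which $[(\si\bsi)^m]$ contains an outer modular automorphism is upward closed; moreover it never contains $m=0$, because $(\si\bsi)^0=\id$ and $\id$ is not outer. It follows that ``the graph change does not occur at any stage $\leq n$'', i.e.\ $[(\si\bsi)^m]$ contains no outer modular automorphism for all $m\leq n$, is equivalent to ``$[(\si\bsi)^n]$ contains no outer modular automorphism''. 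This is exactly (2)$\Leftrightarrow$(3), and it uses only that $M$ is of type III$_1$ (via Izumi's relative commutant theorem underlying Lemma \ref{lem: graph-outer}), not hyperfiniteness.

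For (1)$\Leftrightarrow$(2): I would invoke \cite[Theorem 3.5 (ii), Corollary 4.4]{Po1}, where Popa establishes, for a subfactor of the hyperfinite type III$_1$ factor coming from an endomorphism, that the $n$-th member $M\subs M_n$ of the Jones tower over $\si(M)\subs M$ is centrally free if and only if the graph change does not occur up to the stage $n$. His argument is a Rohlin-type construction of a nearly central partition of unity in $M$, controlled entirely by the standard invariant and the conditional expectations of the tower, together with an analysis of the relative commutant tower $M'\cap M_n$. The claim is that the identical argument, read with ``centrally free'' weakened to ``centrally non-trivial'', proves the equivalence we want: central non-triviality only asks that the partition of unity conjugate the prescribed finite family of states onto $E^{M_n}_{M\vee(M'\cap M_n)}$ rather than all the way down, and throughout \cite{Po1} the finite-dimensional relative-commutant summand is split off and handled separately, so every estimate transfers verbatim. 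Hyperfiniteness (amenability) of $M$ enters exactly here, as in Popa's original proof.

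The only point genuinely requiring care is that last assertion: one must go through Popa's construction step by step and check that each place where a near-central element of $M$ is produced so as to cut $M$ down past $M'\cap M_n$ can be carried out inside $M\vee(M'\cap M_n)$, with the same quantitative bounds. I expect this to be the main (and essentially the only) obstacle, but it is a matter of bookkeeping rather than of new ideas, since the notion of central non-triviality was tailored precisely so that $M\vee(M'\cap M_n)$ is the object Popa's construction naturally outputs once one drops the hypothesis that the relative commutant is trivial.
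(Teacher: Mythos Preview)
Your proposal is correct and matches the paper's approach: the paper does not give an independent proof of this theorem but simply attributes it to Popa \cite[Theorem 3.5 (ii), Corollary 4.4]{Po1}, with the remark (immediately preceding the statement) that ``Popa's theorem is proved for centrally free subfactors, but his proof still works for centrally non-trivial subfactors without any change.'' Your treatment of (2)$\Leftrightarrow$(3) via Lemma \ref{lem: graph-outer} and the monotonicity of $[(\si\bsi)^m]$ is exactly the intended reading, and your discussion of (1)$\Leftrightarrow$(2) simply makes explicit what the paper compresses into a one-line citation.
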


\begin{cor}\label{cor: cent-endo-partial}
Let $M$ be the hyperfinite type III$_1$ factor 
and $\rho\in\Cnd(M)_{{\rm irr}}$. 
Then either the following (1) or (2) occurs:
\begin{enumerate}
\item $[\rho]=[\si_t^\vph]$ for some $t\in\R$. 

\item $[\rho\brho]$ contains an outer modular automorphism. 
\end{enumerate}
\end{cor}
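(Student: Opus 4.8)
The plan is to analyze the case when (1) fails, i.e.\ $\rho$ is \emph{not} an inner perturbation of a modular automorphism $\si_t^\vph$, and show that then $[\rho\brho]$ must contain an outer modular automorphism, which is exactly (2). The hypothesis is that $\rho\in\Cnd(M)_{\rm irr}$ and $M$ is the hyperfinite type III${}_1$ factor. If $\rho$ is inner, then $[\rho]=[\id]=[\si_0^\vph]$ and we are in case (1), so assume $\rho$ is non-inner.

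First I would set $\si:=\id\oplus\rho$, which is centrally trivial by Theorem \ref{thm: cend-structure}. By Lemma \ref{lem: rho-cent-nontri}, the inclusion $\si(M)\subs M$ is \emph{not} centrally non-trivial. Now I invoke Popa's Theorem \ref{thm: Popa-cent}: since the first subfactor $M\subs M_1$ in the basic extension of $\si(M)\subs M$ fails to be centrally non-trivial, the graph change must occur at the stage $n\leq 1$, and hence $[(\si\bsi)^n]$ contains an outer modular automorphism for some $n\in\{0,1\}$. The case $n=0$ is impossible (the $0$th algebra is $\C$, and the graph change cannot occur there), so $n=1$: $[\si\bsi]$ contains an outer modular automorphism. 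Expanding, $[\si\bsi]=[(\id\oplus\rho)(\id\oplus\brho)]=[\id]\oplus[\brho]\oplus[\rho]\oplus[\rho\brho]$. An outer modular automorphism is irreducible and distinct from $[\id]$; if it were a summand of $[\brho]$ or $[\rho]$, then since $[\rho]$, $[\brho]$ are irreducible this would force $[\rho]$ (or $[\brho]$, hence $[\rho]$) to itself be an outer modular automorphism, i.e.\ $[\rho]=[\si_t^\vph]$ for some $t$ and $\vph$, contradicting the assumption that (1) fails. Therefore the outer modular automorphism must be contained in $[\rho\brho]$, which is precisely conclusion (2).

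The main obstacle I anticipate is the bookkeeping at the last step: one must be careful that an ``outer modular automorphism'' here means an irreducible modular endomorphism that is not $[\id]$ (equivalently, a genuine $[\si_t^\vph]$ with nontrivial class), and that the classification of modular endomorphisms on a type III${}_1$ factor — namely $\End(M)_{\rm m,\, irr}=\{\Ad u\circ\si_t^\vph\}$ from the earlier lemma — is exactly what lets us conclude that an irreducible modular summand of $[\rho]$ would put us in case (1). A secondary point to check carefully is that Popa's theorem genuinely applies to centrally non-trivial (rather than centrally free) subfactors; this is asserted in the remark preceding Theorem \ref{thm: Popa-cent}, so I would simply cite it. Modulo these points the argument is a short dichotomy: (1) holds, or we run the chain Lemma \ref{lem: rho-cent-nontri} $\Rightarrow$ Theorem \ref{thm: Popa-cent} $\Rightarrow$ graph change at stage $1$ $\Rightarrow$ outer modular summand in $[\si\bsi]$ $\Rightarrow$ (by irreducibility of $\rho$, $\brho$ and failure of (1)) outer modular summand in $[\rho\brho]$, giving (2).
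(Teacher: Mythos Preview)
Your approach is essentially the same as the paper's, but there is one bookkeeping slip. Lemma \ref{lem: rho-cent-nontri} tells you that $\si(M)\subs M$ is not centrally non-trivial; it does \emph{not} directly say anything about $M\subs M_1$, which is the inclusion to which Theorem \ref{thm: Popa-cent} applies. You jump from one to the other without justification. The paper handles this by observing that $\brho$ is also in $\Cnd(M)_{\rm irr}$ and non-inner (Theorem \ref{thm: cend-structure}), so Lemma \ref{lem: rho-cent-nontri} applied to $\brho$ gives that $\bsi(M)\subs M$ is not centrally non-trivial; since $\bsi(M)\subs M$ is isomorphic to $M\subs M_1$ in the tower of $\si(M)\subs M$, Theorem \ref{thm: Popa-cent} now applies with $n=1$. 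With that one correction your argument matches the paper's exactly, including the final case split on which summand of $[\si\bsi]=[\id]\oplus[\rho]\oplus[\brho]\oplus[\rho\brho]$ carries the outer modular automorphism.
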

\begin{proof}
If $\rho$ is inner, (1) follows. 
We consider the case that $\rho$ is not inner. 
We set $\si:=\id\oplus \rho$. 
Then $\bsi$ is also centrally trivial by Theorem \ref{thm: cend-structure}. 
By Lemma \ref{lem: rho-cent-nontri}, 
the inclusion $\bsi(M)\subs M$ is not centrally non-trivial. 
Since the inclusion $\bsi(M)\subs M$ is isomorphic to $M\subs M_1$, 
the graph change occurs 
at the 1st stage by Theorem \ref{thm: Popa-cent}. 
Hence 
$\si\bsi$ contains an outer modular automorphism 
by Lemma \ref{lem: graph-outer}. 
Using $[\si\bsi]=[\id]\oplus[\rho]\oplus [\brho]\oplus[\rho\brho]$, 
we see that one of $[\rho]$, $[\brho]$ and $[\rho\brho]$
contain an outer modular automorphism. 
\end{proof}

\noindent$\bullet$
\textit{Proof of Theorem \ref{thm: mod-cent} 
for the hyperfinite type III$_1$ factor.} 

It suffices to prove $\Cnd(M)_{\rm irr}\subs \End(M)_{\rm m}$ as before.
Let $\rho\in \Cnd(M)_{\rm irr}$. 
We show that the condition (2) 
in Corollary \ref{cor: cent-endo-partial} does not occur. 
Since $\rho$ is irreducible, 
$\si_t^\vph\prec \rho\brho$ if and only if 
$[\rho]=[\si_{t}^\vph \rho]$ in $\Sect(M)$. 
We set $H:=\{t\in\R\mid [\rho]=[\si_{t}^\vph \rho]\}$. 
Then $H$ is a subgroup of $\R$. 
Hence $t\in H$ implies $\si_{nt}^\vph\prec \rho\brho$ for all $n\in\Z$. 
Since $M$ is of type III$_1$ and $\rho\brho$ has finite index, 
this is possible only in the case $t=0$,
that is,
the condition (1) in Corollary \ref{cor: cent-endo-partial} holds.
Hence $\rho\in \End(M)_{\rm m}$.
\hfill$\Box$

\subsection{Type III$_\la$ case ($0<\la<1$)}

\begin{lem}\label{lem: rho-al-commute}
Let $M$ be a McDuff factor of type III$_\la$, $0<\la<1$, 
and $\rho\in\Cnd(M)$. 
Then for any $\al\in\Aut(M)$, $[\al\rho]=[\rho\al]$ in $\Sect(M)$. 
\end{lem}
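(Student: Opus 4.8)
The plan is to show that the set $G_\rho:=\{\al\in\Aut(M) : [\al\rho\al^{-1}]=[\rho]\}$ is all of $\Aut(M)$. Using $[\Ad(v)\circ\be]=[\be]$ one checks at once that $G_\rho$ is a subgroup of $\Aut(M)$, that it depends only on the sector $[\rho]$, and that it is invariant under $\al\mapsto\Ad(v)\circ\al$; in particular a candidate $\al$ may be perturbed freely by inner automorphisms. Decomposing $[\rho]$ into irreducible summands, each again centrally trivial by Theorem~\ref{thm: cend-structure}, reduces the assertion to irreducible $\rho$. By Lemma~\ref{lem: commutes} we already have $\oInt(M)\subseteq G_\rho$, so it remains to factor an arbitrary $\al$ as $\al=\ga\be$ with $\ga\in G_\rho$ and $\be\in\mathrm{Ker}(\mo)$, and then to absorb $\mathrm{Ker}(\mo)$ into $G_\rho$.

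Here McDuffness enters. By Proposition~\ref{prop: splitting R01} there is an isomorphism $\Theta\colon M\oti R_{0,1}\to M$ with $[\rho]=[\Theta\circ(\rho\oti\id_{R_{0,1}})\circ\Theta^{-1}]$; conjugating by $\Theta$ — which carries $\oInt(M)$ onto $\oInt(M\oti R_{0,1})$ and $G_\rho$ onto $G_{\rho\oti\id_{R_{0,1}}}$ — turns the problem into the same one for $M\oti R_{0,1}$ and $\rho\oti\id_{R_{0,1}}$. For each $\mu>0$ pick $\ga_\mu\in\Aut(R_{0,1})$ scaling the trace by $\mu$; then $\id_M\oti\ga_\mu$ commutes with $\rho\oti\id_{R_{0,1}}$, hence lies in $G_{\rho\oti\id_{R_{0,1}}}$. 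Since the trace is invariant under $\si^\vph$, one has $\wdt{M\oti R_{0,1}}\cong\tM\oti R_{0,1}$ with dual action $\th\oti\id$ and centre $Z(\tM)$; computing the canonical extension of $\id_M\oti\ga_\mu$ (the relevant relative Connes cocycle is the scalar $\mu^{-it}$) shows that it fixes $M$, sends $\la^\vph(t)$ to $\mu^{-it}\la^\vph(t)$, and therefore restricts to $\th_{\log\mu}$ on $Z(\tM)$, i.e. $\mo(\id_M\oti\ga_\mu)=\th_{\log\mu}$. As $\mu$ runs over $(0,\infty)$ and the flow of weights of a type ${\rm III}_\la$ factor is periodic, these exhaust $\Aut_\th(Z(\tM))=\mo(\Aut(M\oti R_{0,1}))$, the module map being onto \cite{Ham,ST2}. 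Thus every $\al$ factors as $\al=(\id_M\oti\ga_\mu)\be$ with $\be\in\mathrm{Ker}(\mo)$, and we are reduced to showing $\mathrm{Ker}(\mo)\subseteq G_{\rho\oti\id_{R_{0,1}}}$.

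When $M$, and hence $M\oti R_{0,1}$, is hyperfinite this is immediate from Theorem~\ref{thm: main-app}, which gives $\mathrm{Ker}(\mo)=\oInt$, so the proof is complete in the hyperfinite case by the first paragraph. For a general McDuff factor one passes to a discrete decomposition: an automorphism in $\mathrm{Ker}(\mo)$ fixes, after an inner perturbation, a chosen generalized trace $\psi$ (the scalar factor it produces is a power of $\la$ and is absorbed into the implementing unitary $U$), hence preserves $N:=M_\psi$, a McDuff type ${\rm II}_\infty$ factor, and by stability of the properly outer $\Z$-action on $N$ one may further arrange that it fixes $U$; similarly, using the Fast Reindexation Lemma \cite[Lemma~5.3]{Oc1} exactly as in the type ${\rm III}_0$ proof of Theorem~\ref{thm: main-app}, one perturbs $\rho$ so that $\rho(U)=U$, $\rho(N)\subseteq N$ and $\rho|_N\in\Cnd(N)_{\rm irr}$. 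The already established type ${\rm II}_\infty$ case of Theorem~\ref{thm: mod-cent} makes $\rho|_N$ inner, and since inner endomorphisms of $N$ of a fixed rank are all unitarily equivalent by unitaries of $N$ — which may be averaged to commute with $\Ad U$ — one gets $[\al\rho\al^{-1}]=[\rho]$. The real difficulty is concentrated in this last step for non-hyperfinite $M$: making $\rho|_N$ honestly centrally trivial in $N$ (reconciling $N$-central and $M$-central sequences) and matching the implementing Hilbert spaces $\Ad U$-equivariantly is the reindexation-heavy part, whereas for the hyperfinite case — the one needed for Theorem~\ref{thm: mod-cent} — this paragraph collapses to the single identity $\mathrm{Ker}(\mo)=\oInt(M)$.
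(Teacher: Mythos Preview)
For the hyperfinite case your argument is correct and is essentially the paper's: realize $\rho$ as $\rho\otimes\id_{R_{0,1}}$ via Proposition~\ref{prop: splitting R01}, note that the automorphisms $\id\otimes\gamma_\mu$ commute with it and exhaust the range of the module map, then absorb the remaining $\mathrm{Ker}(\mo)$ factor using $\mathrm{Ker}(\mo)=\oInt$ together with Lemma~\ref{lem: commutes}. The paper invokes \cite[Theorem~1~(1)]{KST} for this last identity; your citation of Theorem~\ref{thm: main-app} is equivalent for automorphisms. The reduction to irreducible $\rho$ is harmless but unnecessary.

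Be aware that the paper's own proof, despite the lemma's stated generality, also works only under hyperfiniteness, since $\mathrm{Ker}(\mo)=\oInt$ is a hyperfinite result; and indeed the lemma is only applied to the hyperfinite ${\rm III}_\la$ factor. Your third paragraph attempting the general McDuff case has real gaps and should be dropped: the ${\rm II}_\infty$ case of Theorem~\ref{thm: mod-cent} is proved only for the \emph{hyperfinite} factor, so you cannot apply it to $N=M_\psi$ when $M$ is not hyperfinite; the claim $\rho|_N\in\Cnd(N)_{\rm irr}$ is unjustified (irreducibility on $M$ need not descend to $N$, and the reconciliation of $M$-central with $N$-central sequences is exactly the hard point you yourself flag); and the Fast Reindexation Lemma is not the device for arranging $\rho(U)=U$ --- that comes from stability of the $\Z$-action on $M_\psi$ as in Lemma~\ref{lem: gen-trace}, which in turn presupposes $\rho\in\End(M)_{\rm CT}$, something not available for general McDuff $M$.
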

\begin{proof}
Since $\rho$ is centrally trivial, 
there exists an isomorphism $\Ps\col M\ra M\oti R_{0,1}$ such that 
$[\rho]=[\Ps^{-1}\circ (\rho\oti\id_{R_{0,1}})\circ \Ps]$ in $\Sect(M)$ 
by Proposition \ref{prop: splitting R01}. 
Take $\th_\mu\in\Aut(R_{0,1})$ with $\mo(\th_\mu)=\mu>0$.
Define the automorphism 
$\al_\mu=\Ps^{-1}\circ(\id\oti \th_\mu)\circ \Ps\in \Aut(M)$. 
Then we have 
$[\rho\al_\mu]=[\Ps^{-1}\circ (\rho\oti\th_\mu)\circ\Ps]=[\al_\mu\rho]$. 
Note that $\mo(\al_\mu)=\mu$ holds. 

Let $\al\in\Aut(M)$. 
Then there exists $\mu>0$ such that $\mo(\al)=\mo(\al_\mu)$. 
We set $\be:=\al\al_\mu^{-1}$. 
Since $\mo(\be)=1$, $\be$ is approximately inner
\cite[Theorem 1 (1)]{KST}. 
Hence we have $[\rho\be]=[\be\rho]$ by Lemma \ref{lem: commutes}. 
Then,
\[
[\rho\al]
=
[\rho\be\al_\mu]
=
[\rho\be][\al_\mu]
=
[\be\rho][\al_\mu]
=
[\be][\rho\al_\mu]
=
[\be][\al_\mu\rho]
=
[\be\al_\mu\rho]
=[\al\rho]. 
\]
\end{proof}

Let $R$ be the hyperfinite type III$_1$ factor and $\vph$ a dominant weight. 
Then $M=R\rti_{\si_T^\vph}\Z$ is a hyperfinite type III$_\la$ factor with 
$T=-2\pi/\log\la$. 
Denote the implementing unitary of $\sigma^T_\varphi$ by $U$.
Let $\th\col \T=\R/[0,1)\ra \Aut(M)$ be the dual action of $\si_T^\vph$ 
and $\ps=\hat{\vph}$ the dual weight of $\vph$. 
Then $\si_T^\ps=\Ad U$ holds. 

\begin{lem}\label{lem: RM}
One has $R_\om\subs M_\om$ via the embedding $R\subs M$. 
\end{lem}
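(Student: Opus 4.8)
The plan is to prove the pointwise statement that any $\om$-centralizing sequence in $R$ is automatically $\om$-centralizing in $M=R\rti_{\si_T^\vph}\Z$; since $\meT_\om(R)\subs\meT_\om(M)$, this yields the inclusion $R_\om\subs M_\om$ induced by $R\subs M$. So let $(a^\nu)_\nu$ be a bounded sequence in $R$ with $\|[\chi,a^\nu]\|\to0$ as $\nu\to\om$ for all $\chi\in R_*$; we must show $\|[\vph,a^\nu]\|\to0$ as $\nu\to\om$ for all $\vph\in M_*$. The commutator maps $\vph\mapsto[\vph,a^\nu]$ on $M_*$ are uniformly bounded, by $2\sup_\nu\|a^\nu\|$, so by a standard $3\vep$-argument it suffices to test on the family $\{\chi\circ\mE_k\mid \chi\in M_*,\ k\in\Z\}$, which spans a norm-dense subspace of $M_*$ exactly as in the type III$_0$ part of the proof of Theorem \ref{thm: main-app}; here $\mE_k(x)=E_R(xU^{-k})U^k$ is the $k$-th Fourier coefficient map of the crossed product, with $E_R\col M\ra R$ the canonical conditional expectation and $\|\mE_k\|\leq1$.

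Fix $\chi\in M_*$ and $k\in\Z$, and put $\eta:=\chi(\,\cdot\,U^k)\in M_*$. Using $U^k a^\nu U^{-k}=\si_{kT}^\vph(a^\nu)$ together with the bimodule property of $E_R$, one obtains for $x=\sum_\el x_\el U^\el\in M$ with $\|x\|\leq1$ (so that $\|x_k\|=\|E_R(xU^{-k})\|\leq1$)
\begin{align*}
[\chi\circ\mE_k,a^\nu](x)
&=\eta\big(a^\nu x_k-x_k\si_{kT}^\vph(a^\nu)\big)\\
&=[\eta|_R,a^\nu](x_k)+\eta\big(x_k(a^\nu-\si_{kT}^\vph(a^\nu))\big).
\end{align*}
The first summand is bounded in modulus by $\|[\eta|_R,a^\nu]\|$ since $\|x_k\|\leq1$, and this tends to $0$ as $\nu\to\om$ because $\eta|_R\in R_*$ and $(a^\nu)$ is $\om$-centralizing in $R$; the bound is uniform in $x$. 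For the second summand the essential input is Connes' theorem that $\om$-centralizing sequences are asymptotically fixed by the modular flow, namely $\si_{kT}^\vph(a^\nu)-a^\nu\to0$ $*$-strongly as $\nu\to\om$ — this is the very fact already invoked in the proof of Lemma \ref{lem: mod-in-cent} via \cite[Proposition 2.3 (1)]{C1}. Representing $\eta$ in the form $\eta(y)=(y\xi,\zeta)$ with $\xi,\zeta\in L^2(M)$ and setting $b^\nu:=a^\nu-\si_{kT}^\vph(a^\nu)$, the second summand equals $(b^\nu\xi,x_k^*\zeta)$, which is bounded by $\|b^\nu\xi\|\,\|\zeta\|\to0$, again uniformly in $x$. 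Hence $\|[\chi\circ\mE_k,a^\nu]\|\to0$ as $\nu\to\om$, and the reduction above gives $\|[\vph,a^\nu]\|\to0$ for every $\vph\in M_*$, as required.

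I do not expect a genuine obstacle: once the problem is transported into the crossed product by the Fourier coefficient maps, it collapses to the two estimates above, whose only nontrivial ingredient — asymptotic triviality of the modular flow on central sequences — is already available in the paper. The single point needing a little attention is the uniformity in $x$ in both estimates, which is handled precisely by $\|x_k\|\leq\|x\|$ (that is, $\|\mE_k\|\leq1$) and, for the modular term, by passing to the standard form and using $\|x_k^*\zeta\|\leq\|\zeta\|$. An alternative, more structural route would use the identifications $M_\om=M'\cap M^\om$ and $R_\om=R'\cap R^\om$: an element of $R_\om$ commutes with $R$ trivially, and it commutes with $U$ since $\Ad U|_R=\si_T^\vph$ while elements of $R_\om$ are fixed by $\si^\vph$; as $M$ is generated by $R$ and $U$, this places $R_\om$ inside $M'\cap M^\om=M_\om$. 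I would nonetheless present the direct estimate, both to remain within the functional framework used elsewhere in the paper and to avoid invoking the (standard but slightly technical) embedding $R^\om\subs M^\om$.
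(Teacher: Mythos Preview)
Your proof is correct and follows essentially the same approach as the paper's: both exploit the crossed-product structure $M=R\rti_{\si_T^\vph}\Z$ to reduce the question to centralization in $R$ together with the central triviality of $\si_T^\vph$ on $R$. The only cosmetic difference is that the paper first controls a single dual state $\hat\chi$ via the expectation $E_\th$ and then passes to commutators $[aU^n,x^\nu]$ with algebra elements, whereas you test directly against the functionals $\chi\circ\mE_k$; the key inputs and the underlying computation are identical.
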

\begin{proof}
Let $\chi\in R_*$ be a faithful state and $\hat{\chi}$ the dual state. 
Let $E_\th\col M\ra R$ be the expectation obtained 
by averaging the $\T$-action $\th$. 
Then for any $x\in R$ and $y\in M$, we have 
$
[\hat{\chi},x](y)=[\chi,x](E_\th(y))
$
This shows that $\|[\hat{\chi},x]\|\leq \|[\chi,x]\|$. 
Let $(x^\nu)_\nu$ be an $\om$-centralizing sequence in $R$. 
Then we have $[\hat{\chi},x]\to 0$ as $\nu\to\om$. 

Since $\hchi$ is faithful, 
a set $\{\hchi\cdot (a U^n)\mid a\in R,\ n\in\Z\}$ spans 
a norm dense subset in $M_*$. 
To prove $(x^\nu)_\nu$ is an $\om$-centralizing sequence in $M$, 
it suffices to show that $[aU^n, x^\nu]\to0$ strongly* 
for any $a\in R$ and $n\in\Z$.
We have
\begin{align*}
[aU^n, x^\nu]=&\,
[a,x^\nu]U^n+a[U^n,x^\nu]\\
=&\,[a,x^\nu]U^n+a(\si_T^\vph(x^\nu)-x^\nu)U^n. 
\end{align*}
Since $(x^\nu)_\nu$ is an $\om$-centralizing sequence in $R$ 
and $\si_T^\vph\in \Cnt(R)$, 
the both terms converge to $0$ strongly* as $\nu\to\om$. 
\end{proof}

\begin{lem}\label{lem: theta-cnt} One has
$\th_p\nin\oInt(M)$ for all $p\in\T\setm\{0\}$. 
In particular, 
$\th_p \nin \Cnt(M)$ for all $p\in \T\setm \{0\}$. 
\end{lem}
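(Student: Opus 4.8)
The plan is to reduce the first assertion to a computation of the Connes--Takesaki module of $\th_p$ and then invoke Theorem~\ref{thm: main-app}. Since $d(\th_p)=1$, that theorem says that $\th_p\in\oInt(M)=\Aut(M)\cap\oInt_1(M)$ would force $\mo(\th_p)=\th_0=\id$ on $Z(\tM)$ (being an automorphism, $\th_p$ automatically has a Connes--Takesaki module). So everything comes down to exhibiting, for $p\neq0$, an element of $Z(\tM)$ not fixed by the canonical extension $\wdt{\th}_p$.

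I would work inside $\tM=M\rti_{\si^\ps}\R$ with $\ps=\wdh{\vph}$, so $\tM=\langle M,\la^\ps(t):t\in\R\rangle$. The relevant facts about this set-up are: $\th_p|_R=\id$ and $\th_p(U)=e^{2\pi i p}U$; $\si_T^\ps=\Ad U$ (so in particular $\si_T^\ps(U)=U$); $\si_s^\ps(U)=U$ for every $s\in\R$ (because $\vph$ is invariant under $\si_T^\vph$, so the dual weight is left fixed by the crossed-product unitary); and $\ps\circ\th_p=\ps$ (dual weights are invariant under the dual action). Now set $W:=U^*\la^\ps(T)$. Then $W$ is a unitary in $Z(\tM)$: it commutes with $R$ since $U$ and $\la^\ps(T)$ implement the same automorphism $\si_T^\vph$ of $R$; with $U$ since $\si_T^\ps(U)=U$; and with every $\la^\ps(s)$ since $\si_s^\ps(U)=U$. (Equivalently, $\tM$ is the crossed product of the core of $R$, a type~II${}_\infty$ factor as $R$ is of type~III${}_1$, by the canonical extension of $\si_T^\vph$, which is inner; then $W$ even generates $Z(\tM)$, but only $W\in Z(\tM)$ is needed.) Using the defining formula of the canonical extension and $\ps\circ\th_p^{-1}=\ps$ one has $\wdt{\th}_p|_M=\th_p$ and $\wdt{\th}_p(\la^\ps(t))=[D\ps\circ\th_p^{-1}:D\ps]_t\,\la^\ps(t)=\la^\ps(t)$, hence
\[
\wdt{\th}_p(W)=\th_p(U)^*\la^\ps(T)=e^{-2\pi i p}\,W .
\]
For $p\in\T\setm\{0\}$ we have $p\nin\Z$, so $e^{-2\pi i p}\neq1$ and $\wdt{\th}_p(W)\neq W$.

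Thus $\mo(\th_p)$ is not the identity of $Z(\tM)$, so the second condition of Theorem~\ref{thm: main-app} fails for $r=1=d(\th_p)$; hence $\th_p\nin\oInt_1(M)$, i.e.\ $\th_p\nin\oInt(M)$. The ``in particular'' then follows from the inclusion $\Cnt(M)\subseteq\oInt(M)$, which holds for hyperfinite factors: from the description $\Cnt(M)=\{\Ad u\cdot\si_c^\vph\mid u\in U(M),\ c\in Z^1(F^M,\C)\}$ of \cite{KST} one sees that each $\si_c^\vph$, being a cocycle perturbation of a modular flow, has trivial Connes--Takesaki module and therefore lies in $\mathrm{Ker}(\mo)=\oInt(M)$. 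The only genuinely delicate point is the choice of $W$: one must recognize that the discrepancy between the crossed-product unitary $U$ and $\la^\ps(T)$ is exactly the part of $Z(\tM)$ carrying the nontrivial flow of weights of the type~III${}_\la$ factor $M$, and check that $W$ is central; granting this, the module computation and the appeal to Theorem~\ref{thm: main-app} are immediate.
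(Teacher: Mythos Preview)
Your argument is correct and follows essentially the same route as the paper: both show that $\mo(\th_p)\neq\id$ on $Z(\tM)$ and then use $\oInt(M)=\mathrm{Ker}(\mo)$ (the paper's Theorem~\ref{thm: main-app}, or equivalently \cite{KST} for automorphisms). The paper's proof is just the one-line observation that the dual action $\th$ acts faithfully on the flow of weights of $M$; your explicit construction of $W=U^*\la^\ps(T)\in Z(\tM)$ and the computation $\wdt{\th}_p(W)=e^{-2\pi i p}W$ is exactly an unpacking of that well-known fact.
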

\begin{proof}
It is well known that $\theta$ faithfully acts on the flow of weights
of $M$. Hence $\theta_p$ is not in $\oInt(M)$ for $p\ne 0$.
\end{proof}

Now let $\rho\in \Cnd(M)_{\rm irr}$. 
For $p\in\T$, $[\th_p\,\rho]=[\rho\,\th_p]$ holds 
from Lemma \ref{lem: rho-al-commute}. 
Hence there uniquely 	exists $\al_p\in \Int(M)$ such that 
\[
\th_p\,\rho=\al_p \, \rho\,\th_p. 
\]

Let $\{\vph_n\}_{n=1}^\infty$ be a norm dense sequence 
in the set of normal states on $M$. 
We prepare a function $\de_\rho\col \Aut(M)\times \Aut(M)\ra [0,+\infty)$ 
defined by 
\[
\de_\rho(\al,\be)
=\sum_{n=1}^\infty 
\frac{1}{2^n}
\|\vph_n\circ \ph_\rho\circ \al^{-1}-\vph_n\circ \ph_\rho\circ \be^{-1}\|. 
\]
Then $\de_\rho$ defines a metric on $\Int(M)$. 
Indeed, if $\al, \be\in\Int(M)$ satisfies $\de_\rho(\al,\be)=0$, 
then $\al\rho=\be\rho$. 
Take $w\in U(M)$ such that $\be^{-1}\al=\Ad w$. 
Then $\Ad w \circ \rho=\rho$, and $w\in \rho(M)'\cap M=\C$. 
Hence $\al=\be$. 
We call that topology the $\de_\rho$-topology. 

\begin{lem}
The map $\al\col \T\ra \Int(M)$ is continuous with respect to 
the $\de_\rho$-topology. 
\end{lem}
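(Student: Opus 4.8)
The plan is to reduce the statement to the norm–continuity of a single $M_*$–valued map and then invoke the continuity of the dual action $\th$ in the $u$-topology. First I would record the identity
\[
\ph_\rho\circ\al_p^{-1}=\th_p\circ\ph_\rho\circ\th_p^{-1}\qquad(p\in\T).
\]
Indeed, composing the defining relation $\th_p\rho=\al_p\rho\th_p$ on the right with $\th_p^{-1}$ gives $\al_p\circ\rho=\th_p\circ\rho\circ\th_p^{-1}$, an equality of finite-index endomorphisms of $M$, so their standard left inverses coincide. Since $\al_p$ and $\th_p$ are automorphisms, they carry the minimal expectation $E_\rho$ onto $\rho(M)$ to the minimal expectations $\al_pE_\rho\al_p^{-1}$ and $\th_pE_\rho\th_p^{-1}$ onto $\al_p(\rho(M))$ and $\th_p(\rho(M))$, respectively; computing $\ph_{\al_p\rho}=(\al_p\rho)^{-1}\circ E_{\al_p\rho}$ and $\ph_{\th_p\rho\th_p^{-1}}=(\th_p\rho\th_p^{-1})^{-1}\circ E_{\th_p\rho\th_p^{-1}}$ then yields $\ph_\rho\circ\al_p^{-1}$ on the one side and $\th_p\circ\ph_\rho\circ\th_p^{-1}$ on the other. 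In particular $\vph\circ\ph_\rho\circ\al_p^{-1}=\vph\circ\th_p\circ\ph_\rho\circ\th_p^{-1}$ for every $\vph\in M_*$.

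Next, since every summand in $\de_\rho(\al_p,\al_q)=\sum_n2^{-n}\|\vph_n\circ\ph_\rho\circ\al_p^{-1}-\vph_n\circ\ph_\rho\circ\al_q^{-1}\|$ is bounded by $2^{1-n}$, a tail estimate reduces the continuity of $p\mapsto\al_p$ to the claim that, for each fixed $\vph\in M_*$, the map $p\mapsto\vph\circ\th_p\circ\ph_\rho\circ\th_p^{-1}$ is norm-continuous from $\T$ into $M_*$. To prove this I would use that the $\T$-action $\th$ is continuous in the $u$-topology, i.e.\ $p\mapsto\psi\circ\th_p$ is norm-continuous in $M_*$ for every $\psi\in M_*$; composing with the contractive linear map $\psi\mapsto\psi\circ\ph_\rho$ on $M_*$ shows that $p\mapsto\eta_p:=\vph\circ\th_p\circ\ph_\rho$ is norm-continuous too. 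Finally, for $p,q\in\T$ the triangle inequality
\[
\|\eta_p\circ\th_p^{-1}-\eta_q\circ\th_q^{-1}\|
\le\|\eta_p-\eta_q\|+\|\eta_q\circ\th_p^{-1}-\eta_q\circ\th_q^{-1}\|,
\]
where I used that $\th_p^{-1}$ is an isometry on $M_*$, combined with continuity of $p\mapsto\eta_p$ (first term) and $u$-continuity of $p\mapsto\th_p^{-1}=\th_{-p}$ applied to the fixed functional $\eta_q$ (second term), gives $\|\eta_p\circ\th_p^{-1}-\eta_q\circ\th_q^{-1}\|\to0$ as $p\to q$, which is exactly the required claim.

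I do not expect a genuine obstacle here. The only point that needs care is that $\al_p$ enters the computation of $\de_\rho$ solely through its conjugation action on $\ph_\rho$, captured by the identity in the first paragraph; consequently one never has to choose the unitaries implementing $\al_p$ (or $\al_p$ itself) in a way that depends continuously on $p$, and the continuity is imported entirely from that of the dual action $\th$ together with the boundedness of the predual map of $\ph_\rho$.
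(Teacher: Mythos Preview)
Your proposal is correct and follows essentially the same approach as the paper: both hinge on the identity $\ph_\rho\circ\al_p^{-1}=\th_p\circ\ph_\rho\circ\th_p^{-1}$ (derived from $\al_p\rho=\th_p\rho\th_p^{-1}$ via uniqueness of the standard left inverse) and then read off continuity from the norm-continuity of $p\mapsto\chi\circ\th_p\circ\ph_\rho\circ\th_p^{-1}$ in $M_*$. The paper simply asserts this last continuity as ``trivial,'' whereas you spell out the tail estimate and the triangle-inequality splitting explicitly.
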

\begin{proof}
Note that 
$\th_p\circ \ph_\rho\circ \th_p^{-1}
=\ph_{\th_p\rho\th_p^{-1}}
=\ph_{\al_p \rho}
=\ph_\rho\circ \al_p^{-1}$. 
Then the statement is trivial because the map
$\T\ni p\mapsto\chi\circ \ph_\rho\circ \al_p^{-1}
=\chi\circ\th_p\circ \ph_\rho\circ \th_p^{-1}\in M_*$ is norm-continuous 
for any $\chi\in M_*$. 
\end{proof}

\begin{lem}\label{lem: th-w-rho}
There exists $w\in U(M)$ such that 
\[
\th_p\circ (\Ad w\circ \rho)=(\Ad w\circ \rho)\circ \th_p
\]
holds for all $p\in\T$. 
\end{lem}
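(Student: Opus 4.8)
The plan is to reduce the statement to trivializing a $1$-cocycle for the compact group action $\th$, and then to trivialize that cocycle by averaging over $\T$.

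First I would set up the cocycle. Since $\rho$ is irreducible, $\rho(M)'\cap M=\C$, and by Lemma~\ref{lem: rho-al-commute} we have $[\th_p\rho]=[\rho\th_p]$, so for each $p\in\T$ there is a \emph{unique} $\al_p\in\Int(M)$ with $\th_p\circ\rho=\al_p\circ\rho\circ\th_p$, and $\al_0=\id$. Writing $\th_{p+q}\circ\rho$ in two ways and using uniqueness gives the cocycle identity $\al_{p+q}=(\th_p\circ\al_q\circ\th_p^{-1})\circ\al_p$, and the preceding lemma shows $p\mapsto\al_p$ is continuous in the $\de_\rho$-topology. I would also record that $\al_p\in\Cnt(M)$: indeed $\th_p\circ\rho\circ\th_p^{-1}\in\Cnd(M)$ since $\th_p^\om$ preserves $M_\om$ and $\rho^\om=\id$ on $M_\om$, so $(\al_p\rho)^\om=\id$ on $M_\om$, whence $\al_p^\om=\id$ on $M_\om$. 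Our goal becomes: find $w\in U(M)$ with $\al_p=\Ad(\th_p(w)^*w)$ for all $p$, for then $\th_p\circ(\Ad w\circ\rho)=\Ad(\th_p(w)u_p)\circ\rho\circ\th_p=\Ad w\circ\rho\circ\th_p$ (with $u_p$ implementing $\al_p$), which is exactly the assertion.

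Next I would lift to unitaries. By irreducibility, $\al_p=\Ad u_p$ with $u_p\in U(M)$ unique up to a unimodular scalar; choosing $p\mapsto u_p$ Borel (possible by continuity of $\al$), the cocycle identity becomes $u_{p+q}=c(p,q)\,\th_p(u_q)u_p$ with $c\col\T\times\T\ra\T$ a Borel $2$-cocycle. Since every Borel multiplier on the circle is trivial, i.e.\ $H^2_{\mathrm{Borel}}(\T,\T)=0$, after multiplying the $u_p$ by a suitable Borel unimodular function of $p$ I may assume $u_{p+q}=\th_p(u_q)u_p$, a genuine strongly continuous unitary $\th$-cocycle. Then I would form the averaged element $a:=\int_\T u_p^*\,dp\in M$; the cocycle identity gives $\th_q(a)=u_q a$ for all $q\in\T$, hence $a^*a\in M^\th$. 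Writing $a=w|a|$ for the polar decomposition, $|a|=(a^*a)^{1/2}\in M^\th$, and one gets $\th_q(w)w^*w=u_q w$; if $a$ has full left and right support then $w$ is unitary, $\th_q(w)=u_q w$, i.e.\ $u_q=\th_q(w)w^*$, and taking the sought unitary to be $w^*$ finishes the argument by the computation of the previous paragraph.

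The hard part — and the main obstacle — is exactly the last point: cocycle cohomology for compact group actions does \emph{not} vanish in general, so one must genuinely exploit the special origin of $(u_p)$. Concretely, I expect to argue that the averaged element $a$ (or the family of Fourier-weighted averages $\int_\T u_p^* e^{2\pi i np}\,dp$, whose left supports jointly span $1$ because a projection in $M^\th=R$ killing all of them would kill $u_\bullet$ a.e.) can be made invertible after passing to the McDuff picture of Proposition~\ref{prop: splitting R01}, where $\rho\cong\Th\circ(\rho\oti\id_{R_{0,1}})\circ\Th^{-1}$ and $M\cong M\oti R_{0,1}$ gives the room to replace $(u_p)$, within its cohomology class and compatibly with the reductions above, by a cocycle whose $\T$-average has full support. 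Granting this, $w\in U(M)$ with $\th_q(w)=u_q w$ exists, and setting $w\mapsto w^*$ we obtain $\th_p\circ(\Ad w\circ\rho)=(\Ad w\circ\rho)\circ\th_p$ for every $p\in\T$, as required.
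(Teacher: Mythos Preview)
Your setup through obtaining the genuine $\th$-cocycle $(u_p)$ is essentially the same as the paper's: you lift $\al_p$ to unitaries via a Borel section, observe the resulting scalar obstruction is a Borel $2$-cocycle on $\T$, and kill it using $H^2_{\mathrm{Borel}}(\T,\T)=0$. That part is fine.

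The gap is in the final step. Averaging $a=\int_\T u_p^*\,dp$ does give $\th_q(a)=u_q a$, but as you yourself note, there is no reason for $a$ (or any of its Fourier-shifted variants) to have full support, and your proposed fix via the McDuff picture is not an argument---you have not explained how tensoring with $R_{0,1}$ lets you adjust $(u_p)$ within its cohomology class to make the average invertible, nor why such an adjustment should exist. ``Granting this'' is precisely the hole.

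The paper bypasses averaging entirely by invoking a structural fact you are missing: the dual action $\th$ of $\T$ on $M=R\rtimes_{\si_T^\vph}\Z$ is a \emph{minimal} action (since $R'\cap M=\C$, the crossed product being by an outer $\Z$-action on the factor $R$) with \emph{purely infinite} fixed point algebra $M^\th=R$ (type III$_1$). By \cite[Proposition 5.2]{Iz1}, any minimal action of a compact group with purely infinite fixed point algebra is stable, meaning every $\th$-cocycle is a coboundary. This immediately produces $w\in U(M)$ with $u_p=\th_p(w)w^*$ (up to the usual almost-everywhere-to-everywhere passage via continuity), and the conclusion follows. Your side observation that $\al_p\in\Cnt(M)$ is correct but plays no role.
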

\begin{proof}
Let $U(M)/U(\C)$ be the quotient Polish group. 
We have a bijective map $\ovl{\Ad}\col U(M)/U(\C)\ra \Int(M)$ 
such that $\ovl{\Ad}([u])=\Ad u$ 
for $[u]\in U(M)/U(\C)$, $u\in U(M)$. 
The map is continuous with respect to the $\de_\rho$-topology. 
By \cite[Corollary A.10]{Ta2}, 
the inverse map $\ovl{\Ad}^{-1}$ is Borel. 
Let $s\col U(M)/U(\C)\ra U(M)$ be a Borel cross section. 
We set $v_p:=s(\ovl{\Ad}^{-1}(\al_p))^* \in U(M)$. 
Then the map $v\col \T\ra U(M)$ is Borel and 
satisfies 
\[
\th_p\,\rho=\Ad v_p^*\circ \rho\,\th_p
\quad\mbox{for all}\ p\in\T. 
\]

Now we set $\mu_{p,q}:=v_p \th_p(v_q)v_{p+q}^*$ for $p,q\in \T$. 
Then for any $x\in M$, 
\begin{align*}
\mu_{p,q}\rho(x)\mu_{p,q}^*
=&\,
v_p \th_p(v_q)v_{p+q}^*\rho(x)v_{p+q}\th_p(v_q^*)v_p^*
\\
=&\,
v_p \th_p(v_q) \th_{p+q}(\rho(\th_{-p-q}(x)))\th_p(v_q^*)v_p^*
\\
=&\,
v_p \th_p(v_q \th_q(\rho(\th_{-p-q}(x)))v_q^*)v_p^*
\\
=&\,
v_p \th_p(\rho(\th_{-p}(x)))v_p^*
=\rho(x). 
\end{align*}
Hence $\mu_{p,q}\in \rho(M)'\cap M=\C$. 
It is easy to see that $\mu$ satisfies 
\[
\mu_{p,q}\mu_{p+q,r}=\mu_{q,r}\mu_{p,q+r},
\quad \mu_{p,0}=1=\mu_{0,p}. 
\]
Hence $\mu\col \T\times \T\ra \C$ is a 2-cocycle. 
By triviality of $H^2(\T,U(\C))$ 
(see \cite[Proposition 2.1]{Mo1}), 
there exists a Borel map $\la\col \T\ra U(\C)$ such that 
\[
\mu_{p,q}=\la_p \la_q \la_{p+q}^* 
\]
holds for $(p,q)\in\T\times \T$ 
in the outside of a null set $N\subs \T\times \T$ 
with respect to the Haar measure. 
We set $\ovl{v}_p:=\la_p^* v_p$ for $p\in\T$. 
Then the map $\ovl{v}\col \T\ra U(M)$ is Borel and 
satisfies $\th_p\,\rho=\Ad \ovl{v}_p^*\circ \rho\,\th_p$ 
for every $p\in\T$ and 
\[
\ovl{v}_p \th_p(\ovl{v}_p)=\ovl{v}_{p+q}
\quad \mbox{for all}\ (p,q)\in \T\times\T\setm N. 
\]

Hence $\ovl{v}$ satisfies a 1-cocycle relation almost everywhere, 
and it coincides with a 1-cocycle $\ovl{v}'$ almost everywhere 
(see \cite[Remark III.1.9]{CT} and \cite{Mo2}). 
Since $\th$ is a minimal action of the compact group $\T$
and $M^\th=R$ is purely infinite, 
$\th$ is stable (see \cite[Propostion 5.2]{Iz1}). 
Hence there exists $w\in U(M)$ such that 
$\ovl{v}'_p=w^*\th_p(w)$ holds for almost every $p\in\T$, 
and 
\[
\th_p\circ (\Ad w\circ \rho)=(\Ad w\circ \rho)\circ \th_p
\]
holds for almost every $p\in\T$. 
By continuity of $\th$, it holds for every $p\in \T$. 
\end{proof}

\noindent$\bullet$
\textit{Proof of Theorem \ref{thm: mod-cent} 
for the hyperfinite type III$_\la$ factor $(0<\lambda<1)$.}

It suffices to prove $\Cnd(M)_{\rm irr}\subs \End(M)_{\rm m}$
as before.
Let $\rho\in \Cnd(M)_{\rm irr}$.
By Lemma \ref{lem: th-w-rho}, 
we may and do assume that $\rho$ commutes with $\th$:
\[
\th_p\,\rho=\rho\,\th_p. 
\]
Hence $\si:=\rho|_R\in \End(R)$ has the meaning. 
Note that $U$ and $\rho(U)$ are normalizing $R$ 
in $M$. 
Indeed, the commutativity of $\th$ with $\rho$ yields 
$\rho(U)R \rho(U^*)\subs M^\th=R$. 

We consider the relative commutant $\si(R)'\cap R$. 
This is finite dimensional by the Pimsner-Popa inequality 
for $\rho\circ \ph_\rho$. 
Since $\rho$ is irreducible, 
\[
(\si(R)'\cap R)^{\Ad \rho(U)}=\rho(M)'\cap R=\C. 
\]
This means that the finite dimensional von Neumann algebra
$\si(R)'\cap R$ admits the ergodic $\Z$-action $\Ad \rho(U)$.
Hence $\si(R)'\cap R$ is abelian. 
Let $\dim(\si(R)'\cap R)=n$ and $p_1,\dots,p_n$ be the minimal projections of 
$\si(R)'\cap R$ such that 
\[
\si(R)'\cap R=\C p_1+\dots+\C p_n. 
\]

By Lemma \ref{lem: RM}, $R_\om\subs M_\om$ holds. 
Hence $\si\in\End(R)_0$ is centrally trivial. 
Applying Theorem \ref{thm: mod-cent}
for the hyperfinite type III$_1$ factor $R$,
we see that $\si$ is a modular endomorphism,
that is, there exists $t_1,\dots,t_n\in\R$ such that 
\[
[\si]=\bigoplus_{i=1}^n[\si_{t_i}^\vph] \in \Sect(R). 
\]
Take an isometry $w_i\in (\si_{t_i}^\vph,\si)$
with $w_i w_i^*=p_i$ for each $i$. 
Then we have 
\[
\si(x)=\sum_{i=1}^n w_i\si_{t_i}^\vph(x)w_i^*
\quad\mbox{for all}\ x\in R. 
\]
Set
$
%\displaystyle
\mu=\sum_{i=1}^n \si_T^\vph(w_i)w_i^*\in U(R)$. 
Then we have 
\[
\si_T^\vph\,\si\,\si_{-T}^\vph=\Ad \mu\circ \si. 
\]
Since $\Ad U|_R=\si_T^\vph$, we have 
\[
\Ad U\rho(U^*)\circ \si=\Ad \mu\circ \si,
\]
where we note that $U\rho(U^*)\in M^\th=R$, 
and $\mu^* U\rho(U^*)\in \si(R)'\cap R$. 
Hence there exists $\chi_1,\dots,\chi_n\in \T$ such that 
\[
U\rho(U^*)
=\mu\cdot \Big{(}\sum_{i=1}^n\chi_i p_i\Big{)}
=
\sum_{i=1}^n \chi_i \si_T^\vph(w_i)w_i^*
=
\sum_{i=1}^n \chi_i U w_i U^*w_i^*,
\]
and we have
\[
\rho(U)=\sum_{i=1}^n \ovl{\chi_i}\,w_i U w_i^*. 
\]
Taking $s_i\in \T$ with $\th_{s_i}(U)=\ovl{\chi_i}\,U$, we have
\[
\rho(U)=\sum_{i=1}^n w_i \th_{s_i}(U) w_i^*. 
\]
Therefore $\rho$ has the following decomposition:
\[
\rho(x)=\sum_{i=1}^n w_i \si_{t_i}^\ps(\th_{s_i}(x)) w_i^*
\quad\mbox{for all}\ x\in M. 
\]
Since $\rho$ is irreducible, 
we have $n=1$ and
$[\rho]=[\si_{t_1}^\ps\th_{s_1}]$.
This shows $\th_{s_1}$ must be centrally trivial.
By Lemma \ref{lem: theta-cnt}, $s_1=0$,
and we have $[\rho]=[\si_{t_1}^\ps]$.
Hence $\rho$ is a modular automorphism.
$\hfill\Box$

\subsection{Type III$_0$ case}

\subsubsection{Reduction of the problem to the study of type II inclusions}

Let $M=N\rti_\th \Z$ be the discrete decomposition of a type III$_0$ factor 
$M$ with the implementing unitary $U$. 
Let $\ta$ be a trace on $N$ and $\vph=\hat{\ta}$ be the dual weight on $M$. 
Then it is known that the weight $\vph$ is lacunary. 
Let $\hth$ be the dual action of the torus $\T$ on $M$. 
By \cite[Theorem 3.1, Corollary 4.6]{HS-Pt1},
$\hth_p$ is approximately inner for $p\in\T$.

\begin{lem}\label{lem: normalize-rho}
Let $M=N\rti_\th \Z$ be the discrete decomposition as before. 
Let $\rho\in \Cnd(M)_{\rm irr}$. 
Then there exists a unitary $w\in M$ such that 
$\Ad w \rho$ and $\hth$ commute and $\Ad w \rho(U)=U$. 
\end{lem}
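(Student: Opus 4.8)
The plan is to build $w$ as a product $w=v_1\,w_0$: first choose $w_0$ so that $\Ad w_0\circ\rho$ commutes with the dual action $\hth$, and then choose $v_1\in U(N)$ so that $\Ad v_1\circ(\Ad w_0\circ\rho)$ additionally fixes $U$ while still commuting with $\hth$. Step~1 mirrors the proof of Lemma~\ref{lem: th-w-rho}, and Step~2 is a Connes--Takesaki stability argument of the kind used in Lemma~\ref{lem: gen-trace}.

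\textbf{Step 1.} Since each $\hth_p$ is approximately inner (\cite[Theorem 3.1, Corollary 4.6]{HS-Pt1}) and $\rho\in\Cnd(M)$, Lemma~\ref{lem: commutes} gives $[\hth_p\circ\rho\circ\hth_p^{-1}]=[\rho]$, i.e.\ $[\hth_p\,\rho]=[\rho\,\hth_p]$ in $\Sect(M)$. As $\rho$ is irreducible, $\rho(M)'\cap M=\C$, so for each $p$ there is a unique $\al_p\in\Int(M)$ with $\hth_p\,\rho=\al_p\,\rho\,\hth_p$, and $p\mapsto\al_p$ is Borel into $\Int(M)$ exactly as in Lemma~\ref{lem: th-w-rho}. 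A Borel lift (through $\ovl{\Ad}^{-1}$ and a Borel cross section of $U(M)\ra U(M)/U(\C)$) produces a Borel map $v\col\T\ra U(M)$ with $\hth_p\,\rho=\Ad v_p^*\circ\rho\,\hth_p$; then $\mu_{p,q}:=v_p\,\hth_p(v_q)\,v_{p+q}^*$ lies in $\rho(M)'\cap M=\C$ and defines a Borel $2$-cocycle $\T\times\T\ra U(\C)$, which is a coboundary by triviality of $H^2(\T,U(\C))$ (\cite[Proposition 2.1]{Mo1}). Untwisting $v$, and then replacing the resulting almost-everywhere $\hth$-cocycle by a genuine continuous one (\cite[Remark III.1.9]{CT}, \cite{Mo2}), we obtain an $\hth$-cocycle $\ovl v$ with $\hth_p\,\rho=\Ad\ovl v_p^*\circ\rho\,\hth_p$. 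Now $\hth$ is a minimal action of $\T$ on $M$ with fixed-point algebra $M^{\hth}=N$ properly infinite, hence stable (\cite[Proposition 5.2]{Iz1}), so $\ovl v_p=w_0^*\,\hth_p(w_0)$ for some $w_0\in U(M)$, and $\Ad w_0\circ\rho$ commutes with $\hth$. Replace $\rho$ by $\Ad w_0\circ\rho$.

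\textbf{Step 2.} Commutativity of $\rho$ with $\hth$ gives $\rho(N)=\rho(M^{\hth})\subseteq N$; moreover $\hth_p(\rho(U))=\rho(\hth_p(U))$ shows that $\rho(U)$ has the same $\hth$-degree as $U$, so $\rho(U)U^*$ is a unitary in $M^{\hth}=N$. By stability of the $\Z$-action $\th=\Ad U|_N$ of the discrete decomposition (\cite[Theorem III.5.1 (i)]{CT}, applied as in Lemma~\ref{lem: gen-trace}), there is $v_1\in U(N)$ with $\rho(U)U^*=v_1^*\th(v_1)=v_1^*Uv_1U^*$, whence $\Ad v_1\circ\rho(U)=v_1\,\rho(U)\,v_1^*=U$. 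Since $v_1\in U(N)=U(M^{\hth})$, $\Ad v_1$ commutes with $\hth$, so $\Ad v_1\circ\rho$ still commutes with $\hth$. Thus $w:=v_1\,w_0$ has the required properties.

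\textbf{Main obstacle.} Step~2 is routine once Step~1 is in place, and Step~1 is almost a transcription of Lemma~\ref{lem: th-w-rho}; the one genuinely new feature — and the point needing care — is that in the type $\mathrm{III}_0$ case the fixed-point algebra $N=M^{\hth}$ is of type $\mathrm{II}_\infty$ and is \emph{not a factor}, whereas in the type $\mathrm{III}_\la$ case it was the factor $R$. One therefore needs the minimality of $\hth$ (equivalently $N'\cap M=Z(N)$, which holds because $\th^k$ admits no nonzero selfintertwiner in $N$ for $k\neq0$, by the Connes--Takesaki relative commutant theorem) together with the stability of a minimal $\T$-action whose fixed-point algebra is properly infinite but possibly non-factorial; checking that \cite[Proposition 5.2]{Iz1} — or, alternatively, an argument exploiting the dominance of the dual action — applies at this level of generality is the crux. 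The non-factoriality of $N$ likewise forces a little extra bookkeeping with $Z(N)$ when invoking the Connes--Takesaki stability in Step~2.
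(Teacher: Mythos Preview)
Your Step~2 matches the paper exactly. In Step~1 you correctly reach a genuine $\hth$-cocycle $\ovl v$ via the same Borel--cohomology route as Lemma~\ref{lem: th-w-rho}. The gap is precisely where you flag it: you invoke \cite[Proposition~5.2]{Iz1} to kill the cocycle, but that result is stated for minimal compact-group actions with \emph{factor} fixed-point algebra, and here $M^{\hth}=N$ has nontrivial center $Z(N)$. You note that ``checking that \cite[Proposition~5.2]{Iz1} \dots\ applies at this level of generality is the crux,'' but you do not carry out that check, and it is not clear it goes through as stated. So as written, Step~1 is incomplete at its decisive point.

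The paper sidesteps this issue entirely: instead of appealing to an abstract stability result, it proves directly that the cocycle $\ovl v$ is a $\hth$-coboundary by a $2\times 2$ matrix trick. One forms $P=M_2(\C)\oti M$ with the diagonal $\T$-action $\al=\id\oti\hth$, twists by the cocycle $e_{11}\oti1+e_{22}\oti\ovl v$ to get $\be$, and shows that $e_{11}\oti1$ and $e_{22}\oti1$ are Murray--von Neumann equivalent in $P^\be$ (both are properly infinite there, and both have central support~$1$, the latter seen by passing to $P\rti_\be\T\cong P\rti_\al\T$ via Takesaki duality). This argument uses only that $N$ is properly infinite and that $\hth$ is a dual action, so the non-factoriality of $N$ causes no trouble. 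If you want to salvage your line, you would need either to verify that Izumi's stability extends to the case $M^{\hth}$ properly infinite with $N'\cap M=Z(N)$, or to replace the citation by exactly this kind of hands-on equivalence-of-projections argument.
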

\begin{proof}
Since $\hth_p$ is approximately inner, 
$\rho$ and $\hth_p$ commute in $\Sect(M)$ by Lemma \ref{lem: commutes}.
Through the $\T$-valued 2-cohomology vanishing
as in type III$_\la$ case,
we can take a $\hth$-cocycle $v$ such that 
\[
\hth_p\rho\hth_p^{-1}=\Ad v_p^* \rho 
\quad\mbox{for all}\ p\in \T. 
\]

We show that $v$ is a $\hth$-coboundary by using the 2-by-2 matrix argument.
Set $P:=M_2(\C)\oti M$, the $\T$-action $\al:=\id\oti \hth$
and the $\al$-cocycle $w:=e_{11}\oti 1+e_{22}\oti v$.
Let $\be=\Ad v \al$ be the perturbed action.
We will show $p:=e_{11}\oti 1$ and $q:=e_{22}\oti1$
are equivalent in $P^\be$.

First we show that $p$ and $q$ are properly infinite projections in $P^\be$.
Since $p P^\be p=\C e_{11}\oti M^\hth$, $p$ is properly infinite.
For $q$, we have $q P^\be q=\C e_{22}\oti M^{\Ad v \hth}$.
Since $M^{\Ad v \hth}\supset \rho(M^\hth)$, $q$ is properly infinite.

Second we show that the central supports of $p$ and $q$ are equal to $1$.
We set a unitary $V:=e_{11}\oti U+e_{22}\oti \rho(U)$.
The equality $\be_p(V)=e^{-ip}V$ shows that $\be$ is a dual action.
Hence $P$ is naturally isomorphic to $P^\be\rti_{\Ad V}\Z$.
We regard $P=P^\be\rti_{\Ad V}\Z$.
On one hand, $P\rti_\be \T$ is isomorphic to $P^\be\oti B(\el^2(\Z))$ 
by Takesaki duality. 
Since $p,q\in P^\be$, $\pi_\be(p)$ and $\pi_\be(q)$ 
are mapped to $p\oti1$ to $q\oti1$. 
Hence it suffices to show that the central supports of
$\pi_\be(p)$ and $\pi_\be(q)$ are equal to $1$
in $P\rti_\be \T$. 
On the other hand, $P\rti_\be\T$ is naturally isomorphic to 
$P\rti_\al \T=M_2(\C)\oti (M\rti_\hth\T)$. 
Since $p,q\in P^\al\cap P^\be$, 
$\pi_\be(p)$ and $\pi_\be(q)$ 
are mapped to $\pi_\al(p)=p\oti1=e_{11}\oti1\oti1$ 
and $\pi_\al(q)=q\oti1=e_{22}\oti1\oti1$. 
Hence the central supports of $\pi_\be(p)$
and $\pi_\be(q)$ in $P\rti_\be\T$
are equal to $1$. 

Therefore $p$ and $q$ are equivalent in $P^\be$,
and $v$ is a $\hth$-coboundary. 
Take a unitary $w\in M$ such that $v_p=w^*\th_p(w)$. 
Then $\hth_p \circ(\Ad w \rho)=(\Ad w \rho)\circ\hth_p$. 
Hence we may assume that $\rho$ and $\hth_p$ commute. 
Then $\rho(U)U^*$ is contained in $M^\hth=N$. 
Since $\th$ is stable \cite[Theorem III.5.1 (i)]{CT},
there exists $w_1\in U(N)$ such that
$\rho(U)U^*=w_1^*\th(w_1)$.
Then $\Ad w_1 \rho$ satisfies the desired properties.
\end{proof}

\begin{lem}\label{lem: inner-modular}
Let $\rho$ be an irreducible endomorphism
with finite index on $M$.
Assume that $\ph_\rho$ and $\hth$ commute and $\rho(U)=U$.
If an endomorphism $\rho|_N\th^n$ on $N$ is inner
for some $n\in\Z$, then $\rho$ is a modular endomorphism.
\end{lem}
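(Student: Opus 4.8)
The plan is to reduce the modularity of $\rho$ on $M$ to the triviality of the canonical extension $\trho$ on $\tM$, exploiting the discrete decomposition $M = N\rti_\th\Z$ and the hypothesis $\rho(U)=U$. First I would identify the canonical core $\tM$ concretely in terms of the decomposition: since $\vph=\hat\ta$ is the lacunary dual weight with $M_\vph$ closely tied to $N$, and since $\ph_\rho$ commutes with $\hth$, the centralizer-type structure of $\tM$ can be described using $N$, the unitary $U$ (implementing $\th$), and the one-parameter group $\la^\vph(t)$. The key observation is that $\trho$ is determined by $\trho(x)=\rho(x)$ for $x\in M$ together with $\trho(\la^\vph(t)) = d(\rho)^{it}[D\vph\circ\ph_\rho:D\vph]_t\la^\vph(t)$, and the Connes cocycle here is controlled by the relation $\vph\circ\ph_\rho \approx$ (scalar)$\cdot\vph$ coming from the module computation, which is exactly what the hypothesis $\mo(\rho)=\th_{s_0}$ (implicitly in play, since $\rho\in\Cnd(M)\subset$ a module-controlled class) delivers.

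**Main steps.** The steps I would carry out, in order: (1) Use $\rho(U)=U$ and commutation of $\ph_\rho$ with $\hth$ to show that $\rho$ restricts to an endomorphism $\rho|_N$ of $N$ with a compatible left inverse, and that $\rho|_N\th^n$ being inner for some $n\in\Z$ means there is an isometry (unitary, since $N$ is semifinite type II) $v\in N$ with $\Ad v\circ(\rho|_N\th^n)=\id$ on $N$, i.e. $\rho(x) = v^*\th^{-n}(x)v$ for $x\in N$. (2) Because $\hth$ is the dual of $\th=\Ad U$, the automorphism $\th^{-n}$ of $N$ extends to an inner automorphism $\Ad U^{-n}$ of $M$; combined with $\rho(U)=U$, I would deduce that $\rho$ itself is of the form $\Ad(\text{something}) \circ \sigma$ where $\sigma$ is essentially a modular-type map built from $U^{-n}$. (3) Pass to $\tM$: show that $\trho$ is implemented by a finite-dimensional Hilbert space in $\tM$. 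Here the unitary $v$ from step (1) together with the powers of $U$ (which live in $M\subset\tM$) and possibly the factor $d(\rho)^{it}\la^\vph(t)$-correction assemble into an implementing isometry for $\trho$; irreducibility of $\rho$ forces the dimension to be exactly $d(\rho)$, and modularity follows from the definition in \cite[Definition 3.1]{Iz1}.

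**The main obstacle.** The hard part will be step (3): translating "inner on the type II slice $N$" into "inner on the core $\tM$" requires matching the canonical extension's action on $\la^\vph(t)$ with what the inner implementation on $N$ predicts. Concretely, I expect the delicate point to be verifying that the Connes cocycle $d(\rho)^{it}[D\vph\circ\ph_\rho:D\vph]_t$ is of the form $\sum_j W_j \si_t^\vph(W_j^*)$ for a finite family $\{W_j\}$ in $\tM$ (a Hilbert space in $\tM$), i.e. that the $\Z$-action structure from the discrete decomposition interacts correctly with the modular flow $\si^\vph$. The commutation hypotheses ($\ph_\rho$ with $\hth$, and $\rho(U)=U$) are precisely designed to make this cocycle computation go through, so I would carefully unwind the relation $\si_t^{\vph\circ\ph_\rho}\circ\rho = \rho\circ\si_t^\vph$ from \eqref{eq: si-ph} to pin down $[D\vph\circ\ph_\rho:D\vph]_t$ and then exhibit the implementing Hilbert space explicitly using $v$, $U^n$, and the matrix units/isometries coming from the inner structure of $\rho|_N\th^n$. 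Once the implementing finite-dimensional Hilbert space in $\tM$ is produced, the conclusion $\rho\in\End(M)_{\rm m}$ is immediate.
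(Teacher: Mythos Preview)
Your proposal contains a genuine gap at Step (1). You write that ``$\rho|_N\th^n$ being inner \dots\ means there is an isometry (unitary, since $N$ is semifinite type II) $v\in N$ with $\Ad v\circ(\rho|_N\th^n)=\id$ on $N$.'' This conflates \emph{inner endomorphism} with \emph{inner automorphism}. In the paper's terminology an inner endomorphism is one of the form $\rho_\meH$ for a finite-dimensional Hilbert space $\meH\subset N$ with support $1$; nothing forces $\dim\meH=1$. Here $N$ is type II$_\infty$ (the centralizer of a lacunary weight on a type III$_0$ factor), so isometries need not be unitary, and in fact the paper shows $\dim\meH=d(\rho)$, which is typically $>1$. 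Your single-unitary reduction therefore collapses the problem to the case $d(\rho)=1$, which is not what is being proved.

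Because of this, your Steps (2)--(3) miss the actual mechanism. The paper takes an orthonormal basis $(V_i)_{i=1}^d$ of $\meH$ with $\rho|_N=\rho_\meH$, observes that $\rho(U)=U$ forces $V_i^*\th(V_j)\in Z(N)$, and packages these into a unitary $c=(V_i^*\th(V_j))\in M_d(\C)\oti Z(N)$, a $U(d)$-valued $\th$-cocycle. After verifying $\vph\circ\ph_\rho=d^{-1}\vph$ (so $\trho(\la^\vph(t))=\la^\vph(t)$) and $d(\rho)=d$ via a Pimsner--Popa argument, the key step is to pass to the core $\tM$: the system $\{Z(N)\oti L(\R),\,\Ad\pi_{\si^\vph}(U)\}$ has a fundamental domain and is therefore stable, so $c\oti 1=\nu(\id\oti\be)(\nu^*)$ for some unitary $\nu\in M_d(\C)\oti Z(N)\oti L(\R)$. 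Setting $W_i=\sum_j\pi_{\si^\vph}(V_j)\nu_{ji}$ then yields a $d$-dimensional Hilbert space in $\tM$ implementing $\trho$. None of this $U(d)$-cocycle structure or the stability/trivialization step appears in your outline; the ``Connes cocycle'' you propose to compute is not the relevant obstruction here---the real obstruction lives in $H^1$ of the flow with $U(d)$ coefficients, and its vanishing in $\tM$ is exactly what makes $\trho$ inner.
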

\begin{proof}
The proof is similar to that of \cite[Theorem 5.2]{HS-Pt1}. 
We may and do assume that $\rho|_N$ is inner by 
perturbing $\rho$ to $\rho\Ad U^n$. 
Let $\meH\subs N$ be a Hilbert space such that $\rho|_N=\rho_\meH$. 
Since $\rho$ has finite index, $\meH$ is finite dimensional. 
Put $d:=\dim(\meH)$. 
Let $(V_i)_{i=1}^d$ be an orthonormal base of $\meH$. 
Note that $\vph\rho=d\vph$ holds. 
Indeed for $x\in M_+$, we have 
\begin{align*}
\vph(\rho(x))
=&\,
\ta\left( \int_{\T}\hth_p(\rho(x))\right)
=
\ta\left(\rho\left(\int_{\T}\hth_p(x)\right)\right)
\\
=&\,
\sum_{i=1}^d
\ta\left(V_i\left(\int_{\T}\hth_p(x)\right)V_i^*\right)
=
\sum_{i=1}^d
\ta\left(V_i^* V_i\left(\int_{\T}\hth_p(x)\right)\right)
\\
=&\,
d \ta\left(\int_{\T}\hth_p(x)\right)
=d\vph(x). 
\end{align*}

Next we will show $\ta\circ\ph_\rho=d^{-1}\ta$ on $N$. 
Let $h$ be the positive operator affiliated with $N$ such that 
$\ta\circ\ph_\rho=\ta_h$. 
Since $\ph_\rho\rho|_N=\id_N$, $h$ is indeed contained 
in $\rho(N)'\cap N$. 
Moreover we have 
\begin{align*}
\th(h^{it})
=&\,
\th([D\ta\circ\ph_\rho:D\ta]_t)
=
[D\ta\circ\ph_\rho\circ\th^{-1}:D\ta\th^{-1}]_t
=
[D\ta\th^{-1}\circ\ph_\rho:D\ta\th^{-1}]_t
\\
=&\,
[D\ta\th^{-1}\circ\ph_\rho:D\ta\circ\ph_\rho]_t
[D\ta\circ\ph_\rho:D\ta]_t
[D\ta:D\ta\th^{-1}]_t
\\
=&\,
\rho([D\ta\th^{-1}:D\ta]_t)
[D\ta\circ\ph_\rho:D\ta]_t
[D\ta:D\ta\th^{-1}]_t
\\
=&\,
[D\ta\th^{-1}:D\ta]_t
[D\ta\circ\ph_\rho:D\ta]_t
[D\ta:D\ta\th^{-1}]_t
\\
=&\,
[D\ta\circ\ph_\rho:D\ta]_t
[D\ta\th^{-1}:D\ta]_t
[D\ta:D\ta\th^{-1}]_t
\\
=&\,
[D\ta\ph_\rho:D\ta]_t=h^{it}. 
\end{align*}
Hence $h$ is affiliated with $(\rho(N)'\cap N)^\th=\rho(M)'\cap N=\C$, 
and $h$ is a positive scalar. 
Using $\ta\rho=d\ta$ and $\ta\circ\ph_\rho=h \ta$, 
we have $h=d^{-1}$.
Since $\hth$ and $\ph_\rho$ commute,
we have $\vph\circ\ph_\rho=d^{-1}\vph$.

Define
$\ph_1=d^{-1}\sum_{i=1}^d V_i^* xV_i$ for $x\in N$.
Then $\ph_1$ is a left inverse for $\rho|_N$ and satisfies
$\ta\circ\ph_1=d^{-1}\ta$. 
The equality $\ta\circ\ph_\rho|_N=\ta\circ\ph_1$
implies that $\ph_\rho|_N=\ph_1$.

Now we show that $d(\rho)=d$.
From the Pimsner-Popa inequality,
the map $\rho\circ \ph_\rho|_N-d(\rho)^{-2}\id_N$ is
a completely positive on $N$.
Set a projection 
$p:=\sum_{i,j=1}^d d^{-1} V_i V_i V_j^* V_j^* \in N$.
Then $\rho(\ph_\rho(p))=\rho_{\meH}(\ph_1(p))=d^{-2}$.
Hence $d(\rho)\geq d$.

Note that $\rho(\ph_\rho(U))=U$.
Regarding $M=N\rti_\th \Z \subs N\oti B(\el^2(\Z))$,
we can see that $(\rho_\meH\circ\ph_1\oti\id)|_{M}$
is a conditional expectation from $M$ onto $\rho(M)$
whose index is equal to $d^2$.
By the minimality of $E_\rho$, $d^2\geq \Ind(E_\rho)=d(\rho)^2$.
Thus $d(\rho)=d$.

The equality $\rho(U)=U$ is equivalent to 
$\meH^* \th(\meH)\subs Z(N)$. 
We set $c_{i,j}:=V_i^* \th(V_j)$ for $1\leq i,j\leq d$. 
Then $c:=(c_{i,j})_{i,j}$ is unitary 
in $M_d(\C)\oti Z(N)$. 

Consider the canonical core $\tM=M\rti_\vph \R\subs M\oti B(L^2(\R))$ 
and the canonical extension $\trho$.
Note that $Z(N)\oti L(\R)\subs \tM$.
Let $\be=\pi_{\si^\vph}(U)\in \Aut(\tM)$. 
Then it is known that the covariant system $\{Z(N)\oti L(\R),\be\}$
has a fundamental domain, that is, there exists a projection 
$e\in Z(N)\oti L(\R)$ such that
$
\sum_{n\in\Z}\be^n(e)=1$. 
In particular, $\{M_d(\C)\oti Z(N)\oti L(\R),\id\oti \be\}$ is stable. 
Hence there exists a unitary $\nu\in M_d(\C)\oti Z(N)\oti L(\R)$ 
such that $c\oti1=\nu(\id\oti\be)(\nu^*)$. 
Set
$
W_i:=\sum_{j=1}^d  \pi_{\si^\vph}(V_j)\nu_{ji}$. 
It is easy to see that a family $(W_i)_{i=1}^d$ 
span a Hilbert space in $\tM$. 
We show that $\trho$ is implemented by $(W_i)_{i=1}^d$. 
Take $x\in N$. 
Since $\nu_{ij}\in Z(N)\oti L(\R)$ for all $i,j$
and $Z(N)\oti L(\R)\subs \pi_{\si^\vph}(N)'$, 
we have 
\[
W_i \pi_{\si^\vph}(x)=\pi_{\si^\vph}(\rho(x))W_i=\trho(\pi_{\si^\vph}(x))W_i.
\]
Since $\vph\circ\ph_\rho=d^{-1}\vph=d(\rho)^{-1}\vph$,
we have $\trho(\la^\vph(t))=\la^\vph(t)$.
Then it is trivial that 
\[
W_i\la^\vph(t)=\la^\vph(t)W_i=\trho(\la^\vph(t))W_i
\quad\mbox{for all}\ t\in\R. 
\]
Finally we have 
\begin{align*}
\sum_{i=1}^d W_i \pi_{\si^\vph}(U)W_i^*
=&\,
\sum_{i,j,k=1}^d  
\pi_{\si^\vph}(V_j)\nu_{ji} \pi_{\si^\vph}(U) 
\nu_{ki}^* \pi_{\si^\vph}(V_k^*)
\\
=&\,
\sum_{i,j,k=1}^d  
\pi_{\si^\vph}(V_j)\nu_{ji}  
\be(\nu_{ki}^* )\pi_{\si^\vph}(U)\pi_{\si^\vph}(V_k^*)
\\
=&\,
\sum_{j,k=1}^d  
\pi_{\si^\vph}(V_j)
\,(\nu (\id\oti\be)(\nu^*))_{jk}\,\pi_{\si^\vph}(\th(V_k^*) U)
\\
=&\,
\sum_{j,k=1}^d  
\pi_{\si^\vph}(V_j)
\,(c\oti1)_{jk}\,\pi_{\si^\vph}(\th(V_k^*) U)
\\
=&\,
\sum_{j,k}^d  
\pi_{\si^\vph}(V_j)
(c_{jk}\oti1)\pi_{\si^\vph}(\th(V_k^*) U)
\\
=&\,
\sum_{j,k}^d  
\pi_{\si^\vph}(V_j c_{jk} \th(V_k^*)U)
=
\sum_{j,k}^d  
\pi_{\si^\vph}(V_j V_j^*  \th(V_k V_k^*) U)
\\
=&\,
\pi_{\si^\vph}(U)
=\trho(\pi_{\si^\vph}(U)). 
\end{align*}
Since the core $\tM$ is generated by $\pi_{\si^\vph}(N)$, 
$\pi_{\si^\vph}(U)$ and $\la^\vph(t)=1\oti \la(t)$, $t\in\R$,
we see that $\trho$ is implemented by $(W_i)_{i=1}^d$. 
\end{proof}

\subsubsection{Central non-triviality of hyperfinite
type II inclusions}

\begin{lem}
Let $N\subs M$ be an inclusion of
von Neumann algebras with a faithful normal conditional expectation 
$E_N^M$.
Assume that $N$ is hyperfinite and of type II.
Then the inclusion is centrally non-trivial in the following sense: 
For any $\de,\vep>0$, 
any faithful state $\vph$ on $N$,
any finite subset $\meF\subs N_*$ and
any finite family $(x_i)_{i=1}^m$ in the unit ball of $M$
with $E_{N\vee (N'\cap M)}^M(x_i)=0$,
there exists a partition of unity $(q_j)_{j=1}^{t}$in $N$ 
such that
\begin{enumerate}
\item
the partition number $t$ does not depend on $\de$ and $\meF$.

\item
$\displaystyle
\left\| \sum_{j=1}^{t}
q_j x_i q_j\right\|_{\vph\circ E_N^M}<\vep$
for all $1\leq i\leq m$, 

\item 
$\displaystyle\left\|
\sum_{j=1}^{t}
q_j \chi q_j-\chi\right\|<\de$ 
for all $\chi\in \meF$. 
\end{enumerate}
\end{lem}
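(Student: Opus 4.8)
The plan is to reduce the statement to the case where $N$ is the hyperfinite type II$_1$ factor and then invoke a Rohlin-type tower together with a Pimsner--Popa-type estimate. First I would reduce to the type II$_1$ case. Since $N$ is type II, by cutting with a finite projection $p\in N$ and using $pMp\sups pNp$ together with the reduced expectation, it suffices to work on a corner; and since the partition of unity can be transported back and since the condition $E_{N\vee (N'\cap M)}^M(x_i)=0$ is preserved under the compression, we may assume $N$ is a finite factor with separable predual. Then, because $N$ is hyperfinite, $N$ contains an increasing sequence of finite dimensional subfactors $N_k\cong M_{n_k}(\C)$ with $\bigcup_k N_k$ weakly dense. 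The key point is that on $N_k$ the relative commutant data approximates the global one, and the matrix units of $N_k$ provide a partition of unity with a fixed partition number $t=n_k^2$ depending only on $\vep$ (through the choice of $k$), not on $\de$ or $\meF$.

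Next I would carry out the main estimate. For fixed $\vep>0$, the idea is to choose $N_k$ so that the conditional expectation $E_{N_k'\cap M}^M$ is close to $E_{N'\cap M}^M$ on the finite family $(x_i)$ in the $\|\cdot\|_{\vph\circ E_N^M}$-norm; by Popa's local quantization argument (the proof of \cite[Proposition 3.2]{Po1}, applied to the inclusion $N_k\subs M$ which is ``almost'' the inclusion $N\subs M$) the partition of unity given by a system of matrix units $\{e_{jj}^{(k)}\}_{j=1}^{n_k}$ of $N_k$ satisfies
\[
\Bigl\|\sum_{j=1}^{n_k} e_{jj}^{(k)} x_i e_{jj}^{(k)}
- E_{N_k'\cap M}^M(x_i)\Bigr\|_{\vph\circ E_N^M}<\vep/2,
\]
and since the matrix units of a type II$_1$ factor are approximately diagonalizable with respect to any normal state, one can further arrange, by passing to a unitarily rotated copy of $N_k$ inside $N$ (using that $N$ is hyperfinite and that all finite dimensional subfactors of the same type are approximately conjugate), that $\|\sum_j e_{jj}^{(k)}\chi e_{jj}^{(k)}-\chi\|<\de$ for all $\chi\in\meF$. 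The assumption $E_{N\vee(N'\cap M)}^M(x_i)=0$ combined with $E_{N_k'\cap M}^M(x_i)\to E_{N'\cap M}^M(x_i)$ and $E_{N'\cap M}^M(x_i)=E_{N\vee(N'\cap M)}^M(x_i)=0$ (here using that $N$ is a factor so $E_{N\vee(N'\cap M)}^M = E_{N'\cap M}^M\circ(\text{something compatible})$, more precisely that the $N$-average kills the $N'\cap M$-component) then gives (2).

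Finally I would assemble the pieces: choose $k$ (hence $t=n_k^2$) first, depending only on $\vep$ and $\vph$ and $(x_i)$; then, having fixed the size of the tower, use the hyperfiniteness of $N$ and a standard approximate-conjugacy argument to move $N_k$ inside $N$ so that its diagonal matrix units $\vep$-commute with the (finitely many, but now arbitrary) functionals in $\meF$ — this is possible because in a type II$_1$ von Neumann algebra an $n$-dimensional abelian subalgebra with equal traces can be chosen $\de$-central for any prescribed finite set of normal functionals. The main obstacle I expect is precisely controlling the interaction of the two requirements: the partition number $t$ must be frozen before $\de$ and $\meF$ are revealed, yet the $\de$-centrality with respect to $\meF$ must be achieved with that same fixed $t$. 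This is handled by separating the roles: the size of the tower is dictated by the $\vep$-quantization of the $x_i$'s (a Popa-type local argument), while the $\de$-centrality is a soft consequence of type II$_1$-ness and does not constrain $t$ — one simply conjugates the already-chosen finite dimensional subalgebra to a generic position. Verifying that these two constructions can be performed simultaneously, i.e.\ that conjugating $N_k$ to achieve $\de$-centrality does not destroy the $\vep$-quantization estimate, is the delicate technical point, and it relies on the fact that the quantization estimate only uses the algebra $N_k$ up to its isomorphism class and the norm $\|\cdot\|_{\vph\circ E_N^M}$, both of which are unitarily invariant.
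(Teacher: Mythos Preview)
Your overall strategy is in the right neighborhood but the execution has the key step inverted, and this creates the ``delicate technical point'' you flag---which in the paper's proof simply does not arise.

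The paper does not take the partition in a finite-dimensional subfactor $N_k$ and then try to rotate it. Instead, after reducing to the II$_1$ case and writing $N=\meC_N\otimes R_0$ (note: $N$ need not be a factor; your compression argument does not kill the center), it fixes an increasing sequence of type I$_{2^n}$ subfactors $R_n\subset R_0$ and works in the \emph{relative commutant} $\mN_p:=R_p'\cap N$. The point is that $\meF$ can be approximated by $\{\ta a_i\}$ with $a_i\in\meC_N\otimes R_{p_0}$, and $\vph$ can be approximated by $\ps=\ta h_1$ with $h_1\in\meC_N\otimes R_{p_1}$; setting $p=\max(p_0,p_1)$, every element of $\mN_p$ automatically commutes with the $a_i$, so condition (3) is free. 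Popa's local quantization (Theorem A.1.2 of \cite{Po1}) is then applied to $\mN_p\subset M_{\ps\circ E_N^M}$, and the crucial observation is that the bound on $t$ it produces depends only on $\vep$ and $m$, \emph{not} on the algebra $\mN_p$. Hence $p$ may depend on $\de$ and $\meF$ while $t$ does not.

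By contrast, your proposal places the partition in $N_k$ itself, claims the diagonal matrix units satisfy $\|\sum_j e_{jj}^{(k)}x_i e_{jj}^{(k)}-E_{N_k'\cap M}^M(x_i)\|<\vep/2$, and then tries to conjugate $N_k$ to achieve (3). The displayed estimate is not what averaging over the diagonal gives (that yields the expectation onto $D_k'\cap M$, not $N_k'\cap M$), and local quantization does not hand you the diagonal matrix units---it produces \emph{some} partition in the algebra, chosen depending on the $x_i$. More seriously, the conjugation step is not unitarily invariant in the way you suggest: replacing $e_{jj}$ by $ue_{jj}u^*$ changes $\|\sum_j q_j x_i q_j\|_{\vph\circ E_N^M}$ because both the element $u^*x_iu$ and the state $\vph\circ\Ad u$ change, so there is no reason the $\vep$-estimate survives. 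The paper's inversion---partition in the commutant, not the subfactor---eliminates this tension entirely.
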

\begin{proof}
This can be similarly proved as in the case of subfactors \cite{Po1}. 
We may and do assume that $N$ is of type II$_1$
(see \cite[Proposition 3.4 (i)]{Po1}).
Let $\ta$ be a faithful tracial state on $N$.
Let $\meC_N$ be the center of $N$.
Since $N$ is hyperfinite,
we can regard $N=\meC_N\oti R_0$, 
where $R_0$ denotes the hyperfinite type II$_1$ factor. 
Let $R_n$ be a type I$_{2^n}$ subfactor of $R_0$ 
such that $\{R_n\}_{n=1}^\infty$ is an increasing sequence and 
$\cup_{n\geq1} R_n$ is weakly dense in $R_0$. 

We set $\mN_n:=R_n'\cap N$ and $\mM_n:=R_n'\cap M$. 
The inclusion $N\subs M$ is isomorphic to 
$\mN_n\oti R_n\subs \mM_n\oti R_n$,
and the state $\ps$ is $\ps|_{\mM_n}\oti \ta_{R_n}$. 
Using this identification, we have 
\[
N\vee(N'\cap M)=(\mN_n\vee(\mN_n'\cap \mM_n))\oti R_n
=\mN_n\vee(\mN_n'\cap M). 
\]
In particular, 
this implies 
$E_{N\vee(N'\cap M)}^M=E_{\mN_n\vee(\mN_n'\cap M)}^M$. 

Let $\meF:=\{\chi_i\}_{i=1}^k \subs N_*$ be a finite set. 
Let $\de>0$. 
Since the union of an increasing sequence 
$\{\meC_N\oti R_n\}_{n=1}^\infty$ is strongly dense in $N$, 
the set $\{\ta a\mid a\in \meC_N\oti R_n, n\geq1\}$ is dense in $N_*$. 
Hence there exists $p_0\in\N$ and $a_i\in \meC_N\oti R_{p_0}$, 
$1\leq i\leq k$ 
such that $\|\chi_i-\ta a_i\|<\de/2$ for $1\leq i\leq k$. 

Let $\vep>0$. 
By spectral analysis, 
there exists a positive invertible $h$ in 
$N$ such that $\|\vph-\ta h\|<\vep^2/4$ and $\ta(h)=1$. 
By taking enough large $p_1\in\N$, there exists a positive invertible 
$h_1\in \meC_N\oti R_{p_1}$ 
such that $\|\ta h-\ta h_1\|<\vep^2/4$ and $\ta(h_1)=1$. 
Then $\|\vph-\ta h_1\|<\vep^2/2$. 
We set $\ps:=\ta h_1\in N_*$. 
Putting $p:=\max(p_0,p_1)$, we have $h, a_i\in \meC_N\oti R_p$. 

Next let $(x_i)_{i=1}^m$ be given as in the statement.
Then $(x_i)_{i=1}^m$ are orthogonal to $\mN_p\vee(\mN_p'\cap M)$.
Applying Popa's local quantization to
$\mN_p\subs M_{\ps\circ E_N^M}$ \cite[Theorem A.1.2]{Po1},
we have a partition of unity $(q_j)_{j=1}^t$ in $\mN_p$ such that
\[
\left\|\sum_{j=1}^t q_j x_i q_j \right\|_{\ps\circ E_N^M}
<\frac{\vep}{\sqrt{2}} 
\quad\mbox{for all}\ 1\leq i\leq m,
\]
where
$t\leq (40\sqrt[4]{2}\vep^{-1/2})^{(m\log (2\vep^{-2})/\log\frac{4}{3})}$.
We check that $(q_j)_{j=1}^t$ has the desired property. 
Since
$
%\displaystyle
\left\|\sum_{j=1}^t q_j x_i q_j\right\|\leq \|x_i\|\leq1$, 
we have 
\begin{align*}
\left\|\sum_{j=1}^t q_j x_i q_j \right\|_{\vph\circ E_N^M}^2
=&\,
\vph\circ E_N^M\left(\left|\sum_{j=1}^t q_j x_i q_j\right|^2\right)
\\
=&\,
(\vph\circ E_N^M-\ps\circ E_N^M)
\left(\left|\sum_{j=1}^t q_j x_i q_j\right|^2\right)
+
\left\|\sum_{j=1}^t q_j x_i q_j \right\|_{\ps\circ E_N^M}^2
\\
\leq&\,
\|\vph\circ E_N^M-\ps\circ E_N^M\|
+\vep^2/2
\\
=&\,
\|\vph-\ps\|
+\vep^2/2
<\vep^2/2+\vep^2/2=\vep^2. 
\end{align*}
Hence we have 
\[
\left\|\sum_{j=1}^t q_j x_i q_j \right\|_{\vph\circ E_N^M}<\vep. 
\]
Moreover, since $q_j\in \mN_p$ and $a_i\in \meC_N\oti R_p$ commute, 
we have
\[
\|[\chi_i,q_j]\|=\|[\chi_i-\ta a_i,q_j]\|\leq 2\|\chi_i-\ta a_i\|<\de. 
\]
\end{proof}

The previous lemma immediately implies the following lemma. 

\begin{lem}\label{lem: cent-nont-II}
Let $N\subs M$ be an inclusion of
von Neumann algebras with a faithful normal conditional expectation 
$E_N^M$.
Assume that $N$ is hyperfinite and of type II.
Then for any $\vep>0$, 
any faithful state $\vph$ on $N$ and 
any finite finite family $(x_i)_{i=1}^m$ in the unit ball of $M$ 
which satisfies $E_{N\vee (N'\cap M)}^M(x_i)=0$ and $x_i\neq0$,
there exist $t\in\N$ 
and a partition of unity $(q_j)_{j=1}^t$in $N_\om$ such that
\[
\left\| \sum_{j=1}^t q_j x_i q_j\right\|_{(\vph\circ E_N^M)^\om}
<\vep\|x_i\|_{\vph\circ E_N^M} 
\quad\mbox{for all}\ 1\leq i\leq m.
\]
\end{lem}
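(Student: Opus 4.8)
The plan is to deduce this from the preceding lemma by a routine ultraproduct reindexation, the key point being that the partition number $t$ furnished there depends only on $\vep$ and $m$, not on the accuracy $\de$ or the finite set $\meF$. Since $\vph\circ E_N^M$ is a faithful normal state on $M$ and each $x_i\neq0$, the number $\vep_0:=\tfrac{\vep}{2}\min_{1\le i\le m}\|x_i\|_{\vph\circ E_N^M}$ is strictly positive. Fix an increasing sequence $\meF_1\subs\meF_2\subs\cdots$ of finite subsets of $N_*$ whose union is norm dense. For each $\nu\in\N$, apply the preceding lemma with parameters $\vep_0$, $\de=1/\nu$ and $\meF_\nu$; by the uniformity of the partition number we obtain a single $t\in\N$ and, for every $\nu$, a partition of unity $(q_j^\nu)_{j=1}^t$ in $N$ with
\[
\Bigl\|\sum_{j=1}^t q_j^\nu x_i q_j^\nu\Bigr\|_{\vph\circ E_N^M}<\vep_0
\quad(1\le i\le m),
\qquad
\Bigl\|\sum_{j=1}^t q_j^\nu\chi q_j^\nu-\chi\Bigr\|<\frac{1}{\nu}
\quad(\chi\in\meF_\nu).
\]

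Next I would put $q_j:=\pi_\om((q_j^\nu)_\nu)$. For fixed $\nu$ the $q_j^\nu$ are mutually orthogonal projections summing to $1$, so the $q_j$ form a partition of unity in $N^\om$; to see $q_j\in N_\om$ it suffices to check that each $(q_j^\nu)_\nu$ is $\om$-centralizing. A short computation gives, for $k\neq l$, the identity $q_k^\nu\chi q_l^\nu=-\,q_k^\nu\bigl(\sum_j q_j^\nu\chi q_j^\nu-\chi\bigr)q_l^\nu$, whence $\|q_k^\nu\chi q_l^\nu\|<1/\nu$ for $\chi\in\meF_\nu$; combined with $[\chi,q_j^\nu]=\sum_{l\neq j}q_l^\nu\chi q_j^\nu-\sum_{k\neq j}q_j^\nu\chi q_k^\nu$ this yields $\|[\chi,q_j^\nu]\|\le 2(t-1)/\nu$ for $\chi\in\meF_\nu$. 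Since $\|q_j^\nu\|\le1$ and $\bigcup_\nu\meF_\nu$ is norm dense in $N_*$, it follows that $\lim_{\nu\to\om}\|[\chi,q_j^\nu]\|=0$ for all $\chi\in N_*$, so indeed $q_j\in N_\om$.

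Finally, $(\sum_{j}q_j^\nu x_i q_j^\nu)_\nu$ is a representing sequence of $\sum_j q_jx_iq_j\in M^\om$, and because $(\vph\circ E_N^M)^\om$ is computed via the $\si$-weak ultralimit and $\vph\circ E_N^M$ is normal,
\[
\Bigl\|\sum_{j=1}^t q_jx_iq_j\Bigr\|_{(\vph\circ E_N^M)^\om}^2
=\lim_{\nu\to\om}\Bigl\|\sum_{j=1}^t q_j^\nu x_i q_j^\nu\Bigr\|_{\vph\circ E_N^M}^2
\le\vep_0^2
\le\Bigl(\frac{\vep}{2}\Bigr)^2\|x_i\|_{\vph\circ E_N^M}^2
<\vep^2\|x_i\|_{\vph\circ E_N^M}^2,
\]
which is the asserted estimate. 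I do not expect any genuine obstacle: the only points needing a little attention are the uniformity of $t$ (exactly why condition (1) of the preceding lemma was recorded) and deriving the individual commutator bounds needed to place the $q_j$ in $N_\om$ from the global pinching estimate; everything else is standard reindexation bookkeeping.
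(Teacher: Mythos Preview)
Your argument is correct and follows the same route as the paper: apply the preceding lemma with $\vep_0=\tfrac{\vep}{2}\min_i\|x_i\|_{\vph\circ E_N^M}$, use the $\de$- and $\meF$-independence of $t$ to run a diagonal over an exhaustion $\{\meF_\nu\}$ of $N_*$ with $\de=1/\nu$, and pass to $N_\om$. The paper simply asserts that $(q_j^\nu)_\nu$ is centralizing without further comment; your derivation of the individual commutator bounds from the pinching estimate $\|\sum_j q_j^\nu\chi q_j^\nu-\chi\|<1/\nu$ is a welcome clarification of that step.
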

\begin{proof}
In the previous lemma, we take 
$(\vep/2) \min\{\|x_i\|_{\vph\circ E_N^M}\mid 1\leq i\leq m\}$ 
for $\vep$. 
Note that we can take the partition number $t$ 
which does not depend on $\de$ and $\meF$. 
Let $\{\meF_\nu\}_{\nu=1}^\infty$ be an increasing sequence of 
finite sets in $N_*$ whose union is dense in $N_*$. 
Letting $\de=1/\nu$ and $\meF=\meF_\nu$ for $\nu\in\N$ in the previous 
lemma, we obtain a corresponding partition of unity $(q_j^\nu)_{j=1}^t$. 
Then it is trivial that the sequence $(q_j^\nu)_\nu$ is centralizing, 
and $q_j:=\pi_\om((q_j^\nu)_\nu)$, $1\leq j\leq t$ is a desired partition 
of unity. 
\end{proof}

\subsubsection{Endomorphisms on type II von Neumann algebras}

Let $N$ be a hyperfinite type II von Neumann algebra 
with a centrally ergodic automorphism $\th$.
We denote by $\meC$ the center of $N$. 
We consider a subset $\mE_\th\subs \End(N)$ which consists of 
endomorphisms with left inverses commuting $\th$. 
For each $\rho\in \mE_\th$, 
we choose a left inverse $\ph_\rho$ on $N$ such that 
$\ph_\rho \th=\th\ph_\rho$. 
Note that this equality implies $\rho\th=\th\rho$. 
For endomorphisms $\rho,\si$ on $N$, we write $(\rho,\si)_N$ for 
$(\rho,\si)$ to specify $N$. 

\begin{lem}\label{lem: partition}
Assume that $\rho\in\mE_\th$ satisfies $(\rho,\th^n)_N=0$ for any $n\in\Z$. 
Then for any $\vep>0$, $m\in\N$ and a faithful state $\ps\in N_*$, 
there exists $t\in \N$ and a partition of unity $(q_i)_{i=0}^t$ in $N_\om$ 
such that 
\begin{enumerate}

\item 
$\displaystyle\sum_{|n|\leq m}
|q_i \rho^\om(\th_\om^n(q_i))|_{(\ps\circ\ph_\rho)^\om}
<\vep|q_i|_{(\ps\circ\ph_\rho)^\om}$ 
for all $1\leq i\leq t$, 

\item $|q_0|_{(\ps\circ\ph_\rho)^\om}<\vep$. 
\end{enumerate}
\end{lem}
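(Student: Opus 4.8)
The plan is to prove this by a local‑quantization/pigeonhole argument resting on Popa's central non‑triviality of hyperfinite type II inclusions (Lemma~\ref{lem: cent-nont-II}).

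First I would reduce to the case that $N$ is a hyperfinite type II$_1$ von Neumann algebra. As in the proof of the preceding lemma (cf.\ \cite[Proposition~3.4~(i)]{Po1}), compressing by a projection of $N^\th$ and writing $N\cong Z(N)\oti R_0$ carries the datum $(\th,\rho,\ph_\rho)$ to the type II$_1$ situation, preserves central ergodicity of $\th$ and the hypothesis $(\rho,\th^n)_N=0$ for all $n$, and only rescales the seminorms $|\cdot|_{(\ps\circ\ph_\rho)^\om}$. Next I would reformulate the desired conclusion: since $\rho^\om(\th_\om^n(q))=(\rho\th^n)^\om(q)$ and $\rho\th^n$ maps $N$ isomorphically onto $\rho(N)$, condition (1) says precisely that each $q_i$ is (approximately) orthogonal to the subalgebras $(\rho\th^n)^\om(N_\om)\subs\rho(N)_\om\subs N^\om$ for $|n|\le m$, while condition (2) asks that the exceptional piece $q_0$, which absorbs the directions along which such orthogonality is impossible, be $\phi$-small. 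The hypothesis $(\rho,\th^n)_N=0$ for all $n\in\Z$ is exactly what rules out a macroscopic exceptional direction: if some $\th^k$ were a subsector of $\rho$, then on the corresponding corner $(\rho\th^{-k})^\om$ would act like the identity on $N_\om$, forcing $q\,(\rho\th^{-k})^\om(q)=q$ and hence a large $q_0$.

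The core is then the paving estimate: for $\de>0$ there is a finite partition of unity $(r_i)_{i=1}^s$ of $1$ in $N_\om$ with $\sum_i\bigl|r_i(\rho\th^n)^\om(r_i)\bigr|_{(\ps\circ\ph_\rho)^\om}<\de$ for every $|n|\le m$. I would obtain this from Lemma~\ref{lem: cent-nont-II} applied to $\rho(N)\subs N$ — or, to treat all $2m+1$ values of $n$ at once and to reach the images $(\rho\th^n)^\om(r_i)$ rather than the $r_i$ themselves, to a finite iterated basic construction or to $\rho(N)\subs N\rti_\th\Z$ — with the off‑diagonal elements built from the Jones projections of $\rho(N)\subs N$ and the crossed‑product unitaries, the hypothesis $(\rho,\th^n)_N=0$ guaranteeing these are nonzero with full central support. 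The quadratic dependence of $r_i(\rho\th^n)^\om(r_i)$ on $r_i$ is removed by a fast reindexation \cite[Lemma~5.3]{Oc1}: running the partition on a slow copy while feeding the outer $(\rho\th^n)^\om$ a fast, $\om$-independent copy turns the estimate into the $r_i$-compression of a \emph{fixed} family of off‑diagonal elements, which is what Lemma~\ref{lem: cent-nont-II} (via the local quantization \cite[Theorem~A.1.2]{Po1}) controls; and for orthogonal projections $r_i$ the blocks $r_i x r_i$ have mutually orthogonal supports, so $\bigl|\sum_i r_i x r_i\bigr|_\phi=\sum_i|r_i x r_i|_\phi$, producing the stated sum. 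Finally, choosing $\de<\vep^2/(2m+1)$ and letting $I_0$ be the set of $i$ with $\sum_{|n|\le m}|r_i(\rho\th^n)^\om(r_i)|_{(\ps\circ\ph_\rho)^\om}\ge\vep\,|r_i|_{(\ps\circ\ph_\rho)^\om}$, a Markov‑type bound gives $|q_0|_{(\ps\circ\ph_\rho)^\om}<\vep$ for $q_0:=\sum_{i\in I_0}r_i$, and relabelling the rest as $q_1,\dots,q_t$ yields (1) and (2).

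The step I expect to be the main obstacle is the precise mechanism by which $(\rho,\th^n)_N=0$ delivers \emph{both} the paving estimate over all $|n|\le m$ and the smallness of $q_0$, i.e.\ converting ``$\rho$ contains no $\th^k$'' into a quantitative non‑degeneracy of the relative commutants occurring in the chosen basic‑construction tower (or of $\hat\rho(N\rti_\th\Z)'\cap(N\rti_\th\Z)$) so that the off‑diagonal elements fed to Lemma~\ref{lem: cent-nont-II} have support $1$. One must be careful here that the relevant relative commutant can be strictly larger than $\rho(N)'\cap N$, so irreducibility of the subfactor is not sufficient and the vanishing of every intertwiner space $(\rho,\th^n)_N$ has to be used through the Fourier decomposition over $\th$. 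The bookkeeping around the fast reindexation — checking that the reindexed $q_i$ remain a genuine partition of unity of $N_\om$ with the stated estimates — is routine but needs care.
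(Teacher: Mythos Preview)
Your overall architecture matches the paper: feed suitable off-diagonal elements to Lemma~\ref{lem: cent-nont-II}, then run a Markov/pigeonhole selection to isolate the indices satisfying (1) and sweep the rest into $q_0$. The final paragraph of your proposal is essentially how the paper concludes.

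The gap is your mechanism for the paving estimate. Fast reindexation does not remove the quadratic dependence: if you feed a reindexed copy $\Psi(r_i)$ to the outer $(\rho\th^n)^\om$, you are estimating $|r_i\,(\rho\th^n)^\om(\Psi(r_i))|$, not $|r_i\,(\rho\th^n)^\om(r_i)|$, and there is no way to pass from the decoupled quantity back to the diagonal one. Your own caveat (``checking that the reindexed $q_i$ remain a genuine partition of unity of $N_\om$ with the stated estimates'') is precisely where this breaks, and it is not just bookkeeping.

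The paper linearizes the problem by a locally trivial matrix inclusion rather than by reindexation. Set $\meM=M_{2m+2}(\C)\oti N$, embed $N$ diagonally by
\[
\pi(x)=\sum_{i=-m}^{-1}e_{ii}\oti\rho(\th^i(x))+e_{00}\oti x+\sum_{i=1}^{m+1}e_{ii}\oti\rho(\th^{i-1}(x)),
\]
and put $\meN=\pi(N)$. The hypothesis $(\rho,\th^n)_N=0$ for all $n$ kills the $(i,0)$-entries of $\meN'\cap\meM$, so each $e_{i0}\oti1$ with $i\neq0$ is orthogonal to $\meN\vee(\meN'\cap\meM)$. Lemma~\ref{lem: cent-nont-II} applied to $\meN\subs\meM$ then gives a partition $(q_r)$ of $N_\om$ making $\bigl\|\sum_r\pi^\om(q_r)(e_{i0}\oti1)\pi^\om(q_r)\bigr\|$ small. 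The point is that
\[
\pi^\om(q_r)(e_{i0}\oti1)\pi^\om(q_r)=e_{i0}\oti\rho^\om(\th_\om^i(q_r))\,q_r
\]
already carries the \emph{same} $q_r$ on both sides, because the $0$-th diagonal block of $\pi$ is the identity and the $i$-th block is $\rho\th^i$. No reindexation is needed; the locally trivial geometry does the work. You were heading in this direction with ``a finite iterated basic construction'', but the right object is this direct sum of $\id$ with the $\rho\th^n$, and the off-diagonal test elements are simply the matrix units $e_{i0}\oti1$.
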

\begin{proof}
Set $\vph:=\ps\circ\ph_\rho\in N_*$ and $\meM:=M_{2m+2}(\C)\oti M$. 
Let $\{e_{ij}\mid -m\leq i,j\leq m+1\}$ be
a system of matrix units of $M_{2m+2}(\C)$.
We introduce a homomorphism $\pi\col N\ra \meM$ defined by
\[
\pi(x)
=\sum_{i=-m}^{-1} e_{ii}\oti \rho(\th^{i}(x))
+e_{00}\oti x
+\sum_{i=1}^{m+1} e_{ii}\oti \rho(\th^{i-1}(x))
\]
for $x\in N$. Put $\meN:=\pi(N)$. 
We use a conditional expectation $E_{\meN}^\meM$ defined by 
\[
E_{\meN}^\meM((e_{ij}\oti x_{ij})_{ij})
=
\frac{1}{2m+2}
\,\pi
\left(
\sum_{i=-m}^{-1} \ph_\rho(\th^{-i}(x_{ii}))
+
x_{00}
+
\sum_{i=1}^{m+1} \ph_\rho(\th^{-i+1}(x_{ii}))
\right).
\]
We set $\tvph:=\vph\pi^{-1}E_{\meN}^\meM$, and we have
\[
\tvph((e_{ij}\oti x_{ij})_{ij})
=
\frac{1}{2m+2}
\,\vph
\left(\sum_{i=-m}^{-1} \ph_\rho(\th^{-i}(x_{ii}))
+
x_{00}
+
\sum_{i=1}^{m+1} \ph_\rho(\th^{-i+1}(x_{ii}))
\right).
\]

Since $(\rho,\th^n)_N=0$ for any $n\in\Z$ by assumption, 
we have 
\[
\meN'\cap \meM
=
\sum_{i=-m}^{-1}\C e_{ii}\oti (\rho\th^i,\rho\th^i)_N
+
\C e_{00}\oti (\id,\id)_N
+
\sum_{i=1}^{m+1} \C e_{ii}\oti (\rho\th^{i-1},\rho\th^{i-1})_N. 
\] 
Hence 
$\{e_{ii}\}_{i=-m}^{m+1}\subs\meN\vee (\meN'\cap \meM)\subs (\{e_{ii}\}_{i=-m}^{m+1})'\cap \meM$. 
In particular, $e_{ij}$, $i\neq j$ is orthogonal to 
$\meN\vee (\meN'\cap \meM)$. 

Let $\vep>0$. 
By applying Lemma \ref{lem: cent-nont-II} to $\meN\subs \meM$, 
we obtain a partition of unity 
$(q_r)_{r=1}^t$ of $N_\om$ such that for all $i$ with $i\neq 0$, 
\[
\left\| 
\sum_{r=1}^t \pi^\om(q_r)(e_{i0}\oti1)\pi^\om(q_r)\right\|_{\tvph^\om}
<\frac{\vep}{2m+1}
\|e_{i0}\oti1\|_{\tvph^\om},
\] 
where $\pi^\om\col N^\om\ra \meM^\om$ is the natural extension 
of $\pi\col N\ra \meM$. 
By direct computation, we obtain 
\[
\left\|\sum_{r=1}^t \pi^\om(q_r)(e_{i0}\oti1)\pi^\om(q_r)\right\|_{\tvph^\om}^2
\hspace{-5pt}=
\begin{cases}
\displaystyle
\frac{1}{2m+2}\sum_{r=1}^t
\|\rho^\om(\th_\om^{i}(q_r))q_r \|_{\vph^\om}^2 
\ \mbox{if}\ -m\leq i\leq-1,\\
\displaystyle
\frac{1}{2m+2}\sum_{r=1}^t
\|\rho^\om(\th_\om^{i-1}(q_r))q_r \|_{\vph^\om}^2 
\ \mbox{if}\ 1\leq i\leq m\!+\!1.\\
\end{cases}
\]
Since 
$\rho^\om(\th_\om^{i-1}(q_r)), q_r \in (N^\om)_{\vph^\om}$,
we have
$\|\rho^\om(\th_\om^{i}(q_r))q_r \|_{\vph^\om}
=
\|q_r\rho^\om(\th_\om^{i}(q_r)) \|_{\vph^\om}
$.
Hence we have 
\[
\sum_{r=1}^t\|q_r \rho^\om(\th_\om^{i}(q_r))\|_{\vph^\om}^2
<\frac{\vep^2}{(2m+1)^2}
\quad\mbox{for all}\ |i|\leq m. 
\]
Summing up with $i$, we obtain
\begin{equation}\label{ineq: sum-q-rho}
\sum_{r=1}^t
\left(\sum_{|i|\leq m}\|q_r \rho^\om(\th_\om^{i}(q_r))\|_{\vph^\om}^2
\right)
<\frac{\vep^2}{2m+1}\sum_{r=1}^t\|q_r\|_{\vph^\om}^2. 
\end{equation}
We set an index subset in $\N$
\[
I_0:=\Big{\{}
r\in \Z\mid 1\leq r \leq t,\ 
\sum_{|i|\leq m}
\|q_r \rho^\om(\th_\om^{i}(q_r))\|_{\vph^\om}
< \frac{\vep}{(2m+1)^{1/2}} \|q_r\|_{\vph^\om}
\Big{\}}. 
\]
We set
$q_0:=1-\sum_{r\in I_0}q_r$. 
We check that the family $\{q_0\}\cup \{q_r\}_{r\in I_0}$ is 
a desired one. 
On the size of $q_0$, we have 
\begin{align*}
|q_0|_{\vph^\om}
=&\,
\sum_{r\in I\setminus I_0}|q_r|_{\vph^\om}
<
\sum_{r\in I\setminus I_0}
\frac{(2m+1)^{1/2}}{\vep}
\sum_{|i|\leq m}
\|q_r \rho^\om(\th_\om^{i}(q_r))\|_{\vph^\om}
\\
\leq&\,
\frac{(2m+1)^{1/2}}{\vep}
\sum_{r=1}^t
\sum_{|i|\leq m}
\|q_r \rho^\om(\th_\om^{i}(q_r))\|_{\vph^\om}
\\
<&\,
\frac{(2m+1)^{1/2}}{\vep}
\frac{\vep^2}{2m+1}\sum_{r=1}^t\|q_r\|_{\vph^\om}^2
\qquad(\mbox{by}\ (\ref{ineq: sum-q-rho}))
\\
=&\,
\frac{\vep}{(2m+1)^{1/2}}<\vep. 
\end{align*}
Note that $N_\om$ and $\rho^\om(N_\om)$ are contained 
in $(N^\om)_{\vph^\om}$. 
For any $x,y\in (N^\om)_{\vph^\om}$, the inequality 
$|xy|_{\vph^\om}\leq\|x\|_{\vph^\om}\|y\|_{\vph^\om}$ holds. 
Using this inequality, we have, for $r\in I_0$, 
\begin{align*}
\sum_{|i|\leq m}
|q_r\rho^\om(\th_\om^{i}(q_r))|_{\vph^\om}
\leq &\,
\sum_{|i|\leq m}
\|q_r\|_{\vph^\om}\|q_r\rho^\om(\th_\om^{i}(q_r))\|_{\vph^\om}
\\
\leq&\,
\left(\sum_{|i|\leq m}\|q_r\|_{\vph^\om}^2 \right)^{1/2}
\left(\sum_{|i|\leq m}
\|q_r\rho^\om(\th_\om^{i}(q_r))\|_{\vph^\om}^2\right)^{1/2}
\\
<&\,
(2m+1)^{1/2}\|q_r\|_{\vph^\om}
\cdot \frac{\vep}{(2m+1)^{1/2}}\|q_r\|_{\vph^\om}
\\
=&\,
\vep|q_r|_{\vph^\om}. 
\end{align*}
\end{proof}

For an endomorphism trivially acting on $N_\om^{\th_\om}$, 
we have the following. 

\begin{lem}
Assume that $\rho\in\mE_\th$ acts on $N_\om^{\th_\om}$ trivially. 
Then for any $\vep>0$ and any faithful state $\vph\in N_*$, 
there exists $\de>0$ and a finite set of states $\meS\subs N_*$ 
such that 
if $u\in U(N)$ satisfies $\|[u,\chi]\|<\de$ for all $\chi\in\meS$ 
and $|\th(u)-u|_\vph<\de$, 
then we have $\|\rho(u)-u\|_\vph<\vep$. 
\end{lem}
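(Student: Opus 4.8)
The plan is to argue by contradiction, manufacturing from a hypothetical failure a unitary in $N_\om^{\th_\om}$ that $\rho$ moves by a definite amount, which will contradict the triviality of $\rho$ on $N_\om^{\th_\om}$. Suppose the assertion is false: there exist $\vep>0$ and a faithful state $\vph\in N_*$ such that for every $\de>0$ and every finite set of normal states on $N$ there is a unitary $u\in U(N)$ which is $\de$-central against that finite set, satisfies $|\th(u)-u|_\vph<\de$, but has $\|\rho(u)-u\|_\vph\ge\vep$. Fixing a countable norm-dense family $\{\chi_k\}_{k\in\N}$ of normal states on $N$ (available since $N_*$ is separable), I would apply this with $\de=1/\nu$ and the finite set $\{\chi_1,\dots,\chi_\nu\}$ for each $\nu\in\N$, obtaining unitaries $u_\nu$ with $\|[u_\nu,\chi_k]\|<1/\nu$ for $1\le k\le\nu$, with $|\th(u_\nu)-u_\nu|_\vph<1/\nu$, and with $\|\rho(u_\nu)-u_\nu\|_\vph\ge\vep$; then set $u:=\pi_\om((u_\nu)_\nu)$.

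The first step is to check that $(u_\nu)_\nu$ is $\om$-centralizing, so that $u$ is a unitary of $N_\om$. For a fixed normal state $\chi$, approximating $\chi$ by some $\chi_k$ in norm and using $\|u_\nu\|=1$ together with $\|[u_\nu,\chi_k]\|<1/\nu$ for $\nu\ge k$ gives $\|[u_\nu,\chi]\|\to0$; writing an arbitrary $\psi\in N_*$ as a combination of at most four normal states then yields $\|[u_\nu,\psi]\|\to0$ for all $\psi\in N_*$. The second step is to show $u\in N_\om^{\th_\om}$. From $x^*x\le\|x\|\,|x|$ and $\|\th(u_\nu)-u_\nu\|\le2$ one gets $\|\th(u_\nu)-u_\nu\|_\vph^2\le2\,|\th(u_\nu)-u_\nu|_\vph\to0$, so $\th(u_\nu)-u_\nu\to0$ $\sigma$-strongly. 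Since $\th$ carries $\om$-centralizing sequences to $\om$-centralizing sequences (because $\|[\psi,\th(u_\nu)]\|=\|[\psi\circ\th,u_\nu]\|$), the bounded sequence $y_\nu:=\th(u_\nu)-u_\nu$ is $\om$-centralizing; and for any bounded $\om$-centralizing sequence $(y_\nu)$ and any state $\psi$ one has $\|[\psi,y_\nu^*]\|\le\|[\psi,y_\nu]\|$ (take complex conjugates in $[\psi,a](b)=\psi(ab)-\psi(ba)$), whence $\psi(y_\nu y_\nu^*)=\psi(y_\nu^*y_\nu)-[\psi,y_\nu^*](y_\nu)\to0$ as soon as $\psi(y_\nu^*y_\nu)\to0$. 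Thus $y_\nu\to0$ strongly* along $\om$, i.e.\ $\th_\om(u)=u$.

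To finish, since $\rho$ acts trivially on $N_\om^{\th_\om}$ and $u\in N_\om^{\th_\om}$, we have $\rho^\om(u)=u$, which unpacks to $\rho(u_\nu)-u_\nu\to0$ $\sigma$-strongly along $\om$; by boundedness, $\|\rho(u_\nu)-u_\nu\|_\vph\to0$ along $\om$, so $\|\rho(u_{\nu_0})-u_{\nu_0}\|_\vph<\vep$ for some $\nu_0\in\N$, contradicting the construction. I expect the only genuinely delicate point to be the passage from $\sigma$-strong to strongly* convergence when verifying $\th_\om(u)=u$: $N$ need not be finite, so these topologies do not agree on bounded sets in general; the remedy is precisely the commutator estimate above, which shows they do agree on bounded $\om$-centralizing sequences — the only sequences at issue here. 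The rest is routine ultrafilter bookkeeping, together with the standard fact that on bounded sets $\sigma$-strong convergence is detected by the single seminorm $\|\cdot\|_\vph$.
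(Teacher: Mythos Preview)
Your proof is correct and follows essentially the same contradiction argument as the paper: negate the statement, build an $\om$-centralizing sequence of unitaries that is asymptotically $\th$-fixed yet moved by $\rho$, pass to $N_\om^{\th_\om}$, and contradict the triviality of $\rho^\om$ there. The paper handles the passage from $|\th(u^\nu)-u^\nu|_\vph\to0$ to strong* convergence with a single ``we easily see''; your elaboration of this step via the commutator identity on centralizing sequences is a legitimate way to justify it.
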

\begin{proof}
Take an increasing sequence of finite subsets 
$\{\meF_\nu\}_{\nu=1}^\infty$ in $N_*$ 
so that $\cup_{\nu=1}^\infty\meF_\nu\subs N_*$ is dense. 
Suppose that the statement is false. 
Then there exist $\vep_0>0$ and a faithful state $\vph\in N_*$
such that 
for any $\nu\in\N$, we can take $u^\nu\in U(N)$ such that 
$\|[u^\nu,\chi]\|<1/\nu$ for all $\chi\in\meF_\nu$, 
$|\th(u^\nu)-u^\nu|_{\vph}<1/\nu$ 
and $\|\rho(u^\nu)-u^\nu\|_\vph\geq \vep_0$ hold. 
Then the sequence $(u^\nu)_\nu$ is a centralizing sequence in $N$,
and set $u:=\pi_\om((u^\nu)_\nu)\in N_\om$. 
Since we have $|\th(u^\nu)-u^\nu|_{\vph}\to0$ as $\nu\to\om$ 
and $u$ is centralizing, 
we easily see that $\th(u^\nu)-u^\nu\to0$ strongly* as $\nu\to\om$. 
Hence $\th_\om(u)=u$. 
However we have $\|\rho^\om(u)-u\|_{\vph^\om}\geq \vep_0$, 
and this shows that $\rho^\om$ is not trivial on $N_\om^{\th_\om}$. 
This a contradiction. 
\end{proof}

The following lemma is directly proved from the previous lemma. 

\begin{lem}\label{lem: cent-theta}
Assume that $\rho\in\mE_\th$ acts on $N_\om^{\th_\om}$ trivially. 
Then for any $\vep>0$ and any faithful state $\vph\in N_*$, 
there exists $\de>0$ such that 
if $u\in U(N_\om)$ satisfies $|\th_\om(u)-u|_{\vph^\om}<\de$, 
then $\|\rho^\om(u)-u\|_{\vph^\om}<\vep$. 
\end{lem}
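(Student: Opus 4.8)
The plan is to deduce this from the previous lemma by a standard reindexation (ultraproduct) argument. Given $\vep>0$ and a faithful state $\vph\in N_*$, I would first invoke the previous lemma with $\vep/2$ in place of $\vep$, obtaining $\de_0>0$ and a finite set of states $\meS\subs N_*$ with the property that every $u\in U(N)$ satisfying $\|[u,\chi]\|<\de_0$ for all $\chi\in\meS$ and $|\th(u)-u|_\vph<\de_0$ also satisfies $\|\rho(u)-u\|_\vph<\vep/2$. The claim is then that $\de:=\de_0$ works.

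Next, let $u\in U(N_\om)$ with $|\th_\om(u)-u|_{\vph^\om}<\de_0$, and lift $u$ to a representing sequence $(u^\nu)_\nu$ of unitaries in $N$ (as in the proof of Claim 1 in Theorem \ref{thm: main-app}); such a sequence is automatically $\om$-centralizing. The first step is to transfer the hypotheses to the level of $N$ along $\om$. Since $(u^\nu)_\nu$ is $\om$-centralizing, $\lim_{\nu\to\om}\|[u^\nu,\chi]\|=0$ for each of the finitely many $\chi\in\meS$, so $\|[u^\nu,\chi]\|<\de_0$ holds for all $\chi\in\meS$ when $\nu$ ranges over a set belonging to $\om$. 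Also $\th_\om(u)-u=\pi_\om((\th(u^\nu)-u^\nu)_\nu)$, and since $x\mapsto|x|$ is compatible with $\pi_\om$ on norm-bounded sequences while $\vph^\om=\vph\circ\ta^\om$ with $\ta^\om$ $\sigma$-weakly continuous, one gets $|\th_\om(u)-u|_{\vph^\om}=\lim_{\nu\to\om}|\th(u^\nu)-u^\nu|_\vph$; hence $|\th(u^\nu)-u^\nu|_\vph<\de_0$ for $\nu$ in a set belonging to $\om$. On the intersection of these two sets in $\om$ the previous lemma yields $\|\rho(u^\nu)-u^\nu\|_\vph<\vep/2$.

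Finally I would pass back to the ultraproduct: $\rho^\om(u)-u=\pi_\om((\rho(u^\nu)-u^\nu)_\nu)$, and computing $\|\cdot\|_{\vph^\om}$ as the $\om$-limit of $\|\cdot\|_\vph$ on representing sequences (again from $\vph^\om=\vph\circ\ta^\om$ and $\sigma$-weak continuity of $\ta^\om$), this gives $\|\rho^\om(u)-u\|_{\vph^\om}=\lim_{\nu\to\om}\|\rho(u^\nu)-u^\nu\|_\vph\le\vep/2<\vep$, as desired. There is no real obstacle here: the only points needing a line of justification are the two identities expressing $|\cdot|_{\vph^\om}$ and $\|\cdot\|_{\vph^\om}$ of a $\pi_\om$-class as the $\om$-limit of the corresponding quantities of a representing sequence, and the availability of a unitary lift of $u$; both are routine. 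Everything else is bookkeeping with finitely many sets belonging to $\om$.
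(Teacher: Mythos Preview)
Your proposal is correct and is exactly the argument the paper has in mind: the paper simply states that the lemma ``is directly proved from the previous lemma,'' and your reindexation argument (lift to a unitary $\om$-centralizing sequence, apply the previous lemma termwise along $\om$, pass back to the ultraproduct) is the natural way to carry this out. The technical points you flag---unitary lifting in $N_\om$ and the identification of $|\cdot|_{\vph^\om}$, $\|\cdot\|_{\vph^\om}$ with $\om$-limits via continuous functional calculus on bounded sequences---are indeed routine.
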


The following lemma plays an important role for our work
in type III$_0$ case.

\begin{lem}\label{lem: rho-th-n}
If $\rho\in\mE_\th$ acts on $N_\om^{\th_\om}$ trivially, 
then $(\rho, \th^n)_N\neq0$ for some $n\in\Z$. 
\end{lem}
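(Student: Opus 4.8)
The plan is to argue by contradiction. Suppose that $(\rho,\th^n)_N=0$ for every $n\in\Z$ while, simultaneously, $\rho^\om$ acts trivially on $N_\om^{\th_\om}$. Fix a faithful normal state $\ps\in N_*$ and put $\vph:=\ps\circ\ph_\rho$. Applying Lemma \ref{lem: cent-theta} with this $\vph$ and $\vep=1/2$ produces $\de>0$ such that every $u\in U(N_\om)$ with $|\th_\om(u)-u|_{\vph^\om}<\de$ satisfies $\|\rho^\om(u)-u\|_{\vph^\om}<1/2$. The strategy is to manufacture a unitary $u\in U(N_\om)$ that is almost $\th_\om$-invariant (so that the hypothesis of Lemma \ref{lem: cent-theta} applies) but that is moved by a definite amount by $\rho^\om$, in contradiction with the conclusion.

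To produce $u$, I would feed Lemma \ref{lem: partition} with the state $\ps$, a large height $m\in\N$, and a tolerance $\vep_0>0$; the order of the quantifiers matters, so $m$ is fixed first and $\vep_0$ is then chosen small relative to $m$. This gives $t\in\N$ and a partition of unity $(q_i)_{i=0}^t$ in $N_\om$ with $|q_0|_{\vph^\om}<\vep_0$ and $\sum_{|n|\le m}|q_i\,\rho^\om(\th_\om^n(q_i))|_{\vph^\om}<\vep_0\,|q_i|_{\vph^\om}$ for $1\le i\le t$. Using that $N$ is hyperfinite of type II and that $\th$ is centrally ergodic, $\th_\om$ admits Rohlin towers on $N_\om$; carrying out the Rohlin construction relative to the centre and adapted to the partition, I would build $\th_\om$-towers $\{g_{i,k}\}_k$ of height of order $m$, with their bases sitting under the corresponding $q_i$ and with $\sum_{i,k}g_{i,k}=1$. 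Setting $u:=\sum_{i,k}\ze_i^{\,k}\,g_{i,k}$, where $\ze_i$ is a root of unity matched to the $i$-th tower height, $u$ is a unitary in $N_\om$; since $\th_\om$ shifts each tower up by one level, $\th_\om(u)$ is $u$ multiplied tower-by-tower by a phase $\bar\ze_i$ close to $1$, with discrepancies only at the tops and bottoms of the towers and on $q_0$, so that $|\th_\om(u)-u|_{\vph^\om}=O(1/m+\sqrt{\vep_0})<\de$ once $m$ is large and $\vep_0$ small.

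On the other hand, using $\rho^\om\circ\th_\om=\th_\om\circ\rho^\om$ and that the restriction of $\vph^\om$ to $N_\om$ coincides up to scaling with the canonical trace and is therefore $\th_\om$-invariant, the inner product $\langle\rho^\om(u),u\rangle_{\vph^\om}=\vph^\om(u^*\rho^\om(u))$ reduces to a sum of same-tower terms of the form $\vph^\om\bigl(g_{i,0}\,\rho^\om(\th_\om^{k-l}(g_{i,0}))\bigr)$ with $|k-l|\le m$, which are dominated by the smallness built into Lemma \ref{lem: partition}, together with cross-tower terms, which one controls by engineering the towers compatibly with the partition. This forces $\langle\rho^\om(u),u\rangle_{\vph^\om}$ to lie far from $1$; since $\|u\|_{\vph^\om}=\|\rho^\om(u)\|_{\vph^\om}=1$, one gets $\|\rho^\om(u)-u\|_{\vph^\om}\ge1>1/2$, contradicting Lemma \ref{lem: cent-theta}. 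Hence $(\rho,\th^n)_N\ne0$ for some $n\in\Z$.

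I expect the real difficulty to be concentrated in the second and third paragraphs: building the Rohlin towers for $\th_\om$ compatibly with the partition $(q_i)$ so that \emph{all} inner products between tower levels and $\rho^\om$-images of tower levels are small (this is where one genuinely exploits the commutation $\rho^\om\th_\om=\th_\om\rho^\om$ together with the $\vep_0$-orthogonality of Lemma \ref{lem: partition}), and keeping careful track of the fact that $\vph=\ps\circ\ph_\rho$ is neither tracial nor $\th$-invariant on $N$ itself — only its amplification to $N_\om$ is well-behaved — while $\th$ is assumed only centrally ergodic, so the Rohlin lemma must be invoked relative to $Z(N)$ using the hyperfiniteness of $N$. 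The rest is bookkeeping of the two estimates $|\th_\om(u)-u|_{\vph^\om}\to0$ and $\|\rho^\om(u)-u\|_{\vph^\om}\ge1$.
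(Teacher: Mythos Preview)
Your overall strategy---argue by contradiction, feed Lemma~\ref{lem: partition} and Lemma~\ref{lem: cent-theta} against each other via a unitary that is almost $\th_\om$-invariant yet far from $\rho^\om$-fixed---is exactly the paper's. But two of your technical claims fail. First, the assertion that $\vph^\om|_{N_\om}$ is $\th_\om$-invariant is false: the constant sequences from $Z(N)$ sit inside $N_\om$, and $\th$ acts non-trivially on $Z(N)$ (it is centrally ergodic), so $\vph^\om\circ\th_\om\neq\vph^\om$ already on $Z(N)\subs N_\om$. This breaks your reduction of $\vph^\om(g_{i,k}\rho^\om(g_{j,l}))$ to expressions in $g_{i,0}$ and $\th_\om^{l-k}(g_{j,0})$. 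Second, Lemma~\ref{lem: partition} gives smallness only of $q_i\,\rho^\om(\th_\om^n(q_i))$ for the \emph{same} index $i$; it says nothing about $q_i\,\rho^\om(\th_\om^n(q_j))$ with $i\neq j$, so your ``cross-tower terms'' cannot be engineered away by further refining the partition---that circularity is precisely the difficulty you flag but do not resolve.

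The paper handles both issues with two devices you are missing. To cope with the lack of $\th$-invariance of $\vph$, it applies Lemma~\ref{lem: partition} to the averaged state $\tvph=\frac{1}{m}\sum_{j=0}^{m-1}\vph\circ\th^j$, and then converts estimates back to $\vph$ via a Rohlin tower $\{E_j\}_{j=0}^{m}$ taken in the \emph{centre} $Z(N)$ (this is where central ergodicity of $\th$ is actually used). Rather than building towers with bases under the $q_i$, it averages each $q_i$ along the tower, setting $\wdt{q_i}:=\sum_{j=0}^{m-1}\th_\om^j(q_i)E_j$; each $\wdt{q_i}$ is then individually almost $\th_\om$-invariant, so every unitary $u(z)=p_0+\sum_{i\in I_1}z^i\,\wdt{q_i}$ is almost $\th_\om$-invariant for \emph{all} $z\in\T$. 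The cross-term problem is then dissolved by integrating $\|\rho^\om(u(z))-u(z)\|_{\vph^\om}^2$ over $z\in\T$: the phases $z^{i-j}$ kill all off-diagonal contributions, leaving only $\sum_i\vph^\om(\wdt{q_i}\,\rho^\om(\wdt{q_i}))$, which is small by the output of Lemma~\ref{lem: partition}. This averaging-plus-integration trick is the missing idea in your sketch; your single-unitary, level-dependent-phase construction does not provide an analogous mechanism to eliminate the $i\neq j$ terms.
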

\begin{proof}
Assume that $(\rho,\th^n)_N=0$ for all $n\in\Z$.
We will derive a contradiction.

\noindent\textbf{Step I}. 
We prepare $\de,\vep,\vep_1>0$, $m\in\N$ and the states
$\ps, \wdt{\ps},\vph$ and $\tvph$ on $N$.

Take $0<\vep<1$ and a faithful state $\ps\in N_*$. 
Set $\vph:=\ps\circ\ph_\rho$. 
For $\vep$ and the state $\vph$, we can take $\de>0$ 
as in Lemma \ref{lem: cent-theta}. 
Take $m\in\N$ enough large to satisfy $\sqrt{3/m}<\de/2$ 
and $1/2<1-1/m-\sqrt{3/m}<1$.
Take $\vep_1>0$ such that 
\begin{equation}\label{ineq: 1/2-m}
1/2<1-1/m-\sqrt{3/m}-2m\vep_1<1. 
\end{equation}

Define the following faithful states on $N$
\[
\wdt{\ps}:=\frac{1}{m} \sum_{j=0}^{m-1} \ps \th^j,
\quad 
\tvph:=\wdt{\ps}\circ\ph_\rho. 
\] 
Since $\th$ and $\ph_\rho$ commute, we trivially have 
\[
\wdt{\vph}=\frac{1}{m} \sum_{j=0}^{m-1} \vph \th^j.
\]
Note that $N_\om$ and $\rho^\om(N_\om)$ are contained
in $(N^\om)_{\tvph^\om}$. 

\noindent\textbf{Step II}. 
We take a ``Rohlin partition'' $(q_i)_{i=0}^t$ in $N_\om$
for $\{\rho\th_\om^n\}_{n=-m}^m\subs \End(N^\om)$. 

For $\vep_1$, $m$ and $\tvph$, 
we apply Lemma \ref{lem: partition}. 
Then there exists $t\in \N$ 
and a partition of unity $(q_i)_{i=0}^t$ in $N_\om$
such that
\begin{align}
&\sum_{|n|\leq m}
|q_i \rho^\om(\th_\om^n(q_i))|_{\tvph^\om}<\vep_1|q_i|_{\tvph^\om} 
\quad \mbox{for all}\ 1\leq i\leq t, 
\label{ineq: q-rho-th}
\\
&|q_0|_{\tvph^\om}<\vep_1. 
\label{ineq: q0-vep1}
\end{align}

\noindent\textbf{Step III}. 
We average each $q_i$ and take $\wdt{q_i}$
so that this is almost invariant under $\th_\om$
and almost orthogonal to $\rho^\om(\wdt{q_i})$.

Take a Rohlin partition for $\{\meC,\th\}$ as \cite[Lemma 10]{KST}, 
i.e., a family of orthogonal projections $\{E_i\}_{i=0}^m$ 
in $\meC$ such that 
\begin{enumerate}

\item $\th(E_i)=E_{i+1}$ for $0\leq i\leq m-1$, 

\item $\displaystyle\left|\sum_{i=0}^{m-1}E_i\right|_\vph\geq 1-1/m$, 

\item $|E_0|_\vph<1/m$, $|E_m|_\vph<2/m$. 
\end{enumerate}

Next we average $q_i$ along with the Rohlin partition as follows. 
For each $0\leq i\leq t$, we define $\wdt{q_i}\in N_\om$ by 
\[
\wdt{q_i}=\sum_{j=0}^{m-1} \th_\om^j(q_i)E_j. 
\]
It is clear that $\{\wdt{q_i}\}_{i=0}^t$ are orthogonal projections,
and we have 
\begin{align}
\sum_{i=0}^t|\wdt{q_i}|_{\vph^\om}
=&\,
\sum_{i=0}^t\vph^\om(\wdt{q_i})
=
\sum_{i=0}^t\sum_{j=0}^{m-1} \vph^\om(\th_\om^j(q_i)E_j)
\notag\\
=&\,
\sum_{j=0}^{m-1}
\vph^\om\Big{(}\th_\om^j\Big{(}\sum_{i=0}^t q_i\Big{)}E_j\Big{)}
=
\sum_{j=0}^{m-1} \vph(E_j)\notag\\
>&\,1-1/m. \label{ineq: tilde-q}
\end{align}
For all $0\leq i\leq t$, we have 
\begin{align*}
\wdt{q_i}\rho^\om(\wdt{q_i})
=&\,
\sum_{j,k=0}^{m-1}
\th_\om^j(q_i)E_j \rho^\om(\th_\om^k(q_i)E_k)
\notag\\
=&\,
\sum_{j,k=0}^{m-1}
\th_\om^j(q_i)\rho^\om(\th_\om^k(q_i))E_j \rho^\om(E_k)
\notag\\
=&\,
\sum_{j,k=0}^{m-1}
(\th^\om)^j\big{(}q_i\rho^\om(\th_\om^{k-j}(q_i))\big{)}E_j \rho^\om(E_k). 
\notag\\
\end{align*}
Since $N_\om$ and $\rho^\om(N_\om)$ are contained 
in $(N_\om)_{\vph^\om}$, 
we have for $1\leq i\leq t$, 
\begin{align}
|\wdt{q_i}\rho^\om(\wdt{q_i})|_{\vph^\om}
=&\,
\left|
\sum_{j,k=0}^{m-1}
(\th^\om)^j\big{(}q_i\rho^\om(\th_\om^{k-j}(q_i))\big{)}E_j \rho^\om(E_k)
\right|_{\vph^\om}
\notag\\
\leq&\,
\sum_{j,k=0}^{m-1}
\left|
(\th^\om)^j\big{(}q_i\rho^\om(\th_\om^{k-j}(q_i))\big{)}E_j \rho^\om(E_k)
\right|_{\vph^\om}
\notag\\
\leq&\,
\sum_{j,k=0}^{m-1}
\left|
(\th^\om)^j\big{(}q_i\rho^\om(\th_\om^{k-j}(q_i))\big{)}
\right|_{\vph^\om}
\|E_j \rho^\om(E_k)\|
\notag\\
\leq&\,
\sum_{j=0}^{m-1}
\sum_{|n|\leq m}
\left|
(\th^\om)^j\big{(}q_i\rho^\om(\th_\om^{n}(q_i))\big{)}
\right|_{\vph^\om}
\notag\\
=&\,
\sum_{|n|\leq m}
m\left|
q_i\rho^\om(\th_\om^{n}(q_i))
\right|_{\tvph^\om}
\notag\\
<&\,
m\vep_1|q_i|_{\tvph^\om} 
\qquad(\mbox{by}\ (\ref{ineq: q-rho-th})). 
\label{ineq: q-rho-q}
\end{align}

By definition of $\wdt{q_i}$, 
we obtain $\th_\om(\wdt{q_i})-\wdt{q_i}=-q_i E_0+\th_\om^m(q_i) E_m$. 
Hence we have 
\begin{align*}
\sum_{i=0}^t\left|\th_\om(\wdt{q_i})-\wdt{q_i}\right|_{\vph^\om}
=&\,
\sum_{i=0}^t|-q_i E_0+\th_\om^m(q_i) E_m|_{\vph^\om}
=
\sum_{i=0}^t\left(\vph^\om(q_i E_0)+\vph^\om(\th_\om^m(q_i)E_m)\right)
\\
=&\,
\vph(E_0)+\vph(E_m)
\\
<&\, 1/m+2/m=3/m. 
\end{align*}
By using (\ref{ineq: tilde-q}), we have 
\[
\sum_{i=0}^t\left|\th_\om(\wdt{q_i})-\wdt{q_i}\right|_{\vph^\om}
< \frac{3}{m} 
\left(\frac{1}{m} +\sum_{i=0}^t|\wdt{q_i}|_{\vph^\om}\right). 
\]

\noindent\textbf{Step IV}. 
We show that most of $\{\wdt{q_i}\}_{i=0}^t$ are almost invariant
under $\th_\om$. 

We set an index set $I:=\{i\in\Z\mid -1\leq i\leq t\}$. 
We also put $X_{-1}:=0$, $Y_{-1}=1/m$, 
$X_i:=|\th_\om(\wdt{q_i})-\wdt{q_i}|_{\vph^\om}$ 
and $Y_i:=|\wdt{q_i}|_{\vph^\om}$ for $1\leq i\leq t$. 
Then the above inequality yields 
\[
\sum_{i\in I}X_i<\frac{3}{m} \sum_{i\in I}Y_i. 
\]
We introduce the following index subsets of $I$:
\[
I_0:=\{i\in I\mid X_i<\sqrt{3/m}\,Y_i\},\quad 
I_1:=\{i\in I_0\mid 1\leq i\leq t\}. 
\]
By definition of $I_1$, $\wdt{q_i}\neq0$ if $i\in I_1$ and
we have
\begin{equation}\label{ineq: th-tq}
|\th_\om(\wdt{q_i})-\wdt{q_i}|_{\vph^\om}
<\sqrt{3/m}|\wdt{q_i}|_{\vph^\om}<(\de/2)|\wdt{q_i}|_{\vph^\om}. 
\end{equation}
We estimate of the total size of $Y_i$ for $i\in I\setm I_0$
as follows.
\begin{align*}
\sum_{i\in I\setm I_0}Y_i
\leq&\,
\sum_{i\in I\setm I_0}\sqrt{m/3}X_i
\leq
\sum_{i\in I}\sqrt{m/3}X_i
\\
<&\,
\sqrt{m/3} \,(3/m)\sum_{i\in I}Y_i
=
\sqrt{3/m}\sum_{i\in I}Y_i. 
\end{align*}
This implies 
\[
\sum_{i\in I_0}Y_i
=\sum_{i\in I}Y_i-\sum_{i\in I\setm I_0} Y_i
>
(1-\sqrt{3/m})\sum_{i\in I}Y_i. 
\]
It is trivial that $-1$ is in $I_0$. 
Hence we have 
\begin{align}
\sum_{i\in I_0, 0\leq i\leq t}|\wdt{q_i}|_\vph
=&\, 
\sum_{i\in I_0}Y_i-1/m
\notag\\
>&\,
(1-\sqrt{3/m})\sum_{i\in I}Y_i-1/m
\notag\\
=&\,
(1-\sqrt{3/m})(1/m+\sum_{i=1}^t Y_i)-1/m
\notag\\
=&\,
(1-\sqrt{3/m})\sum_{i=1}^t \vph^\om(\wdt{q_i})-(1/m)\sqrt{3/m}
\notag\\
>&\,
(1-\sqrt{3/m})(1-1/m)-(1/m)\sqrt{3/m} 
\qquad
({\rm by}\ (\ref{ineq: tilde-q}))
\notag\\
=&\,1-1/m-\sqrt{3/m}.
\label{ineq: tilde-q2} 
\end{align}
Now we estimate the size of $\wdt{q_0}$ as follows. 
\begin{align*}
|\wdt{q_0}|_{\vph^\om}
=&\,
\sum_{j=0}^{m-1}\vph^\om(\th^j(q_0)E_j)
\leq
\sum_{j=0}^{m-1}\vph^\om(\th^j(q_0))
\\
<&\,
\sum_{j=0}^{m-1}|q_0|_{(\vph\th^j)^\om}
=m |q_0|_{\tvph^\om}
\\
<&\,
m\vep_1. 
\quad(\mbox{by}\ (\ref{ineq: q0-vep1}))
\end{align*}
Together with this inequality and (\ref{ineq: tilde-q2}), 
we have 
\begin{equation}\label{ineq: tilde-q3}
\sum_{i\in I_1}|\wdt{q_i}|_{\vph^\om}
>1-1/m-\sqrt{3/m}-m\vep_1. 
\end{equation}

Now we set
$
%\displaystyle
p_0:=1-\sum_{i\in I_1}\wdt{q_i}$. 
Then by (\ref{ineq: tilde-q3}), 
\begin{equation}\label{ineq: p0}
|p_0|_{\vph^\om}<1/m+\sqrt{3/m}+m\vep_1, 
\end{equation}
and 
\begin{align}
|\th_\om(p_0)-p_0|_{\vph^\om}
=&\,
\left|\sum_{i\in I_1}(\th_\om(\wdt{q_i})-\wdt{q_i})\right|_{\vph^\om}
\leq \sum_{i\in I_1}|\th_\om(\wdt{q_i})-\wdt{q_i}|_{\vph^\om}
\notag\\
<&\, \sum_{i\in I_1}(\de/2)|\wdt{q_i}|_{\vph^\om}
\qquad(\mbox{by}\ (\ref{ineq: th-tq}))
\notag\\
\leq&\, \de/2. 
\label{ineq: th-p0}
\end{align}

\noindent\textbf{Step V}.
We sum up $\{p_0\}\cup\{\wdt{q_i}\}_{i\in I_1}$
with phases and obtain unitaries almost invariant under $\th_\om$.
We derive a contradiction by studying how $\rho^\om$ acts on
them.

For $z\in \T=\{z\in\C\mid |z|=1\}$, we define the unitary element 
\[
u(z)=p_0+\sum_{i\in I_1}z^i \wdt{q_i}. 
\]
Then we have for all $z\in \T$, 
\begin{align*}
|\th_\om(u(z))-u(z)|_{\vph^\om}
\leq&\,
|\th_\om(p_0)-p_0|_{\vph^\om}
+
\sum_{i\in I_1} |\th_\om(\wdt{q_i})-\wdt{q_i}|
\\
<&\,
\de/2+(\de/2)\sum_{i\in I_1}|\wdt{q_i}|_{\vph^\om}
\leq \de. 
\qquad(\mbox{by}\ (\ref{ineq: th-tq})\ \mbox{and}\ (\ref{ineq: th-p0})) 
\end{align*}
Since we have taken $\vep$ and $\de$ as in Lemma \ref{lem: cent-theta}, 
we have 
\[
\|\rho^\om(u(z))-u(z)\|_{\vph^\om}<\vep
\quad\mbox{for all}\ z\in\T. 
\]
Making use of $\displaystyle\int_\T z^k\,dz=0$ if $k\neq0$, we have 
\begin{align*}
\vep^2\geq&\,
\int_{\T} \|\rho^\om(u(z))-u(z)\|_{\vph^\om}^2 \,dz
\\
=&\,
\int_{\T}
\left(2-\vph^\om(u(z)^* \rho^\om(u(z)))
-\vph^\om(\rho^\om(u(z)^*)u(z))\right)\, dz
\\
=&\, 
2-2\left(\vph^\om(p_0\rho^\om(p_0))+
\sum_{i\in I_1}\vph^\om(\wdt{q_i}\rho^\om(\wdt{q_i}))\right)
\\
>&\,
2-2\left(\vph^\om(p_0)+\sum_{i\in I_1}m\vep_1|q_i|_{\tvph^\om}\right)
\qquad(\mbox{by}\ (\ref{ineq: q-rho-q}))
\\
>&\,
2-2\left(1/m+\sqrt{3/m}+m\vep_1+m\vep_1\right)
>1>\vep^2 
\qquad(\mbox{by}\ (\ref{ineq: 1/2-m})\ \mbox{and}\ (\ref{ineq: p0})). 
\end{align*}
However this is a contradiction. 
\end{proof}

The following lemma is
similar to \cite[Proposition 3.4 (1)]{Iz1},
which is stated about a canonical extension.

\begin{lem}\label{lem: rho-th-inner}
Let $\rho\in \mE_\th$ such that $(\rho(N)'\cap N)^\th=\C$.
If there exists $n\in \N$ such that $(\rho,\th^n)_N\neq0$, 
then there exists a Hilbert space $\meH$ in $N$ such that 
$\rho=\rho_\meH\circ \th^n$. 
\end{lem}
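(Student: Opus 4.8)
The plan is to realize the required Hilbert space inside the intertwiner space $\meK:=(\th^n,\rho)_N=\{v\in N: v\th^n(x)=\rho(x)v\ \text{for all}\ x\in N\}$. Precisely, I will produce a finite family $v_1,\dots,v_d\in\meK$ with $v_i^*v_j=\de_{ij}\cdot1$ and $\sum_{i=1}^d v_iv_i^*=1$; then $\meH:=\spa\{v_1,\dots,v_d\}$ is a finite dimensional (hence $\si$-weakly closed) Hilbert space in $N$ with support $1$ and orthonormal basis $\{v_i\}$, and $\rho_{\meH}(\th^n(x))=\sum_i v_i\th^n(x)v_i^*=\sum_i\rho(x)v_iv_i^*=\rho(x)$, i.e. $\rho=\rho_{\meH}\circ\th^n$. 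Note $\meK\ne\{0\}$: since $\rho\in\mE_\th$ we have $\rho\th=\th\rho$, hence $\rho\th^n=\th^n\rho$, so $(\rho,\th^n)_N\ne0$ (the hypothesis) gives by adjunction that $\meK$ contains the nonzero set $\{v^*:v\in(\rho,\th^n)_N\}$. A direct computation using $\rho\th=\th\rho$ shows $\th(\meK)=\meK$. Moreover, for $v,w\in\meK$ one has $w^*v\in(\th^n,\th^n)_N=\th^n(N)'\cap N=Z(N)=:\meC$ (as $\th^n$ is an automorphism) and $vw^*\in(\rho,\rho)_N=\rho(N)'\cap N=:B$; by assumption $B^\th=\C$, and by central ergodicity $\meC^\th=\C$.

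The core step is to trivialize $\meK$ as a module over $\meC$. Left multiplication by $B$ and right multiplication by $\meC$ make $\meK$ a $B$--$\meC$ bimodule, and $\langle v,w\rangle:=v^*w\in\meC$ turns $\meK$ into a W$^*$-module over $\meC$ which is $\si$-weakly closed in $N$. Since $\rho$ has finite index, $\meK$ is finitely generated as a right $\meC$-module (a Pimsner--Popa type frame for the finite-index endomorphism $\rho\th^{-n}$; equivalently $\Hom(\th^n,\rho)_N$ is finitely generated over the centre). Writing $\meC=L^\infty(X,\mu)$ with $(X,\mu)$ standard and decomposing $\meK=\int_X^\oplus\meK_x\,d\mu(x)$, finite generation gives $\sup_x\dim\meK_x<\infty$. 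The automorphism $\th|_\meC$ of $\meC$ is implemented by an ergodic nonsingular transformation $T$ of $(X,\mu)$ (ergodic because $\meC^\th=\C$), and the identity $\th(vc)=\th(v)\th(c)$ together with $\th(\meK)=\meK$ shows that $\th$ is semilinear over $\th|_\meC$, so it induces isomorphisms $\meK_x\cong\meK_{Tx}$ for a.e.\ $x$. Hence $x\mapsto\dim\meK_x$ is $T$-invariant, so constant, equal to some $d\in\N$; and $d\ge1$ since $\meK\ne\{0\}$. A measurable field of $d$-dimensional Hilbert spaces over a standard space is trivial, so there exist $v_1,\dots,v_d\in\meK$ with $v_i^*v_j=\de_{ij}\cdot1$ which form a $\meC$-basis of $\meK$, i.e. every $w\in\meK$ equals $\sum_i v_i(v_i^*w)$.

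It remains to check $e:=\sum_{i=1}^d v_iv_i^*=1$. By orthonormality the cross terms vanish, so $e$ is a projection in $B$; and the basis property gives $ew=w$ for every $w\in\meK$, hence $e$ acts as the identity on $\meK\meK^*:=\spa\{vw^*:v,w\in\meK\}$. Now $\meK\meK^*$ is a $*$-closed $B$-subbimodule of $B$, so its $\si$-weak closure $\ovl{\meK\meK^*}$ is a two-sided ideal of $B$, of the form $zB$ with $z$ a central projection of $B$; since $\th(\meK)=\meK$ this ideal is $\th$-invariant, so $z\in Z(B)^\th\subseteq B^\th=\C$, and $z\ne0$ because $\meK\ne\{0\}$, whence $z=1$ and $\ovl{\meK\meK^*}=B\ni1$. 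Since left multiplication by $e$ is $\si$-weakly continuous and equals the identity on the $\si$-weakly dense set $\meK\meK^*$, it is the identity on $B$, so $e=e\cdot1=1$, completing the construction.

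The main obstacle is the structural input that $\meK=(\th^n,\rho)_N$ is finitely generated over $Z(N)$ (so that its pointwise dimension is bounded), which is where finite index of $\rho$ enters; granting this, the rest is the ergodic averaging that forces the fibre dimension to be constant, plus routine bookkeeping with the $B$- and $\meC$-module structures. This argument is the von Neumann algebra analogue of Izumi's treatment of the corresponding statement for canonical extensions in \cite[Proposition 3.4 (1)]{Iz1}.
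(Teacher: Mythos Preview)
Your module-theoretic approach to $\meK=(\th^n,\rho)_N$ is attractive and different in flavor from the paper's, but it has a genuine gap: you invoke finite index of $\rho$ (``Since $\rho$ has finite index, $\meK$ is finitely generated as a right $\meC$-module''), and that is \emph{not} among the hypotheses. The class $\mE_\th$ consists of endomorphisms of $N$ admitting a left inverse that commutes with $\th$; no finiteness is assumed, and the paper's closing remark stresses precisely that this lemma (together with Lemma~\ref{lem: rho-th-n}) is proved without any finite-index assumption. So as written your argument does not prove the lemma; it only proves it under an extra hypothesis that happens to hold in the one place the lemma is applied (the type III$_0$ case of Theorem~\ref{thm: mod-cent}, where $\rho$ is the restriction of a finite-index endomorphism of $M$).

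The paper's proof circumvents finite index by building the isometries by hand. After reducing to $(\id,\rho)_N\neq0$, it shows via a Zorn/maximality argument that under every nonzero projection $p\in\rho(N)'\cap N$ there is a partial isometry $V\in(\id,\rho)_N$ with $VV^*\le p$ and $V^*V=z_N(p)$; iterating and exhausting gives, on each piece of a central partition $(z_\la)$, a family $(V_i^\la)_{i\in I_\la}$ with $(V_i^\la)^*V_i^\la=z_\la$ and $\sum_i V_i^\la(V_i^\la)^*=z_\la$. Ergodicity of $\th$ on $Z(N)$ then forces all $|I_\la|$ to coincide (possibly with $|I_\la|=\infty$), and the glued isometries span $\meH$. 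Your disintegration route can in principle be repaired along the same lines: ergodicity of the induced transformation on $(X,\mu)$ still makes $x\mapsto\dim\meK_x$ a.e.\ constant even when that constant is $\infty$, and a separable Hilbert bundle over a standard space is trivializable in either case; but you would have to replace your finite orthonormal system by a countable one and verify that the $\si$-weakly closed right $\meC$-module $\meK$ really admits such a global basis. Your ideal argument for $e=\sum_i v_iv_i^*=1$ via $\ovl{\meK\meK^*}=B$ is correct and is a nice alternative to the paper's exhaustion in Steps~III--IV, but the preceding step needs to be done without assuming finite generation.
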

\begin{proof}
We may and do assume that $(\rho,\id)_N\neq0$ by considering $\rho\th^{-n}$ 
for $\rho$ in case of $(\rho,\th^n)_N\neq0$. 
Note the fact that $(\id,\rho)_N$ is $\rho(N)'\cap N$-$Z(N)$-bimodule,
that is,
if $a\in \rho(N)'\cap N$, $X\in(\id,\rho)_N$ and $b\in Z(N)$,
then $aXb\in(\id,\rho)_N$.
Also note that $(\id,\rho)_N$ is globally invariant under $\th$ 
because $\rho$ and $\th$ commute. 

\noindent\textbf{Step I}. 
We show that for any non-zero projection $p\in \rho(N)'\cap N$, 
there exists a non-zero partial isometry $V\in (\id,\rho)_N$ such that 
$pV\neq 0$. 

Assume that such an element does not exist. 
Then for any $X\in (\id,\rho)_N$, we have $pX=0$, and $\th^n(p)\th^n(X)=0$ 
for all $n\in\Z$. 
Since $\th^n((\id,\rho)_N)=(\id,\rho)_N$, we have 
$\th^n(p)(\id,\rho)_N=0$ 
for all $n\in\Z$. 
This implies that 
$
%\displaystyle
\bigvee_{n\in\Z}\th^n(p)\,(\id,\rho)_N=0$. 
However the projection
$
%\displaystyle
\bigvee_{n\in\Z}\th^n(p)$ is contained 
in $(\rho(N)'\cap N)^\th=\C$, and it is equal to $1$. 
This shows $(\id,\rho)_N=0$. This is a contradiction. 
Hence there exists a non-zero $X\in (\id,\rho)_N$ such that $pX\neq0$. 
Let $X=V|X|$ be the polar decomposition of $X$. Then it is easy to see that 
$V\in (\id,\rho)_N$. The partial isometry $V$ satisfies $pV\neq0$. 

\noindent\textbf{Step II}. 
We show the following: 
Let $p\in\rho(N)'\cap N$ be a non-zero projection. 
Then there exists a partial isometry $V\in (\id,\rho)_N$ such that 
$VV^*\leq p$ and $V^* V=z_N(p)$, where $z_N(p)$ is the central support 
projection of $p$ in $N$. 

For partial isometries $V,W \in p(\id,\rho)_N$, we define the relation 
$V\prec W$ by $V=WV^*V$, 
and this gives an inductive order on the set $p(\id,\rho)_N$ 
as in the proof of \cite[Proposition 3.4]{Iz1}. 
Take $W$ a partial isometry from $p(\id,\rho)_N$ 
which is maximal with respect to this order. 
Note that $WW^*\leq p$, and $W^* W\leq z_N(p)$. 
Assume that $W^*W\neq z_N(p)$. 
We set a central projection $z_0:=z_N(p)-W^*W$. 
Using Step I for a non-zero projection $pz_0\in \rho(N)'\cap N$, 
we obtain a non-zero partial isometry $W_0\in (\id,\rho)_N$ 
such that $W_0=pz_0 W_0$. 
Then $W_0\in p(\id,\rho)_N$ and $W_0^*W_0\leq z_0$.
Hence $W_0^* W_0 W^* W=0$. 
Using $W^*W\in Z(N)$, we have 
\[
W^* W_0=(W^*W)W^*W_0=W^*W_0(W^*W)
=W^*W_0 z_0 (W^*W)=0. 
\]
Hence $W+W_0\in p(\id,\rho)_N$ is 
a partial isometry such that $W\prec W+W_0$. 
This is a contradiction. Therefore $W^*W=z_N(p)$. 

\noindent\textbf{Step III}. 
Take any non-zero projection $z$ in $Z(N)$. 
We show that there exists a non-zero projection $z_1\leq z$ in $Z(N)$ 
and a family of partial isometries $(V_i)_{i\in I}$ in $(\id,\rho)_N$ 
such that 
\[
V_i^* V_i=z_1\ \mbox{for all}\ i\in I 
\ \mbox{and}\ \sum_{i\in I}V_i V_i^*=z_1. 
\]

Using Step II for the projection $z$, 
we can take a family of partial isometries $(W_i)_{i\in I}$ 
in $(\id,\rho)_N$ whose range projections are maximally orthogonal 
and $W_i^* W_i=z$ for all $i\in I$. 
We set
$%\displaystyle
p:=z-\sum_{i\in I}W_i W_i^*$. 
Again by Step II, 
we can take a partial isometry $W\in (\id,\rho)_N$ 
such that $pW=W$ and $W^*W=z_N(p)$. 
If $z_N(p)=z$, then a extended family $(W_i)_{i\in I}\cup \{W\}$ contradicts 
with the maximality. 
Hence $z_N(p)\neq z$, and we set $z_1:=z-z_N(p)\neq0$ and $V_i:=z_1W_i$. 
It is trivial that $(V_i)_{i\in I}$ are contained in $(\id,\rho)_N$ 
and satisfy $V_i^* V_j=\de_{i,j}V_i^* V_i$ for all $i,j\in I$. 
Since $z_1 p=0$, we have
$
%\displaystyle
z_1=\sum_{i\in I}V_i V_i^*$. 

\noindent\textbf{Step IV}.
We show that
there exists a partition of unity 
$(z_\la)_{\la\in \La}$ in $Z(N)$
and a family of partial isometries
$(V_i^\la)_{i\in I_\la}$ in $(\id,\rho)_N$ 
for each $\la\in\La$
such that 
\[
(V_i^\la)^* V_i^\la=z_\la\ \mbox{for all}\ i\in I_\la 
\ \mbox{and}\ \sum_{i\in I_\la}V_i^\la(V_i^\la)^*=z_\la. 
\]

Let $(z_\la)_{\la\in \La}\subs Z(N)$ 
be a maximal family of orthogonal projections 
which possesses such families of partial isometries. 
Assume that
$\sum_{\la\in\La}z_\la\neq1$.
By Step III, 
there exist a non-zero projection $z_1\in Z(N)$ and
a family of partial isometries $(V_i)_{i\in I}$ in $(\id,\rho)_N$
such that
$z_1 z_\la=0$ for all $\la\in\La$ and 
\[
V_i^* V_i=z_1\ \mbox{for all}\ i\in I 
\ \mbox{and}\ \sum_{i\in I}V_i V_i^*=z_1. 
\]
The family $(z_\la)_{\la\in\La}\cup\{z_1\}$ contradicts 
with the maximality. 
Hence
$
%\displaystyle
\sum_{\la\in\La}z_\la=1$. 

\noindent\textbf{Step V}. 
Let $z_\la$ and $V_i^\la$ as in Step IV. 
We show that the cardinality of each $I_\la$ is equal to each other. 

Take $\la_1,\la_2\in\La$. 
By ergodicity of $\{\meC,\th\}$, there exists $n\in \Z$ such that 
$z_{12}:=\th^n(z_{\la_1})z_{\la_2}\neq0$. 
We set $W_{i}^{\la_j}:=(\th^n)^{2-j}(V_i^{\la_j}) z_{12}$ 
for $i\in I_{\la_j}$ and $j=1,2$. 
Trivially $W_i^{\la_j}$ is contained in $(\id,\rho)_N$. 
Then we have, for each $j=1,2$, 
\[
(W_{i}^{\la_j})^*W_{i}^{\la_j}=z_{12}
\ \mbox{for all}\ i\in I_{\la_j}
\ \mbox{and}\ 
\sum_{i\in I_{\la_j}}W_{i}^{\la_j}(W_{i}^{\la_j})^*=z_{12}.
\]
Note that $(W_i^{\la_1})^* W_{j}^{\la_2}\in Z(N)$ 
for $i\in I_1$ and $j\in I_2$. 
Let $|I_\la|\leq\infty$ be the cardinality of $I_\la$ for $\la\in\La$. 
Then 
\begin{align*}
|I_{\la_1}|z_{12}
=&\,
\sum_{i\in I_{\la_1}}(W_i^{\la_1})^* z_{12} W_i^{\la_1}
=
\sum_{i\in I_{\la_1}}
\sum_{j\in I_{\la_2}}
(W_i^{\la_1})^* W_{j}^{\la_2}(W_{j}^{\la_2})^*W_i^{\la_1}
\\
=&\,
\sum_{j\in I_{\la_2}}
\sum_{i\in I_{\la_1}}
(W_{j}^{\la_2})^*W_i^{\la_1}(W_i^{\la_1})^* W_{j}^{\la_2}
=
\sum_{j\in I_{\la_2}}
(W_{j}^{\la_2})^*z_{12} W_{j}^{\la_2}
\\
=&\,
\sum_{j\in I_{\la_2}}z_{12}
=
|I_{\la_2}|z_{12}. 
\end{align*}
Hence $|I_{\la_1}|=|I_{\la_2}|$. 

\noindent\textbf{Step VI}. 
By Step V, 
we may and do assume that each index set $I_\la$ is the same index set
$I$. 
We set
$V_i:=\sum_{\la\in\La}V_i^\la$.
It is trivial that $V_i$ is an isometry 
in $(\id,\rho)_N$ and
$\sum_{i\in I}V_i V_i^*=1$. 
Then the Hilbert space spanned by $(V_i)_{i\in I}$ implements $\rho$.
\end{proof}

\noindent$\bullet$ \textit{Proof of Theorem \ref{thm: mod-cent} 
for hyperfinite type III$_0$ factors.}

It suffices to show that 
$\Cnd(M)_{\rm{irr}}\subs \End(M)_{\rm m}$ as before.
Let $M=N\rti_\th\Z$ be the discrete decomposition
with the implementing unitary $U$.
Take $\si\in \Cnd(M)_{\rm{irr}}$. 
By Lemma \ref{lem: normalize-rho}, 
we may and do assume that $\si$ and $\hth$ commute 
and moreover $\si(U)=U$. 
We set $\rho:=\si|_N$. 
Then the restriction $\ph_\si|_N$ is a left inverse of $\rho$ 
and it commutes with $\th$. 
Hence $\rho\in \mE_\th$. 
Since $M_\om$ is naturally identified with $(N_\om)^{\th_\om}$
\cite[Lemma 7]{KST}, 
$\si^\om=\id$ on $(N_\om)^{\th_\om}$. 
Then by Lemma \ref{lem: rho-th-n}, 
$(\si,\th^n)_N\neq0$ for some $n\in\Z$. 
By Lemma \ref{lem: rho-th-inner}, 
we can find a Hilbert space $\meH$ in $N$ such that
$\si=\rho_{\meH}\th^n$.
Thus $\rho$ is a modular endomorphism
by Lemma \ref{lem: inner-modular}.
\hfill$\Box$

\begin{rem}
In this paper, we have studied centrally trivial endomorphisms with 
finite indices. 
However, as we claimed before, 
central triviality has the meaning even for 
endomorphisms with infinite indices if they have left inverses. 
Hence it is natural to consider the generalization
of Theorem \ref{thm: mod-cent}. 
It seems that Lemma \ref{lem: rho-th-n}, \ref{lem: rho-th-inner}
play important roles because
in them we have not assumed that $\rho$ has finite index, 
but that study is beyond our scopes at the present. 
\end{rem}

\vspace{10pt} 
\noindent \textbf{Acknowledgments.} 
The authors would like to thank Yasuyuki Kawahigashi 
and Masaki Izumi for encouragement on this work. 
They are also grateful to Robert Longo and Sebastiano Carpi 
for various useful comments and discussions. 
A part of this work was done while the first named author 
stayed at Fields Institute 
and the second named author stayed 
at Universit\`{a} di Roma ``Tor Vergata'' , 
and completed 
when the second named author stayed at Katholieke Universiteit Leuven. 
They express gratitude for their warm hospitality.

\end{document}